\newtheorem{satz}{Satz}[section]
\newtheorem{Theorem}[satz]{Theorem}
\newtheorem{Cor}[satz]{Corollary}
\newtheorem{Lemma}[satz]{Lemma}
\newtheorem{Prop}[satz]{Proposition}
\newtheorem*{thmx}{Theorem}
\theoremstyle{definition}
\newtheorem{Def}[satz]{Definition}
\newtheorem{Remark}[satz]{Remark}
\newtheorem*{Remarkx}{Remark}
\newtheorem{Example}[satz]{Example}
\newtheorem*{Def*}{Definition}
\newtheorem*{Example*}{Example}
\newcommand{\R}{\mathbb{R}}
\newcommand{\upi}{\underline{\pi}}
\newcommand{\N}{\mathbb{N}}
\newcommand{\F}{\mathcal{F}}
\newcommand{\U}{\mathcal{U}}
\newcommand{\xr}{\xrightarrow}
\newcommand{\T}{\mathbf{T}}
\newcommand{\map}{\operatorname{map}}
\newcommand{\Ho}{\operatorname{Ho}}
\newcommand{\conn}{\operatorname{conn}}
\newcommand{\M}{\mathbf{M}^{\U}}
\newcommand{\Hom}{\operatorname{Hom}}
\DeclareMathOperator*{\colim}{colim}
\DeclareMathOperator*{\res}{res}
\DeclareMathOperator*{\tr}{tr}
\DeclareMathOperator*{\Inj}{Inj}
\DeclareMathOperator*{\Bij}{Bij}
\DeclareMathOperator*{\Lin}{L}
\DeclareMathOperator*{\Iso}{Iso}
\DeclareMathOperator*{\Fun}{Fun}
\DeclareMathOperator*{\Cyl}{Cyl}
\DeclareMathOperator*{\im}{im}
\numberwithin{equation}{section}
\newcommand*{\defeq}{\mathrel{\rlap{%
                     \raisebox{0.3ex}{$\m@th\cdot$}}%
                     \raisebox{-0.3ex}{$\m@th\cdot$}}%
                     =}
\title{$G$-symmetric spectra, semistability and the multiplicative norm}
\author{Markus Hausmann}
\date{}
\begin{document}

\begin{abstract}
In this paper we develop the basic homotopy theory of $G$-symmetric spectra (that is, symmetric spectra with a $G$-action) for a finite group $G$, as a model for equivariant stable homotopy with respect to a $G$-set universe. This model lies in between Mandell's equivariant symmetric spectra and the $G$-orthogonal spectra of Mandell and May and is Quillen equivalent to the two. We further discuss equivariant semistability, construct model structures on module, algebra and commutative algebra categories and describe the homotopical properties of the multiplicative norm in this context.
\end{abstract}

\maketitle

\setcounter{section}{-1}
\section{Introduction}

Stable equivariant homotopy theory has seen various applications both in equivariant and non-equivariant topology. Many of these applications make use of constructions which require or are simplified by a highly structured model of equivariant spectra with a point-set level smash product. For example, model structures on module or (commutative) algebra categories allow one to employ algebraic techniques in order to obtain new equivariant spectra with desired properties, fixed points of commutative equivariant ring spectra form non-equivariant $E_{\infty}$-ring spectra and their homotopy groups inherit the structure of a Tambara functor. Another example is the construction of the spectrum level multiplicative norm introduced in \cite{HHR16}, which plays a central role in the solution of the Kervaire invariant one problem.

The first highly structured models for stable equivariant homotopy theory were the $S_G$-modules and $G$-orthogonal spectra of \cite{MM02} and the equivariant symmetric spectra of \cite{Man04}. While the first two can be formed over arbitrary compact Lie groups, the latter only works for finite groups. In turn it has the advantage that it can also be based on simplicial sets, whereas the other two need topological spaces.

In this paper we develop a further model we call $G$-symmetric spectra. It also requires the group to be finite and can be based both on simplicial sets and topological spaces. Conceptually, it lies in between Mandell's equivariant symmetric spectra and $G$-orthogonal spectra. In order to describe the difference, we have to recall the definitions. $G$-orthogonal spectra are formed with respect to a $G$-representation universe which determines what kinds of representation spheres $S^V$ become invertible in the homotopy category. Then, roughly speaking, a $G$-orthogonal spectrum $X$ consists of a family of based $G$-spaces $X(V)$ for every finite dimensional subrepresentation $V$ of the chosen universe, together with structure maps of the form $X(V)\wedge S^W\to X(V\oplus W)$. In addition, every $X(V)$ possesses an action of the orthogonal group $O(V)$ which is suitably compatible with the 
$G$-action and the structure maps. $G$-orthogonal spectra formed with respect to different universes are connected by so-called change of universe functors. It was noticed in \cite[V, Thm. 1.5]{MM02} that on the point-set level these are always equivalences of categories (though the derived functors are not). In other words, the underlying category does not really depend on the chosen $G$-representation universe, while the weak equivalences do. Furthermore, every such category of $G$-orthogonal spectra is equivalent to the category of $G$-objects in ordinary non-equivariant orthogonal spectra. In short, the reason for this phenomenon is that the values $X(V)$ can be reconstructed from those on trivial representations together with the orthogonal group actions. While this observation can be misleading in terms of the homotopy theory, it is very useful from a categorical viewpoint. For example, it is now clear how to define restriction to subgroups, induction, fixed point and orbit functors on the point set 
level and it only remains to examine with respect to which model structures (or weak equivalences) they can be derived, in order to obtain corresponding functors between the homotopy categories. A further example for such a functor is the multiplicative norm of \cite{HHR16} mentioned earlier.


For Mandell's equivariant symmetric spectra (an equivariant version of the symmetric spectra of \cite{HSS00}) the situation is somewhat different. There, a normal subgroup $N$ of $G$ is fixed and a $G\Sigma_{G/N}$-spectrum is defined as a family of based $(G\times \Sigma_n)$-simplicial sets $X(n\times G/N)$ for all natural numbers $n$, connected by structure maps $X(n\times G/N)\wedge S^{G/N}\to X((n+1)\times G/N)$.
The category of such spectra models the stable theory with respect to the universe of $G$-representations on which $N$ acts trivially (in particular the case $N=1$ models the genuine theory). 
At first sight this looks similar to $G$-orthogonal spectra, replacing $G$-representations by $G$-sets and orthogonal groups by symmetric groups. But if $N$ is a proper subgroup of $G$, then $X(n\times G/N)$ does not have an action of the full 
symmetric group of $n\times G/N$, but only of the subgroup of block permutations of the copies of $G/N$. Furthermore, it does not contain evaluations at trivial $G$-sets. Taken together, this results in the categories for varying $N$ being pairwise non-equivalent, and only if $N$ is equal to $G$ the category of $G\Sigma_{G/N}$-spectra is equivalent to the category of $G$-objects in non-equivariant symmetric spectra. As a consequence, to quote \cite{Man04}, ``the construction of these [change of universe, change of groups, fixed point and orbit] functors and their relationship is significantly more complicated for equivariant symmetric spectra than it is for equivariant orthogonal spectra.'' The norm is not discussed in \cite{Man04}.

Another simplicial set model for the $G$-equivariant stable homotopy category was introduced by Dundas, {\O}stv{\ae}r and R{\"o}ndigs in \cite{DOR03}. It is given by the $G$-enriched functor category from finite $G$-simplicial sets to all $G$-simplicial sets, an equivariant generalization of Lydakis' model for the stable homotopy category \cite{Lyd98}. By evaluating at spheres, every such spectrum gives rise to a $G$-symmetric spectrum in our sense below. The paper \cite{DOR03} does not contain a treatment of the norm and genuine commutative multiplications.

\subsection*{$G$-symmetric spectra}

We now come to the content of this paper. We consider the category of $G$-symmetric spectra, which are the direct symmetric analog of $G$-orthogonal spectra. The category is always that of $G$-objects in the non-equivariant symmetric spectra introduced in \cite{HSS00}. Parallel to the orthogonal case, these already secretly inherit evaluations at arbitrary finite $G$-sets. Which of these evaluations are declared homotopically meaningful in the model structure is based on a $G$-set universe $\U$, i.e., a countably infinite $G$-set which is isomorphic to the disjoint union of two copies of itself. The $\R$-linearization $\R[\U]$ of such a $G$-set universe is then a $G$-representation universe and we show:
\begin{thmx}[Model structures, Theorems \ref{theo:flatmod}, \ref{theo:projmod}, \ref{theo:gquillen} and \ref{theo:gquillenmandell}] For every finite group $G$ and every $G$-set universe $\U$ there exists a model structure on $G$-symmetric spectra of spaces and simplicial sets, which is Quillen equivalent to the $G$-orthogonal spectra of \cite{MM02} formed with respect to the $G$-representation universe $\R[\U]$. If $\U$ is an infinite disjoint union of orbits $G/N$ for a fixed normal subgroup $N$ of $G$, the model structure is also Quillen equivalent to the $G\Sigma_{G/N}$-spectra of \cite{Man04}.
\end{thmx}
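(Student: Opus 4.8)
The statement assembles four results proved in the body of the paper, so the plan is to indicate how each is obtained and how they combine. Throughout, the underlying category is fixed as the category of $G$-objects in the symmetric spectra of \cite{HSS00}, and only the model structure varies with the $G$-set universe $\U$, exactly as happens for $G$-orthogonal spectra over a representation universe in \cite[Ch. V]{MM02}. The first thing I would record is the categorical fact that a $G$-symmetric spectrum $X$ carries, for every finite $G$-set $A$, an evaluation $X(A)$ --- a based space (resp.\ simplicial set) with compatible actions of $G$ and of the permutation group $\Sigma_A$ of $A$ --- together with natural structure maps $X(A)\wedge S^{\R[B]}\to X(A\sqcup B)$. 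This comes from left Kan extending $n\mapsto X(\{1,\dots,n\})$ along the inclusion of finite sets and bijections into finite $G$-sets and bijections, just as in \cite[Ch. V]{MM02}; equivalently one realises $G$-symmetric spectra as sphere-modules in a suitable $G$-diagram category. One then defines $\U$-homotopy-group systems $\upi^{\U}_k(X)$ by $\upi^{\U}_k(X)(G/H)=\colim_{A\subseteq\U}[\,G/H_+\wedge S^{\R^k\oplus\R[A]},\,X(A)\,]^G$ for $H\leq G$, the colimit taken over the directed poset of finite $G$-subsets of $\U$ along the (symmetric-group-twisted) structure maps; this twisting is the source of the semistability phenomenon. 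A lemma used repeatedly is that every finite-dimensional subrepresentation of $\R[\U]$ lies in $\R[A]$ for some finite $G$-subset $A\subseteq\U$, so the $\R[A]$ are cofinal in the poset of finite-dimensional subrepresentations of $\R[\U]$.

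\textbf{The model structures (Theorems \ref{theo:flatmod} and \ref{theo:projmod}).} I would build a flat (``$S$-model'') and a projective level model structure on $G$-symmetric spectra, with generating (trivial) cofibrations induced from free $G$-symmetric spectra on the evaluations at finite $G$-subsets of $\U$, smashed with generating cofibrations of the appropriate equivariant CW categories (keeping positive variants available for the later algebra applications), and then Bousfield-localise to the corresponding stable model structures by forcing the $\U$-$\Omega$-spectrum condition; the fibrant objects become the levelwise fibrant $\U$-$\Omega$-spectra. The delicate and genuinely new point --- where the symmetric theory departs from the orthogonal one --- is the identification of stable equivalences: because a $G$-symmetric spectrum need not be semistable, an honest $\upi^{\U}_*$-isomorphism need not be a stable equivalence, so one must use the derived (``true'') $\U$-homotopy groups, namely those of a semistable or $\U$-$\Omega$-replacement, and prove that stable equivalences are exactly the maps inducing isomorphisms on these. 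Developing enough equivariant semistability to run the localisation machinery of \cite{HSS00} equivariantly --- that $\upi^{\U}_*$-isomorphisms are stable equivalences, that the converse holds against semistable targets, and the compatibility of all this with shift functors and with the Weyl-twisted maps $X(A)\to\Omega^{\R[B]}X(A\sqcup B)$ --- is where I expect the main obstacle to lie.

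\textbf{Quillen equivalence with $G$-orthogonal spectra (Theorem \ref{theo:gquillen}).} Passing the nonequivariant prolongation of \cite{HSS00,MM02} to $G$-objects gives an adjunction $\mathbb{P}\colon G\text{-symmetric spectra}\rightleftarrows G\text{-orthogonal spectra}\colon\mathbb{U}$ which identifies the symmetric evaluation at a finite $G$-set $A$ with the orthogonal evaluation at the permutation representation $\R[A]\subseteq\R[\U]$, and hence --- by the cofinality lemma above --- matches the $\U$-stable structure with the $\R[\U]$-stable structure of \cite{MM02}. That this is a Quillen adjunction is routine; to upgrade it to a Quillen equivalence it suffices to check that the derived unit is an isomorphism on a generating set, the free spectra on $G/H_+\wedge S^{\R^k\oplus\R[A]}$. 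Using that $\mathbb{P}$ commutes with restriction to subgroups and with the formation of fixed-point spectra, and that stable equivalences in both categories are detected after such restrictions, this reduces to the nonequivariant Quillen equivalence between symmetric and orthogonal spectra of \cite{HSS00}; the semistability input of the previous step guarantees that on the symmetric side one is comparing the correct (true) homotopy groups.

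\textbf{Comparison with Mandell's spectra (Theorem \ref{theo:gquillenmandell}).} Finally, take $\U\cong\coprod_{\N}G/N$ for a normal subgroup $N$ of $G$. Since $\R[\coprod_{\N}G/N]$ is exactly the universe of those $G$-representations on which $N$ acts trivially, the previous step already exhibits our model structure as Quillen equivalent to the very category of $G$-orthogonal spectra that \cite{Man04} compares with $G\Sigma_{G/N}$-spectra. It then remains to produce a direct comparison functor --- restriction along the inclusion of Mandell's indexing (copies of $G/N$ with block permutations only) into the full diagram of finite $G$-sets, together with its left adjoint --- and to check it is a Quillen functor for the two stable structures; the $2$-out-of-$3$ property for Quillen equivalences then finishes the argument. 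The only substantive point beyond bookkeeping is that enlarging the block-permutation symmetries to the full permutation groups $\Sigma_A$ does not alter the derived category, which follows by a cellular induction once one notes that stable equivalences on both sides are detected by the same $\R[\U]$-homotopy groups, indexed by the same finite $G$-sets $n\times G/N$. The space-level and simplicial-set-level statements are exchanged by the levelwise geometric realisation/singular adjunction, so it is enough to carry out the above in one of the two settings.
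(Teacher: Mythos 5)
Your overall architecture matches the paper's: construct level model structures (flat and projective, with positive variants), left Bousfield localize at the class of maps that become isomorphisms against $G^{\U}\Omega$-spectra, show naive $\upi_*^{\U}$-isomorphisms are stable equivalences via equivariant semistability, then compare to $G$-orthogonal spectra through the prolongation/forgetful adjunction and close the Mandell comparison by $2$-out-of-$3$ through that equivalence together with Mandell's own Theorem 10.2. That much is faithful.

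The step that does not go through as written is your reduction of the Quillen-equivalence statement (Theorem \ref{theo:gquillen}) to the nonequivariant case via ``$\mathbb{P}$ commutes with restriction to subgroups and with the formation of fixed-point spectra.'' First, $\mathbb{P}$ (the left adjoint $L$) certainly does not commute with categorical fixed points $(-)^H$, since one is a left and the other a right adjoint; and restriction to subgroups alone never removes the equivariance (taking $H=G$ leaves you where you started). If instead you mean geometric fixed points $\Phi^H$, you would need a comparison of $\Phi^H_{\mathrm{sym}}$ with $\Phi^H_{\mathrm{orth}}\circ L$ (equivalently with $U\circ\Phi^H_{\mathrm{orth}}$), and establishing that comparison is essentially equivalent to the very statement you are trying to prove, because geometric fixed points detect stable equivalences in both categories and the derived unit lives exactly in the homotopy groups you are comparing. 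There is also a subsidiary wrinkle: the free spectra $\mathscr{F}_A(G/H_+\wedge S^{\R^k\oplus\R[A]})$ that you propose to use as generators are generically not $G^{\U}$-semistable (Example \ref{exa:semifree}), so ``the unit on generators'' is not computed naively but only after an $\Omega$-spectrum replacement. The paper avoids both issues by a sharper observation: the compact generators can be taken to be the suspension spectra $G/H_+\wedge\mathbb{S}$ (Proposition \ref{prop:ggenerators}), these are always semistable, $U$ preserves and \emph{reflects} all $G^{\U}$-stable equivalences (because underlying spectra of orthogonal spectra are semistable, Corollary \ref{cor:piiso}), and since the composite $\Sigma^\infty=L\circ\Sigma^\infty_{\mathrm{sym}}$ identifies $UL\Sigma^\infty A$ with $\Sigma^\infty A$ on the nose, the unit is literally an isomorphism of $G$-symmetric spectra on each $G/H_+\wedge\mathbb{S}$. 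Replacing your fixed-point reduction with this direct verification (and using \cite[Lemma A.2(iii)]{MMSS01}) closes the gap and gives exactly the paper's proof.
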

It can happen that two non-isomorphic $G$-set universes become isomorphic when linearized and hence model the same homotopy theory. This leads to various different model structures with 
equivalent homotopy categories. For example, the $N$-fixed $G$-equivariant stable homotopy category can be modeled by a $G$-set universe consisting of infinitely many copies of the $G$-orbit $G/N$, which is the closest approximation to \cite{Man04}, as noted above. But the disjoint union $\U_G(N)$ of infinitely many copies of all orbits $G/H$ with $N\leq H$ leads to an equivalent theory and often has technical advantages, in particular for understanding the homotopical properties of the norm discussed below.

The central notion for these model structures is that of a $G^{\U}$-stable equivalence (Definition \ref{def:gomega}), whose description is more complicated than in the orthogonal case. This is due to the phenomenon of semistability, which we say more about below. Around these $G^{\U}$-stable equivalences we construct two kinds of model structures, a projective one generalizing the stable model structure in \cite{HSS00} and analogous to the one on $G$-orthogonal spectra constructed in \cite{MM02}; and a flat one, which is an equivariant generalization of the one constructed in \cite{Shi04} and the analog of the model structures of \cite{Sto11} on $G$-orthogonal spectra. The flat cofibrations do not depend on $\U$, for symmetric spectra of simplicial sets they are just the morphisms which are non-equivariant flat cofibrations if one forgets the $G$-action. In Section \ref{sec:derived} we explain how these model structures can be used to derive change of universe, change of groups, fixed point and orbit functors.

\subsection*{Multiplicative properties}

In Section \ref{sec:monoidal} we deal with the multiplicative properties of our model structures. We show that they are monoidal (Proposition \ref{prop:stablemonoidal}), that they lift to categories of modules and algebras (Corollary \ref{cor:modmodules}) and that positive versions lift to the categories of commutative algebras (Theorem \ref{theo:modcomm}). For this we use the machinery of \cite{SS00} and \cite{Whi17}. Non-equivariantly, a major step in the proof of the existence of model structures on commutative algebras is that given a positive flat symmetric spectrum $X$, the maps $(E\Sigma_n)_+\wedge_{\Sigma_n} X^{\wedge n}\to X^{\wedge n}/\Sigma_n$, collapsing $E\Sigma_n$ to a point, are stable equivalences for all $n$. This is also the case equivariantly, but the $G$-equivariant generalization of $E\Sigma_n$ which is needed depends on the universe with regard to which the stable equivalences are formed (Proposition \ref{prop:commfree}).
Only if the $G$-set universe is trivial one can use an $E\Sigma_n$ with trivial $G$-action. As remarked in \cite{HHR16}, this was overlooked in \cite{MM02}.

The appearance of the $E\Sigma_n$'s with non-trivial $G$-action requires the study of the homotopical properties of the multiplicative norm $N_H^G$ of \cite{HHR16}, which is also interesting in its own right. While the rest of the paper works with respect to arbitrary $G$-set universes, we here need to restrict to the full $N$-fixed ones with respect to a normal subgroup $N$ of $G$ contained in $H$, denoted $\U_G(N)$, and show:
\begin{thmx}[Homotopical properties of the norm, Theorem \ref{theo:norm2}] \label{thmx:homnorm} The norm functor $N_H^G:HSp^{\Sigma}\to GSp^{\Sigma}$ takes $H^{\U_H(N)}$-stable equivalences between $H$-flat $H$-symmetric spectra to $G^{\U_G(N)}$-stable equivalences of $G$-flat $G$-symmetric spectra.
\end{thmx}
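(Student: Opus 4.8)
The plan is to adapt the strategy of \cite{HHR14}: reduce via Ken Brown's lemma to the norm of an acyclic cofibration, and then use the multiplicative (``fat wedge'') filtration of such a norm to localize the problem to its filtration quotients. So I would first record (or cite from Section~\ref{sec:monoidal}) the point-set input: $N_H^G$ sends $H$-flat cofibrations to $G$-flat cofibrations, in particular $H$-flat spectra to $G$-flat spectra; and for an $H$-flat cofibration $i\colon A\to B$ the map $N_H^G i$ admits a natural filtration $N_H^G A=F_0\to F_1\to\cdots\to F_k=N_H^G B$ (with $k=[G:H]$) by $G$-flat cofibrations, whose $j$-th quotient $F_j/F_{j-1}$ is a wedge, indexed by the $G$-orbits of $j$-element subsets $S\subseteq G/H$, of spectra $G_+\wedge_{K_S}Z_S$ with $K_S=\operatorname{Stab}_G(S)$ and $Z_S$ a $K_S$-equivariant indexed smash product of $j$ twisted copies of $B/A$ and $k-j$ twisted copies of $A$. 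Since $N$ is normal in $G$ and contained in $H$, the subgroup $N$ acts trivially on $G/H$, so $N\le K_S$ for every $S$; and for each $K$ with $N\le K\le G$ the restriction of $\U_G(N)$ to $K$ is isomorphic to $\U_K(N)$, while conjugation by $g\in G$ preserves $N$. Thus the whole argument stays inside the full $N$-fixed universes, and the norm, restriction, induction and geometric fixed point functors of Sections~\ref{sec:monoidal} and~\ref{sec:derived} are all available over matching universes.

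Next, by Ken Brown's lemma — applied to $N_H^G$ viewed as a functor from $HSp^{\Sigma}$ with the stable $H^{\U_H(N)}$-flat model structure to $GSp^{\Sigma}$ equipped with the class of $G^{\U_G(N)}$-stable equivalences — it suffices to show that $N_H^G$ takes an acyclic $H$-flat cofibration $i\colon A\to B$ between $H$-flat spectra to a $G^{\U_G(N)}$-stable equivalence. For such an $i$ the cofiber $C\defeq B/A$ is $H$-flat and $H^{\U_H(N)}$-stably trivial. By the filtration above $N_H^G i$ is a finite composite of maps whose cofibers are the $F_j/F_{j-1}$, $1\le j\le k$; since $j\ge 1$ forces $S\neq\emptyset$, every $Z_S$ occurring there contains a smash factor which is a norm $N_L^{K_S}$ (for some $L\le K_S$) of a twisted restriction of $C$. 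Granting the claim below, each such factor is a $K_S$-flat, stably trivial spectrum, so $Z_S$ is stably trivial by Proposition~\ref{prop:flatness} (smash the remaining, flat, factors onto it), hence $G_+\wedge_{K_S}Z_S$ is $G^{\U_G(N)}$-stably trivial because induction from $K_S$ preserves stable equivalences between flat spectra (Section~\ref{sec:derived}); therefore each $F_j/F_{j-1}$ is stably trivial and $N_H^G i=(F_0\to F_k)$ is a $G^{\U_G(N)}$-stable equivalence. So the theorem reduces to the following claim: \emph{for $N\le K_0\le K\le G$, the norm $N_{K_0}^{K}$ carries a $K_0$-flat, $K_0^{\U_{K_0}(N)}$-stably trivial spectrum to a $K^{\U_K(N)}$-stably trivial spectrum} — that is, the norm of a flat, stably trivial spectrum is stably trivial.

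This claim is the real content, and I would prove it using geometric fixed points. Let $D$ be $K_0$-flat and stably trivial. A $K^{\U_K(N)}$-spectrum is stably trivial precisely when its (derived) geometric $J$-fixed points $\Phi^{J}$ vanish for all $J$ with $N\le J\le K$, so it suffices to compute $\Phi^{J}N_{K_0}^{K}D$. Combining the double coset formula for the norm (the restriction of a norm to a subgroup is an indexed smash product of norms from intersection subgroups — a point-set identity for indexed monoidal products) with the compatibility $\Phi^{J}N_{J'}^{J}\simeq\Phi^{J'}$ of geometric fixed points with the norm, one obtains $\Phi^{J}N_{K_0}^{K}D\simeq\bigwedge_{[g]}\Phi^{\,J\cap{}^{g}K_0}(c_g^{*}D)$, a smash product of geometric fixed points of twisted restrictions of $D$. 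Each factor is the geometric fixed points of a flat, stably trivial spectrum, hence stably trivial (geometric fixed points are homotopical on flat spectra, Section~\ref{sec:derived}), and smashing a flat, stably trivial spectrum with flat spectra keeps it stably trivial (Proposition~\ref{prop:flatness}); so $\Phi^{J}N_{K_0}^{K}D\simeq\ast$ for all relevant $J$, and therefore $N_{K_0}^{K}D\simeq\ast$. This is not circular: for $J=K$ the double coset sum degenerates to the single term $\Phi^{K}N_{K_0}^{K}D\simeq\Phi^{K_0}D$, which no longer involves a norm.

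The main obstacle is precisely this top case $J=K$ — equivalently, the top filtration quotient $F_k/F_{k-1}=N_H^G(B/A)$: the multiplicative filtration is useless for it, and its contractibility has to be extracted from the fundamental relation $\Phi^{G}N_H^{G}\simeq\Phi^{H}$ together with the fact that geometric fixed points are homotopical on flat $G$-symmetric spectra. Establishing these in the $G$-symmetric setting is the one genuinely new ingredient beyond the formal manipulations above (the point-set double coset identity being formal, these homotopical statements are not), and it is here that one uses that the universes are the \emph{full} $N$-fixed ones $\U_G(N)$: for a general $G$-set universe the restriction and geometric fixed point formulas for the norm need not hold, which is why the theorem is stated only in this generality.
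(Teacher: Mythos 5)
Your proposal is a reduction, not a proof: it lands on the diagonal relation $\Phi^K N_{L}^K X \simeq \Phi^L X$ and on the monoidality of derived geometric fixed points $\Phi^J$ for $G$-symmetric spectra, and neither of these is established in this paper, nor is either of them routine. Setting up the diagonal relation in the $G$-symmetric context would require constructing a monoidal (point-set) model for geometric fixed points and analyzing its interaction with the indexed smash powers and the norm --- a project comparable in scope to what \cite{HHR14} carry out for orthogonal spectra in their Appendix~B, so characterizing it as ``one genuinely new ingredient'' badly understates what is missing. There is also a circularity you have not addressed: expanding the derived $\Phi^J N_{K_0}^K D$ via the point-set double coset identity and the diagonal formula implicitly asserts that the point-set norm computes the derived one, which is uncomfortably close to the very statement under proof; making this non-circular forces exactly the kind of careful ordering of point-set and homotopical arguments that the geometric-fixed-point shortcut is supposed to spare you. (A smaller slip: the detection criterion requires $\Phi^J$ to be checked for all $J\le K$, not merely those with $N\le J$; since $\res_J^K\U_K(N)\cong\U_J(J\cap N)$ and your double coset expansion is uniform in $J$ this does not break the argument, but as stated it is wrong.)

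It is also worth noting that the paper's proof takes an entirely different route and avoids geometric fixed points altogether. It first verifies the statement on the generating stably acyclic flat cofibrations --- via Lemma~\ref{lem:tech2} for the level-acyclic ones and via Theorem~\ref{theo:norm1} together with a change of universe for the rest --- and then propagates it through wedges, pushouts, and sequential colimits by an induction on the order of $G$, using the distributive law and Proposition~\ref{prop:times}. The troublesome top filtration quotient $N_H^G(B/A)$, which is precisely what forces you to the diagonal relation, is disposed of there by two-out-of-three against the composite $N_H^G A\to N_H^G B$, which is already known to be a stable equivalence from the generator step. That elementary move is unavailable in your argument because you skip the generator step and try to prove the contractibility of $N_H^G(B/A)$ directly; this is exactly what makes your approach demand the much heavier homotopical machinery the paper manages to do without.
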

In fact, the projective model structures satisfy an even stronger compatibility property with respect to the norm (Theorem \ref{theo:norm1}), but the compatibility with the flat model structures is particularly relevant for understanding the derived norm on commutative $H$-symmetric ring spectra. This is because an $H$-symmetric spectrum underlying an $H$-flat commutative $H$-symmetric ring spectrum is $H$-flat (this is the equivariant analog of the ``convenience'' property of the flat model structure discussed in \cite{Shi04}), with the projective analog of that statement being false. So the theorem above shows that the derived norm of a commutative $H$-symmetric ring spectrum is equivalent as a $G$-symmetric spectrum to the derived norm of the underlying $H$-symmetric spectrum (Corollary \ref{cor:normcomp}). In \cite{HHR16}, where they work with a projective model structure, this issue needs to be addressed differently (cf. \cite[Sec. B.8]{HHR16}).
\subsection*{Equivariant semistability}

Finally, this paper also contains a discussion of equivariant semistability: In Section \ref{sec:homgroups} we define the equivariant homotopy groups of a $G$-symmetric spectrum, depending on the $G$-set universe $\U$. Just like for $G$-orthogonal spectra, they carry a natural structure of a Mackey functor (with the amount of transfers depending on $\U$, cf. \cite{Lew95}). However, equivariant homotopy groups of symmetric spectra are not as well-behaved, as a stable equivalence does not induce isomorphisms on them in general. In the non-equivariant case, this phenomenon is captured by a natural action of the monoid of injective self-maps of the natural numbers on the homotopy groups of a symmetric spectrum, which detects whether the ``naively defined'' homotopy groups coincide with the derived ones (\cite{Sch08}). For $G$-symmetric spectra a similar action exists by the monoid of equivariant self-injections of the chosen universe $\U$, and we show that it detects equivariant semistabiliy and enjoys similar properties to the non-equivariant one.

\begin{Remarkx} The stable model structures on $G$-symmetric spectra for varying $G$ can be combined to a new model structure on the category of (non-equivariant) symmetric spectra, which models global equivariant homotopy theory for finite groups in the sense of Schwede \cite{Sch18}. This was another motivation for this project and is carried out in \cite{Hau15}.
\end{Remarkx}

\tableofcontents

\section{$G$-spaces and $G$-simplicial sets}
\subsection{Elementary definitions and notation}
\label{sec:unstable}
Let $G$ be a finite group. Throughout this paper a topological space means a compactly generated weak Hausdorff space, so that the category of topological spaces becomes closed symmetric monoidal with respect to the cartesian product. We begin by fixing some notation.

\begin{Def} We write $G\T_*$ for the category of based $G$-spaces, and $G\mathcal{S}_*$ for the category of based $G$-simplicial sets. The geometric realization functor $G\mathcal{S}_*\to G\T_*$ is denoted by $|.|$ and its right adjoint singular complex $G\T_*\to G\mathcal{S}_*$ by $\mathcal{S}$.
It follows from adjointness that $|.|$ commutes with $G$-orbits and $\mathcal{S}$ commutes with $G$-fixed points. Moreover, since $|.|$ commutes with all finite limits, it also preserves $G$-fixed points.
\end{Def}

\begin{Example}[Representation spheres] Given a finite dimensional real $G$-representation $V$, its one-point compactification is denoted $S^V$ and called the \emph{representation sphere} of $V$. For another $G$-representation $W$ there is a canonical $G$-homeomorphism $ S^{V\oplus W}\cong S^V\wedge S^W$ sending a pair $(v,w)$ to $v\wedge w$ and the basepoint to the basepoint.

For a finite $G$-set $M$, we further let $S^M$ stand for the simplicial set given by the $M$-fold smash product of $S^1=\underline{\Delta}^1/\partial \underline{\Delta}^1$, with $G$-action permuting the smash factors. Fixing a based homeomorphism $|S^1|\cong S^1=S^\mathbb{R}$ induces a $G$-homeomorphism $|S^M|= S^{\R[M]}$ for all finite $G$-sets~$M$.
\end{Example}

\begin{Def}[Mapping spaces] 
Given two based $G$-spaces/simplicial sets $X$ and $Y$ we write $\map(X,Y)$ for the space/simplicial set of not necessarily equivariant based continuous maps with conjugation $G$-action, and $\map_G(X,Y)$ for the subspace of equivariant maps.
\end{Def}

\begin{Remark} These definitions give two enrichments of $G\T_*$, one over $G\T_*$ itself with mapping spaces $\map(-,-)$ with conjugation $G$-action and one over $\T_*$ using $\map_G(-,-)$. Likewise, $G\mathcal{S}_*$ is enriched over $G\mathcal{S}_*$ and $\mathcal{S}_*$.
\end{Remark}


\begin{Def}[Change of groups]
Given a subgroup $H\leq G$, we write $\res_H^G:G\T_*\to H\T_*$ and $\res_H^G:G\mathcal{S}_*\to H\mathcal{S}_*$ for the associated restriction functors. The left adjoint induction functors are denoted $G\ltimes_H -$, the right adjoint coinduction functors by $\map_H(G,-)$.
\end{Def}
The \emph{double coset formula}, which will be used throughout this paper, gives a decomposition of an induction followed by a restriction. Given a finite group $G$ and two subgroups $H$ and $K$, we let $K\backslash G/H$ denote the set of double cosets, i.e., equivalence classes of $G$ under the $K\times H$ action given by $(k,h)\cdot g=kgh^{-1}$. Then for every based $H$-space/$H$-simplicial set $X$ there is a natural $K$-equivariant decomposition
 \[ {\res}_K^G (G\ltimes_H X)\cong \bigvee_{[g]\in K\backslash G/H} K\ltimes_{K\cap gHg^{-1}}(c_g^*({\res}^G_{g^{-1}Kg \cap H}X)). \]
Here, $c_g^{*}$ stands for restriction along the conjugation isomorphism $g^{-1}(-)g:K\cap gHg^{-1}\to g^{-1}Kg \cap H$. On the summand belonging to a double coset representative $g$, the isomorphism above is the adjoint of the $K\cap gHg^{-1}$-equivariant map $c_g^*({\res}^H_{g^{-1}Kg \cap H}X)\to {\res}_{K\cap gHg^{-1}}^G X$ given by multiplication with $g$.

Likewise, there is the following decomposition of coinduction followed by restriction:
\begin{equation} \label{eq:dec2} {\res}_K^G (\map_H(G,X))\cong \prod _{[g]\in K\backslash G/H} \map_{K\cap gHg^{-1}}(K,c_{g}^*({\res}^G_{g^{-1}Kg \cap H} X)). \end{equation}
Furthermore, induction and coinduction are related by a natural transformation \[ \gamma_X:G\ltimes_H X\to \map_H(G_+,X),\] which is adjoint to the $H$-map $X\to \map_H(G_+,X)$ sending $x$ to the function which maps $g$ to $gx$ if $g$ is contained in $H$ and the basepoint otherwise.

\subsection{Genuine model structure}
We review certain model structures on based $G$-spaces and based $G$-simplicial sets, starting with the genuine model structure. For the notions of non-equivariant Serre and Kan fibrations and model category terminology we refer to the textbooks \cite{Qui67}, \cite{Hir03} or \cite{Hov99}.

A map of based $G$-spaces or based $G$-simplicial sets is called a \emph{genuine $G$-equivalence} (\emph{genuine $G$-fibration}) if for all subgroups $H$ of $G$ the fixed point map $f^H:X^H\to Y^H$ is a weak homotopy-equivalence (respectively Serre/Kan fibration). It is called a genuine $G$-cofibration if it has the left lifting property with respect to all maps that are simultaneously genuine $G$-equivalences and genuine $G$-fibrations, which in the simplicial case is equivalent to being levelwise injective. These classes define a proper, cofibrantly generated and monoidal model structure, with sets of generating cofibrations $I_G$ and acyclic cofibrations $J_G$ given by smashing their respective standard non-equivariant versions $I$ and $J$ with $G/H_+$ for all subgroups $H\leq G$.

Throughout the paper we will use the expression \emph{cofibrant $G$-space} to mean a based topological $G$-space or $G$-simplicial set which is cofibrant in the genuine model structure above. These are precisely the retracts of $I_G$-cell complexes. Every $G$-simplicial set is cofibrant. Furthermore, by a \emph{finite} cofibrant $G$-space we will mean a retract of a finite $I_G$-cell complex.

\begin{Example} All representation spheres $S^V$ for a finite dimensional $G$-representation $V$ are cofibrant. For linearizations of finite $G$-sets this follows from the simplicial description above, for general $V$ it is a consequence of a result of Illman \cite{Ill78}.
\end{Example}

\subsection{Model structures with respect to a family of subgroups}
\label{sec:fammod}
The later treatment of spectra will show the need for other model structures on $G$-spaces and $G$-simplicial sets where weak equivalences are those $G$-maps that induce weak homotopy equivalences only on fixed points for a chosen set of subgroups of $G$, more precisely for a \emph{family} of subgroups.

\begin{Def} Let $G$ be a finite group. A non-empty set of subgroups of $G$ is called a \emph{family} if it is closed under conjugation and passage to subgroups.\end{Def}
Families of the following type are of particular importance to us:
\begin{Example} \label{exa:freefam} Let $G$ and $K$ be two finite groups. Then the family of subgroups of $G\times K$ which intersect $1\times K$ trivially is denoted by $\F^{G,K}$. Every such subgroup is of the form $\{(h,\varphi(h))\ |\ h\in H\}$ for a unique subgroup $H$ of $G$ and group homomorphism $\varphi:H\to K$.
\end{Example}
Given a family $\F$, we let $E\F$ denote a universal $G$-space for $\F$, i.e., an (unbased) cofibrant $G$-space with the property that the $H$-fixed points for a subgroup $H$ are contractible if $H$ is in~$\F$ and empty otherwise.
We call a $G$-simplicial set universal for $\F$ (and also denote it $E\F$) if its geometric realization is a universal $G$-space for $\F$.

\begin{Def}[$\F$-equivalence] A $G$-map $f:X\to Y$ of based $G$-spaces or based $G$-simplicial sets is called an \emph{$\F$-equivalence} if for all subgroups $H$ of $G$ that lie in $\F$ the fixed point map $f^H:X^H\to Y^H$ is a weak homotopy equivalence, or equivalently if $E\F_+\wedge f$ is a genuine $G$-equivalence.
\end{Def}

Equivalently, using an equivariant Whitehead theorem (cf. \cite[Prop. 2.7]{Ada84}), a map of based $G$-spaces is an $\F$-equivalence if and only if $\map(E\F_+,f)$ is a genuine $G$-equivalence. We say that a based $G$-space $X$ is \emph{$\F$-local} if the canonical map $X\to \map(E\F_+,X)$ is a genuine $G$-equivalence, and a based $G$-simplicial set is $\F$-local if its geometric realization is.

The class of $\F$-equivalences takes part in two model structures. For this we define:
\begin{Def} A map $f:X\to Y$ of based $G$-spaces/$G$-simplicial sets is called a
\begin{itemize}
 \item \emph{projective $\F$-cofibration} if it is a genuine $G$-cofibration where all the points away from the image of $f$ have isotropy in $\F$.
 \item \emph{projective $\F$-fibration} if it induces a Serre/Kan fibration $f^H:X^H\to Y^H$ for all $H\in \F$.
 \item \emph{mixed $\F$-fibration} if it has the right lifting property with respect to all maps that are genuine cofibrations and $\F$-equivalences.
\end{itemize}
\end{Def}

\begin{Prop}[Projective model structure] \label{prop:projmod} Let $G$ be a finite group and $\F$ a family of subgroups. Then the classes of projective $\F$-cofibrations, $\F$-equivalences and projective $\F$-fibrations assemble to a proper, cofibrantly generated, monoidal and $G$-topological ($G$-simplicial) model structure on based $G$-spaces ($G$-simplicial sets), called the \emph{projective} $\F$-model structure.
\end{Prop}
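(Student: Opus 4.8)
The plan is to realise the projective $\F$-model structure as a cofibrantly generated model structure whose generating cofibrations are the ``$\F$-restricted'' version
\[ I_\F=\{(G/H\times S^{n-1})_+\to(G/H\times D^n)_+\ |\ H\in\F,\ n\in\N\} \]
of the set $I_G$ from Proposition \ref{prop:genmod}, and whose generating acyclic cofibrations are the corresponding
\[ J_\F=\{(G/H\times D^n)_+\to(G/H\times D^n\times[0,1])_+\ |\ H\in\F,\ n\in\N\}, \]
with the boundary inclusions $\partial\underline{\Delta}^n\hookrightarrow\underline{\Delta}^n$ and horn inclusions used simplicially instead. I would then verify the hypotheses of the recognition theorem for cofibrantly generated model categories (\cite[Theorem 2.1.19]{Hov99}, \cite[Theorem 11.3.1]{Hir03}) with weak equivalences $\mathcal{W}$ the class of $\F$-equivalences: smallness of the (compact) domains holds exactly as in the non-equivariant case (\cite{Hov99}, \cite{MM02}), and $\mathcal{W}$ satisfies two-out-of-three and is closed under retracts because it is detected on the fixed point spaces/simplicial sets $X^H$, $H\in\F$, where these properties are classical.

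The two points that genuinely use the equivariant structure are the identifications of the two weak factorisation systems. Via the adjunction isomorphisms $\map_G((G/H)_+\wedge A,-)\cong\map(A,(-)^H)$ for $A$ carrying the trivial $G$-action, a map $f$ lifts against $I_\F$ (resp.\ against $J_\F$) if and only if $f^H$ is an acyclic Serre/Kan fibration (resp.\ a Serre/Kan fibration) for every $H\in\F$; since a map of spaces or simplicial sets is an acyclic fibration precisely when it is simultaneously a fibration and a weak equivalence, this at once gives that the $J_\F$-injective maps are the projective $\F$-fibrations and that the $I_\F$-injective maps are exactly those which are in addition $\F$-equivalences, i.e.\ $I_\F\text{-inj}=J_\F\text{-inj}\cap\mathcal{W}$. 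For the other hypothesis $J_\F\text{-cell}\subseteq\mathcal{W}\cap I_\F\text{-cof}$, each generator in $J_\F$ is a genuine $G$-cofibration whose points off the image have isotropy in $\F$, and this class is closed under pushout and transfinite composition, so relative $J_\F$-cell complexes are $I_\F$-cofibrations (using the identification of cofibrations in the next paragraph); and since $(-)^H$ commutes with pushouts along genuine $G$-cofibrations and with transfinite composition along such, it sends a relative $J_\F$-cell complex to an iterated pushout of the acyclic cofibrations $D^n\times\{0\}\hookrightarrow D^n\times[0,1]$ of spaces, hence to a weak homotopy equivalence.

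The recognition theorem now yields a cofibrantly generated model structure whose weak equivalences are the $\F$-equivalences, whose fibrations are the projective $\F$-fibrations, and whose cofibrations are the $I_\F$-cofibrations; it then remains to identify this last class with the projective $\F$-cofibrations. That every $I_\F$-cofibration is a genuine $G$-cofibration with isotropy off the image in $\F$ follows from the closure of the latter class under pushout, transfinite composition and retracts; for the converse one argues, as in the proof of the genuine model structure (topologically using Illman's triangulation theorem \cite{Ill78}, simplicially using the skeletal filtration), that any genuine $G$-cofibration with relative isotropy in $\F$ is a retract of a relative $I_\F$-cell complex. Left and right properness then reduce, after applying the functors $(-)^H$ for $H\in\F$ (which commute with pushouts along cofibrations and with all pullbacks), to left and right properness of the Quillen model structure on spaces, respectively simplicial sets. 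Finally, the monoidal and $G$-topological axioms are verified on the generators: $(G/H)_+\wedge(G/K)_+$ decomposes by the double coset formula into a wedge of orbits $(G/L)_+$ with each $L$ subconjugate to $H$, hence $L\in\F$ since $\F$ is closed under conjugation and subgroups, while the pushout--product of $(S^{n-1}\to D^n)$ with $(S^{m-1}\to D^m)$ is homeomorphic to $(S^{n+m-1}\to D^{n+m})$ and correspondingly with a $J_\F$-generator in the second variable; so the relevant pushout--products lie in $I_\F\text{-cof}$, respectively in $\mathcal{W}\cap I_\F\text{-cof}$, and the pullback-cotensor (``$G$-topological'') variant reduces in the same way to Proposition \ref{prop:genmod}. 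I expect the main obstacle to be these ``cell-level'' bookkeeping steps --- that fixed points interact correctly with equivariant cell attachments, and that the isotropy conditions are stable under retracts and the double coset decomposition --- rather than any conceptual difficulty; the rest is formal once Proposition \ref{prop:genmod} is in hand.
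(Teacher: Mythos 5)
The paper does not actually give a proof of Proposition~\ref{prop:projmod} (it is treated as well-known, with only the generating sets $I^{\F}_{G,proj}$, $J^{\F}_{G,proj}$ recorded afterward), so there is no in-text argument to compare against; your outline is the standard one and matches what the paper clearly has in mind. The recognition-theorem scaffold, the adjunction identification of $I_\F$-injective and $J_\F$-injective maps, the properness reduction via fixed points, and the monoidal and $G$-topological verifications via the double coset decomposition are all fine.

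There is, however, a gap in the topological case at the step identifying the $I_\F$-cofibrations with the projective $\F$-cofibrations as defined in the paper. Simplicially the argument is fine: every injection of $G$-simplicial sets is built by its skeletal filtration from cells whose isotropy is read off from the non-degenerate simplices. Topologically, the appeal to Illman's triangulation theorem is misplaced: Illman~\cite{Ill78} produces $G$-CW structures on smooth $G$-manifolds, not on arbitrary genuinely cofibrant $G$-spaces, and a genuine $G$-cofibration is in general only a \emph{retract} of a relative $I_G$-cell complex, where the retract destroys any control on isotropy. The correct topological argument is the retract argument applied to the $(I_\F\text{-cell},I_\F\text{-inj})$ factorization: given a genuine $G$-cofibration $i\colon A\to B$ with relative isotropy in $\F$, factor it as $A\xr{j}C\xr{p}B$ with $j$ a relative $I_\F$-cell complex and $p$ an $I_\F$-injective map. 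The key observation --- and this is where the isotropy hypothesis actually enters --- is that $p$ is then a \emph{genuine} acyclic $G$-fibration, not merely acyclic on $\F$-fixed points: for $H\notin\F$, attaching cells $(G/K\times D^n)_+$ with $K\in\F$ adds no $H$-fixed points (since $(G/K)^H=\emptyset$ by closure of $\F$ under subgroups and conjugation), so $C^H=A^H$; likewise $B^H=A^H$ because $B\setminus A$ has isotropy in $\F$; hence $p^H$ is a homeomorphism. Now $i$ lifts against $p$, the retract argument exhibits $i$ as a retract of $j\in I_\F\text{-cell}$, and you are done. Without this fixed-point computation the needed LLP of $i$ against $p$ is not available, as $p$ is a priori only controlled on $\F$-fixed points while $i$ is only known to lift against genuine acyclic $G$-fibrations.
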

Here and below, $G$-topological or $G$-simplicial is meant with respect to the genuine model structure of the previous section. The projective model structure is cofibrantly generated by the subsets $I^{\F}_{G,proj}$ of $I_G$ and $J_{G,proj}^{\F}$ of $J_G$ of the maps of the form $G/H_+\wedge f$ with $H$ an element of $\F$.

\begin{Prop}[Mixed model structure] \label{prop:mixedmod} Let $G$ be a finite group and $\F$ a family of subgroups. Then the classes of genuine $G$-cofibrations, $\F$-equivalences and mixed $\F$-fibrations assemble to a proper, cofibrantly generated, monoidal and $G$-topological ($G$-simplicial) model structure on based $G$-spaces ($G$-simplicial sets), called the \emph{mixed} $\F$-model structure.
\end{Prop}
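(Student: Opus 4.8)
The plan is to exhibit the mixed $\F$-model structure as a left Bousfield localization of the genuine model structure of Proposition~\ref{prop:genmod}. Let $\lambda\colon E\F_+\to S^0$ be the based $G$-map collapsing $E\F$ to the non-basepoint, and set $S\defeq\{\,G/H_+\wedge\lambda\ :\ H\leq G\,\}$. The genuine model structure is proper and cofibrantly generated by Proposition~\ref{prop:genmod}, and it is moreover cellular (its generating cofibrations have cofibrant and compact domains and codomains, and its cofibrations are effective monomorphisms, being levelwise injections resp.\ closed inclusions). So the left Bousfield localization $L_S$ exists by \cite[Thm.~4.1.1]{Hir03} (in the topological case one may, if preferred, transfer the whole discussion along $|\cdot|\dashv\mathcal{S}$); it is left proper and cofibrantly generated, its cofibrations are the genuine $G$-cofibrations, and its fibrations are tautologically the maps with the right lifting property against all maps that are simultaneously genuine $G$-cofibrations and $L_S$-equivalences. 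Thus everything reduces to identifying the $L_S$-equivalences with the $\F$-equivalences: the fibrations are then precisely the mixed $\F$-fibrations and $L_S$ is the asserted model structure.

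To identify the weak equivalences, I would first note that the $L_S$-fibrant objects are the genuine-fibrant $Z$ that are $S$-local, and that by the discussion following Proposition~\ref{prop:whitehead} these are exactly the $\F$-local genuine-fibrant $G$-spaces, i.e.\ those for which $Z\to\map(E\F_+,Z)$ is a genuine $G$-equivalence (testing $S$-locality against $G/H_+\wedge\lambda$ and its suspensions says precisely that $Z^H\to\map(E\F_+,Z)^H$ is a weak equivalence for every $H$). One direction is then easy: if $f$ is an $\F$-equivalence, after cofibrant replacement it is a map of $G$-CW complexes, so $E\F_+\wedge f$ is a genuine $G$-equivalence between $G$-CW complexes with isotropy in $\F$, hence a $G$-homotopy equivalence by the genuine Whitehead theorem; for $\F$-local $Z$ the comparison $\map_G(A,Z)\to\map_G(A\wedge E\F_+,Z)\cong\map_G(A,\map(E\F_+,Z))$ is a weak equivalence for cofibrant $A$, so the homotopy function complex $\map^h(f,Z)$ agrees up to weak equivalence with $\map^h(E\F_+\wedge f,Z)$ and $f$ is an $L_S$-equivalence. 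For the converse I would use that $X\mapsto\map(E\F_+,\widehat X)$, with $\widehat X$ a genuine fibrant replacement, is an explicit $\F$-localization: the target is $\F$-local because $E\F\times E\F$ is again universal for $\F$, and $X\to\map(E\F_+,\widehat X)$ is an $\F$-equivalence because on $H$-fixed points for $H\in\F$ it becomes, up to weak equivalence, $X^H\to\map_H(\res_H E\F_+,\res_H\widehat X)\simeq\widehat X^H$, using that $\res_H E\F$ is universal for the family of \emph{all} subgroups of $H$ (as $\F$ is closed under passage to subgroups and $H\in\F$) and hence $H$-contractible. Given an $L_S$-equivalence $f$, the naturality square comparing $f$ with $\map(E\F_+,\widehat f)$ has the two $\F$-localization maps as horizontal edges; these are $\F$-equivalences and hence $L_S$-equivalences, so $\map(E\F_+,\widehat f)$ is an $L_S$-equivalence between $L_S$-fibrant objects and therefore a genuine $G$-equivalence, and two-out-of-three for $\F$-equivalences forces $f$ to be an $\F$-equivalence.

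Next I would check properness and the monoidal and $G$-topological ($G$-simplicial) axioms. Left properness and cofibrant generation are already in hand. For right properness, observe that every mixed $\F$-fibration is a genuine $G$-fibration (it lifts against the genuine $G$-acyclic cofibrations, which are genuine $G$-cofibrations that are $\F$-equivalences); hence in a pullback of an $\F$-equivalence $w$ along a mixed $\F$-fibration $p$ I can pass to $H$-fixed points for $H\in\F$, where, since $(-)^H$ preserves pullbacks, it becomes the pullback of the weak equivalence $w^H$ along the Serre (Kan) fibration $p^H$, hence a weak equivalence by right properness of $\T_*$ ($\mathcal{S}_*$). For the monoidal and $G$-topological axioms, the cofibrations, and so the pushout-products of two of them, are those of the genuine monoidal model structure, so only acyclicity needs attention; here I would use that $E\F_+\wedge-$ preserves pushouts and smash products, giving $E\F_+\wedge(i\,\square\,j)\cong(E\F_+\wedge i)\,\square\,j$, that $E\F_+\wedge i$ is a genuine $G$-cofibration whenever $i$ is (as $E\F_+$ is cofibrant), and that $E\F_+\wedge i$ is a genuine acyclic cofibration whenever $i$ is moreover an $\F$-equivalence (by the very definition of $\F$-equivalence). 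Then, if $i$ or $j$ is a genuine $G$-cofibration that is an $\F$-equivalence, the pushout-product axiom of the genuine model structure makes $(E\F_+\wedge i)\,\square\,j$ a genuine acyclic cofibration, whence $E\F_+\wedge(i\,\square\,j)$ is a genuine $G$-equivalence and $i\,\square\,j$ is an $\F$-equivalence; the variant with the $G\T_*$-tensoring works identically, and since $S^0$ is cofibrant this finishes the monoidal and $G$-topological statements. The $G$-simplicial case runs the same way, or can be deduced from the topological one via $|\cdot|\dashv\mathcal{S}$.

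The hard part will be the identification of the $L_S$-equivalences with the $\F$-equivalences — in particular the converse direction, where one must break the apparent circularity by producing the explicit $\F$-localization $\map(E\F_+,\widehat{(-)})$; both this and the easy direction hinge on the two forms of the equivariant Whitehead theorem (Proposition~\ref{prop:whitehead}) together with the observation that $\res_H E\F$ is $H$-contractible for $H\in\F$. A secondary point is verifying that $E\F\times E\F$ is universal for $\F$ (a routine fixed-point computation) and that the general localization machinery applies to the topological, non-simplicial, genuine model structure, which is handled by its cellularity and left properness, or by transferring along geometric realization.
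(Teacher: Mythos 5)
Your proposal is correct, and it takes a genuinely different route from the paper. The paper obtains the mixed model structure by applying Bousfield's $Q$-localization theorem \cite[Theorem 9.3]{Bou01} (a Bousfield--Friedlander--type localization) directly to the coaugmented endofunctor $Q(X)=\map(E\F_+,X^f)$ with the natural transformation $\alpha_X\colon X\to\map(E\F_+,X^f)$; axioms (A1)--(A2) identify the $Q$-equivalences with the $\F$-equivalences, and (A3) delivers right properness. You instead invoke Hirschhorn-style left Bousfield localization of the genuine model structure at the set $S=\{G/H_+\wedge\lambda\}$, which requires cellularity and then leaves the nontrivial work of identifying $L_S$-equivalences with $\F$-equivalences; your key move — exhibiting $X\mapsto\map(E\F_+,\widehat X)$ as an explicit $\F$-localization to break the circularity — is essentially the same functor the paper feeds to Bousfield's theorem, so the two arguments share their technical core but package it differently. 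The paper's route avoids cellularity considerations and gets properness in one stroke from (A3); your route yields for free that the acyclic cofibrations are generated by a set (since left Bousfield localizations are cofibrantly generated in Hirschhorn's sense), at the cost of the cellularity check and a separate right-properness argument (which you carry out correctly via the observation that mixed $\F$-fibrations are genuine $G$-fibrations and $(-)^H$ preserves pullbacks). Your verification of the monoidal and $G$-topological pushout-product axioms via the identity $E\F_+\wedge(i\,\square\,j)\cong(E\F_+\wedge i)\,\square\,j$ is the standard argument and is sound; the paper leaves this step implicit.

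One small caveat worth flagging, though it does not affect correctness: for the topological case you appeal to cellularity of the genuine model structure on based $G$-spaces. This is true but is a nontrivial fact (it relies on working in compactly generated weak Hausdorff spaces, where relative cell inclusions are closed $T_1$-inclusions, hence effective monomorphisms, and on compactness of the cells relative to such inclusions); a reference such as \cite[Chapters 12--13]{Hir03} should be cited rather than asserted, or one should, as you suggest, transfer along $|\cdot|\dashv\mathcal{S}$. The paper's route sidesteps this entirely, which is part of why it cites \cite{Bou01} instead.
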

The mixed model structure can be obtained from the genuine model structure by applying Bousfield's localization theorem \cite[Thm. 9.3]{Bou01} with respect to the replacement $\alpha_X:X\to \map(E\F_+,X^f)$, where $(-)^f$ is a genuine fibrant replacement functor (e.g. the identity for based $G$-spaces and $\mathcal{S}(|.|)$ for based $G$-simplicial sets). By the characterization of fibrations in \cite{Bou01}, it follows that the fibrant objects are precisely the genuinely $G$-fibrant and $\F$-local $G$-spaces or $G$-simplicial sets, and that the mixed model structure is cofibrantly generated with the previous $I_G$ as generating cofibrations and
\[ J_{G,mix}^{\F}\defeq \{j \square \alpha\ |\ j\in J_G\}\cup J_G\]
as generating acyclic cofibrations. Here, $\alpha$ is the inclusion $E\F_+\to C(E\F)_+$ into the cone and $\square$ denotes the pushout-product.

\section{$G$-symmetric spectra}
In this section we introduce $G$-symmetric spectra, explain various point-set level constructions and define level model structures.
\subsection{Definitions}
\label{sec:defsymm}
\begin{Def}[Symmetric spectrum]
A \emph{symmetric spectrum} $X$ of spaces or simplicial sets consists of \begin{itemize}
\item a based $\Sigma_n$-space/$\Sigma_n$-simplicial set $X_n$ and
\item a based \emph{structure-map} $\sigma_n:X_n\wedge S^1 \to X_{n+1}$
\end{itemize}
for all $n\in \N$. This data has to satisfy the condition that for all $n,m\in\mathbb{N}$ the \emph{iterated structure map} \[
	\sigma_n^m: X_n\wedge S^m\cong (X_n\wedge S^1)\wedge S^{m-1}\xr{\sigma_n\wedge S^{m-1}} X_{n+1}\wedge S^{m-1}\to \hdots \xr{\sigma_{n+m-1}}  X_{n+m}
\]
is $\Sigma_n\times \Sigma_m$-equivariant. Here, the $\Sigma_n\times \Sigma_m$-action on $X_n\wedge S^m$ is the smash product of the $\Sigma_n$-action on $X_n$ and the $\Sigma_m$-action on $S^m$ as the permutation sphere for the natural $\Sigma_m$-set $\underline{m}\defeq \{1,\hdots, m\}$. The action on $X_{n+m}$ is induced from the inclusion $\Sigma_n\times \Sigma_m\to \Sigma_{n+m}$ which arises from disjoint union and the bijection $\underline{n}\sqcup \underline{m}\cong \underline{n+m}$ that sends each $k$ in $\underline{n}$ to itself and each $l$ in $\underline{m}$ to $n+l$.

A \emph{map of symmetric spectra} $f:X\to Y$ is a sequence of based $\Sigma_n$-equivariant maps $f_n:X_n\to Y_n$ such that $f_{n+1}\circ \sigma_n^{(X)}=\sigma_{n+1}^{(Y)}\circ (f_n\wedge S^1)$ for all $n\in \N$.
\end{Def}

We denote the category of symmetric spectra over spaces or simplicial sets by $Sp^\Sigma_{\T}$ or $Sp^\Sigma_{\mathcal{S}}$, respectively. In statements that make sense over both categories we sometimes simply write $Sp^{\Sigma}$ and mean they hold in either of them.

\begin{Def}[$G$-symmetric spectrum] Let $G$ be a finite group. A \emph{$G$-symmetric spectrum} of spaces or simplicial sets is a symmetric spectrum together with a $G$-action via automorphisms of symmetric spectra, or equivalently a functor $G\to Sp^\Sigma_\T$ or $G\to Sp^\Sigma_\mathcal{S}$. A \emph{morphism of $G$-symmetric spectra} is a morphism of symmetric spectra that commutes with the given $G$-actions. We denote the categories of $G$-symmetric spectra by $GSp^{\Sigma}_{\T}$ and $GSp^{\Sigma}_{\mathcal{S}}$.
\end{Def}

Equivalently, a $G$-symmetric spectrum is a symmetric spectrum $X$ together with a $G$-action on each level $X_n$ which commutes with the $\Sigma_n$-action and for which all structure maps $\sigma_n:X_n\wedge S^1\to X_{n+1}$ are $G$-equivariant for the trivial action on $S^1$.

\begin{Example}[Suspension spectra] Let $A$ be a based $G$-space ($G$-simplicial set). Then the \emph{suspension spectrum} of $A$, denoted $\Sigma^{\infty}A$, has  $(\Sigma^{\infty} A)_n\defeq  A\wedge S^n$ as its $n$-th level where the $(G\times \Sigma_n)$-action is the smash product of the $G$-action on $A$ and the permutation $\Sigma_n$-action on~$S^n$. The structure map is the associativity homeomorphism \[(A\wedge S^n)\wedge S^1\cong A\wedge (S^n\wedge S^1)\cong A\wedge S^{n+1}.\]
Taking $A$ to be $S^0$ with trivial $G$-action yields the sphere spectrum $\Sigma^{\infty}S^0$, which we also denote by $\mathbb{S}$.
\end{Example}
The categories $GSp^{\Sigma}_{\T}$ and $GSp^{\Sigma}_{\mathcal{S}}$ have all limits and colimits and these are computed levelwise, cf. \cite[Prop. 1.2.10]{HSS00}.
\subsection{Evaluation on finite $G$-sets and generalized structure maps}
\label{sec:evaluation}
For the homotopy theory of $G$-symmetric spectra it is essential that they can be evaluated on finite $G$-sets, which we now explain.

Let $M$ be a finite $G$-set of order $m$. We denote by $\Bij(\underline{m},M)$ the discrete space or simplicial set of bijections between the sets $\underline{m}=\{1,\hdots,m\}$ and $M$. It possesses a right $\Sigma_m$-action by precomposition and a left $\Sigma_M$-action by postcomposition.
\begin{Def}[Evaluation] The \emph{evaluation} of a $G$-symmetric spectrum $X$ on a finite $G$-set $M$ is defined by
\begin{align*}	X(M) & \defeq X_m \wedge_{\Sigma_m} \Bij(\underline{m},M)_+ \\ & \defeq X_m \wedge \Bij(\underline{m},M)_+/\{(\sigma x,f)\sim(x,f\sigma),\sigma \in \Sigma_m\}
 \end{align*}
with diagonal $G$-action $g[x\wedge f]\defeq [gx\wedge gf]$.
\end{Def}
Since $\Bij(\underline{m},M)$ is free and transitive as a right $\Sigma_m$-set, any choice of bijection from $\underline{m}$ to $M$ gives rise to a non-equivariant isomorphism from $X_m$ to $X(M)$. In particular, $X(\underline{m})$ is canonically isomorphic to $X_m$. However, the $G$-action on $X(M)$ is usually different from that on $X_m$. It is twisted by the $G$-set structure on $M$, which corresponds to a group homomorphism from $G$ into $\Sigma_M$. In particular, the $G$-action on $X(M)$ depends on the $\Sigma_m$-action on $X_m$.

Evaluation is functorial in isomorphisms of finite sets. Given any bijective map $\varphi:M\to N$ we obtain an induced isomorphism $X(\varphi):X(M) \to X(N)$ by sending $[x\wedge f]$ to $[x\wedge (\varphi\circ f)]$. This isomorphism is in general only $G$-equivariant if $\varphi$ is.

\begin{Example} Let $A$ be a based $G$-space or $G$-simplicial set, $M$ a finite $G$-set. Then the map $(\Sigma^\infty A)(M)\to A\wedge S^M$ that sends a class $[(a\wedge x)\wedge f]$ to $a\wedge f_*(x)$ is a $G$-isomorphism for the diagonal action on the target.
\end{Example}


\begin{Example} \label{exa:nontriveva} Let $G$ be the symmetric group $\Sigma_n$ and $M$ be the natural $\Sigma_n$-set $\underline{n}$, $X$ any symmetric spectrum with trivial $\Sigma_n$-action. Then $X(\underline{n})$ is canonically isomorphic to $X_n$ but now carries the $\Sigma_n$-action coming from the data of the underlying symmetric spectrum. In contrast, evaluating at $\{1,\hdots,n\}$ with \emph{trivial} $\Sigma_n$-action yields $X_n$ with trivial action.
\end{Example}

These evaluations are connected by so-called generalized structure maps. Let $G$ be a finite group, $M$ and $N$ two finite $G$-sets of orders $m$ and $n$, respectively, and $X$ a $G$-symmetric spectrum. We further choose a bijection $\psi: \underline{n} \xr{\cong}N$.

\begin{Def}[Generalized structure map] The map \begin{eqnarray*}
	 \sigma_M^N: X(M)\wedge S^N & \to & X(M\sqcup N) \\
	 ([x\wedge f]\wedge s) & \mapsto & [\sigma_m^n(x\wedge \psi ^{-1}_*(s))\wedge (f\sqcup \psi)]
\end{eqnarray*}
is called the \emph{generalized structure map} of $M$ and $N$.
\end{Def}
It is straightforward to check:
\begin{Lemma} \label{lem:genstructmap} The generalized structure map does not depend on the choice of bijection $\psi: \underline{n} \xr{\cong}N$. Furthermore, it is $G$-equivariant for the diagonal $G$-action on $X(M)\wedge S^N$.
\end{Lemma}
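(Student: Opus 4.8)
The plan is to reduce everything to two equivariance properties already built into the definitions: the iterated structure map $\sigma_m^n\colon X_m\wedge S^n\to X_{m+n}$ is $\Sigma_m\times\Sigma_n$-equivariant (with $\Sigma_n$ acting on $S^n=S^{\underline n}$ as a permutation sphere), and, for a $G$-symmetric spectrum, each $\sigma_m^n$ is $G$-equivariant for the trivial $G$-action on $S^n$. Along the way one also gets for free that the formula for $\sigma_M^N$ does not depend on the choice of representative $[x\wedge f]$, since replacing $(x,f)$ by $(\sigma x, f\sigma^{-1})$ for $\sigma\in\Sigma_m$ introduces a block permutation $\sigma^{-1}\sqcup\mathrm{id}_{\underline n}\in\Sigma_{m+n}$ precomposed with $f\sqcup\psi$, which can be absorbed by the $\Sigma_{m+n}$-coequalizer defining $X(M\sqcup N)$ and then cancelled against $\sigma x$ using the $\Sigma_m$-part of the equivariance of $\sigma_m^n$.

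For independence of $\psi$, I would take a second bijection $\psi'\colon\underline n\xrightarrow{\cong}N$, write $\psi'=\psi\circ\tau$ with $\tau\defeq\psi^{-1}\circ\psi'\in\Sigma_n$, and substitute. This replaces $\psi^{-1}_*(s)$ by $\tau^{-1}_*(\psi^{-1}_*(s))$ and $f\sqcup\psi$ by $(f\sqcup\psi)\circ(\mathrm{id}_{\underline m}\sqcup\tau)$, where $\mathrm{id}_{\underline m}\sqcup\tau$ is the image of $\tau$ under the block inclusion $\Sigma_n\hookrightarrow\Sigma_{m+n}$. Moving this precomposed permutation across the $\Sigma_{m+n}$-coequalizer in $X(M\sqcup N)$ and applying the $\Sigma_m\times\Sigma_n$-equivariance of $\sigma_m^n$ turns $\sigma_m^n(x\wedge\tau^{-1}_*\psi^{-1}_*(s))$ back into $\sigma_m^n(x\wedge\psi^{-1}_*(s))$, recovering the original formula.

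For $G$-equivariance, fix $\psi$ and evaluate both $\sigma_M^N(g\cdot(-))$ and $g\cdot\sigma_M^N(-)$ on $[x\wedge f]\wedge s$. On the source $g$ acts by $[gx\wedge g_M f]\wedge g_* s$, where $g_M$ and $g_*$ denote the actions of $g$ on $M$ and on $S^N$; on $X(M\sqcup N)$ the element $g$ acts on $\Bij(\underline{m+n},M\sqcup N)$ by postcomposition with $g_{M\sqcup N}=g_M\sqcup g_N$. Unwinding, the left-hand side gives the class of $\sigma_m^n(gx\wedge\psi^{-1}_*(g_* s))$ paired with $(g_M f)\sqcup\psi$, while the right-hand side gives $g\cdot\sigma_m^n(x\wedge\psi^{-1}_*(s))=\sigma_m^n(gx\wedge\psi^{-1}_*(s))$ — using $G$-equivariance of $\sigma_m^n$ for the trivial action on $S^n$ — paired with $(g_M f)\sqcup(g_N\circ\psi)$. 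Writing $g_N\circ\psi=\psi\circ\rho$ with $\rho\defeq\psi^{-1}\circ g_N\circ\psi\in\Sigma_n$ and running the same coequalizer-plus-equivariance manipulation as before converts the right-hand expression into $\sigma_m^n(gx\wedge\rho_*\psi^{-1}_*(s))$ paired with $(g_M f)\sqcup\psi$; since $\rho_*\psi^{-1}_*=\psi^{-1}_*(g_N)_*=\psi^{-1}_*\circ g_*$, this agrees with the left-hand side.

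The only point requiring care — and the part I would write out in full — is the bookkeeping of the two actions on $\Bij(\underline{m+n},M\sqcup N)$: the $\Sigma_{m+n}$ used to manipulate classes acts by precomposition while $G$ acts by postcomposition, so the identities $g_N\circ\psi=\psi\circ(\psi^{-1}g_N\psi)$ and $(f\sqcup\psi)\circ(\mathrm{id}\sqcup\tau)=f\sqcup(\psi\tau)$ must be applied in the correct order, with the conjugate $\rho=\psi^{-1}g_N\psi$ landing exactly where the $\Sigma_n$-part of the equivariance of $\sigma_m^n$ can absorb it. There is no homotopical content; everything is a formal consequence of the $\Sigma_m\times\Sigma_n$- and $G$-equivariance of the iterated structure maps recorded in the definition of a $G$-symmetric spectrum.
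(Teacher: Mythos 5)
Your argument is correct, and it follows exactly the route the paper has in mind (and signals with the remark that $G$-equivariance ``is a consequence of $\sigma_m^n$ being $\Sigma_m\times\Sigma_n$-equivariant''): both the independence of $\psi$ and the $G$-equivariance reduce to moving a precomposed block permutation across the $\Sigma_{m+n}$-coequalizer defining $X(M\sqcup N)$ and then cancelling it via the $\Sigma_m\times\Sigma_n$- and $G$-equivariance of $\sigma_m^n$. The bookkeeping of pre- versus post-composition actions on $\Bij(\underline{m+n},M\sqcup N)$ and the identity $\rho_*\psi^{-1}_*=\psi^{-1}_*(g_N)_*$ are all handled correctly.
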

Here, the second statement is a consequence of $\sigma_m^n$ being $(\Sigma_m\times \Sigma_n)$-equivariant.
\begin{Remark} \label{rem:genstructmap} In fact we will also need the generalized structure map in the situation where $G$ acts on $M\sqcup N$ but not necessarily in a way such that $M$ and $N$ are preserved by $G$. In this case $G$ does not act on $X(M)\wedge S^N$, but it acts on the wedge over all $X(\alpha(M))\wedge S^{(M\sqcup N)-\alpha(M)}$ for injections $M\hookrightarrow M\sqcup N$ (cf. Section \ref{sec:free}). The map from this wedge to $X(M\sqcup N)$ obtained via the various generalized structure maps is then $G$-equivariant.
\end{Remark}


\subsection{Functors on $G$-symmetric spectra}
\label{sec:funcgsym} The following functors on equivariant symmetric spectra are obtained by applying the respective space-level functors degreewise with suitably defined structure maps (see \cite[Def. 1.2.9]{HSS00} on how to prolong enriched space-level functors to functors on symmetric spectra):

\begin{itemize}
	\item The trivial action functor $Sp^{\Sigma}\to GSp^{\Sigma}$ with its left adjoint $G$-orbits $(-)/G$ and its right adjoint $G$-fixed points $(-)^G$.
	\item The smash product $A\wedge -$ for a based $G$-space/$G$-simplicial set $A$ and its right adjoint $\map(A,-)$. 
	\item The restriction functor $\res_H^G:GSp^{\Sigma}\to HSp^{\Sigma}$ for a subgroup $H\leq G$ with its left adjoint $G\ltimes_H -$ and its right adjoint $\map_H(G,-)$.
	\item The geometric realization functor $|.|:GSp^{\Sigma}_{\mathcal{S}}\to GSp^{\Sigma}_{\T}$ with right adjoint singular complex $\mathcal{S}:Sp^{\Sigma}_\T\to Sp^{\Sigma}_{\mathcal{S}}$.
\end{itemize}
An important functor which does not arise through a levelwise construction is the following:
\begin{Def}[Shift] \label{def:shift} The \emph{shift} along a finite $G$-set $M$ is defined by $(sh^M X)_n \defeq X(M\sqcup \underline{n})$ with $\Sigma_n$ acting through $\underline{n}$. The structure map \[ \sigma_n:(sh^M X)_n\wedge S^1=X(M\sqcup \underline{n})\wedge S^1\to X(M\sqcup \underline{n+1})=(sh^MX)_{n+1}\] is the generalized structure map $\sigma_{M\sqcup \underline{n}}^1$ of $X$.

For all $G$-sets $M$ there is a natural map $\alpha_X^M:S^M\wedge X\to sh^M X$ given in level $n$ by the composite\[
	S^M\wedge X_n\cong X_n\wedge S^M \xr{\sigma_n^M} X(\underline{n}\sqcup M)\xr{X(\tau_{\underline{n},M})} X(M\sqcup \underline{n})=(sh^MX)_n.
\]
Here, the notation $X(\tau_{\underline{n},M})$ stands for the $G$-map induced from the symmetry isomorphism $\tau_{\underline{n},M}$ of the $G$-sets $\underline{n}\sqcup M$ and $M\sqcup \underline{n}$.
\end{Def}

\begin{Def}[Enrichments] Given $G$-symmetric spectra $X$ and $Y$, we write $\map_{Sp^{\Sigma}}(X,Y)$ for the space/simplicial set and $\Hom(X,Y)$ for the symmetric spectrum of not necessarily equivariant morphisms, cf. \cite[Sec. 1.3 and Def. 2.2.9]{HSS00}. Both carry a $G$-action by conjugation, with fixed points $\map_{GSp^\Sigma}(X,Y)$ respectively $\Hom_G(X,Y)$.
\end{Def}

\begin{Def}[Smash product] The category of $G$-symmetric spectra inherits a symmetric monoidal product by forming the smash product of the underlying non-equivariant spectra (cf. \cite[Sec. 2]{HSS00}) and giving it the diagonal $G$-action. For a fixed $G$-symmetric spectrum $X$, the functor $-\wedge X$ is left adjoint to $\Hom_G(X,-)$.
\end{Def}
\subsection{Free $G$-symmetric spectra and $G$-symmetric spectra as enriched functors}
\label{sec:free}
For every finite $G$-set $M$ the evaluation functors $-(M):Sp^\Sigma_\T \to G\T_*$ and $-(M):Sp^\Sigma_\mathcal{S} \to G\mathcal{S}_*$ have left adjoints called \emph{free $G$-symmetric spectra}, which we now describe.

Given two finite sets $M$ and $N$ we set \[\mathbf{\Sigma}(M,N)=\bigvee \limits_{\alpha:M\hookrightarrow N}^{}S^{N-\alpha(M)}, \]
where $N-\alpha(M)$ is the complement of the image of $\alpha$ in $N$. This comes with two group actions: a right one by $\Sigma_M$ via precomposition on the indexing set of injective maps and identities on the spheres and a left one by $\Sigma_N$ where an element $\sigma\in \Sigma_N$ maps a sphere $S^{N-\alpha(M)}$ associated to $\alpha:M\hookrightarrow N$ to the one associated to $\sigma\alpha:M\hookrightarrow N$ via the induced action of $\sigma_{|N-\alpha(M)}:N-\alpha(M)\to N-(\sigma\alpha)(M)$. In the case where $M$ and $N$ are $G$-sets, we can pull back this $(\Sigma_N\times \Sigma_M^{op})$-action to get a conjugation action by $G$ on $\mathbf{\Sigma}(M,N)$.

For another finite set $K$ there is a composition map \begin{eqnarray*}
	\circ:\mathbf{\Sigma}(M,N)\wedge \mathbf{\Sigma}(N,K) \to \mathbf{\Sigma}(M,K) 
\end{eqnarray*}
defined via the formula $(\alpha;x)\circ (\beta;y)\defeq (\beta\alpha;\beta(x)\wedge y)$. These composition maps are associative, unital (with respect to the identity in $\mathbf{\Sigma}(M,M)\cong \Bij(M,M)_+$) and $G$-equivariant. We obtain categories called $G\mathbf{\Sigma}_\T$ and $G\mathbf{\Sigma}_\mathcal{S}$, enriched over based $G$-spaces and based $G$-simplicial sets, respectively. The objects are finite $G$-sets and the morphisms between $M$ and $N$ are $\mathbf{\Sigma}(M,N)$.

We write $\underline{G\T}_*$ to denote the category of $G$-spaces enriched over themselves via the mapping spaces $\map(-,-)$, and similarly $\underline{G\mathcal{S}}_*$ in the simplicial case. Every $G$-symmetric spectrum $X$ of spaces or simplicial sets gives rise to a $G\T_*$-enriched functor $G\mathbf{\Sigma}_\T\to \underline{G\T}_*$ (respectively $G\mathcal{S}_*$-enriched functor $G\mathbf{\Sigma}_\mathcal{S}\to \underline{G\mathcal{S}}_*$) in the following way: A finite $G$-set $M$ is sent to $X(M)$, the evaluation of $X$ on $M$. The map $\mathbf{\Sigma}(M,N)\to \map(X(M),X(N))$ is the adjoint of \begin{eqnarray*}  X(M)\wedge \mathbf{\Sigma}(M,N) & \longrightarrow & X(N) \\
									x\wedge (\alpha;y) & \mapsto & X(\alpha\sqcup i)(\sigma_M^{N-\alpha(M)}(x\wedge y)). 
\end{eqnarray*}
Here, $X(\alpha\sqcup i)$ is the isomorphism $X(M\sqcup (N-\alpha(M)))\cong X(N)$ induced from $\alpha\sqcup i:M\sqcup (N-\alpha(M))\to N$ in the way described in Section \ref{sec:evaluation} and we have made use of the generalized structure maps $\sigma_M^{N-\alpha(M)}$ in the sense of Remark \ref{rem:genstructmap}.

In fact, it is possible to show that this construction defines an equivalence from the category of $G$-symmetric spectra to the category of $G\T_*$-enriched functors $\Fun_{G\T_*}(G\Sigma_{\T},\underline{G\T}_*)$ (respectively $G\mathcal{S}_*$-enriched functors $\Fun_{G\mathcal{S}_*}(G\Sigma_{\mathcal{S}},\underline{G\mathcal{S}}_*)$). The inverse functor is the restriction to trivial $G$-sets $\{1,\hdots,n\}$ and the sphere associated to the inclusion $\{1,\hdots,n\}\hookrightarrow \{1,\hdots,n+1\}$.
\begin{Def}[Free $G$-symmetric spectra] \label{def:free} Let $A$ be a based $G$-space/$G$-simplicial set and $M$ a finite $G$-set. The \emph{free $G$-symmetric spectrum on $A$ in level $M$} is denoted by $\mathscr{F}_M A$ and defined via \[	(\mathscr{F}_M A)_n\defeq A\wedge \mathbf{\Sigma}(M,\underline{n}) \]
with diagonal $G$-action and $\Sigma_n$-action through $\mathbf{\Sigma}(M,\underline{n})$. The structure map is the composition \[
 A\wedge \mathbf{\Sigma}(M,\underline{n})\wedge S^1 \hookrightarrow A\wedge \mathbf{\Sigma}(M,\underline{n})\wedge \mathbf{\Sigma}(\underline{n},\underline{n+1})\xr{A\wedge \circ} A\wedge \mathbf{\Sigma}(M,\underline{n+1}).
\]
\end{Def}
When it is unclear with respect to which finite group the free spectrum is formed, we write~$\mathscr{F}^{(G)}_M A$.

\begin{Prop}[Adjunction between free spectra and evaluation] Let $M$ be a finite $G$-set, $A$ a based $G$-space (or based $G$-simplicial set) and $Y$ a $G$-symmetric spectrum. Then the natural map $\map_{Sp^\Sigma}(\mathscr{F}_MA,Y)\to \map(A,Y(M))$ that sends a (not necessarily equivariant) morphism of symmetric spectra $f:\mathscr{F}_MA\to Y$ to the composite 
\[ A\cong A\wedge \{id_{M}\}_+\hookrightarrow A\wedge \Sigma(M,M)\cong (\mathscr{F}_M A)(M) \xr{f(M)} Y(M) \]
is a $G$-isomorphism.
\end{Prop}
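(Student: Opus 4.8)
The plan is to produce an explicit inverse to the map in the statement and to recognize the whole assertion as an instance of the enriched Yoneda lemma. Recall from Section \ref{sec:free} that a $G$-symmetric spectrum $Y$ may equivalently be viewed as a $G\T_*$-enriched (resp.\ $G\mathcal{S}_*$-enriched) functor $G\mathbf{\Sigma}\to G\T_*$ (resp.\ $G\mathcal{S}_*$) with value $Y(N)$ at a finite $G$-set $N$, that under this identification $\mathscr{F}_MA$ is the functor $N\mapsto A\wedge\mathbf{\Sigma}(M,N)$, and that $\map_{Sp^{\Sigma}}(X,Y)$, with its conjugation $G$-action, is the $G$-object of enriched natural transformations. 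Under this dictionary $\mathscr{F}_MA=A\wedge\mathbf{\Sigma}(M,-)$ is the functor corepresented by the object $M$, tensored with $A$, so that the claimed $G$-isomorphism $\map_{Sp^{\Sigma}}(\mathscr{F}_MA,Y)\cong\map(A,Y(M))$ is exactly what the (weighted) enriched Yoneda lemma predicts in the closed symmetric monoidal base $G\T_*$ (resp.\ $G\mathcal{S}_*$). Since the equivalence of Section \ref{sec:free} is only sketched, I would nonetheless spell out the inverse: given $g\colon A\to Y(M)$, define $\widehat g\colon\mathscr{F}_MA\to Y$ levelwise by the composite $(\mathscr{F}_MA)(N)\cong A\wedge\mathbf{\Sigma}(M,N)\xrightarrow{g\wedge\mathrm{id}}Y(M)\wedge\mathbf{\Sigma}(M,N)\to Y(N)$, where the last arrow is the action of $\mathbf{\Sigma}(M,N)$ on the levels of $Y$ from Section \ref{sec:free}; concretely, on the wedge summand indexed by an injection $\alpha\colon M\hookrightarrow N$ it is $g\wedge\mathrm{id}$ followed by the generalized structure map $\sigma_M^{N-\alpha(M)}$ and the reindexing isomorphism $Y(\alpha\sqcup i)$.

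Next I would carry out the three verifications needed to conclude. First, that $\widehat g$ is genuinely a map of symmetric spectra: compatibility with the $\Sigma_n$-actions and with the structure maps of $\mathscr{F}_MA$ follows from the associativity and unitality of the composition maps of $G\mathbf{\Sigma}$, equivalently from the iterated-structure-map relations satisfied by the generalized structure maps of $Y$. Second, that $f\mapsto\phi(f)$ (the map in the statement, i.e.\ the restriction of $f(M)$ along $A\cong A\wedge\{\mathrm{id}_M\}_+\hookrightarrow(\mathscr{F}_MA)(M)$) and $g\mapsto\widehat g$ are mutually inverse: $\phi(\widehat g)=g$ because the action of $\mathbf{\Sigma}(M,M)$ on $Y$ sends $\mathrm{id}_M$ to the identity of $Y(M)$ (unitality), while $\widehat{\phi(f)}=f$ follows from naturality of $f$, since every point of $(\mathscr{F}_MA)(N)=A\wedge\mathbf{\Sigma}(M,N)$ is the image of $A\wedge\{\mathrm{id}_M\}_+$ under a morphism of $G\mathbf{\Sigma}$ --- this last step being precisely the corepresented-functor case of the enriched Yoneda lemma. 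Third, $G$-equivariance of $\phi$: the element $\mathrm{id}_M\in\mathbf{\Sigma}(M,M)\cong\Bij(M,M)_+$ is fixed by the conjugation $G$-action, so the inclusion $A\hookrightarrow(\mathscr{F}_MA)(M)$ is a $G$-map, and evaluation at $M$ is $G$-natural; hence $\phi$ intertwines the conjugation $G$-actions on source and target. Continuity (resp.\ simpliciality) of $\phi$ and of the inverse $g\mapsto\widehat g$ is immediate from the descriptions; alternatively, once $\phi$ is known to be $G$-equivariant and a bijection on underlying objects, it is automatically a $G$-isomorphism.

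The one genuinely laborious point --- and the main obstacle --- is the verification that $\widehat g$ respects the structure maps, i.e.\ that it defines a point of $\map_{Sp^{\Sigma}}(\mathscr{F}_MA,Y)$ at all. This is a diagram chase unwinding the definition of composition in $G\mathbf{\Sigma}$ against the iterated generalized structure maps $\sigma_M^{N}$ of $Y$ and the reindexing isomorphisms $Y(\varphi)$, of the same flavour as the bookkeeping needed to check that the formula in Section \ref{sec:free} defines an enriched functor. If one takes the equivalence between $G$-symmetric spectra and $G\T_*$-enriched functors on $G\mathbf{\Sigma}$ as established, all of this is absorbed into that equivalence and the proposition collapses to a one-line application of the enriched Yoneda lemma.
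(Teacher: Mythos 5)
Your proposal is correct and matches the paper's approach: the paper's entire proof is the one sentence ``Viewing $G$-symmetric spectra as an enriched functor category, this is a consequence of the enriched Yoneda Lemma,'' which is exactly the route you identify, and your explicit construction of the inverse $g\mapsto\widehat g$ with the three verifications is just the Yoneda argument unwound by hand.
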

Viewing $G$-symmetric spectra as an enriched functor category, this is a consequence of the enriched Yoneda Lemma.

As described in Section \ref{sec:evaluation}, the evaluation of a $G$-symmetric spectrum on a $G$-set $M$ carries a $\Sigma_M$-action in addition to the $G$-action and the two assemble to  a $(G\ltimes \Sigma_M)$-action (where the semi-direct product is formed with respect to the conjugation $G$-action on $\Sigma_M$). Thus, $-(M)$ can be thought of as a functor to $(G\ltimes \Sigma_M)\T_*$ or $(G\ltimes \Sigma_M)\mathcal{S}_*$. This functor also has a left adjoint, which in the case of $G$-orthogonal spectra first appeared in Stolz's thesis \cite[Def. 2.2.14]{Sto11}.

\begin{Def}[Semi-free $G$-symmetric spectra] Let $M$ be a finite $G$-set and $A$ a based $(G\ltimes \Sigma_M)$-space/$(G\ltimes \Sigma_M)$-simplicial set. We recall that the natural $\Sigma_M$-action turns $M$ into a $(G\ltimes \Sigma_M)$-set. Then the \emph{semi-free $G$-symmetric spectrum on $A$}, denoted $\mathscr{G}_M A$, is defined as the quotient of the $(G\ltimes \Sigma_M)$-spectrum $\mathscr{F}_M^{(G\ltimes \Sigma_M)} A$ by the action of the normal subgroup $\Sigma_M$.
\end{Def}
As advertised we have:
\begin{Prop}[Adjunction between semi-free spectra and evaluation] For $A$ a based $(G\ltimes \Sigma_M)$-space/$(G\ltimes \Sigma_M)$-simplicial set and $Y$ a $G$-symmetric spectrum there is a natural isomorphism $\map_{GSp^\Sigma}(\mathscr{G}_M A,Y) \cong \map_{G\ltimes \Sigma_M}(A,Y(M))$.
\end{Prop}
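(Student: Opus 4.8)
The plan is to factor the desired natural isomorphism through the free-spectrum/evaluation adjunction \emph{over the larger group} $\Gamma\defeq G\ltimes\Sigma_M$, exploiting that $\mathscr{G}_M A$ is by definition the quotient of $\mathscr{F}_M^{(G\ltimes\Sigma_M)}A$ by the \emph{normal} subgroup $\Sigma_M$, and that $\Gamma/\Sigma_M$ is $G$. In other words, $\mathscr{G}_M$ is the composite of the free functor $\mathscr{F}_M^{(G\ltimes\Sigma_M)}$ with the $\Sigma_M$-orbit functor, and we dualize both factors.

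First I would record the elementary quotient--inflation adjunction for symmetric spectra. Let $p\colon\Gamma\to G$ denote the projection, with kernel $\Sigma_M$, and let $p^{*}\colon GSp^{\Sigma}\to\Gamma Sp^{\Sigma}$ be the inflation functor, i.e.\ restriction of the $G$-action along $p$ (so that $\Sigma_M$ acts trivially). Then $(-)/\Sigma_M\colon\Gamma Sp^{\Sigma}\to GSp^{\Sigma}$ is left adjoint to $p^{*}$, even at the level of (equivariant) mapping spaces. Indeed, colimits of symmetric spectra are computed levelwise, so $(Z/\Sigma_M)_n=Z_n/\Sigma_M$ (a $\Sigma_n\times G$-object, since $\Sigma_M$ is normal in $\Gamma$), and the assertion reduces levelwise and naturally in the structure maps to the space-level quotient--inflation adjunction $\map_G(B/\Sigma_M,C)\cong\map_\Gamma(B,p^{*}C)$.

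Then I would run the chain of natural isomorphisms
\begin{align*}
	\map_{GSp^{\Sigma}}(\mathscr{G}_M A,Y)
	&=\map_{GSp^{\Sigma}}\bigl(\mathscr{F}_M^{(G\ltimes\Sigma_M)}A/\Sigma_M,\,Y\bigr)\\
	&\cong\map_{\Gamma Sp^{\Sigma}}\bigl(\mathscr{F}_M^{(G\ltimes\Sigma_M)}A,\,p^{*}Y\bigr)\\
	&\cong\map_{\Gamma}\bigl(A,\,(p^{*}Y)(M)\bigr),
\end{align*}
where the first line is the definition of $\mathscr{G}_M$, the second is the adjunction of the previous paragraph applied to $Z=\mathscr{F}_M^{(G\ltimes\Sigma_M)}A$, and the third is the free-spectrum/evaluation adjunction (the Proposition above), applied verbatim with $G$ replaced by $\Gamma$ to the $\Gamma$-set $M$, the $\Gamma$-space $A$ and the $\Gamma$-symmetric spectrum $p^{*}Y$, followed by passage to $\Gamma$-fixed points.

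The one point still to check --- and the only non-formal one --- is the identification $(p^{*}Y)(M)\cong Y(M)$ of $\Gamma$-objects, where on the left $M$ is the $\Gamma$-set of the definition while on the right $Y(M)$ carries the $G\ltimes\Sigma_M$-action described in Section~\ref{sec:evaluation} (the diagonal $G$-action together with functoriality of evaluation in bijections of $M$). Both sides have underlying object $Y_m\wedge_{\Sigma_m}\Bij(\underline{m},M)_+$, and for $\gamma=(g,s)\in\Gamma$ one checks directly that the diagonal $\Gamma$-action on $(p^{*}Y)(M)$ --- namely $g$ acting on $Y_m$ through $p$ and the permutation of $M$ determined by $(g,s)$ acting on the $\Bij$-factor --- agrees with first applying the bijection-functoriality action of $s$ and then the diagonal $G$-action of $g$ on $Y(M)$. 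I expect this bookkeeping (matching the two semidirect-product conventions and keeping straight which permutation of $M$ hits the $\Bij$-factor) to be the main obstacle; once it is settled, the chain above gives the claimed natural isomorphism $\map_{GSp^{\Sigma}}(\mathscr{G}_M A,Y)\cong\map_{G\ltimes\Sigma_M}(A,Y(M))$.
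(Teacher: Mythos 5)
Your argument is correct and is exactly the paper's own: it reads off the same two-isomorphism chain, interpreting the first as the quotient--inflation adjunction along $\Sigma_M\lhd G\ltimes\Sigma_M$ and the second as the free-spectrum/evaluation adjunction over $G\ltimes\Sigma_M$. You simply spell out what the paper leaves implicit (the inflation $p^*Y$ and the identification $(p^*Y)(M)\cong Y(M)$ of $(G\ltimes\Sigma_M)$-objects), which is a reasonable amount of extra care but not a different route.
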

\begin{proof}
The adjunction isomorphism is given by the composition
\[\map_{GSp^\Sigma}(\mathscr{G}_M A,Y) \cong \map_{(G\ltimes \Sigma_M)Sp^\Sigma}(\mathscr{F}^{(G\ltimes \Sigma_M)}_M A,Y) \cong \map_{G\ltimes \Sigma_M}(A,Y(M)). \qedhere \]
\end{proof}

There is a more concrete description of $\mathscr{G}_M(A)$ in levels of the form $M\sqcup N$, where it is given by $(G\ltimes \Sigma_{M\sqcup N})\ltimes_{G\ltimes (\Sigma_M\times \Sigma_N)} (A\wedge S^N)$. In addition, all evaluations at $G$-sets of order less than $|M|$ are given by a point. In Section \ref{sec:monoidal} we will use repeatedly that the smash product of a $G$-symmetric spectrum $X$ with a semi-free $G$-symmetric spectrum $\mathscr{G}_m(A)$ can be described explicitly in the following way:
\[ (\mathscr{G}_m(A)\wedge X)_k \cong \begin{cases} * & \text{ for } k<m \\
                                    \Sigma_{m+n}\ltimes_{\Sigma_m\times \Sigma_n} (A\wedge X_n) & \text{ for } k=m+n
                                   \end{cases}
\]
This can be checked via the universal properties of the smash product and semi-free spectra.

\subsection{Latching spaces and the skeleton filtration of a map}
\label{sec:latching}
Every morphism of $G$-symmetric spectra $f:X\to Y$ can be factored as a countable composite
\begin{equation} \label{eq:latching} X=F^{-1}[f]\xr{j_0[f]} F^0[f]\xr{j_1[f]} F^1[f] \xr{j_2[f]} \hdots \longrightarrow Y, \end{equation}
which builds $Y$ out of $X$ ``one dimension at a time". This factorization is important for stating the model structures in Section \ref{sec:glev} and for proving the lifting property axioms in them.

The spectra $F^n[f]$ together with maps $i_n[f]:F^n[f]\to Y$ are constructed inductively. We set $F^{-1}[f]=X$ and $i_{-1}[f]=f$. Now we fix an $n\geq 0$ and assume that $F^{n-1}[f]$ and $i_{n-1}[f]$ have already been defined. We denote the $n$-th level of $F^{n-1}[f]$ by $L_n[f]$ and call it the \emph{$n$-th latching space} of $f$. It comes with a $\Sigma_n$-map $\nu_n[f]$ to $Y_n$, the effect of $i_{n-1}[f]$ in level $n$. Then $F^n[f]$ is defined as the pushout
\[ \xymatrix{ \mathscr{G}_n(L_n[f]) \ar[rr]^{\varepsilon_{F^{n-1}[f]}} \ar[d]_{\mathscr{G}_n(\nu_n[f])} && F^{n-1}[f] \ar[d]^{j_n[f]}\\
							\mathscr{G}_n (Y_n) \ar[rr]_{\varepsilon_{F^n [f]}} && F^n [f].}
\]
The map $i_n[f]$ is induced by $i_{n-1}[f]$ and the counit $\mathscr{G}_n(Y_n)\to Y$. The map $i_n[f]:F^n[f]\to Y$ induces an isomorphism in degrees $\leq n$ and so the sequence \eqref{eq:latching} stabilizes in every fixed degree and converges to $Y$. We write $L_nY$ and $\nu_nY$ for the latching spaces and maps of the unique map $*\to Y$.

The connection to model structures comes from the following lemma:
\begin{Lemma} \label{lem:lift} Let $f:X\to Y$ and $g:W\to Z$ be morphisms of $G$-symmetric spectra such that for all $n\geq 0$ the $(G\times \Sigma_n)$-map $\nu_n[f]:L_n[f]\to Y_n$ has the left lifting property with respect to $g_n:W_n\to Z_n$. Then $f$ has the left lifting property with respect to $g$.
\end{Lemma}
\begin{proof} Using the adjunction between semi-free spectra and evaluation, the assumption is equivalent to $\mathscr{G}_n(\nu_n[f])$ having the left lifting property with respect to $g$ for all $n$. But by the properties of the skeleton filtration above, $f$ can be obtained from these by pushouts and countable composition, which preserve the left lifting property.
\end{proof}

\subsection{Level model structures}
\label{sec:glev}
In this section we construct various level model structures on the categories of $G$-symmetric spectra of spaces and simplicial sets.  Precisely, for every $G$-set universe $\U$ we describe a projective and a flat (or $\mathbb{S}$-) model structure, in positive and non-positive versions. As mentioned in the introduction, the former is a variant of the level model structure of $G$-orthogonal spectra of \cite{MM02} and the level model structure of \cite{Man04}, and the latter is a generalization of the non-equivariant flat model structure by Shipley \cite{Shi04} and a translation from the one on $G$-orthogonal spectra by Stolz \cite{Sto11}.

From now on we fix a $G$-set universe $\U$, i.e., a countably infinite $G$-set which is isomorphic to the disjoint union of two copies of itself. Up to non-canonical isomorphism, such a $G$-set universe is always of the form $\bigsqcup_{H\in \mathcal{C}}(\N\times G/H)$ for a non-empty set of subgroups $\mathcal{C}$ which becomes unique if one requires it to be closed under conjugation.

We recall from Example \ref{exa:freefam} that $\F^{G,\Sigma_n}$ is the family of subgroups of $G\times \Sigma_n$ that intersect $1\times \Sigma_n$ only in the neutral element. Elements of this family correspond to pairs consisting of a subgroup $H$ of $G$ and a group homomorphism $\alpha:H\to \Sigma_n$. We denote by $\F^{G,\Sigma_n}_{\U}$ the subfamily of $\F^{G,\Sigma_n}$ corresponding to those pairs $(H,\alpha)$ such that $\underline{n}$ equipped with the $H$-set structure through $\alpha$ allows an $H$-embedding into $\U$. The level model structures on $G$-symmetric spectra are constructed out of the projective and mixed model structures associated to these families for varying $n$ (cf. Section \ref{sec:fammod}). We start with the non-positive versions, the positive modifications are explained in the paragraph preceding Proposition \ref{prop:poslevmod}.
\begin{Def} A morphism $f:X\to Y$ of $G$-symmetric spectra is called a \begin{itemize}
	\item \emph{$G^{\U}$-level equivalence} if for all $n\in \mathbb{N}$ the $(G\times \Sigma_n)$-map $f_n:X_n\to Y_n$ is an $\mathcal{F}_{\U}^{G,\Sigma_n}$-equivalence.
	\item $G^{\U}$-\emph{projective (resp. $G^{\U}$-flat)  level fibration} if for all $n\in \mathbb{N}$ the $(G\times \Sigma_n)$ -map $f_n:X_n\to Y_n$ is a projective (resp. mixed) $\F_{\U}^{G,\Sigma_n}$-fibration.
	\item $G^{\U}$-\emph{projective (resp. $G$-flat) cofibration} if for all $n\in \mathbb{N}$ the latching map \begin{equation*}
		\nu_n[f]:L_n[f] \to Y_n	\end{equation*}
	is a projective $\F_{\U}^{G,\Sigma_n}$-cofibration (resp. genuine $G$-cofibration).
\end{itemize}
\end{Def}
The class of $G^{\U}$-level equivalences (and that of projective $G^{\U}$-level fibrations) has a different description that motivates its definition:
\begin{Lemma}\label{lem:gleveleq}
Let $f:X\to Y$ be a morphism of $G$-symmetric spectra. Then the following are equivalent: \begin{enumerate}
\item	The map $f$ is a $G^{\U}$-level equivalence (resp. $G^{\U}$-projective level fibration).
\item For all subgroups $H$ of $G$ and all finite $H$-subsets $M\subset \U$ the $H$-map $f(M):X(M)\to Y(M)$
induces a weak homotopy-equivalence (resp. Serre/Kan fibration) on $H$-fixed points.
\end{enumerate}
\end{Lemma}
By applying it to all subgroups of $H$ with the restricted action on $M$, one can also replace the second condition by requiring $f(M)$ to be a genuine $H$-equivalence (resp. genuine $H$-fibration) for all finite $H$-subsets $M\subset \U$.
\begin{proof} Given an $H$-subset $M\subset \U$ of order $m$, any choice of bijection $M\cong \underline{m}$ defines a homomorphism $\varphi:H\to \Sigma_m$ and a natural isomorphism $X(M)^H\cong X_m^{\Gamma(\varphi)}$, where $\Gamma(\varphi)\leq G\times \Sigma_m$ is the graph of $\varphi$ which by assumption lies in $\F_{\U}^{G,\Sigma_n}$. This shows that $(1)$ implies $(2)$. The other direction is similar.
\end{proof}
In another common definition of level equivalence of $G$-spectra, for example in the translation of the model structure in \cite{MM02} to the symmetric context, one only requires $f(M)$ to be an equivalence on $H$-fixed points for $H$-sets $M$ that are restrictions of $G$-sets. This is more natural if one views $G$-symmetric spectra as enriched functor categories as explained in Section \ref{sec:free}. The stronger notion we use has the advantage that for a subgroup $H$ of $G$ the restriction functor from $G$-symmetric spectra to $H$-symmetric spectra preserves fibrations and weak equivalences and thus becomes a right Quillen functor. For $G$-orthogonal spectra, this stronger version is also used in \cite[Def. 2.3.5]{Sto11}; and its positive one appears in \cite[Sec. B.4.]{HHR16}.

Furthermore, we want to note the following convenient properties of cofibrations of $G$-symmetric spectra of simplicial sets:
\begin{Remark} \label{rem:flatdetect} Let $f:X\to Y$ be a  morphism of $G$-symmetric spectra of simplicial sets. Then the following hold: 
\begin{enumerate}[(i)]
 \item $f$ is a $G$-flat cofibration if and only if it is a non-equivariant flat cofibration.
 \item If $\U$ is a complete $G$-set universe (i.e., if every finite $G$-set can be embedded into it), then $f$ is a $G^{\U}$-projective cofibration if and only if it is a non-equivariant projective cofibration.
\end{enumerate}
\end{Remark}
We now proceed to proving the model category axioms. By definition, the classes of $G^{\U}$-projective/$G^{\U}$-flat cofibrations, fibrations and $G^{\U}$-level equivalences are formed out of the respective classes for the projective/mixed $\F_{\U}^{G,\Sigma_n}$-model structures in each dimension. We use a more general proposition that is - together with its proof - close to \cite[Appendix C]{Sch18}.

We first define:

\begin{Def}[Consistency condition] \label{def:consistency} For every $n\in \mathbb{N}$ let $\mathscr{M}_n$ be a model structure on based $(G\times \Sigma_n)$-spaces/$(G\times \Sigma_n)$-simplicial sets. The collection of model structures $\{\mathscr{M}_n\}_{n\in \mathbb{N}}$ is said to satisfy the \emph{consistency condition} if for all $m,n\in \mathbb{N}$ and every acyclic cofibration $f:A\to B$ in $\mathscr{M}_m$ the pushout of the $(G\times \Sigma_n)$-map $f\wedge_{\Sigma_m} \mathbf{\Sigma}(\underline{m},\underline{n})$ along an arbitrary $(G\times \Sigma_n)$-map $g:A\wedge_{\Sigma_m} \mathbf{\Sigma}(\underline{m},\underline{n})\to X$ is a weak equivalence in $\mathscr{M}_n$.
\end{Def}
In particular, the consistency condition holds if the functors 
\[ -\wedge_{\Sigma_m} \mathbf{\Sigma}(\underline{m},\underline{n}): (G\times \Sigma_m)\T_*\to (G\times \Sigma_{n})\T_* \] (or their simplicial analogs) are left Quillen functors, in which case we say that the $\mathscr{M}_n$ satisfy the \emph{strong consistency condition}.

The following proposition says that given the consistency condition is satisfied, the model structures $\mathscr{M}_n$ assemble to a level model structure on $G$-symmetric spectra.
\begin{Prop}[Level model structures] \label{prop:levmod} Let $\{\mathscr{M}_n\}_{n\in \mathbb{N}}$ be a collection of model structures on based $(G\times \Sigma_n)$-spaces or based $(G\times \Sigma_n)$-simplicial sets, satisfying the consistency condition. Call a map $f:X\to Y$ of $G$-symmetric spectra a level weak equivalence (level fibration) if for all $n\in \mathbb{N}$ the map $f_n$ is an $\mathscr{M}_n$-weak-equivalence ($\mathscr{M}_n$-fibration) and a \emph{level cofibration} if for all $n\in \mathbb{N}$ the latching morphism $\nu_n[f]:L_n[f]\to Y_n$ is an $\mathscr{M}_n$-cofibration.

Then these classes define a model structure on $G$-symmetric spectra. If all model structures $\mathscr{M}_n$ are right proper, then so is this model structure. If all model structures $\mathscr{M}_n$ are left proper and satisfy the strong consistency condition, then this model structure is also left proper.
\end{Prop}
The purpose of the consistency condition lies in the following:
\begin{Lemma}  \label{lem:acy} Let $f:X\to Y$ be a morphism of $G$-symmetric spectra such that all latching maps $\nu_n[f]$ are acyclic cofibrations in the respective model structures $\mathscr{M}_n$. Then $f$ is an acyclic cofibration of $G$-symmetric spectra.
\end{Lemma}
\begin{proof} We only have to show that $f$ is a weak equivalence since it is a cofibration by definition. As we saw in Section \ref{sec:latching}, each component $f_n$ is a finite composition of pushouts of \begin{eqnarray*} & (\mathscr{G}_m(\nu_m[f]))_n=\nu_m[f]\wedge_{\Sigma_m} \mathbf{\Sigma}(\underline{m},\underline{n})\end{eqnarray*}
for $m\leq n$. These maps are by the assumption on $f$ and the consistency condition weak equivalences, hence so is $f_n$.
\end{proof}
\begin{proof}[Proof of Proposition \ref{prop:levmod}] 
$G$-symmetric spectra have all limits and colimits and these are constructed levelwise. It follows from the respective properties in the model structures $\mathscr{M}_n$ that weak equivalences possess the 2-out-of-3 property and that weak equivalences and fibrations are closed under retracts. The $n$-th latching map is a functor on the arrow category and thus turns retract diagrams into retract diagrams and so the retract of a cofibration is also one.

Using that the definition of the $n$-th latching space of a morphism only depends on the first $n$ levels of the source and the first $n-1$ levels of the target, we can inductively apply factorizations in the model categories $\mathscr{M}_n$ to obtain spectrum level factorizations of the form
\begin{enumerate}[(i)]
\item $f=p\circ i$, where $i$ is a cofibration and $p$ is an acylic fibration.
\item $f=p'\circ i'$, where $i'$ has the property that all latching maps $\nu_n[i']$ are acyclic cofibrations in the respective $\mathscr{M}_n$ and $p'$ is a fibration. In particular, by Lemma \ref{lem:acy}, such an $i'$ is an acyclic cofibration.
\end{enumerate}

The lifting properties then follow from Lemma \ref{lem:lift}, once we see that every acyclic cofibration $f$ has the property that each latching map $\nu_n[f]$ is an acyclic cofibration in $\mathscr{M}_n$. For this we factor $f=p'\circ i'$ as in item (ii) above. Since $i'$ and $f$ are weak equivalences, so is $p'$. Since $f$ is in particular a cofibration, it follows that $f$ is a retract of $i'$ (cf. \cite[Prop. 3.2.4]{HSS00}), which proves the claim.

Finally, the statement about right properness follows directly from the definition, while for left properness one first has to note that the strong consistency condition implies that all components~$f_n$ of a level cofibration $f$ are cofibrations in the respective model structure $\mathscr{M}_n$.
\end{proof}


Hence we need to check that the projective and mixed model structures on $(G\times \Sigma_n)\T_*$ and $(G\times \Sigma_n)\mathcal{S}_*$ satisfy the consistency condition. Since the two share the same weak equivalences, it suffices to prove:
\begin{Prop} The mixed model structures with respect to the families $\mathcal{F}_{\U}^{G,\Sigma_n}$ satisfy the strong consistency condition. \end{Prop}
\begin{proof} For $m>n$ the required condition holds trivially since in that case $\mathbf{\Sigma}(\underline{m},\underline{n})=*$. Otherwise, we write $n=m+k$ with $k\geq 0$. The map \begin{eqnarray*}
	\Sigma_{m+k} \ltimes_{\Sigma_k} S^k & \longrightarrow & \mathbf{\Sigma}(m,m+k) \\
	(\sigma,x) & \mapsto & (\sigma_{|m};(\sigma_{|k})_*(x))
\end{eqnarray*}
is a $(\Sigma_{m+k}\times \Sigma_m^{op})$-equivariant isomorphism. Thus, the functor $-\wedge_{\Sigma_m}\mathbf{\Sigma}(m,m+k)$ is naturally isomorphic to $\Sigma_{m+k} \ltimes_{\Sigma_m\times \Sigma_k} (-\wedge S^k)$. Hence, it sends genuine $(G\times \Sigma_m)$-cofibrations to genuine $(G\times \Sigma_{m+k})$-cofibrations.

For acyclic cofibrations, it is enough to show the case of topological spaces. It suffices to check the property on the set of generators $J_{G,mix}^{\mathcal{F}_{\U}^{G,\Sigma_m}}$ introduced in Section \ref{sec:fammod}. Every map in this set is a $\mathcal{F}_{\U}^{G,\Sigma_m}$-equivalence between $(G\times\Sigma_m)$-CW complexes. Hence the statement follows by putting $A=S^k$ in Lemma \ref{lem:tech1}.

Finally, since a map of $(G\times \Sigma_m)$-simplicial sets is an $\F _{\U}^{G,\Sigma_m}$-equivalence and genuine cofibration if and only if its geometric realization is, the simplicial case follows from the topological one.
\end{proof}
Hence, we obtain:
\begin{Cor} [Flat level model structure on $G$-symmetric spectra] The classes of $G$-flat cofibrations, $G^{\U}$-flat level fibrations and $G^{\U}$-level equivalences define a proper model structure on the category of $G$-symmetric spectra of spaces or simplicial sets.
\end{Cor}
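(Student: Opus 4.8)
The plan is to apply the general level-model-structure machinery of Proposition~\ref{prop:levmod} to the collection of mixed model structures indexed by $n$. Concretely, for each $n\in\N$ I take $\mathscr{M}_n$ to be the mixed $\F_{\U}^{G,\Sigma_n}$-model structure on based $(G\times\Sigma_n)$-spaces (respectively $(G\times\Sigma_n)$-simplicial sets), which exists and is proper by Proposition~\ref{prop:mixedmod}. The first step is then a purely definitional unwinding: the weak equivalences of $\mathscr{M}_n$ are exactly the $\F_{\U}^{G,\Sigma_n}$-equivalences, so the level weak equivalences of Proposition~\ref{prop:levmod} are precisely the $G^{\U}$-level equivalences; the fibrations of $\mathscr{M}_n$ are the mixed $\F_{\U}^{G,\Sigma_n}$-fibrations, so the level fibrations are precisely the $G^{\U}$-flat level fibrations; and the cofibrations of $\mathscr{M}_n$ are the genuine $(G\times\Sigma_n)$-cofibrations (Proposition~\ref{prop:mixedmod}), so a map $f$ is a level cofibration in the sense of Proposition~\ref{prop:levmod} exactly when each pushout-product map $\nu_n[f]$ is a genuine $(G\times\Sigma_n)$-cofibration, i.e.\ exactly when $f$ is a $G$-flat cofibration.

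The second and only substantive point is to check that the collection $\{\mathscr{M}_n\}_{n\in\N}$ satisfies the consistency condition of Definition~\ref{def:consistency}, which is the hypothesis of Proposition~\ref{prop:levmod}. But this is precisely the content of the proposition established immediately above: the mixed model structures with respect to the families $\F_{\U}^{G,\Sigma_n}$ satisfy the (strong) consistency condition. Feeding this into Proposition~\ref{prop:levmod} produces a model structure on $G$-symmetric spectra whose weak equivalences, fibrations and cofibrations are exactly the three classes named in the corollary, both over spaces and, via the reduction to the topological case noted there, over simplicial sets.

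It remains to record properness. The mixed model structures $\mathscr{M}_n$ are all proper by Proposition~\ref{prop:mixedmod}, and we have just used that they satisfy the \emph{strong} consistency condition; hence both the ``all $\mathscr{M}_n$ right proper'' clause and the ``all $\mathscr{M}_n$ left proper and strongly consistent'' clause of Proposition~\ref{prop:levmod} apply, giving right and left properness of the resulting level model structure. I do not anticipate any genuine obstacle: all the real work --- the existence of the mixed model structures and the verification of the strong consistency condition via the identification $-\wedge_{\Sigma_m}\mathbf{\Sigma}(\underline{m},\underline{n})\cong \Sigma_{m+k}\ltimes_{\Sigma_m\times\Sigma_k}(-\wedge S^k)$ together with Lemma~\ref{lem:tech1} --- has already been carried out, so the corollary is a matter of assembling these inputs. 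If anything requires care it is only the bookkeeping that the cofibration class produced by Proposition~\ref{prop:levmod} for this choice of $\mathscr{M}_n$ literally coincides with the $G$-flat cofibrations, which is immediate from the description of mixed cofibrations as genuine cofibrations.
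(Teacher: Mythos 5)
Your proposal is correct and is essentially the paper's own argument: the corollary is obtained by feeding the mixed $\F_{\U}^{G,\Sigma_n}$-model structures into Proposition~\ref{prop:levmod}, with the consistency condition supplied by the proposition proved immediately above, and properness coming from the clauses about right properness and strong-consistency-plus-left-properness in Proposition~\ref{prop:levmod}. You have simply written out the bookkeeping that the paper compresses into ``Hence, we obtain:''.
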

As well as:
\begin{Cor} [Projective level model structure on $G$-symmetric spectra] The classes of $G^{\U}$-projective cofibrations, $G^{\U}$-projective level fibrations and $G^{\U}$-level equivalences define a proper model structure on the category of $G$-symmetric spectra of spaces or simplicial sets.
\end{Cor}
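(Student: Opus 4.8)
The plan is to apply the general Level model structure proposition (Proposition~\ref{prop:levmod}) with $\mathscr{M}_n$ taken to be the projective $\F_{\U}^{G,\Sigma_n}$-model structure on based $(G\times\Sigma_n)$-spaces, respectively $(G\times\Sigma_n)$-simplicial sets, from Proposition~\ref{prop:projmod}; this runs exactly parallel to how the $G^{\U}$-flat level model structure of the previous corollary is obtained from the mixed structures. Unwinding the definitions, the level cofibrations, level fibrations and level weak equivalences that Proposition~\ref{prop:levmod} produces for this choice of $\{\mathscr{M}_n\}_{n\in\N}$ are precisely the $G^{\U}$-projective cofibrations, $G^{\U}$-projective level fibrations and $G^{\U}$-level equivalences. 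So the existence of the model structure reduces to checking that $\{\mathscr{M}_n\}_{n\in\N}$ satisfies the consistency condition of Definition~\ref{def:consistency}.

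I would deduce the consistency condition from the corresponding statement for the mixed model structures treated just above, rather than reprove it. A projective $\F_{\U}^{G,\Sigma_m}$-cofibration is by definition a genuine $(G\times\Sigma_m)$-cofibration, so every acyclic cofibration of the projective $\F_{\U}^{G,\Sigma_m}$-model structure is a genuine $(G\times\Sigma_m)$-cofibration which is an $\F_{\U}^{G,\Sigma_m}$-equivalence, i.e.\ an acyclic cofibration of the mixed $\F_{\U}^{G,\Sigma_m}$-model structure. Since the pushout of $f\wedge_{\Sigma_m}\mathbf{\Sigma}(\underline m,\underline n)$ along a fixed map is a purely categorical construction and the projective and mixed structures on $(G\times\Sigma_n)$-spaces have the same weak equivalences, the consistency condition for the mixed structures---itself a consequence of the strong consistency condition established above---immediately gives the consistency condition for the projective ones. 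By Proposition~\ref{prop:levmod} we obtain the model structure, and right properness follows from the same proposition because each projective $\F_{\U}^{G,\Sigma_n}$-model structure is right proper by Proposition~\ref{prop:projmod}.

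The step I expect to be delicate is left properness. Here one cannot invoke the strong-consistency clause of Proposition~\ref{prop:levmod}: the functors $-\wedge_{\Sigma_m}\mathbf{\Sigma}(\underline m,\underline n)$, which for $n=m+k$ are naturally $\Sigma_{m+k}\ltimes_{\Sigma_m\times\Sigma_k}(-\wedge S^k)$, fail to be left Quillen for the projective structures, since the permutation sphere $S^k$ carries cells with nontrivial (Young-subgroup) isotropy whose induction pushes the relevant graph subgroups out of the families $\F_{\U}^{G,\Sigma_{m+k}}$. The way around this is to observe that every $G^{\U}$-projective cofibration is levelwise a genuine $(G\times\Sigma_n)$-cofibration, hence in particular a $G$-flat cofibration; therefore a pushout of a $G^{\U}$-level equivalence along a $G^{\U}$-projective cofibration is a pushout of the same map along a $G$-flat cofibration, which is a $G^{\U}$-level equivalence by left properness of the $G^{\U}$-flat level model structure of the preceding corollary. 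Hence the projective level model structure is proper, completing the argument.
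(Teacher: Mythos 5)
Your proposal is correct and follows the paper's own argument essentially step by step: you invoke Proposition~\ref{prop:levmod} with the projective $\F_{\U}^{G,\Sigma_n}$-model structures, deduce the consistency condition from the mixed case because the two share weak equivalences, get right properness from the proposition, and bootstrap left properness from the flat level model structure using that every $G^{\U}$-projective cofibration is a $G$-flat cofibration. The only addition is your (correct) aside explaining \emph{why} strong consistency fails in the projective case, which the paper omits; otherwise the two arguments coincide.
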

In the projective case the left properness does not directly follow from Proposition \ref{prop:levmod}, but it is a consequence of the left properness of the flat level model structure, since every $G^{\U}$-projective cofibration is also a $G$-flat cofibration and the weak equivalences are the same.

It follows from the adjunction between (semi-)free spectra and evaluation that the flat and the projective $\U$-level model structures on $G$-symmetric spectra are cofibrantly generated with
\begin{equation} \label{eq:gencofflat} I_{\U,fl}^{lev}\defeq \{ \mathscr{G}_n(i)\ |\ n\in \mathbb{N},\ i\in I_{G\times \Sigma_n}\}\end{equation} and \begin{equation} \label{eq:genacyflat} J_{\U,fl}^{lev}\defeq \{ \mathscr{G}_n(j)\ |\ n\in \mathbb{N},\ j\in J_{G\times \Sigma_n,mix}^{\mathcal{F}_{\U}^{G,\Sigma_n}}\}, \end{equation}
respectively
\begin{equation} \label{eq:gencofproj} I_{\U,proj}^{lev}\defeq \{ G\ltimes_H(\mathscr{F}^{(H)}_M(i))\ |\ H\leq G,\ M\subset_H \U,\ i\in I_H\} \end{equation} and \begin{equation} \label{eq:genacyproj} J_{\U,proj}^{lev}\defeq \{ G\ltimes_H (\mathscr{F}^{(H)}_M(j))\ |\ H\leq G,\ M\subset_H \U,\ j\in J_H\} \end{equation} as generating cofibrations and acyclic cofibrations.

\begin{Remark} \label{rem:freeflat} By adjunction, all free $G$-symmetric spectra $\mathscr{F}_M A$ are $G$-flat, provided that $A$ is cofibrant in the topological case. If $M$ is contained in $\U$ they are also $G^{\U}$-projective. Similarly, the semi-free $G$-spectra $\mathscr{G}_M A$ for cofibrant $(G\ltimes \Sigma_M)$-spaces $A$ are $G$-flat. Whether they are $G^{\U}$-projective depends on the isotropy of $A$ and $\U$. 
\end{Remark}

As in the non-equivariant case, in order to obtain model structures on the category of commutative $G$-symmetric ring spectra, we need a positive variant of these level model structures. It is obtained via Proposition \ref{prop:levmod} by replacing the genuine model structure $\mathscr{M}_0$ on based $G$-spaces/$G$-simplicial sets in level $0$ by the one where the cofibrations are the isomorphisms and the weak equivalences and fibrations are arbitrary maps.
It is immediate that these also satisfy the consistency condition and so we get:
\begin{Prop}[Positive level model structures] \label{prop:poslevmod} The classes of positive $G$-flat (positive $G^{\U}$-projective) cofibrations, positive $G^{\U}$-flat (resp. positive $G^{\U}$-projective) fibrations and positive $G^{\U}$-level equivalences form a proper cofibrantly generated model structure on the category of $G$-symmetric spectra.
\end{Prop}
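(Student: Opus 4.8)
The plan is to deduce the statement from Proposition \ref{prop:levmod} by exhibiting a collection of model structures $\{\mathscr{M}_n\}_{n\in\mathbb{N}}$ on based $(G\times\Sigma_n)$-spaces (resp. $(G\times\Sigma_n)$-simplicial sets) satisfying the consistency condition, whose induced level model structure is the one claimed. For $n\geq 1$ I take $\mathscr{M}_n$ to be exactly the projective (resp. mixed) $\F_{\U}^{G,\Sigma_n}$-model structure already used in the non-positive case, while for $n=0$ I take $\mathscr{M}_0$ to be the model structure on based $G$-spaces ($G$-simplicial sets) in which the cofibrations are the isomorphisms and every map is both a weak equivalence and a fibration. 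That $\mathscr{M}_0$ is a model structure is routine: $2$-out-of-$3$ and closure under retracts are trivial because all maps are weak equivalences and fibrations; the lifting axioms hold because the (acyclic) cofibrations are isomorphisms and thus lift against everything; and any $f\colon A\to B$ factors as $A\xrightarrow{\mathrm{id}_A}A\xrightarrow{f}B$, where $\mathrm{id}_A$ is an isomorphism (hence an acyclic cofibration) and $f$ is an acyclic fibration, resp.\ a fibration. With these choices, the level cofibrations, level fibrations and level weak equivalences produced by Proposition \ref{prop:levmod} are, by construction, precisely the positive $G$-flat (positive $G^{\U}$-projective) cofibrations, positive $G^{\U}$-flat (positive $G^{\U}$-projective) level fibrations and positive $G^{\U}$-level equivalences of the statement.

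The next step is to verify the consistency condition of Definition \ref{def:consistency} for this modified collection, which I would do by distinguishing the value of $n$. If $n=0$, every map is a weak equivalence in $\mathscr{M}_0$, so the required pushout statement is automatic. If $n\geq 1$ and $m\geq 1$, both $\mathscr{M}_m$ and $\mathscr{M}_n$ coincide with their non-positive counterparts, so this is precisely the consistency statement established in the construction of the non-positive level model structures (the mixed $\F_{\U}^{G,\Sigma_n}$-model structures satisfy the strong consistency condition, and the projective ones inherit it since they have the same weak equivalences). If $n\geq 1$ and $m=0$, then an acyclic cofibration $f\colon A\to B$ in $\mathscr{M}_0$ is an isomorphism, hence so is $f\wedge_{\Sigma_0}\mathbf{\Sigma}(\underline{0},\underline{n})$ and, consequently, so is any pushout of it; in particular it is a weak equivalence in $\mathscr{M}_n$. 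Thus the consistency condition holds and Proposition \ref{prop:levmod} produces the positive level model structure.

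For properness I would argue as in the non-positive case. Right properness follows directly from Proposition \ref{prop:levmod}, as every $\mathscr{M}_n$ is right proper (the $\F_{\U}^{G,\Sigma_n}$-model structures by Propositions \ref{prop:projmod} and \ref{prop:mixedmod}, and $\mathscr{M}_0$ trivially). For left properness, the same case analysis shows that the modified collection even satisfies the \emph{strong} consistency condition: for $m=0$ the functor $-\wedge_{\Sigma_0}\mathbf{\Sigma}(\underline{0},\underline{n})$ sends isomorphisms to isomorphisms and hence is left Quillen into $\mathscr{M}_n$, and for $m\geq 1$ nothing has changed; moreover $\mathscr{M}_0$ is left proper, since a pushout along an isomorphism is isomorphic to the original map and every map is a weak equivalence. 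Hence Proposition \ref{prop:levmod} gives left properness of the positive $G^{\U}$-flat level model structure, and the positive $G^{\U}$-projective one is left proper as well, because positive $G^{\U}$-projective cofibrations are positive $G$-flat cofibrations and the two structures share the same weak equivalences. Cofibrant generation follows exactly as in the non-positive case from the (semi-)free/evaluation adjunctions, using the generating sets of the $\mathscr{M}_n$ for $n\geq 1$ and no generators in level $0$.

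Since all the substantive input --- the consistency condition for the families $\F_{\U}^{G,\Sigma_n}$ and the general machinery of Proposition \ref{prop:levmod} --- is already available, I do not anticipate a serious obstacle; the only place where the positive modification could have interfered is the $m=0$ case of the (strong) consistency condition, i.e.\ the behaviour of $-\wedge_{\Sigma_0}\mathbf{\Sigma}(\underline{0},\underline{n})=-\wedge S^{\underline{n}}$ out of the indiscrete level-$0$ model structure, and there it is harmless.
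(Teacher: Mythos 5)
Your proof is correct and follows the same route the paper takes: replace $\mathscr{M}_0$ by the model structure in which cofibrations are isomorphisms and all maps are weak equivalences and fibrations, then invoke Proposition~\ref{prop:levmod}. The paper simply asserts that the consistency condition is ``immediate'' and omits the case analysis, the properness discussion, and the cofibrant-generation argument, all of which you have correctly supplied.
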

One obtains generating (acyclic) cofibrations by leaving out the maps of the form $\mathscr{G}_{\emptyset}(-)$ and $\mathscr{F}_{\emptyset}(-)$ in the generating (acyclic) cofibrations of their non-positive analogs described above.

We further have the following, where $\square$ denotes the pushout product with respect to the smash product of $G$-symmetric spectra:
\begin{Prop}\label{prop:levmonoidal} Let $i:A\to B$ and $j:C\to D$ be two maps of $G$-symmetric spectra. Then the following hold:
\begin{itemize}
 \item If $i$ and $j$ are $G$-flat ($G^{\U}$-projective) cofibrations, then so is the pushout product $i\square j$. If either $i$ or $j$ is positive, then the pushout product is also positive.
 \item If $i$ is a $G$-flat cofibration and $G^{\U}$-level equivalence and $j$ is a $G$-flat cofibration, then $i\square j$ is also a $G^{\U}$-level equivalence.
 \item If $i$ is a positive $G$-flat cofibration and positive $G^{\U}$-level equivalence and $j$ a $G$-flat cofibration, then $i\square j$ is a $G^{\U}$-level equivalence.
\end{itemize} 
\end{Prop}
\begin{proof} It suffices to show each of these on generating (acyclic) cofibrations. The first one follows from the natural isomorphisms $\mathscr{F}_M(f)\square \mathscr{F}_N(g)\cong \mathscr{F}_{M\sqcup N} (f\square g)$ and $\mathscr{G}_n(f)\square \mathscr{G}_m(g)\cong \mathscr{G}_{m+n}(\Sigma_{n+m}\ltimes_{\Sigma_n\times \Sigma_m} (f\square g))$ and the fact that the respective model structures on equivariant spaces are closed symmetric monoidal. So do the other two in the projective case. In the flat case we need Lemma \ref{lem:tech1} in addition, which guarantees that $\Sigma_{n+m}\ltimes_{\Sigma_n\times \Sigma_m} (f\wedge X)$ is an $\F_{\U}^{G,\Sigma_{m+n}}$-equivalence if $X$ is a cofibrant $(G\times \Sigma_m)$-space. This implies that the pushout product $\Sigma_{n+m}\ltimes_{\Sigma_n\times \Sigma_m} (f\square g)$ is also one by 2-out-of-3.
\end{proof}
\begin{Cor} The non-positive $G^{\U}$-flat and $G^{\U}$-projective level model structures are monoidal with respect to the smash product of $G$-symmetric spectra. 
\end{Cor}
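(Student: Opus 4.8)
The plan is to verify the two axioms of a monoidal model category in the sense of \cite{Hov99}: the pushout-product axiom and the unit axiom, in each case for the non-positive $G^{\U}$-flat and the non-positive $G^{\U}$-projective level model structure.

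For the pushout-product axiom, essentially everything is already contained in Proposition \ref{prop:levmonoidal}. Its first bullet gives that $i\square j$ is a $G$-flat (resp.\ $G^{\U}$-projective) cofibration whenever $i$ and $j$ are, and its second bullet gives that $i\square j$ is moreover a $G^{\U}$-level equivalence as soon as one of the two factors is a $G$-flat cofibration which is also a $G^{\U}$-level equivalence. Here one uses that the smash product of $G$-symmetric spectra is symmetric, so that $i\square j\cong j\square i$ and it is irrelevant which of the two factors carries the weak equivalence. For the $G^{\U}$-projective level structure one additionally invokes the fact (noted right after the projective level model structure corollary) that every $G^{\U}$-projective cofibration is in particular a $G$-flat cofibration, together with the fact that the two level structures share the same weak equivalences; thus a $G^{\U}$-projective acyclic cofibration is in particular a $G$-flat cofibration and a $G^{\U}$-level equivalence, and the second bullet of Proposition \ref{prop:levmonoidal} applies to it.

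For the unit axiom, one must check that $Q\mathbb{S}\wedge X\to \mathbb{S}\wedge X\cong X$ is a $G^{\U}$-level equivalence for every cofibrant $X$ and some cofibrant replacement $Q\mathbb{S}$ of the monoidal unit. This is automatic here because the unit is already cofibrant: $\mathbb{S}=\Sigma^{\infty}S^0=\mathscr{F}_{\emptyset}S^0$, and by Remark \ref{rem:freeflat} the free $G$-symmetric spectrum $\mathscr{F}_{\emptyset}S^0$ is $G$-flat (the based $G$-space/$G$-simplicial set $S^0$ with trivial action being $G$-cofibrant) and, since $\emptyset$ embeds into $\U$, also $G^{\U}$-projective. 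Hence one may take $Q\mathbb{S}=\mathbb{S}$ with identity structure map.

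Combining the two points yields the corollary. I do not expect any genuine obstacle: the statement is a formal consequence of Proposition \ref{prop:levmonoidal} and the identification of the unit as a free spectrum, and the only point deserving a word of care is why the restriction to the \emph{non-positive} structures is needed. In the positive variants the pushout-product argument goes through verbatim, but the unit $\mathbb{S}$ fails to be cofibrant (its $0$-th latching map $\ast\to S^0$ is not an isomorphism), so the unit axiom can no longer be disposed of for free and must be revisited separately, as indeed it is later when lifting to categories of commutative ring spectra.
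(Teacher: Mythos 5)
Your proof is correct and is exactly the argument the paper leaves implicit: the pushout-product axiom follows from the two relevant bullets of Proposition \ref{prop:levmonoidal} (together with the symmetry of $\wedge$ and, in the projective case, the fact that a projective cofibration is in particular a flat cofibration), and the unit axiom is automatic because $\mathbb{S}=\mathscr{F}_{\emptyset}S^0$ is already $G$-flat and $G^{\U}$-projective. One small inaccuracy in your closing remark: for the positive variants the paper states that the map $\mathbb{S}^{+}\wedge X\to X$ genuinely fails to be a positive $G^{\U}$-level equivalence in general, so the unit axiom is not merely ``harder to check'' but actually false at the level stage; and it is repaired in the positive \emph{stable} model structures (Proposition \ref{prop:stablemonoidal}), not at the point of lifting to commutative ring spectra.
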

The positive ones are not quite monoidal because the unit $\mathbb{S}$ is not positively cofibrant and in general the map $\mathbb{S}^{+}\wedge X\to X$ is not a positive $G^{\U}$-level equivalence, even if $X$ is positive cofibrant. This problem will disappear in the stable model structures.

Since the suspension spectrum functors $G\T_*\to GSp^{\Sigma}_T$ and $G\mathcal{S}_*\to GSp^{\Sigma}_{\mathcal{S}}$ are left Quillen with respect to both the non-positive projective and flat model structures, Proposition \ref{prop:levmonoidal} implies that the positive and non-positive, flat and projective $\U$-model structures are in particular based $G$-topological (respectively based $G$-simplicial).

\subsection{$G^{\U}\Omega$-spectra and $G^{\U}$-stable equivalences}
\label{sec:gom}
We now move towards the stable model structure, beginning with the notion of an equivariant $\Omega$-spectrum:

\begin{Def}[$G^{\U}\Omega$-spectra] \label{def:gomega} A $G^{\U}$-projective level fibrant $G$-symmetric spectrum $X$ is called a \emph{$G^{\U}\Omega$-spectrum} if for all subgroups $H$ of $G$ and all finite $H$-subsets $M$ and $N$ of $\U$ the adjoint generalized structure map induces a weak equivalence \[
	(\widetilde{\sigma}_M^N)^H:X(M)^H\to \map_H(S^N,X(M\sqcup N))
\]
on $H$-fixed points. We say that a positively $G^{\U}$-projective level fibrant spectrum is a positive $G^{\U}\Omega$-spectrum if it satisfies the above condition except for the cases where $M=\emptyset$. 
\end{Def}

\begin{Example} \label{exa:gomsh} Let $N$ be a finite $G$-set contained in $\U$ and $A$ a cofibrant $G$-space. Then the endofunctors $sh^N$ and $\map(A,-)$ preserve (positive) $G^{\U}\Omega$-spectra. \end{Example}
\begin{Cor} \label{cor:omegash}If $X$ is a $G^{\U}\Omega$-spectrum, then so is $\Omega^Nsh^NX$ for any finite $G$-set $N$ contained in $\U$ and the natural map $\widetilde{\alpha}_X^N:X\to \Omega^Nsh^NX$
is a $G^{\U}$-level equivalence.
\end{Cor}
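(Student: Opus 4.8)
The plan is to treat the two assertions separately.

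\emph{That $\Omega^N sh^N X$ is again a $G^{\U}\Omega$-spectrum} follows at once from Example~\ref{exa:gomsh}: since $N$ is contained in $\U$, the functor $sh^N$ preserves $G^{\U}\Omega$-spectra, so $sh^N X$ is a $G^{\U}\Omega$-spectrum; and the representation sphere $S^N$ is a $G$-CW complex (in the simplicial setting simply a $G$-simplicial set), hence a cofibrant $G$-space, so the functor $\Omega^N=\map(S^N,-)$ likewise preserves $G^{\U}\Omega$-spectra. Composing the two gives the claim, in the positive variant as well.

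For the second assertion I would use the characterization of Proposition~\ref{prop:gleveleq}: $\widetilde{\alpha}_X^N$ is a $G^{\U}$-level equivalence if and only if, for every subgroup $H$ of $G$ and every finite $H$-subset $M$ of $\U$, the $H$-map $\widetilde{\alpha}_X^N(M):X(M)\to (\Omega^N sh^N X)(M)$ is a weak equivalence on $H$-fixed points. The essential input is an identification of the target: since $\map(S^N,-)$ is a levelwise construction and $sh^N$ is built from the generalized structure maps, there are natural $G$-isomorphisms
\[ (\Omega^N sh^N X)(M)\;\cong\;\map\bigl(S^N,(sh^N X)(M)\bigr)\;\cong\;\map\bigl(S^N, X(N\sqcup M)\bigr), \]
under which $\widetilde{\alpha}_X^N(M)$ corresponds to the composite of the isomorphism $X(\tau_{M,N})_*$ with the adjoint generalized structure map $\widetilde{\sigma}_M^N:X(M)\to \map(S^N,X(M\sqcup N))$, where $\tau_{M,N}:M\sqcup N\cong N\sqcup M$ is the symmetry isomorphism. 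Passing to $H$-fixed points, the first factor remains an isomorphism, while the second is a weak equivalence because $X$ is a $G^{\U}\Omega$-spectrum and both $M$ and $N$ are finite $H$-subsets of $\U$ (the latter since $N$ is a finite $G$-set contained in $\U$). Hence $\widetilde{\alpha}_X^N(M)$ is a weak equivalence on $H$-fixed points, and Proposition~\ref{prop:gleveleq} yields the conclusion.

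The only step that requires genuine care is the bookkeeping in the second paragraph: one must verify that $sh^N$ and $\Omega^N$ commute with evaluation on a finite $H$-set up to natural $G$-isomorphism, and that these isomorphisms carry $\widetilde{\alpha}_X^N(M)$ to the twisted adjoint structure map $X(\tau_{M,N})_*\circ\widetilde{\sigma}_M^N$. This is a routine but somewhat fiddly diagram chase through the definitions of evaluation in Section~\ref{sec:evaluation}, of the shift in Definition~\ref{def:shift}, and of the map $\alpha_X^M$; once it is in place, everything else is formal, relying only on the definition of a $G^{\U}\Omega$-spectrum and Proposition~\ref{prop:gleveleq}.
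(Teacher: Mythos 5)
Your proof is correct and follows essentially the same route as the paper's: the first claim is Example~\ref{exa:gomsh} applied to $sh^N$ and then $\map(S^N,-)$, and the second claim is the identification of $\widetilde{\alpha}_X^N(M)$ as the composite of $\widetilde{\sigma}_M^N$ with the shuffle isomorphism $\Omega^N X(\tau_{M,N})$, invoking the $G^{\U}\Omega$-spectrum condition on $X$. The paper's proof is just a terser statement of the same computation.
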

\begin{Cor} \label{cor:homomega} For every $G^{\U}$-projective $G$-symmetric spectrum $X$ the functor $\Hom(X,-)$ preserves $G^{\U}\Omega$-spectra. If $X$ is $G$-flat, it preserves $G^{\U}$-flat level fibrant $G^{\U}\Omega$-spectra.
\end{Cor}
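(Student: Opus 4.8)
The plan is to verify the two conditions of Definition~\ref{def:gomega} for $\Hom(X,Y)$, running the projective and the flat statement in parallel; the loop condition is where the work lies. Level fibrancy is immediate: by the smash/internal-hom adjunction $\Hom(X,-)$ is right adjoint to $-\wedge X$, and when $X$ is $G^{\U}$-projective cofibrant this functor, being $(*\to X)\,\square\,(-)$, is left Quillen for the monoidal $G^{\U}$-projective level model structure (using that $\mathbb{S}$ is cofibrant there). Hence $\Hom(X,-)$ preserves level fibrant objects, so $\Hom(X,Y)$ is $G^{\U}$-projective level fibrant; the flat case is word-for-word the same with $X$ taken $G$-flat and the flat level model structure in place of the projective one.

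For the loop condition I would first isolate the point-set identification that carries the argument. Fix a subgroup $H\le G$ and finite $H$-subsets $M,N\subseteq\U$ of orders $m,n$. Starting from $\Hom(X,Y)_m=\map_{Sp^{\Sigma}}(X,sh^mY)$ and unwinding the coordinate-change formula for evaluation — noting that $sh^MY$ is the ``evaluation on $M$'' of the $(G\times\Sigma_m)$-symmetric spectrum $sh^mY$ and that $\map_{Sp^{\Sigma}}(X,-)$ is compatible with the twist $-\wedge_{\Sigma_m}\Bij(\underline m,M)_+$ — one obtains a natural $H$-isomorphism $\Hom(X,Y)(M)\cong\map_{Sp^{\Sigma}}(\res_HX,sh^M\res_HY)$ for the conjugation $H$-action. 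Using in addition the reindexing isomorphism $sh^{M\sqcup N}Y\cong sh^Nsh^MY$ and the commutation of $\map(S^N,-)$ with $\map_{Sp^{\Sigma}}(X,-)$, the adjoint generalized structure map of $\Hom(X,Y)$ on $H$-fixed points is identified with
\[ \map_{HSp^{\Sigma}}\!\big(\res_HX,\,\widetilde{\alpha}^N_{sh^M\res_HY}\big)\colon\ \map_{HSp^{\Sigma}}(\res_HX,sh^M\res_HY)\ \longrightarrow\ \map_{HSp^{\Sigma}}(\res_HX,\Omega^Nsh^Nsh^M\res_HY), \]
obtained by applying $\map_{HSp^{\Sigma}}(\res_HX,-)$ to $\widetilde{\alpha}^N_{sh^M\res_HY}$. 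Checking this matching — tracing the structure maps of $\Hom$ from Section~\ref{sec:smash} through the identification while keeping the twisted $H$-actions straight — is the technical core of the proof, and I expect it to be the main obstacle.

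Granting this, the loop condition follows formally. Restriction preserves $\Omega$-spectra, so $\res_HY$ is an $H^{\U}\Omega$-spectrum; since $M\subseteq\U$, Example~\ref{exa:gomsh} makes $sh^M\res_HY$ an $H^{\U}\Omega$-spectrum, and Corollary~\ref{cor:omegash} then gives that $\widetilde{\alpha}^N_{sh^M\res_HY}$ is an $H^{\U}$-level equivalence and $\Omega^Nsh^Nsh^M\res_HY$ is again an $H^{\U}\Omega$-spectrum, so both source and target are $H^{\U}$-projective level fibrant. Moreover $\res_HX$ is $H^{\U}$-projective cofibrant (restriction sends the generating projective cofibrations to such, via the double coset formula), and the $G^{\U}$-projective level model structure is based $G$-topological; hence $\map_{Sp^{\Sigma}}(\res_HX,-)$ is right Quillen and, by Ken Brown's lemma, carries the level equivalence $\widetilde{\alpha}^N_{sh^M\res_HY}$ between level fibrant objects to a genuine $H$-equivalence — in particular a weak equivalence on $H$-fixed points, which is exactly the loop condition for $\Hom(X,Y)$.

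For the flat statement one repeats the last paragraph with $\res_HX$ taken $H$-flat, using that shifts and loops along finite $H$-subsets of $\U$ preserve $H^{\U}$-flat level fibrancy and that the flat level model structure is likewise based $G$-topological; together with the flat level fibrancy from the first step this exhibits $\Hom(X,Y)$ as a $G^{\U}$-flat level fibrant $G^{\U}\Omega$-spectrum. Apart from the identification in the second paragraph, the whole argument is a formal assembly of the monoidal and based $G$-topological structure of the level model structures with Example~\ref{exa:gomsh} and Corollary~\ref{cor:omegash}.
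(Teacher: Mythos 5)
Your proof is correct and is the argument the paper leaves implicit (the corollary is stated without proof, as a consequence of Example~\ref{exa:gomsh}, Corollary~\ref{cor:omegash} and the based $G$-topological structure of the level model structures from Proposition~\ref{prop:levmonoidal}). The one ingredient you invoke without justification — that $sh^M$ and $\Omega^N$ for finite $H$-subsets $M,N\subseteq\U$ preserve $H^{\U}$-flat level fibrancy — is indeed true and is available at this point in the paper: there is a natural $H$-isomorphism $sh^M\cong\Hom\bigl(\mathscr{F}^{(H)}_M S^0,-\bigr)$ with $\mathscr{F}^{(H)}_M S^0$ being $H$-flat by Remark~\ref{rem:freeflat}, and $\Omega^N\cong\map(S^N,-)$ with $S^N$ a cofibrant $H$-space, so both functors are right Quillen for the $H^{\U}$-flat level model structure by the monoidality and $G$-topological structure already established in Proposition~\ref{prop:levmonoidal} and its Corollary. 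With that observation supplied, the flat half of your argument closes cleanly.
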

We can now give the definition of the class of stable equivalences. We denote by $\gamma:GSp^{\Sigma}\to \Ho_{lev}^{\U}$ the localization of $GSp^{\Sigma}$ at the class of $G^{\U}$-level equivalences.
\begin{Def}[$G^{\U}$-stable equivalence]	\label{def:gstableeq} A map $f:X\to Y$ of $G$-symmetric spectra is a \emph{$G^{\U}$-stable equivalence} if for all $G^{\U}\Omega$-spectra $Z$ the map 
\[	\Ho_{lev}^{\U}(\gamma(f),Z):\Ho_{lev}^{\U}(Y,Z)\to \Ho_{lev}^{\U}(X,Z) \]
is a bijection.
\end{Def}
Making use of the level model structures we constructed, one can also characterize $G^{\U}$-stable equivalences in the following way: Two maps of $G$-symmetric spectra $f,g:X\to Y$ are called \emph{$G$-homotopic} if they lie in the same path-component of the mapping space $\map_{GSp^\Sigma}(X,Y)$. The set of $G$-homotopy classes of $G$-maps is denoted $[X,Y]^G$. Then by general model category theory, a map $f:X\to Y$ is a $G^{\U}$-stable equivalence if and only if for any $G^{\U}$-flat level fibrant $G^{\U}\Omega$-spectrum $Z$ and $G$-flat replacement $f^{\flat}:X^{\flat}\to Y^{\flat}$ of $f$ the induced map \[ [f^{\flat},Z]^G:[Y^{\flat},Z]^G\to [X^{\flat},Z]^G \]
is a bijection.
\begin{Remark} \label{rem:levelomega} Since $G^{\U}$-level equivalences are mapped to isomorphisms in the homotopy category, they are in particular $G^{\U}$-stable equivalences. On the other hand, the Yoneda lemma implies that every $G^{\U}$-stable equivalence between $G^{\U}\Omega$-spectra is already a $G^{\U}$-level equivalence.
\end{Remark}
\begin{Example} \label{exa:lambda} For a subgroup $H$ of $G$ and two finite $H$-subsets $M$ and $N$ of $\U$ we define \[ \lambda^{(H)}_{M,N}: \mathscr{F}^{(H)}_{M\sqcup N}S^N \to \mathscr{F}^{(H)}_M S^0 \]
to be the morphism of $H$-spectra adjoint to the embedding $S^N\hookrightarrow \mathbf{\Sigma}(M,M\sqcup N)=(\mathscr{F}^{(H)}_M S^0)(M\sqcup N)$ 
associated to the inclusion $M\hookrightarrow M\sqcup N$.

Then the induction $G\ltimes_H\lambda^{(H)}_{M,N}$ to $G$-symmetric spectra is a $G^{\U}$-stable equivalence.
\end{Example}
\begin{proof} This follows from the fact that domain and target of $G\ltimes_H\lambda^{(H)}_{M,N}$ are $G$-flat and that $G\ltimes_H\lambda^{(H)}_{M,N}$ corepresents the adjoint structure map $X(M)^H\to (\Omega^N X(M\sqcup N))^H$, cf. \cite[Ex. 3.1.10]{HSS00} for the non-equivariant analog.
\end{proof}

The notion of $G^{\U}$-stable equivalence behaves well under geometric realization and singular complex:
\begin{Prop}[Relation between spaces and simplicial sets] \label{prop:ggeo}
Both geometric realization $|.|$ and singular complex $\mathcal{S}$ preserve and reflect $G^{\U}$-stable equivalences.
\end{Prop}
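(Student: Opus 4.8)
The plan is to deduce this from a combination of the adjunction $(|.|, \mathcal{S})$, the analogous statement at the level of the $G^{\U}$-level model structures (which follows from the corresponding unstable statement, namely that $|.|$ and $\mathcal{S}$ preserve and reflect $\F$-equivalences of $G$-spaces for any family $\F$), and the characterization of $G^{\U}$-stable equivalences in terms of the sets $[-,Z]^G$ for $G^{\U}$-flat level fibrant $G^{\U}\Omega$-spectra $Z$. The key points to establish are: (a) $\mathcal{S}$ of a $G^{\U}\Omega$-spectrum of spaces is a $G^{\U}\Omega$-spectrum of simplicial sets, and $|.|$ of a $G^{\U}\Omega$-spectrum of simplicial sets is one of spaces (at least after a genuine fibrant replacement that does not disturb the $\Omega$-spectrum condition — one must be careful here, since realization need not land in level-fibrant spectra); and (b) the derived adjunction $(|.|, \mathcal{S})$ on the level homotopy categories $\Ho_{lev}^{\U}$ is compatible enough to translate the bijectivity condition of Definition \ref{def:gstableeq} across.

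Concretely, first I would observe that $\mathcal{S}$ preserves $G^{\U}$-level equivalences and $G^{\U}$-projective level fibrations (it is right Quillen for the $G^{\U}$-projective level model structures, since levelwise it is right Quillen for the mixed/projective $\F_{\U}^{G,\Sigma_n}$-model structures, $\mathcal{S}$ commuting with fixed points and sending Serre fibrations/weak equivalences to Kan fibrations/weak equivalences), and that it commutes with the loop functor $\map_H(S^N,-)$ up to natural isomorphism via $\mathcal{S}\map_H(S^N,Y)\cong \map_H(S^N,\mathcal{S}Y)$ together with $|S^N|\cong S^N$. Hence $\mathcal{S}$ takes $G^{\U}\Omega$-spectra of spaces to $G^{\U}\Omega$-spectra of simplicial sets. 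Dually, $|.|$ is left Quillen for the $G^{\U}$-projective level model structures; it commutes with smash products and with geometric realization of $S^N$, and every $G$-symmetric spectrum of simplicial sets is $G$-flat and in fact levelwise cofibrant, so the derived and underived realizations agree. The subtle point is that $|X|$ for $X$ a $G^{\U}\Omega$-spectrum of simplicial sets need not be $G^{\U}$-projective level fibrant as a spectrum of spaces; but $|X|_n = |X_n|$ is a genuine $(G\times\Sigma_n)$-CW complex, hence cofibrant and, being a realization of a Kan(-after-nothing-needed) object, one checks directly that the adjoint generalized structure maps $(\widetilde\sigma_M^N)^H$ on $|X|$ are weak equivalences using that $|.|$ commutes with finite limits (so with $H$-fixed points) and with the mapping-space-out-of-$S^N$ up to weak equivalence when the source is cofibrant. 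One then replaces $|X|$ by a genuine level fibrant model via $\mathcal{S}|.|$-type functors levelwise to land in an honest $G^{\U}\Omega$-spectrum, which is $G^{\U}$-level equivalent to $|X|$.

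With (a) in hand, reflecting and preserving follow formally. For preservation: if $f$ is a $G^{\U}$-stable equivalence of $G$-symmetric spectra of simplicial sets and $Z$ is a $G^{\U}$-flat level fibrant $G^{\U}\Omega$-spectrum of spaces, then $\mathcal{S}Z$ is such an object over simplicial sets, and using the adjunction $[|{-}|,Z]^G\cong[-,\mathcal{S}Z]^G$ on $G$-homotopy classes (valid since $|X|$ is cofibrant and $Z$ is level fibrant, so this computes the derived mapping set) together with the characterization of $G^{\U}$-stable equivalences via $[f^\flat,-]^G$, the bijectivity for $f$ transfers to bijectivity for $|f|$. For reflection one argues symmetrically, using that the unit $X\to \mathcal{S}|X|$ is a $G^{\U}$-level equivalence (hence a $G^{\U}$-stable equivalence, by Remark \ref{rem:levelomega}) for every $X$, and likewise the counit $|\mathcal{S}Y|\to Y$ is a $G^{\U}$-level equivalence, so $f$ is a $G^{\U}$-stable equivalence iff $\mathcal{S}|f|$ is iff $|f|$ is, and symmetrically for $\mathcal{S}$. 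I expect the main obstacle to be exactly the fibrancy bookkeeping in point (a): verifying that realization of a simplicial $G^{\U}\Omega$-spectrum satisfies the $\Omega$-spectrum condition despite not being level fibrant, and cleanly choosing a level-fibrant replacement that is still a $G^{\U}\Omega$-spectrum; everything else is formal manipulation of the adjunction and the definitions.
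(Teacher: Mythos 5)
Your proposal is correct and takes essentially the same route as the paper: establish that $(|.|,\mathcal{S})$ induces an equivalence of level homotopy categories under which the classes of $G^{\U}\Omega$-spectra correspond, and then transport the representability characterization of $G^{\U}$-stable equivalences across this equivalence. One small remark: your worry that $|X|$ might fail to be $G^{\U}$-projective level fibrant is unfounded, since every topological space is Serre fibrant and hence every $G$-symmetric spectrum of spaces is automatically $G^{\U}$-projective level fibrant; the actual content in checking that $|X|$ is a $G^{\U}\Omega$-spectrum is, as you note at the end of that paragraph, that $|.|$ only commutes with $\map_H(S^N,-)$ up to weak equivalence (valid here since $S^N$ has finitely many nondegenerate simplices and the levels of $X$ are Kan).
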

\begin{proof} This follows from the fact that the geometric realization and singular complex adjunction induces an equivalence on level homotopy categories and that a $G$-symmetric spectrum of spaces is a $G^{\U}\Omega$-spectrum if and only if its singular complex is.
\end{proof}

\section{Naive homotopy groups, $G^{\U}$-semistability and the action of $\M_G$}
\label{sec:homgroups}
In this section we deal with ``naive'' equivariant homotopy groups of $G$-symmetric spectra. This part is the main complication in the theory of $G$-symmetric spectra, because unlike for $G$-orthogonal spectra a $G^{\U}$-stable equivalence must not necessarily induce isomorphisms on these, hence the name ``naive''. Nevertheless they are useful for two reasons: Firstly, the converse is true, i.e., every map inducing an isomorphism on naive homotopy groups is a $G^{\U}$-stable equivalence (Theorem \ref{theo:piiso}) and for various morphisms it is easier to show that they induce such an isomorphism than to check the rather abstract condition in the definition of a $G^{\U}$-stable equivalence.

Secondly, there is a large class  of $G$-symmetric spectra, called $G^{\U}$-semistable, on which the two notions of equivalence agree. Several equivalent characterizations of semistability for non-equivariant symmetric spectra were given in \cite[Sec. 5.6]{HSS00}. This theory was developed further in \cite{Sch08} (and later in \cite{HH14}, which deals with semistability in the context of $\mathbb{A}^1$-homotopy theory), where it is shown that the naive homotopy groups carry a natural ``tame'' action of the monoid of injective self-maps of the natural numbers and that a symmetric spectrum is semistable if and only if this action is trivial. These criteria have equivariant analogs, in particular there exists a tame action of the monoid of $G$-equivariant injective self-maps of the chosen $G$-set universe $\U$ on the naive homotopy groups of $G$-symmetric spectra which detects whether the naive homotopy groups are isomorphic to the derived ones (Corollary \ref{cor:semicrit}). The notion of tameness and the 
relevant 
algebraic properties of such actions are given in Section \ref{sec:monoid}.

Moreover, the naive equivariant homotopy groups of $G$-symmetric spectra naturally form a $G^{\U}$-Mackey functor, like those of $G$-orthogonal spectra. This $G^{\U}$-Mackey functor structure is compatible with the above monoid action.

\subsection{Definition}
We begin by defining the naive equivariant homotopy groups of a $G$-symmetric spectrum and fix a $G$-set universe $\U$. We denote by $s_G(\U)$ the poset of finite $G$-subsets of $\U$, partially ordered by inclusion. 

\begin{Def} Let $n$ be an integer and $H\leq G$ a subgroup. The $n$-th $H$-equivariant homotopy group $\pi_n^{H,\U}X$ of a $G$-symmetric spectrum of spaces $X$ (with respect to $\U$) is defined as
\[ \pi_n^{H,\U}X \defeq \colim_{M\in s_G(\U)}[S^{n\sqcup M},X(M)]^H. \]
The connecting maps in the colimit system are given by the composites
\[ [S^{n\sqcup M},X(M)]^H\xr{(-)\wedge S^{N-M}} [S^{n\sqcup M\sqcup (N-M)},X(M)\wedge S^{N-M}]^H\xr{(\sigma_M^{N-M})_*}  [S^{n\sqcup N},X(N)]^H
\]
for every inclusion $M\subseteq N$. The last step implicitly uses the homeomorphism $X(M\sqcup (N-M))\cong X(N)$ induced from the canonical isomorphism $M\sqcup (N-M)\cong N$.
\end{Def}
To clarify what this means for negative $n$ we choose an isometric $G$-embedding $i:\R^{\infty}\hookrightarrow (\R[\U])^G$ and only index the colimit system over those elements $M$ of $s_G(\U)$ such that $\R^M$ contains $i(R^{-n})$, in which case the corresponding term is given by $[S^{M-i(\R^{-n})},X(M)]^H$, the expression $M-i(R^{-n})$ denoting the orthogonal complement of $i(\R^{-n})$ in $\R^M$. Since any two such embeddings are connected by a homotopy (in fact the space of embeddings is contractible) the definition only depends on this choice up to \emph{canonical} isomorphism and so we leave it out of the notation.

If the number of $G$-orbits of $M$ is larger than $-n$ by at least two, the permutation sphere $S^{n\sqcup M}$ has at least two trivial coordinates and hence the set $[S^{n\sqcup M},X(M)]^H$ has a natural abelian group structure. Since such $G$-sets are cofinal in $s_G(\U)$, this turns $\pi_0^{H,\U}X$ into an abelian group. For a $G$-symmetric spectrum of simplicial sets $A$ we set $\pi_n^{H,\U} A\defeq \pi_n^{H,\U} |A|.$

\begin{Example}[Homotopy groups of $G^{\U}\Omega$-spectra] \label{exa:piomega} For every (positive) $G^{\U}\Omega$-spectrum of spaces~$X$, every integer $n$, every subgroup $H$ of $G$ and every finite (non-empty) $G$-set $M\subset \U$, for which $i(\R^{-n})\subseteq \R[M]$ if $n$ is negative, the induced map
$[S^{n\sqcup M},X(M)]^H\to \pi_n^{H,\U}X$ is a bijection.
\end{Example}
\begin{proof} The connecting maps in the colimit system can also be described as first postcomposing with the adjoint structure map $X(M)\to \Omega^{N-M} X(N)$ and then adjoining $S^{N-M}$ to the left. Representation spheres allow the structure of $G$-CW complexes, so in the case of a (positive) $G^{\U}\Omega$-spectrum these connecting maps are (almost) all isomorphisms, which proves the claim.
\end{proof}
\begin{Def}[$\upi_*^{\U}$-isomorphism] A map $f:X\to Y$ of $G$-symmetric spectra is called a \emph{$\upi_*^{\U}$-isomorphism} if for every subgroup $H$ of $G$ and every integer $n\in \mathbb{Z}$ the induced map
\[ \pi_n^{H,\U}(f):\pi_n^{H,\U}(X)\to \pi_n^{H,\U}(Y) \]
is an isomorphism.
\end{Def}
A (positive) $G^{\U}$-level equivalence induces a bijection on (almost) all terms in the colimit system and so:
\begin{Lemma}
Every (positive) $G^{\U}$-level equivalence is a $\upi_*^{\U}$-isomorphism.
\end{Lemma}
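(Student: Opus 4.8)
The plan is to unwind the definition of a (positive) $G^{\U}$-level equivalence in terms of evaluations, and then observe that on each term of the colimit system defining $\pi_n^{H,\U}$ the map becomes a bijection for an essentially formal reason. First I would reduce to $G$-symmetric spectra of spaces: for a spectrum of simplicial sets $\pi_n^{H,\U}$ is defined through geometric realization, and $|\cdot|$ carries $G^{\U}$-level equivalences to $G^{\U}$-level equivalences, since $|f_n|^L \cong |f_n^L|$ and $|\cdot|$ preserves weak equivalences of simplicial sets. So assume $f:X\to Y$ is a (positive) $G^{\U}$-level equivalence of spectra of spaces. By Proposition \ref{prop:gleveleq} (applied in each level) together with the remark following it, this means exactly that for every subgroup $H\leq G$ and every finite $H$-subset $M\subseteq \U$ — in the positive case, every \emph{nonempty} such $M$ — the $H$-map $f(M):X(M)\to Y(M)$ is a genuine $H$-equivalence.

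Next I would fix $H$, $n$ and such an $M$ and note that $f(M)$ induces a bijection $[S^{n\sqcup M},X(M)]^H \to [S^{n\sqcup M},Y(M)]^H$. Indeed $S^{n\sqcup M}$ (and, when $n<0$, the sphere $S^{M-i(\R^{-n})}$ that occurs in the colimit system) is a based $H$-CW complex, so this is an instance of the equivariant Whitehead theorem, Proposition \ref{prop:whitehead}, applied to the group $H$ with the family of all its subgroups. Since $f$ is a morphism of $G$-symmetric spectra it commutes with smashing by spheres, with the generalized structure maps $\sigma_M^{N-M}$ and with the isomorphisms $X(M\sqcup(N-M))\cong X(N)$ induced by bijections of finite sets; hence these bijections are compatible with the connecting maps of the colimit systems, so $f$ induces an isomorphism from the colimit diagram computing $\pi_n^{H,\U}X$ onto the one computing $\pi_n^{H,\U}Y$, and therefore $\pi_n^{H,\U}(f)$ is an isomorphism — which is precisely the assertion that $f$ is a $\upi_*^{\U}$-isomorphism. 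In the positive case I would first discard the term $M=\emptyset$: because $\U$ is infinite, the nonempty $M\in s_G(\U)$ (with $i(\R^{-n})\subseteq \R[M]$ when $n<0$) are cofinal in $s_G(\U)$, so the colimits may be formed over this cofinal subsystem, on which the argument above applies verbatim.

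There is no real obstacle in this argument; the only points requiring care are the naturality bookkeeping that turns $f$ into a map of colimit diagrams, the fact that the relevant permutation/representation spheres are $H$-CW complexes (so that a genuine $H$-equivalence is detected on $H$-homotopy classes of maps out of them), and, in the positive variant, the cofinality of the nonempty finite $H$-subsets of $\U$.
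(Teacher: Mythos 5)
Your proposal is correct and matches the paper's (very terse) argument: a $G^{\U}$-level equivalence induces, via the equivariant Whitehead theorem, a bijection on each term $[S^{n\sqcup M},-(M)]^H$ of the colimit system (all of them in the non-positive case, all with $M\neq\emptyset$ in the positive case, which suffices by cofinality), and these bijections are compatible with the connecting maps. The paper compresses this into the single remark that such a map ``induces a bijection on (almost) all terms in the colimit system,'' with the parenthetical ``(almost)'' standing precisely for your cofinality observation in the positive case.
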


\subsection{Suspension isomorphism, long exact sequences and the Wirthm{\"u}ller isomorphism}
In this section we explain how naive homotopy groups behave under various constructions. We begin with the suspension isomorphism:

\begin{Prop} \label{prop:suspiso} For every $G$-symmetric spectrum of spaces $X$ and every finite dimensional $G$-subrepresentation $V$ of $\R[\U]$ the adjunction unit $ X\xr{\eta_X} \Omega^V (S^V\wedge X)$ and counit $\epsilon_X:S^V\wedge (\Omega^V X)\to X$ are $\upi_*^{\U}$-isomorphisms.
\end{Prop}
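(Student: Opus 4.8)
The plan is to reduce the statement about the counit to the statement about the unit, and then to prove the unit assertion by a direct cofinality argument in the defining colimit system for $\pi_*^{H,\U}$. For the reduction, note that if $\eta_X$ is a $\upi_*^{\U}$-isomorphism for every $G$-symmetric spectrum of spaces, then applying this to $\Omega^V X$ in place of $X$ and using the triangle identities, together with the fact that $S^V \wedge -$ and $\Omega^V$ preserve $\upi_*^{\U}$-isomorphisms once we know $\eta$ is one (because the composite $S^V \wedge \Omega^V X \xr{S^V \wedge \eta_{\Omega^V X}} S^V \wedge \Omega^V(S^V \wedge \Omega^V X)$ followed by $S^V \wedge \Omega^V \epsilon_X$ is the identity, and $\epsilon_{S^V \wedge X} \circ (S^V \wedge \eta_X) = \mathrm{id}$), a 2-out-of-3 argument pins down $\epsilon_X$. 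So the essential content is the unit.

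For the unit $\eta_X : X \to \Omega^V(S^V \wedge X)$, I would first reduce to the case $V = \R[N]$ for a finite $G$-subset $N \subseteq \U$, since such $V$ are cofinal among finite-dimensional $G$-subrepresentations of $\R[\U]$ (by the normal form of a $G$-set universe) and $\pi_*^{H,\U}$ only depends on the relevant cofinal part of the indexing poset. Fix a subgroup $H$ and an integer $n$. Both sides are computed as colimits over $M \in s_G(\U)$ (suitably restricted for negative $n$): the left side has terms $[S^{n \sqcup M}, X(M)]^H$, and for the right side I would use the natural $G$-isomorphism $(S^N \wedge X)(M) \cong S^N \wedge X(M)$ — wait, this is not quite an isomorphism since $N$ and $M$ are independent $G$-sets; rather $(S^N \wedge X)(M) \cong (S^{\infty} A)(M) \wedge \cdots$. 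More carefully, $S^N \wedge X = \Sigma^\infty S^N \wedge X$ only relates to the shift construction; the cleaner route is to observe that $(\Sigma^\infty S^N \wedge X)(M)$ receives a map from $S^N \wedge X(M)$ and to compare with the shift $sh^N X$, using the map $\alpha_X^N : S^N \wedge X \to sh^N X$ from Definition~\ref{def:shift}. So the plan is: show $\alpha_X^N$ is a $\upi_*^{\U}$-isomorphism by a cofinality reindexing (the colimit system for $\pi_*^{H,\U}(sh^N X)$ over $M$ is, up to a shift of indices, the subsystem of the one for $\pi_*^{H,\U}(X)$ indexed by $G$-subsets containing $N$, which is cofinal), and separately show that $\eta_X$ followed by $\Omega^N$ applied to the comparison map $S^N \wedge X \to sh^N X$ (then $\Omega^N sh^N X \xr{} X$ being recognized via $\widetilde\alpha_X^N$) assembles to the identity up to the isomorphisms already in play.

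The cleanest formulation I would actually adopt: $\Omega^V(S^V \wedge X)$ on homotopy groups is, after passing to the cofinal subsystem of $M$ containing a fixed large $G$-set and using $S^V \wedge X(M) \hookrightarrow (S^V \wedge X)(M)$ becoming a $G$-equivalence in the colimit, computed by $\colim_M [S^{n \sqcup M}, \Omega^V(S^V \wedge X(M))]^H = \colim_M [S^{(n+V) \sqcup M}, S^V \wedge X(M)]^H$; and then the key point is that the map $S^V \wedge X(M) \to X(M \sqcup (\text{copy of } N))$ combined with reindexing $M \mapsto M \sqcup N$ identifies this with the colimit defining $\pi_n^{H,\U}(X)$, with $\eta_X$ realizing exactly the canonical comparison. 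Concretely I would invoke the suspension isomorphism on the level of equivariant homotopy classes of maps out of representation spheres (a standard fact: $- \wedge S^V$ induces a bijection $[S^A, B]^H \cong [S^{A} \wedge S^V, B \wedge S^V]^H$ in a range, and stably after colimit it is an iso), and then match up the two colimit systems term by term and on connecting maps.

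\textbf{Expected main obstacle.} The genuinely fiddly part is the bookkeeping of independent $G$-sets: $V = \R[N]$ and the varying $M$ live in the same universe $\R[\U]$ but are not canonically disjoint, so the isomorphism $(S^V \wedge X)(M) \cong S^V \wedge X(M)$ fails on the nose and one must instead work with the shift $sh^N X$ and the map $\alpha_X^N$, carefully tracking which symmetric-group actions get twisted and checking that the relevant maps in the two colimit systems agree up to the canonical coherence isomorphisms $X(A \sqcup B) \cong X(B \sqcup A)$. Verifying that $\eta_X$ and $\epsilon_X$ really do correspond, under all these identifications, to the identity map of the colimit (rather than to some self-map one would separately need to show is an isomorphism) is where the care is needed; everything else is a cofinality argument plus the classical suspension isomorphism for $G$-CW complexes (Proposition~\ref{prop:whitehead}).
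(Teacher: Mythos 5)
Your proposal has a genuine gap that cannot be patched without substantially changing the argument: the claim that $\alpha_X^N : S^N\wedge X\to sh^N X$ is a $\upi_*^{\U}$-isomorphism ``by a cofinality reindexing'' is false. By Corollary~\ref{cor:semi} in the paper, $\alpha_X^N$ is a $\upi_*^{\U}$-isomorphism for all finite $G$-subsets $N$ of $\U$ if and only if $X$ is $G^{\U}$-semistable; since $\widetilde{\alpha}_X^N = \Omega^N(\alpha_X^N)\circ\eta_X$ by adjunction, your route would prove that \emph{every} $G$-symmetric spectrum is $G^{\U}$-semistable, which fails already for $\mathscr{F}_M S^M$ (Example~\ref{exa:semifree}). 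The cofinality argument shows only that $\pi_n^{H,\U}(sh^N X)$ and $\pi_n^{H,\U}X$ are \emph{abstractly} isomorphic, not that $\alpha_X^N$ induces that isomorphism --- this is precisely the subtlety that the whole $\M_G$-action and semistability theory is built to handle, and it is flagged explicitly before Proposition~\ref{prop:pishift}. Moreover the detour through $sh^N X$ is unnecessary in the first place: $S^V\wedge X$ here is the \emph{levelwise} smash, so $(S^V\wedge X)(M) = S^V\wedge X(M)$ holds on the nose with the diagonal $G$-action; you have conflated it with the spectrum-level smash $\Sigma^\infty S^V\wedge X$.

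Once the detour is removed, the real work is to show that $S^V\wedge(-) : \colim_M[S^{n\sqcup M}, X(M)]^H \to \colim_M[S^V\wedge S^{n\sqcup M}, S^V\wedge X(M)]^H$ is a bijection, and your appeal to ``a standard fact'' about stable suspension glosses over exactly the hard part. The colimit system stabilizes along $S^N\wedge(-)$ for $N\subseteq\U$, not along $S^V\wedge(-)$, so the two suspension directions have to be reconciled. Injectivity is easy (embed $V$ into some $\R[N]$); surjectivity is not: given $f : S^V\wedge A\wedge S^{M-W}\to S^V\wedge X(M)$, moving the extraneous $S^V$ to the other side of the smash to produce a candidate preimage introduces a twist of $S^V\wedge S^V$, which is \emph{not} equivariantly homotopic to the identity. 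The paper kills this by smashing with a further $S^V\wedge S^V$ and invoking that same-sign permutations of $\Sigma_4$ act equivariantly homotopically on $(S^V)^{\wedge 4}$ --- a step your outline does not anticipate. Finally, your 2-out-of-3 reduction of the counit to the unit is circular as stated: it needs that $S^V\wedge(-)$ and $\Omega^V$ preserve and reflect $\upi_*^{\U}$-isomorphisms, but in this paper that (Proposition~\ref{prop:suspiso2}, via Corollary~\ref{cor:smashpi} and the Wirthm\"uller isomorphism) is itself a downstream consequence of the present proposition. The paper instead proves the slightly stronger statement with an arbitrary based $G$-space $A$ and subrepresentation $W$ in the source, and then reads off both unit and counit from it directly.
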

\begin{proof}
The proof is similar to its non-equivariant analog, cf. \cite[Ex. I.2.16]{Sch07}, so we will be brief. In the case of the unit, by adjunction, we have to prove that the suspension maps $S^V\wedge (-):[S^{n\sqcup M},X(M)]^G\to [S^V\wedge S^{n\sqcup M},S^V\wedge X(M)]^G$ induce an isomorphism on the colimit over $s_G(\U)$ (and similarly for all subgroups $H$ of $G$). Injectivity is obvious, since smashing with representation spheres is part of the connecting maps in the colimit system.
For surjectivity we let $M$ be a finite $G$-subset of $\U$ and $f:S^V\wedge S^{n\sqcup M}\to S^V\wedge X(M)$ a $G$-map. We pick a finite $G$-subset $N\subseteq \U$ (disjoint to $M$) and an embedding $j:V\to \mathbb{R}[N]$ and denote by $g$ the $G$-map $S^{n\sqcup M}\wedge S^V\to X(M)\wedge S^V$ obtained by pre- and postcomposing $f$ with the symmetry isomorphisms shifting $S^V$ to the correct position. Then the map $S^V\wedge g$ differs from $f\wedge S^V$ (as maps $S^V\wedge S^{n\sqcup M}\wedge S^V\to S^V\wedge X(M)\wedge S^V$) by pre- and postcomposing with the twist automorphism of $S^V\wedge S^V$. In general, these twists do not cancel each other out and so $S^V\wedge g$ is not always $G$-homotopic to $f\wedge S^V$, but they do become so after smashing with another copy of $S^V\wedge S^V$ (since any two transpositions in $\Sigma_4$ act $G$-homotopically on $(S^V)^{\wedge 4}$, so both twists can be replaced with the interchanging of the two new $S^V$ factors, which do cancel). Thus the element represented by \[ \sigma_M^N\circ (g\wedge S^{N-j(V)}):S^{n\sqcup M\sqcup N}=S^{n\sqcup M}\wedge S^V\wedge S^{N-j(V)}\to X(M\sqcup N)\] is an inverse image of the class of $f$, and so the induced map is surjective



The proof for the counit is similar.
\end{proof}
Since for any $G$-set universe $\U$ the linearization $\R[\U]$ always allows an embedding of the trivial $\R^{\infty}$ (even if no trivial $G$-sets embed into $\U$), the above proposition in particular applies for all trivial representations. By adjunction, the groups $\pi_n^{H,\U}(\Omega (S^1\wedge X))$ can be naturally identified with $\pi_{n+1}^{H,\U} (S^1\wedge X)$ and so we see that there is a natural isomorphism $\pi_n^{H,\U}X\cong \pi_{n+1}^{H,\U} (S^1\wedge X)$ for all subgroups $H$ of $G$ and $n\in \mathbb{Z}$.

One uses this to construct long exact sequences in naive homotopy groups. The mapping cone $C(f)$ and the homotopy fiber $H(f)$ of a map $f:X\to Y$ of $G$-symmetric spectra (as well as the associated natural maps $i(f):Y\to C(f)$, $q(f):C(f)\to S^1\wedge X$, $j(f):\Omega Y\to H(f)$ and $p(f):H(f)\to X$) are defined levelwise.
\begin{Prop} \label{prop:exseq} The following hold:
\begin{enumerate}
\item For every map $f:X\to Y$ of $G$-symmetric spectra of spaces and all subgroups $H$ of $G$ the sequences
 \[ \hdots\to \pi_n^{H,\U}X\xr{\pi_n^{H,\U}f} \pi_n^{H,\U}Y \xr{\pi_n^{H,\U}i(f)} \pi_n^{H,\U}C(f)\xr{\pi_n^{H,\U}q(f)} \pi_n^{H,\U} (S^1\wedge X)\cong \pi_{n-1}^{H,\U} X\to\hdots \]
and
\[ \hdots \to \pi_{n+1}^{H,\U} Y\cong \pi_n^{H,\U}(\Omega Y)\xr{\pi_n^{H,\U}j(f)} \pi_n^{H,\U} H(f)\xr{\pi_n^{H,\U}p(f)} \pi_n^{H,\U} X\xr{\pi_n^{H,\U} f} \pi_n^{H,\U} Y \to \hdots \]
are exact.
\item Let $f:X\to Y$ be a morphism of $G$-symmetric spectra of spaces. Then the natural map $h:S^1\wedge H(f)\to C(f)$ is a $\upi_*^{\U}$-isomorphism.
\item Let $\{X_i\}_{i\in I}$ be a family of $G$-symmetric spectra. Then for every subgroup $H$ of $G$ the natural map 
$\bigoplus_{i\in I} \pi_*^{H,\U}X_i\to \pi_*^{H,\U}\Big(\ \hspace{-4pt}\bigvee_{i\in I}X_i\Big )$ is an isomorphism. Furthermore, for finite $I$ the natural map $ \pi_*^{H,\U}\Big(\ \hspace{-4pt}\prod_{i\in I} X_i\Big )\to \prod_{i\in I} \pi_*^{H,\U} X_i$ is an isomorphism and hence the canonical morphism $\bigvee_{i\in I} X_i\to \prod_{i\in I} X_i$ is a $\upi_*^{\U}$-isomorphism.
\end{enumerate}
\end{Prop}
\begin{proof} The proof is very similar to the one for the analogous statements in non-equivariant symmetric spectra or orthogonal spectra, cf. \cite[Prop. I.4.7 and Cor. I.4.9]{Sch07} and \cite[Thm. 7.4]{MMSS01}.
\end{proof}

Now we come to the Wirthm{\"u}ller isomorphism, which states that for a subgroup inclusion $H\leq G$ the natural map $\gamma$ from induction to coinduction (cf. Section \ref{sec:unstable}) is a $\upi^{\U}_*$-isomorphism, provided that $G/H$ allows an embedding into $\R[\U]$. It was first proved for suspension spectra in \cite{Wir74}, generalized to all $H$-spectra in \cite[Section II.6]{LMS86} and reproved in a different way in \cite{May03}. The statement in \cite{LMS86} and \cite{May03} is about the derived natural transformation in the $G$-equivariant stable homotopy category. In the case of a complete $G$-universe an underived version is given in \cite[Sec. 4]{Sch11}, the main point being that one does not have to replace the $H$-orthogonal spectrum $X$ cofibrantly for the isomorphism to hold.

The version for $G$-symmetric spectra we present here is even more underived, as it involves naive homotopy groups:
\begin{Prop}[Wirthm{\"u}ller isomorphism] \label{prop:wirth} Let $H\leq G$ be a subgroup inclusion such that $G/H$ allows an embedding into $\R[\U]$ and $X$ an $H$-symmetric spectrum of spaces or simplicial sets. Then the natural map $\gamma_X:G\ltimes_H X\to \map_H(G,X)$ is a $\upi_*^{\U}$-isomorphism.
\end{Prop}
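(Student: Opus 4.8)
\emph{Proof plan.} The plan is to reduce, by induction on $|G|$, to the single assertion that $\pi_n^{G,\U}(\gamma_X)$ is an isomorphism for all $n$, and then to prove that remaining case by a Pontryagin--Thom collapse argument exploiting the embedding of $G/H$ into $\R[\U]$.

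\textbf{Reduction to the top group.} By definition a map is a $\upi_*^{\U}$-isomorphism as soon as it induces an isomorphism on $\pi_n^{K,\U}$ for every subgroup $K\leq G$ and every $n$, so we may treat each $K$ in turn and argue by induction on the group order; the cases $|G|=1$ and $H=G$ are trivial since then $\gamma_X$ is an isomorphism. Let $K\subsetneq G$. Restricting along $K\hookrightarrow G$ and applying the two double coset formulas of Section~\ref{sec:unstable} levelwise, I would identify $\res_K^G\gamma_X$ (compatibly with the decompositions) with a map
\[ \bigvee_{[g]\in K\backslash G/H} K\ltimes_{K\cap gHg^{-1}} Y_g \longrightarrow \prod_{[g]\in K\backslash G/H} \map_{K\cap gHg^{-1}}(K, Y_g), \]
where $Y_g$ is the appropriate conjugate restriction of $X$ to $K\cap gHg^{-1}$ and where on the $[g]$-summand the map is precisely the Wirthm\"uller map $\gamma_{Y_g}$ for the pair $(K,K\cap gHg^{-1})$. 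Since $K$ is finite, Corollary~\ref{cor:piwedge} makes the canonical map from this finite wedge to the finite product a $\upi_*^{\U}$-isomorphism and identifies $\pi_n^{K,\U}$ of source and target with the corresponding finite direct sums. The orbit $K/(K\cap gHg^{-1})$ is a $K$-suborbit of $\res_K^G(G/H)$ and hence embeds into $\res_K^G\R[\U]=\R[\res_K^G\U]$, and $\res_K^G\U$ is again a $K$-set universe, so the inductive hypothesis (applied to the group $K$, of strictly smaller order) shows each $\gamma_{Y_g}$ is a $\upi_*$-isomorphism. Thus $\pi_n^{K,\U}(\gamma_X)$ is an isomorphism for every proper $K$, and only the case $K=G$ remains.

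\textbf{The top group.} Since $G$-subrepresentations of the form $\R[M]$ with $M$ a finite $G$-subset of $\U$ are cofinal among the finite-dimensional $G$-subrepresentations of $\R[\U]$ (cf.\ Remark~\ref{rem:orthhomgroups}), the hypothesis gives an embedding $\R[G/H]\hookrightarrow V\defeq\R[M]$ for some such $M$. The Pontryagin--Thom construction for the embedding of the ($0$-dimensional, finite) $G$-manifold $G/H$ into $V$ yields a collapse $G$-map $c\colon S^{V}\to G_+\wedge_H S^{\res_H^G V}$ onto the Thom space of the normal bundle $G\times_H\res_H^G V\to G/H$; together with the $G$-action map $G_+\wedge_H \res_H^G S^{V}\to S^{V}$ this is exactly the data exhibiting $G/H_+$ as Spanier--Whitehead self-dual. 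The plan is to use $c$ to write down an inverse of $\pi_n^{G,\U}(\gamma_X)=\colim_{N\in s_G(\U)}[S^{n\sqcup N},-]^G(\gamma_X)$: first reduce to suspension spectra $X=\Sigma^{\infty}A$ (induction commutes with $\Sigma^{\infty}$, and coinduction commutes with $\Sigma^{\infty}$ up to $\upi_*^{\U}$-isomorphism, which I would check via the long exact sequences of Proposition~\ref{prop:exsym}, the skeleton filtration of Section~\ref{sec:latching}, and Corollary~\ref{cor:piwedge}); then, for $X=\Sigma^{\infty}A$, smash representatives $S^{n\sqcup N}\to (G\ltimes_H A)\wedge S^N$ with $S^{V}$, use $c$ to pass from $G_+\wedge_H(-)$ to $\map_H(G_+,-)$, and cancel the extra copy of $S^V$ via the suspension isomorphism of Proposition~\ref{prop:suspiso}. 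The two triangle identities should reduce, after one further suspension by $S^{V}$, to the statement that two $G$-self-maps of the representation sphere $S^{V}\wedge S^{\res_H^G V}$ built from $c$ and the action map are equivariantly homotopic; this I would deduce, exactly as in the proof of the suspension isomorphism, from the fact that the relevant permutations of smash factors lie in the same path component of the appropriate orthogonal group.

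\textbf{Main obstacle.} I expect the real difficulty to lie entirely in this last step: making the self-duality of the finite $G$-set $G/H$ effective \emph{at the level of naive homotopy groups}, without any $\Omega$-spectrum or cofibrant replacement — this underivedness being precisely what distinguishes the statement from the versions in \cite{Wir74}, \cite{LMS86} and \cite{May03}. Concretely, one must arrange the collapse maps $c$ coherently as $M$ grows through $s_G(\U)$ so that they pass to the colimit defining $\pi_*^{G,\U}$, and one must verify that the self-maps of representation spheres produced by the triangle identities (which are ``degree one on all fixed points'' but not obviously the identity on the nose) are genuinely $G$-homotopic to the identity. As an alternative I would keep in mind deducing the result from the corresponding statement for $G$-orthogonal spectra over a \emph{complete} universe, where an underived Wirthm\"uller isomorphism is available (cf.\ \cite{Sch11}), via Lemma~\ref{lem:eva}/Remark~\ref{rem:orthhomgroups} and a change-of-universe comparison; but for a general $G$-set universe $\U$ the direct collapse-map argument seems the cleaner route. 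The verification that $\res_K^G\gamma_X$ decomposes over double cosets as claimed is routine bookkeeping but should be done carefully.
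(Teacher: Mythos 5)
Your reduction to $\pi_n^{G,\U}$ via the double coset formulas and Corollary~\ref{cor:piwedge} matches the paper's own argument, but your top-group step is a genuinely different route. The paper never constructs an inverse: it bounds the connectivity of the homotopy fiber of the space-level Wirthm\"uller map $\gamma^K_{A\wedge S^{kN}}$ (for a cofibrant $H$-space $A$ and a $G$-embedding $G/H\hookrightarrow\R[N]$) by decomposing $K$-fixed points via the double coset formulas and observing that (a) the wedge-to-product inclusion has connectivity $\geq 2\conn((S^{kN})^K)+1$, and (b) the extra factors indexed by $g$ with $g^{-1}Kg\not\subseteq H$ have connectivity $\geq\conn((S^{kN})^K)+k$, because the embedding $G/H\hookrightarrow\R[N]$ forces $\dim(S^{N})^{g^{-1}Kg\cap H}$ to be strictly larger than $\dim(S^{N})^{K}$ when $g^{-1}Kg\not\subseteq H$. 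For $k$ large both bounds exceed the dimension of $(S^{n\sqcup M\sqcup kN})^K$, so every class in $\pi_n^{G,\U}H(\gamma_X)$ dies after finitely much suspension. This neatly sidesteps what you flag as your ``main obstacle'': you never have to arrange collapse maps coherently across the colimit, nor to verify underived triangle identities. Your Pontryagin--Thom plan is the classical route of \cite{Wir74}, \cite{LMS86}, \cite{May03} and can be made underived, but those verifications are genuine work; the connectivity count is the more economical route for an on-the-nose statement about naive homotopy groups.

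There is one concrete gap in your plan: the reduction to $X=\Sigma^\infty A$ via the skeleton filtration does not go through. The filtration of Section~\ref{sec:latching} builds $X$ from semi-free spectra $\mathscr{G}_n(-)$, which for $n>0$ are not suspension spectra, and coinduction (a right adjoint) does not interact simply with $\mathscr{G}_n(-)$ (a left adjoint), so neither $\Sigma^\infty$-reduction nor a clean levelwise identification of $\map_H(G,\mathscr{G}_n(A))$ is available. The remedy is to forego the reduction entirely and run the collapse-map argument directly at the space level on $\gamma_{X(M)}\colon G\ltimes_HX(M)\to\map_H(G_+,X(M))$ for each $M\in s_G(\U)$, which is also the level at which the paper's connectivity argument operates. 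You should also budget for a step your outline omits and the paper handles separately: Lemma~\ref{lem:wirth2}, that $G\ltimes_H(-)$ preserves $\upi_*^{\U}$-isomorphisms, which together with the fact that $\pi_*^{G,\U}\map_H(G,-)\cong\pi_*^{H,\U}(-)$ lets one replace $X$ by a flat approximation so that the space-level argument (which requires cofibrant levels) applies.
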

\begin{proof} I learned this proof from Stefan Schwede. We first prove the isomorphism in the case where all evaluations of $X$ at $H$-representations are cofibrant $H$-spaces, which already implies the simplicial version. Under this assumption the morphism $\gamma_Y$ is levelwise an equivariant cofibration, and it suffices to show that the strict quotient $Q=\map_H(G,X)/G\ltimes_H(X)$ has trivial homotopy groups. Evaluated at a finite $G$-set $M$, this quotient is given by $N_H^G(X(M))$, i.e., the space-level norm with respect to the smash product, in the sense of Section \ref{sec:norm}. Furthermore, given another finite $G$-set $N$, the structure map factors as
\[ N_H^G(X(M))\wedge S^N\xr{id\wedge \Delta_N} N_H^G(X(M))\wedge N_H^G(S^N)\cong N_H^G(X(M)\wedge S^N)\xr{N_H^G(\sigma_M^N)} N_H^G(X(M\sqcup N)), \]
where the map $\Delta_N:S^N\to N_H^G(S^N)$ is the diagonal. We claim that if $\R[N]$ allows an embedding of $G/H$, then the diagonal $\Delta_N$ is $G$-equivariantly nullhomotopic. This implies that for any subgroup $K$ of $G$, infinitely many connecting maps in the colimit defining $\pi_*^{K,\U}(Q)$ are zero, and hence the colimit is trivial. To see the claim, note that $N_H^G(S^N)$ is $G$-homeomorphic to the sphere associated to the induced representation $G\ltimes_H \R[N]$. The $G$-fixed subspace of $G\ltimes_H \R[N]$ is given precisely by the image of the $H$-fixed subspace of $\R[N]$ under the diagonal. By assumption, there exists a $v\in \R[N]$ which is $H$-fixed but not $G$-fixed. This implies that $\Delta_N(v)$ is a $G$-fixed point which does not come from a $G$-fixed point of $\R[N]$, and hence the inclusion of representation spheres is equivariantly nullhomotopic.

To obtain the result for arbitrary $H$-symmetric spectra it suffices to show that a $G$-flat replacement $X^{\flat}\to X$ induces $\upi_*^{\U}$-isomorphisms $\map_H(G,X^{\flat})\to \map_H(G,X)$ and $G\ltimes_HX^{\flat}\to G\ltimes_H X$. The prior follows from the natural isomorphism $\pi_*^{G,\U}(\map_H(G,X))\cong \pi_*^{H,\U}X$ and the double coset decomposition of $\res^G_K(\map_H(G,X))$, so it remains to show the latter. Hence the claim follows by the following lemma,  for which it is not necessary to require that $G/H$ embeds into $\R[\U]$.
\end{proof}

\begin{Lemma} \label{lem:wirth2} The functor $G\ltimes_H -$ maps $\upi^{\U}_*$-isomorphisms of $H$-symmetric spectra to $\upi_*^{\U}$-isomorphisms of $G$-symmetric spectra.
\end{Lemma}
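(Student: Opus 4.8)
The plan is to reduce the statement to an explicit computation of the homotopy groups of $G\ltimes_H Y$ in terms of those of the $H$-symmetric spectrum $Y$. The key point is the double coset formula: for every subgroup $K\le G$ there is a natural $K$-equivariant isomorphism
\[
  {\res}_K^G (G\ltimes_H Y)\cong \bigvee_{[g]\in K\backslash G/H} K\ltimes_{K\cap gHg^{-1}}\bigl(c_g^*({\res}^H_{g^{-1}Kg\cap H} Y)\bigr),
\]
which holds levelwise and is compatible with the generalized structure maps, hence is an isomorphism of $K$-symmetric spectra. Thus it suffices to understand $\pi_*^{K,\U}$ of each wedge summand, and then to invoke Corollary \ref{cor:piwedge} to identify the homotopy groups of the wedge with the direct sum. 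Since each summand is itself an induction (up to conjugation and restriction along subgroup inclusions, which manifestly preserve and detect $\upi_*^{\U}$-isomorphisms — note that after restricting to $K\cap gHg^{-1}$ we evaluate on $K$-subsets of $\U$, whose restrictions to the smaller subgroups are again in the relevant posets), the statement for all $K$ follows once we know it for $K=G$ and for the special shape ``$\pi_*^{G,\U}(G\ltimes_H Y)$''.

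So the core reduction is: it is enough to show $G\ltimes_H -$ sends $\upi_*^{\U}$-isomorphisms to $\upi_*^{\U}$-isomorphisms \emph{on $\pi_*^{G,\U}$}, and then the version on $\pi_*^{K,\U}$ for proper $K$ drops out from the double coset decomposition above together with Corollary \ref{cor:piwedge}. For the $G$-level statement, I would compute directly from the definition
\[
  \pi_n^{G,\U}(G\ltimes_H Y)=\colim_{M\in s_G(\U)}[S^{n\sqcup M},(G\ltimes_H Y)(M)]^G.
\]
Since induction of spectra is computed levelwise and commutes with the bijection-indexed colimits defining evaluation, there is a natural $G$-isomorphism $(G\ltimes_H Y)(M)\cong G\ltimes_H(\res_H^G$-evaluation of $Y)$ — more precisely, evaluation commutes with induction, $(G\ltimes_H Y)(M)\cong G_+\wedge_H Y(\res_H^G M)$, where we must be slightly careful because $M$ need not be free over $H$; one uses the standard adjunction $[S^{n\sqcup M}, G\ltimes_H Z]^G\cong [\res_H^G S^{n\sqcup M}, Z]^H$ together with the double coset formula applied to $\res_H^G S^{n\sqcup M}\wedge(-)$, or equivalently the decomposition of $\res_H^G(G\ltimes_H Y)$ above with $K=H$. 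In any case, taking the colimit over $M\in s_G(\U)$ — which is cofinal in (a subsystem of) $s_H(\res_H^G\U)$ — one obtains a natural isomorphism $\pi_n^{G,\U}(G\ltimes_H Y)\cong\pi_n^{H,\U}(\res_H^G$ of something); cleanly, the double coset formula with $K=G$ gives $\res_G^G(G\ltimes_H Y)\cong\bigvee_{[g]\in G/H}$ induction-free pieces, but it is more transparent to use that $\pi_*^{G,\U}\circ(G\ltimes_H-)$ is naturally isomorphic to $\pi_*^{H,\U}\circ\res$ applied appropriately. Granting such a natural identification, a $\upi_*^{\U}$-isomorphism $f:Y\to Y'$ of $H$-spectra induces an isomorphism on each of these colimit terms, hence on $\pi_*^{G,\U}$ of the inductions.

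The main obstacle I anticipate is \emph{bookkeeping the interaction between the $H$-set structure on the evaluating set and the induction}, i.e., making the identification $\pi_*^{G,\U}(G\ltimes_H Y)\cong(\text{something about }\pi_*^{H,\U}Y)$ genuinely natural and index-compatible — in particular checking that the colimit system over finite $G$-subsets of $\U$ restricts to a cofinal system of finite $H$-subsets of $\res_H^G\U$ on which the relevant comparison maps are the structure maps of $Y$ up to canonical isomorphism, and that the abelian group structures match. Once this naturality is in hand, the proof is essentially formal: apply the double coset formula to reduce to the $G$-level computation for each conjugate/restriction, use the identification of $\pi_*^{G,\U}\circ(G\ltimes_H-)$ with homotopy groups of $Y$, and conclude via Corollary \ref{cor:piwedge} (homotopy groups commute with wedges) that $G\ltimes_H f$ is a $\upi_*^{\U}$-isomorphism whenever $f$ is. Note that, in contrast with the Wirthmüller map, here no connectivity estimates or suspension arguments are needed — the functor $G\ltimes_H-$ preserves $\upi_*^{\U}$-isomorphisms on the nose, with no hypothesis relating $G/H$ to $\U$.
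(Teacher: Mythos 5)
There is a genuine gap in the step you call the ``core reduction,'' namely handling $\pi_*^{G,\U}(G\ltimes_H Y)$ directly. You write that one can use ``the standard adjunction $[S^{n\sqcup M}, G\ltimes_H Z]^G\cong [\res_H^G S^{n\sqcup M}, Z]^H$,'' but this adjunction goes the wrong way: $G\ltimes_H(-)$ is the \emph{left} adjoint of restriction, so the valid statement is $[G\ltimes_H Z, W]^G\cong [Z,\res_H^G W]^H$; the formula you wrote is the one for the \emph{co}induction $\map_H(G_+,-)$. Your fallback, ``the double coset formula with $K=G$ gives $\bigvee_{[g]\in G/H}$ induction-free pieces,'' is also not correct: $G\backslash G/H$ is a single point, so for $K=G$ the double coset decomposition has one summand, which is $G\ltimes_H Y$ itself, and nothing has been gained. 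So the identification $\pi_*^{G,\U}(G\ltimes_H Y)\cong(\text{something about }\pi_*^{H,\U}Y)$ that your argument requires is not available by these means, and indeed cannot be available without further hypotheses: if it were, it would essentially give the Wirthm\"uller isomorphism for free, whereas that isomorphism (Proposition~\ref{prop:wirth}) needs the hypothesis that $G/H$ embeds into $\R[\U]$, which the present lemma explicitly does not assume. In fact this lemma is used as \emph{input} for the Wirthm\"uller isomorphism, so a proof along your lines would be circular.

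The paper's proof confronts the $K=G$ case differently, and this is the key idea you are missing. One first reduces, via the long exact sequences of Proposition~\ref{prop:exsym} and the fact that $G\ltimes_H(-)$ preserves mapping cones, to showing that $G\ltimes_H X$ has vanishing naive homotopy groups when $X$ does. Then one inducts on the order of $G$: the groups $\pi_n^{K,\U}(G\ltimes_H X)$ for proper $K<G$ vanish by the double coset decomposition plus Corollary~\ref{cor:piwedge} plus the inductive hypothesis, exactly as you suggest. But for $K=G$ one uses a geometric observation rather than any homotopy-group identification: if $H\neq G$ (the case $H=G$ being trivial), the $G$-fixed points of $G\ltimes_H X(M)$ consist only of the basepoint, so any $G$-map $f:S^{n\sqcup M}\to G\ltimes_H X(M)$ factors through $S^{n\sqcup M}/(S^{n\sqcup M})^G$. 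The latter is a finite based $G$-CW complex all of whose cells are induced from proper subgroups, and a cell-by-cell induction — using the already-established vanishing of $\pi_*^{K,\U}(G\ltimes_H X)$ for proper $K$ — shows that $f$ is stably null. No adjunction or explicit computation of $\pi_*^{G,\U}(G\ltimes_H Y)$ is ever carried out, and none is available at this level of generality.
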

\begin{proof}[Proof] Since $G\ltimes_H -$ preserves cofiber sequences, the long exact sequence of Proposition \ref{prop:exseq} implies that it suffices to show that if an $H$-symmetric spectrum $X$ has trivial homotopy groups, then so does $G\ltimes _H X$. We prove this by induction on the order of $G$, the induction start for the trivial group being clear. Hence we take a finite group $G$, assume the statement shown for all proper subgroups and fix an $H$-symmetric spectrum $X$ with trivial homotopy groups. Now from the double coset decomposition of $\res^G_K(G\ltimes_H X)$ and the formula for the homotopy groups of a wedge we see that all the groups $\pi_n^{K,\U}(G\ltimes_H X)$, where $K$ is a proper subgroup, are trivial by the induction hypothesis. So let $f:S^{n\sqcup M}\to G\ltimes_H X(M)$ be a $G$-map, we have to show that it represents the trivial element in $\pi_n^{G,\U}(G\ltimes_H X)$. If $H$ is equal to $G$ the statement of the lemma is trivial, so we can assume 
this not to be the case. But then the $G$-fixed points of 
$G\ltimes_H X(M)$ only consist of the basepoint and hence $f$ factors through a map $\widetilde{f}:S^{n\sqcup M}/(S^{n\sqcup M})^G\to G\ltimes_H X(M)$. Now the domain of $\widetilde{f}$ is a finite based $G$-CW complex with all cells induced, and an induction over the cells shows that any such map into a level of $G\ltimes_H X$ is stably trivial, since the groups $\pi_*^{K,\U} (G\ltimes_HX)$ vanish for proper subgroups $K$. This finishes the proof.
\end{proof}

We obtain a corollary:
\begin{Cor} \label{cor:smashpi} Let $\mathcal{F}$ be a family of subgroups of $G$, $f:X\to Y$ a morphism of $G$-symmetric spectra which induces an isomorphism on $\pi_n^{H,\U}$ for all $n\in \mathbb{Z}$ and all $H$ in $\mathcal{F}$ and $A$ a cofibrant $G$-space with non-basepoint isotropy contained in $\mathcal{F}$. Then
\begin{enumerate}[(i)]
\item $A\wedge f:A\wedge X\to A\wedge Y$ is a $\upi_*^{\U}$-isomorphism.
\item If $A$ is finite and $X$ and $Y$ are $G^{\U}$-projective level fibrant, $\map(A,f):\map(A,X)\to \map(A,Y)$ is a $\upi_*^{\U}$-isomorphism.
\end{enumerate}
\end{Cor}
\begin{proof} This follows by an induction over the cells of $A$, using the natural isomorphisms $G/H_+\wedge X\cong G\ltimes_H {\res}^G_H X$ and $\map(G/H_+,X)\cong \map_H(G,{\res}_H^G X)$ together with Lemma \ref{lem:wirth2} and the preceding paragraph.
\end{proof}
In particular, taking $\mathcal{F}$ to be the family of all subgroups, we see that smashing with any cofibrant $G$-space preserves all $\upi^{\U}_*$-isomorphisms, and so does $\map(A,-)$ for finite $A$ (with the fibrancy assumption above in the simplicial case).

Finally we obtain the following extension of Proposition \ref{prop:suspiso}:
\begin{Prop} \label{prop:suspiso2} For every $G$-subrepresentation $V$ of $\R[\U]$ the functors $S^V\wedge (-)$ and $\Omega^V$ preserve and reflect $\upi_*^{\U}$-isomorphisms of $G$-symmetric spectra of spaces. Furthermore, a map $f:S^V\wedge X\to Y$ is a $\upi_*^{\U}$-isomorphism if and only if its adjoint $\widetilde{f}:X\to \Omega^VY$ is.
\end{Prop}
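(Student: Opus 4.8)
The plan is to derive this formally from Proposition~\ref{prop:suspiso} together with Corollary~\ref{cor:smashpi}, using that $\upi_*^{\U}$-isomorphisms satisfy the $2$-out-of-$3$ property (they are defined by requiring isomorphisms of abelian groups, which do). Throughout I take $V$ finite-dimensional, as in Proposition~\ref{prop:suspiso}; then the representation sphere $S^V$ admits a finite $G$-CW structure, so it is in particular a finite cofibrant based $G$-space. The first step is to record that both functors \emph{preserve} $\upi_*^{\U}$-isomorphisms: for $S^V\wedge(-)$ this is Corollary~\ref{cor:smashpi}(i) applied with $\mathcal F$ the family of all subgroups of $G$ and $A=S^V$, and for $\Omega^V=\map(S^V,-)$ it is Corollary~\ref{cor:smashpi}(ii) with the same $\mathcal F$ and $A=S^V$ --- the hypothesis there that source and target be $G^{\U}$-projective level fibrant is automatic for $G$-symmetric spectra of spaces, since every $G$-space is a Serre fibration over a point.

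Next I would upgrade ``preserves'' to ``preserves and reflects'' using the natural $\upi_*^{\U}$-isomorphisms supplied by Proposition~\ref{prop:suspiso}. If $S^V\wedge f$ is a $\upi_*^{\U}$-isomorphism for some $f\colon X\to Y$, then $\Omega^V(S^V\wedge f)$ is one by the previous step; naturality of the unit $\eta_X\colon X\xrightarrow{} \Omega^V(S^V\wedge X)$ gives a commuting square with sides $f$, $\eta_X$, $\eta_Y$ and $\Omega^V(S^V\wedge f)$, and since $\eta_X,\eta_Y$ are $\upi_*^{\U}$-isomorphisms, $2$-out-of-$3$ forces $f$ to be one. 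Dually, if $\Omega^V g$ is a $\upi_*^{\U}$-isomorphism then so is $S^V\wedge\Omega^V g$, and naturality of the counit $\epsilon_X\colon S^V\wedge\Omega^V X\xrightarrow{} X$ together with $2$-out-of-$3$ (the counits being $\upi_*^{\U}$-isomorphisms) forces $g$ to be one.

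Finally, for $f\colon S^V\wedge X\to Y$ with adjoint $\widetilde f\colon X\to\Omega^V Y$, the triangle identities give $\widetilde f=\Omega^V(f)\circ\eta_X$ and $f=\epsilon_Y\circ(S^V\wedge\widetilde f)$. If $f$ is a $\upi_*^{\U}$-isomorphism, then $\widetilde f$ is a composite of the $\upi_*^{\U}$-isomorphisms $\Omega^V(f)$ (preservation) and $\eta_X$ (Proposition~\ref{prop:suspiso}), hence one; conversely, if $\widetilde f$ is one, then $f$ is a composite of $S^V\wedge\widetilde f$ (preservation) and $\epsilon_Y$ (Proposition~\ref{prop:suspiso}), hence one. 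I do not expect a genuine obstacle here: the whole argument is two or three short diagram chases, and the only point that needs care is confirming that Corollary~\ref{cor:smashpi} really applies in each instance --- that $S^V$ is finite and cofibrant as a based $G$-space, and that the level-fibrancy hypothesis in part (ii) is vacuous in the topological setting.
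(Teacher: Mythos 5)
Your proposal is correct and follows essentially the same route as the paper's proof: preservation from Corollary~\ref{cor:smashpi} using that $S^V$ is a finite $G$-CW complex (Illman), reflection from the fact that the unit and counit of the $(S^V\wedge(-),\Omega^V)$ adjunction are $\upi_*^{\U}$-isomorphisms (Proposition~\ref{prop:suspiso}) together with 2-out-of-3, and the adjoint statement by factoring through the unit. Your explicit check that the level-fibrancy hypothesis in Corollary~\ref{cor:smashpi}(ii) is vacuous in the topological setting is a good precaution that the paper leaves implicit.
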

\begin{proof} Since representation spheres allow the structure of a finite $G$-CW complex, this is a formal consequence of Proposition \ref{prop:suspiso} and Corollary \ref{cor:smashpi}.
\end{proof}

\subsection{The monoid ring $\M_G$}
\label{sec:monoid}
The universe $\U$ is kept in the notation of $\pi_*^{G,\U}$ for two reasons: For once, as one would expect and as is also the case for $G$-orthogonal spectra, these homotopy groups depend on the isomorphism type of $\U$, for example taking $\U$ to be a trivial universe will usually lead to different homotopy groups than for a complete universe. Secondly, if two universes $\U,\U'$ are isomorphic, any such isomorphism $\varphi:\U\cong \U'$ induces a natural isomorphism $\varphi_*:\pi_n^{G,\U}\cong \pi_n^{G,\U'}$, but this isomorphism does depend on the chosen $\varphi$. (We note, however, that the notion of $\upi_*^{\U}$-isomorphism only depends on the isomorphism type of $\U$.) This phenomenon, which already occurs for non-equivariant symmetric spectra, is not present for $G$-orthogonal spectra. It is encoded in an action of the monoid ring $\M_G$, for which we now introduce the relevant algebraic background. Everything here is a rather straightforward generalization of \cite{Sch08}.

\begin{Def}[Monoid of equivariant self-injections] The set $\Inj_G(\U,\U)$ of $G$-equivariant injective self-maps of $\U$ forms a monoid under composition. We denote the associated monoid ring $\mathbb{Z}[\Inj_G(\U,\U)]$ by $\M_G$. 
\end{Def}
Let $M$ be a finite $G$-subset of $\U$ and $A$ an $\M_G$-module. Then we say that an element $a$ of $A$ is of \emph{filtration $M$} if every injection $\psi$ which leaves $M$ pointwise fixed satisfies $\psi\cdot a=a$.

\begin{Def}[Tame modules] An $\M_G$-module $A$ is called \emph{tame} if every element $a\in A$ is of filtration $M$ for some finite $G$-subset $M$ of $\U$. 
\end{Def}

Tame modules have the following property:
\begin{Lemma} \label{lem:injective} Let $A$ be a tame $\M_G$-module. Then the following hold:
\begin{enumerate}[(i)]
 \item If two injections $\psi,\psi':\U\to \U$ agree on a finite $G$-subset $M$ of $\U$, then $\psi\cdot a=\psi'\cdot a$ for every element $a\in A$ of filtration $M$.
 \item Every element $\psi\in \Inj_G(\U,\U)$ acts injectively on $A$.
\end{enumerate}
\end{Lemma}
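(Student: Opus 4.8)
The plan is to prove (ii) first by a direct construction and then to deduce (i) from it, following \cite{Sch08} closely. The common tool is the following observation. Given $\psi\in\Inj_G(\U,\U)$ and a finite $G$-subset $M\subseteq\U$, the restriction $\psi|_M\colon M\to\psi(M)$ is a $G$-bijection onto the finite $G$-subset $\psi(M)$, so it has an inverse $G$-bijection $\psi(M)\to M$. Since $M$ is finite and $\U$ is a $G$-set universe, $\U\setminus\psi(M)$ is again a universe (it still contains infinitely many copies of each orbit occurring in $\U$), so it admits a $G$-injection into $\U\setminus M$. Gluing the inverse $G$-bijection on $\psi(M)$ to such a $G$-injection on the complement produces a map $\chi\in\Inj_G(\U,\U)$ with $\chi(\psi(m))=m$ for all $m\in M$; it is injective because its restriction to $\psi(M)$ has image $M$ while its restriction to the complement has image disjoint from $M$. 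I will call such a $\chi$ a \emph{partial retraction} of $\psi$ along $M$.

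For (ii), I would take $a\in A$ with $\psi\cdot a=0$ and, using tameness, pick a finite $G$-subset $M\subseteq\U$ such that $a$ is of filtration $M$. Choosing a partial retraction $\chi$ of $\psi$ along $M$, the composite $\chi\psi$ lies in $\Inj_G(\U,\U)$ and fixes $M$ pointwise, so $(\chi\psi)\cdot a=a$ by the filtration property. On the other hand $(\chi\psi)\cdot a=\chi\cdot(\psi\cdot a)=\chi\cdot 0=0$, hence $a=0$, i.e. $\psi$ acts injectively.

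For (i), let $\psi$ and $\psi'$ agree on $M$ and let $a$ be of filtration $M$. Since $\psi|_M=\psi'|_M$ we have $\psi(M)=\psi'(M)$, so a single partial retraction $\chi$ of $\psi$ along $M$ also satisfies $\chi(\psi'(m))=m$ for all $m\in M$. Thus $\chi\psi$ and $\chi\psi'$ both fix $M$ pointwise, whence $\chi\cdot(\psi\cdot a)=(\chi\psi)\cdot a=a=(\chi\psi')\cdot a=\chi\cdot(\psi'\cdot a)$. As $A$ is tame, part (ii) shows that $\chi$ acts injectively on $A$, so $\psi\cdot a=\psi'\cdot a$.

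The only step requiring care is the construction of the partial retraction, i.e. extending the inverse $G$-bijection $\psi(M)\to M$ to a $G$-injection of all of $\U$ whose image meets $M$ only in $\psi(M)$. This is precisely where one uses that $\U$ is a universe, since removing a finite $G$-subset leaves its isomorphism type unchanged; making this explicit just amounts to bookkeeping with the orbit decomposition $\U\cong\bigsqcup_{H\in\mathcal{C}}(\N\times G/H)$. Everything else is formal manipulation of the $\M_G$-action, so I expect no real obstacle beyond this.
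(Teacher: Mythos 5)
Your proof is correct, and it is closely related to the paper's but with the logical order of the two parts reversed. The paper first proves (i) directly: given $\psi,\psi'$ agreeing on $M$, it extends $\psi|_M$ to a $G$-\emph{bijection} $\alpha\colon\U\to\U$ and uses the algebraic identity $\psi\cdot a=\alpha\cdot\bigl((\alpha^{-1}\psi)\cdot a\bigr)=\alpha\cdot a$, which needs only the invertibility of $\alpha$ in $\M_G$ and the filtration property of $a$; then (ii) follows from (i) applied to $\psi$ and $\alpha$ together with $\alpha^{-1}\cdot(\alpha\cdot a)=a$. You instead build what you call a partial retraction $\chi$, an injection with $\chi\circ\psi|_M=\mathrm{id}_M$, prove (ii) directly from $(\chi\psi)\cdot a=a$, and then deduce (i) by cancelling $\chi$ using the injectivity you just established. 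The underlying universe-theoretic input is the same in both treatments — that removing a finite $G$-subset from $\U$ leaves a $G$-set isomorphic to $\U$ — and in fact the paper's $\alpha^{-1}$ is a (bijective) partial retraction in your sense, so the two extension constructions are interchangeable. The paper's ordering has the small advantage that its proof of (i) is self-contained and does not invoke (ii); your ordering has the small advantage that (ii) is obtained without detouring through (i). Both are equally valid.
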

\begin{proof}$(i)$: Let $\alpha:\U\cong \U$ be a $G$-bijection which agrees with $\psi$ and $\psi'$ on $M$. Then $\alpha^{-1}\circ \psi$ and $\alpha^{-1}\circ \psi'$ restrict to the identity on $M$ and thus
\[ \psi\cdot a=\alpha\cdot ((\alpha^{-1}\circ \psi)\cdot  a)=\alpha \cdot a = \alpha ((\alpha^{-1}\circ \psi')\cdot  a)=\psi' \cdot a \]
for every element $a$ of filtration $M$.

Regarding $(ii)$, let $\psi$ be an injective self-map of $\U$ and $a$ an element of $A$ such that $\psi\cdot a=0$. Since $A$ is tame, the element $a$ is of filtration $M$ for some finite $G$-subset $M\in \U$. Let $\alpha:\U\to \U$ be a $G$-bijection which agrees with $\psi$ on $M$. Then, by $(i)$, we have $\alpha \cdot a=0$ and thus $a=\alpha^{-1}\cdot (\alpha \cdot a)=\alpha^{-1} \cdot 0=0$ and hence $\psi$ acts injectively. 
\end{proof}

All $G$-embeddings $\varphi:\U\hookrightarrow \U$ give rise to a conjugation ring homomorphism $c_{\varphi}:\M_G\to \M_G$, even if they are not surjective:
\begin{Def}[Conjugation] \label{def:conjugation} The $\varphi$-conjugate of a $G$-embedding $f:\U\hookrightarrow \U$ is defined as:
 \[ c_{\varphi}(f)(x)=\begin{cases} \varphi(f(\varphi^{-1}(x))) & \text{ if } x\in \im(\varphi) \\
 																		x & \text{ if } x\notin \im(\varphi) \end{cases} \]
Any preimage $\varphi^{-1}(x)$ is unique if it exists, so the above is well-defined. Given an $\M_G$-module $A$ we denote by $c_{\varphi}^* A$ the same abelian group with $\M_G$-action pulled back along $c_{\varphi}$. With this definition the map $A\xr{\varphi \cdot} c_{\varphi}^*A$ becomes $\M_G$-equivariant.
\end{Def}

\begin{Def}[Shift] \label{def:algshift} Let $M$ be a finite $G$-subset of $\U$. Then a \emph{shift by $M$} is a $G$-embedding $d^M:\U\hookrightarrow \U$ with image the complement $(\U-M)$ of $M$.
\end{Def}
The behavior of these shifts along finite $G$-sets is central to the theory of semistability (cf. Section \ref{sec:shifthom}). The non-equivariant prototype is the map $d:\N \to \N$ from \cite[Lem. 2.1, (iii)]{Sch08} which sends $i$ to $i+1$. Equivariantly, shifts in every isotypical direction contained in $\U$ need to be considered. Since we prefer to work coordinate-free there is no canonical ``shifting back by one copy of $M$'', leading to the ambiguity in the definition above. However, any two shifts by $M$ only differ by precomposition with a unique $G$-automorphism.

By iteration one obtains a shift along the whole universe $\U$. For this we choose a sequence $\emptyset=M_0\subseteq M_1 \subseteq \hdots \subseteq \U$ of finite $G$-subsets of $\U$ whose union equals $\U$, along with an $(M_n-M_{n-1})$-shift $d^{M_n-M_{n-1}}$ leaving $M_{n-1}$ fixed (for all $n\in \N$). Then the composite $d^{M_n}\defeq d^{M_n-M_{n-1}}\circ d^{M_{n-1}-M_{n-2}}\circ \hdots \circ d^{M_1}$ is a shift by $M_n$. Given an $\M_G$-module $A$ we define a new $\M_G$-module $c_{d^{\U}}^*A$ as the colimit of the sequence
\[ A\xr{d^{M_1}\cdot } c_{d^{M_1}}^*A\xr{d^{M_2-M_1}\cdot } c_{d^{M_2}}^*A\to \hdots \to c_{d^{M_n}}^*A\to \hdots \]
and denote the induced map $A\to c_{d^\U}^*A$ by $d^{\U}$.

We then have:
\begin{Prop}[Criteria for triviality] \label{prop:criteria} For a tame $\M_G$-module $A$ the following are equivalent:
\begin{enumerate}[(i)]
 \item The $\M_G$-action on $A$ is trivial.
 \item All shifts along transitive $G$-subsets of $\U$ act surjectively on $A$.
 \item All shifts along finite $G$-subsets of $\U$ act surjectively on $A$.
 \item The map $d^\U:A\to c_{d^\U}^*A$ above is surjective.
 \item There exists a finite $G$-subset $M$ of $\U$ such that every element of $A$ is of filtration $M$.
\end{enumerate}
\end{Prop}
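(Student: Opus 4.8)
The plan is to establish the cycle $(i)\Rightarrow(ii)\Rightarrow(iii)\Rightarrow(i)$, to add the formal equivalence $(iii)\Leftrightarrow(iv)$, and to close the loop through $(v)$ via the trivial implication $(i)\Rightarrow(v)$ (take $M=\emptyset$) together with $(v)\Rightarrow(i)$. The easy parts are $(i)\Rightarrow(ii)$, $(iii)\Rightarrow(ii)$ (a transitive $G$-subset is in particular a finite one, since $G$ is finite) and $(i)\Rightarrow(v)$. For $(ii)\Rightarrow(iii)$ I would take a finite $G$-subset $N\subseteq\U$, decompose it into $G$-orbits $N=T_1\sqcup\dots\sqcup T_r$, and use Remark~\ref{rem:comp} to write a shift by $N$ as a composite of shifts along the $T_j$, choosing each of these to leave the remaining $T_i$ pointwise fixed so that the composite really is a shift by $N$, up to precomposition with a $G$-automorphism of $\U$ (which lies in $\Inj_G(\U,\U)$, is invertible there, and hence acts bijectively on $A$); a composite of surjective maps is surjective, which is exactly hypothesis $(ii)$. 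The equivalence $(iii)\Leftrightarrow(iv)$ is formal: in the colimit defining $c^*_{d^\U}A$ each transition map is the action of a shift along a finite $G$-subset, so $(iii)$ makes it an epimorphism, and then $d^\U$ — the canonical map to the colimit from its first term $A$ — is an epimorphism because a sequential colimit of epimorphisms of abelian groups has this property; conversely, if $d^\U$ is surjective then chasing preimages through the colimit and cancelling the (injective, by Lemma~\ref{lem:injective}(ii)) action of the intermediate shifts shows that each $d^{M_n}\cdot$ is already surjective, and factoring an arbitrary finite shift out of a sufficiently large $d^{M_n}$ (again Remark~\ref{rem:comp}) gives $(iii)$.

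The substance of the proposition lies in $(iii)\Rightarrow(i)$ and $(v)\Rightarrow(i)$, and both rest on the following key lemma, which is where the genuinely equivariant content enters (non-equivariantly all filtrations are nested, so it is vacuous): \emph{if an element $a$ of a tame $\M_G$-module has a filtration $M_1$ and a filtration $M_2$ with $M_1\cap M_2=\emptyset$, then $a$ has filtration $\emptyset$, i.e.\ the whole monoid acts trivially on $a$.} I would also record the elementary observation that if $a$ has filtration $M$ then $\psi\cdot a$ has filtration $\psi(M)$ for every $\psi\in\Inj_G(\U,\U)$: this is immediate from Lemma~\ref{lem:injective}(i), since $\tau\psi$ and $\psi$ agree on $M$ whenever $\tau$ fixes $\psi(M)$ pointwise. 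Granting the key lemma, $(iii)\Rightarrow(i)$ runs as follows: given $a$, choose a filtration $M_a$ of $a$ by tameness; by $(iii)$ there is $b\in A$ with $d^{M_a}\cdot b=a$, and if $M_b$ is a filtration of $b$ then $a=d^{M_a}\cdot b$ has filtration $d^{M_a}(M_b)$, which is contained in $\U-M_a$ and hence disjoint from $M_a$; the key lemma forces the action on $a$ to be trivial, and $a$ was arbitrary. For $(v)\Rightarrow(i)$, assume $(v)$ with uniform filtration $M$. Then for every $a$ the element $d^M\cdot a$ has filtration $M$ (by $(v)$) and filtration $d^M(M)\subseteq\U-M$, so by the key lemma the monoid acts trivially on $d^M\cdot a$. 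Picking $\psi\in\Inj_G(\U,\U)$ that restricts to $(d^M|_M)^{-1}$ on $d^M(M)$, the composite $\psi d^M$ restricts to the inclusion on $M$, so $(\psi d^M)\cdot a=a$; on the other hand $\psi\cdot(d^M\cdot a)=d^M\cdot a$ by the triviality just observed. Hence $d^M\cdot$ is the identity of $A$, and then the triviality of the action on $\im(d^M\cdot)=A$ yields $(i)$.

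It remains to prove the key lemma, which I expect to be the main obstacle — not for any deep reason, but because it requires careful bookkeeping with partial $G$-embeddings. Let $\rho\in\Inj_G(\U,\U)$; the goal is $\rho\cdot a=a$. Since $a$ has filtration $M_1$, the element $\rho\cdot a$ depends only on $\rho|_{M_1}$, so I may replace $\rho$ by a $G$-embedding agreeing with it on $M_1$ but sending $M_2$ to a copy of $M_2$ disjoint from $M_1\cup M_2\cup\rho(M_1)$ — such a copy exists because $\U\cong\U\sqcup\U$ contains infinitely many orbits of each isomorphism type occurring in $M_2$, and the resulting map on the finite $G$-subset $M_1\sqcup M_2$ extends to a $G$-embedding of $\U$ by the same universe property. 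Next, since $a$ also has filtration $M_2$, I may further modify this embedding off $M_2$; the disjointness just arranged makes it possible to keep it unchanged on $M_2$ while making it the identity on $M_1$, with no injectivity clash. Finally, a $G$-embedding that is the identity on $M_1$ fixes $a$ because $a$ has filtration $M_1$, so $\rho\cdot a=a$. The only points needing care throughout are that each prescribed partial $G$-embedding of a finite $G$-subset extends to a $G$-embedding of $\U$ and that injectivity is maintained at each step; both reduce to $\U\cong\U\sqcup\U$ and to the fact that every finite $G$-subset has infinite complement in $\U$.
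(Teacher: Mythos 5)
Your proof is correct, but it proceeds by a genuinely different key lemma than the paper's, so a comparison is in order. The paper proves the single statement: \emph{if $a$ is not of filtration $\emptyset$, then $d^M\cdot a$ is not of filtration $M$}. This follows in one line from the identity $d^M\circ\psi = c_{d^M}(\psi)\circ d^M$ (together with injectivity of $d^M\cdot$ on tame modules), because $c_{d^M}(\psi)$ fixes $M$ pointwise by construction of the conjugation homomorphism. The contrapositive --- $d^M\cdot a$ of filtration $M$ forces $a$ of filtration $\emptyset$ --- then yields both (iii)$\Rightarrow$(i) and (v)$\Rightarrow$(i) immediately. Your key lemma, that two \emph{disjoint} filtrations of an element force the whole monoid to act trivially on it, is a different structural statement: it is proved by hand-constructing auxiliary $G$-embeddings rather than by the conjugation formalism. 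Both are correct and serve the same purpose; the paper's route is shorter because $c_{d^M}$ has already been built and absorbs the bookkeeping, while yours is more self-contained and arguably more conceptually transparent about the phenomenon. For the equivalence of (ii), (iii), (iv), the paper simply observes that a (possibly countably infinite) composite of injective maps is surjective if and only if each constituent is; your more explicit chase through the colimit amounts to the same thing.

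One small slip worth flagging: your parenthetical claim that the key lemma is ``vacuous non-equivariantly because all filtrations are nested'' is false. Already over $\U=\N$, filtration is a property defined for arbitrary finite subsets, not just the initial segments $\{1,\dots,n\}$, so $\{1,2\}$ and $\{3,4\}$ are both legitimate and disjoint; the lemma's content there is precisely that an element tame for both must be fixed by everything. You may be conflating this with the convention (used in Schwede's non-equivariant paper) of recording only the minimal $n$ with filtration $\underline{n}$, in which case the associated sets \emph{are} nested --- but that is a choice of normal form, not a feature of the definition, and your argument uses the unnormalized notion throughout. This does not affect the validity of the proof, only the aside.
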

\begin{proof} For a (possibly countably infinite) sequence of injective maps the total composite is a surjection if and only if each of the constituents is. Together with the uniqueness of shifts up to precomposition with an automorphism this shows that conditions $(ii)$, $(iii)$ and $(iv)$ are equivalent. Moreover, it is immediate that $(i)$ implies all the others.

The remaining implications are a consequence of:
\begin{Lemma} If an element $a$ of $A$ does not have filtration $\emptyset$ (i.e., if there exists an injection $\U\hookrightarrow \U$ which acts non-trivially on $a$) and $d^M:\U\hookrightarrow \U$ is a shift by $M$, then $d^M\cdot a$ is not of filtration $M$. 
\end{Lemma}
\begin{proof} Let $\psi:\U\hookrightarrow \U$ be an injection with $\psi\cdot a\neq a$. Then, by the injectivity of $d^M\cdot -$ we have
 \[ d^M \cdot a\neq d^M\cdot (\psi \cdot a)=c_{d^M}(\psi)\cdot (d^M\cdot a) \]
and $c_{d^M} (\psi)$ is an injection which pointwise fixes $M$. Hence, $d^M\cdot a$ is not of filtration $M$.
\end{proof}
Now this implies that if an element $a$ is of filtration $M$ and not $\emptyset$, it cannot be in the image of $d^M \cdot -$ for any shift by $M$ (because a premiage $b$ would need to be of filtration $\emptyset$ by the lemma, implying in particular that $a=d^M\cdot b=b$, which contradicts the fact that $a$ is not of filtration $\emptyset$). Hence, $(iii)$ implies $(i)$.

Finally, $(v)$ implies $(i)$, because if there existed an element $a$ not of filtration $\emptyset$, then $d^M\cdot a$ would not be of filtration $M$ by the lemma, contradicting the assumption.
\end{proof}
\begin{Cor} \label{cor:semifingen} Every tame $\M_G$-module which is finitely generated as an abelian group has trivial action. 
\end{Cor}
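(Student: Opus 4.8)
The plan is to reduce directly to criterion (v) of Proposition \ref{prop:criteria}, which characterizes triviality of a tame $\M_G$-module by the existence of a single finite $G$-subset $M$ of $\U$ serving as a simultaneous filtration bound for every element.

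First I would choose a finite generating set $a_1,\dots,a_k$ of $A$ as an abelian group, which exists by hypothesis. Since $A$ is tame, each $a_i$ is of filtration $M_i$ for some finite $G$-subset $M_i\subseteq \U$. Set $M\defeq M_1\cup\dots\cup M_k$; this is again a finite $G$-subset of $\U$, being a finite union of finite $G$-subsets. Because any injection $\psi\in\Inj_G(\U,\U)$ which pointwise fixes $M$ also pointwise fixes each $M_i$, it acts trivially on each $a_i$, so every $a_i$ is of filtration $M$.

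Next I would observe that the elements of filtration $M$ form a subgroup of $A$: if $\psi$ pointwise fixes $M$ and $\psi\cdot a=a$ and $\psi\cdot b=b$, then $\psi\cdot(a+b)=a+b$ and $\psi\cdot(-a)=-a$ by the $\mathbb{Z}$-linearity of the $\M_G$-action. This subgroup contains the generators $a_1,\dots,a_k$, hence equals all of $A$; that is, every element of $A$ is of filtration $M$. This is precisely condition (v) of Proposition \ref{prop:criteria}, and therefore the $\M_G$-action on $A$ is trivial.

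There is no genuine obstacle here: all the substantive content is contained in Proposition \ref{prop:criteria}, and the corollary is a formal consequence once one notes that being of filtration $M$ is preserved under enlarging $M$ and under the finitely many additive operations needed to pass from a generating set to an arbitrary element.
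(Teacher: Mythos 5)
Your argument is correct and matches the paper's proof exactly: take a finite generating set, bound each generator's filtration, pass to the (finite) union $M$ of these bounds, observe that all elements are then of filtration $M$, and conclude via criterion (v) of Proposition \ref{prop:criteria}. You merely spell out the subgroup observation that the paper leaves implicit.
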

\begin{proof} Let $A$ be generated by $a_1,\hdots,a_n$. Then each $a_i$ is of filtration $M_i$ for some finite $G$-subset $M_i$ of $\U$, hence they are all of filtration $\cup M_i$, which is again finite. It follows that every element of $A$ is of filtration $\cup M_i$, so by item $(v)$ in the previous proposition the action is trivial. 
\end{proof}

\subsection{Action of $\M_G$ on naive homotopy groups of $G$-symmetric spectra}
\label{sec:action}
Now we introduce the action of $\M_G$ on the naive homotopy groups of a $G$-symmetric spectrum. It suffices to explain it in the case $n=0$, since $\pi_n^{H,\U}X$ is canonically isomorphic to $\pi_0^{H,\U}(\Omega^n X)$ for $n>0$ and to $\pi_0^{H,\U} (S^{-n}\wedge X)$ for $n<0$ (cf. Proposition \ref{prop:suspiso}). So let $\alpha:\U\hookrightarrow \U$ be a $G$-equivariant injection and $x$ an element of $\pi_0^{H,\U}X$. Then $x$ is represented by an $H$-equivariant map $f:S^M\to X(M)$ for some finite $G$-subset $M$ of $\U$.
\begin{Def} \label{def:action} The element $\alpha \cdot x\in \pi_n^{H,\U}X$ is defined as the class of the composite
\[ \alpha_*f:S^{\alpha(M)}\xr{S^{\alpha_{|M}^{-1}}} S^{M}\xr{f} X(M)\xr{X(\alpha_{|M})} X(\alpha(M)), \]
i.e., the injection monoid $\M_G$ acts ``through conjugation''.
\end{Def}
\begin{Remark} \label{rem:action}
The construction also makes sense if $\alpha$ is an injective $G$-equivariant map between two different (and possibly not even isomorphic) $G$-set universes $\U$ and $\U'$, inducing a map $\alpha\cdot (-):\pi_n^{H,\U}X\to \pi_n^{H,\U'}X$. 
\end{Remark}

It is straightforward to check that the action does not depend on the chosen representative~$f$, and that it is unital, associative, additive and natural. Furthermore, every element in $\pi_0^{H,\U}X$ represented by a map $f:S^{M}\to X(M)$ is of filtration $M$ in the sense of the previous section, since by definition the action of an injection $\alpha$ on $[f]$ depends only on the restriction of $\alpha$ to $M$. In particular, this implies that $\pi_0^{H,\U}X$ is a tame $\M_G$-module.
\begin{Remark} In order for an injection $\alpha:\U\hookrightarrow \U$ to act on $\pi_n^{H,\U}X$ it would suffice that it is $H$-equivariant and not necessarily $G$-equivariant. However, since finite $G$-subsets are cofinal in the poset of finite $H$-subsets of $\U$, the action of $\M_H$ is trivial if and only if the one of $\M_G$ is (cf. Proposition \ref{prop:criteria}), and hence the latter is enough to detect semistability.
\end{Remark}

\begin{Def}[Semistability] A $G$-symmetric spectrum $X$ is called \emph{$G^{\U}$-semistable} if the action of $\M_G$ on $\pi_n^{H,\U}X$ is trivial for every subgroup $H$ of $G$ and every $n\in \mathbb{Z}$.
\end{Def}
We have:
\begin{Lemma} \label{lem:omegasemi}Every $G^{\U}\Omega$-spectrum is $G^{\U}$-semistable. \end{Lemma}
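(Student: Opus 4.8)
The plan is to produce a single finite $G$-subset $N$ of $\U$ with the property that \emph{every} element of $\pi_n^{H,\U}X$ has filtration $N$, and then to invoke the triviality criterion of Proposition \ref{prop:criteria}. Since each $\pi_n^{H,\U}X$ is a tame $\M_G$-module, part $(v)$ of that proposition then yields exactly the vanishing of the action that we want.

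First I would recall, as in the discussion following Definition \ref{def:action}, that a class in $\pi_n^{H,\U}X$ represented by an $H$-map at level $N$ --- that is, by a map out of $S^{n\sqcup N}$ into $X(N)$, or the evident negative-index variant --- is automatically of filtration $N$: by Definition \ref{def:action} an injection $\psi\in\Inj_G(\U,\U)$ acts on such a class by conjugating a representative with $\psi|_N$ and $X(\psi|_N)$, so if $\psi$ restricts to the identity on $N$ it acts trivially. For $n\neq 0$ this uses the description of the action through the canonical isomorphisms with $\pi_0$ of $\Omega^n X$ resp.\ $S^{-n}\wedge X$, but the conjugation formula --- and hence this observation --- is unchanged; in the simplicial case one first passes to the geometric realization, which is again a $G^{\U}\Omega$-spectrum.

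The key input is Example \ref{exa:piomega}: because $X$ is a $G^{\U}\Omega$-spectrum, for every non-empty finite $G$-subset $N$ of $\U$ --- chosen, when $n<0$, large enough that $i(\R^{-n})\subseteq\R[N]$ --- the canonical map $[S^{n\sqcup N},X(N)]^H\to\pi_n^{H,\U}X$ is a bijection, in particular surjective. Such an $N$ exists since the universe $\U$ contains arbitrarily large finite $G$-subsets and $\R[\U]$ contains a copy of the trivial $\R^{\infty}$. Fixing one, every element of $\pi_n^{H,\U}X$ is represented at level $N$, hence is of filtration $N$ by the previous paragraph. This is precisely condition $(v)$ of Proposition \ref{prop:criteria}, so the $\M_G$-action on $\pi_n^{H,\U}X$ is trivial; letting $H$ and $n$ range over all subgroups and all integers shows that $X$ is $G^{\U}$-semistable.

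I do not anticipate a real obstacle: once Example \ref{exa:piomega} is in hand, the entire content is that for a $G^{\U}\Omega$-spectrum the colimit system defining the naive homotopy groups has already stabilised at every non-empty level, so one level suffices to represent all classes. The only points requiring a line of care are the bookkeeping for $n\neq 0$ and the reduction from simplicial sets to spaces. As an alternative to invoking part $(v)$, one could instead verify part $(iii)$ of Proposition \ref{prop:criteria}: given a shift $d^L$ along a finite $G$-subset $L$, represent an arbitrary class of $\pi_n^{H,\U}X$ at a level $N$ contained in $\im(d^L)=\U-L$ (again possible by Example \ref{exa:piomega}) and transport it back along the $G$-isomorphism $d^L|_{(d^L)^{-1}(N)}$; by the conjugation formula of Definition \ref{def:action} the two transports cancel, exhibiting the class as lying in the image of $d^L$, so $d^L$ acts surjectively.
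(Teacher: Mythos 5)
Your proof is correct and is essentially the paper's argument: invoke Example \ref{exa:piomega} to see that a single finite $G$-subset $M$ already represents every class in $\pi_n^{H,\U}X$, observe this means every class has filtration $M$, and conclude by criterion $(v)$ of Proposition \ref{prop:criteria}. The extra care you take with the $n<0$ bookkeeping and the simplicial reduction, and the alternative via criterion $(iii)$, are all fine but not part of the paper's (shorter) proof.
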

\begin{proof} Let $X$ be a $G^{\U}\Omega$-spectrum and $n\in \mathbb{Z}$ an integer. By Example \ref{exa:piomega} there exists a finite $G$-subset $M$ of $\U$ such that the map $[S^{n\sqcup M},X(M)]^H\to \pi_n^{H,\U}X$ is a bijection. In particular, it is surjective and hence every element in $\pi_n^{H,\U}X$ is of filtration $M$. By criterion  $(v)$ of Proposition \ref{prop:criteria}, this implies that the action of $\M_G$ is trivial.
\end{proof}
This already implies that if a $G$-symmetric spectrum $X$ is not $G^{\U}$-semistable, there cannot exist a $\upi_*^{\U}$-isomorphism from $X$ to a $G^ {\U}\Omega$-spectrum. Conversely, in Corollary \ref{cor:semipiiso} and Proposition \ref{prop:semiomega} it is shown that every $G^{\U}$-semistable $G$-symmetric spectrum admits such a $\upi_*^{\U}$-isomorphism. 

Finally, we note the following immediate consequence of Corollary \ref{cor:semifingen}:
\begin{Cor} \label{cor:semifinite} Every $G$-symmetric spectrum $X$ for which all $\pi_n^{H,\U}X$ are finitely generated as abelian groups is $G^{\U}$-semistable. 
\end{Cor}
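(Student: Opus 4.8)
The plan is to deduce the statement directly from the algebraic fact recorded in Corollary \ref{cor:semifingen} together with the tameness of the naive homotopy groups established in Section \ref{sec:action}. By the very definition of $G^{\U}$-semistability, what has to be shown is that for every subgroup $H\leq G$ and every integer $n$ the $\M_G$-action on $\pi_n^{H,\U}X$ is trivial. So the whole proof amounts to feeding the hypothesis into a result we already have.

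First I would recall that each $\pi_n^{H,\U}X$ is a tame $\M_G$-module. For $n=0$ this is exactly what was checked after Definition \ref{def:action}: every class is represented by an $H$-map $f:S^M\to X(M)$ for some finite $G$-subset $M\subseteq\U$, and such a class is of filtration $M$ since the action of an injection on $[f]$ depends only on its restriction to $M$. For general $n$ one uses the canonical isomorphisms $\pi_n^{H,\U}X\cong\pi_0^{H,\U}(\Omega^nX)$ for $n>0$ and $\pi_n^{H,\U}X\cong\pi_0^{H,\U}(S^{-n}\wedge X)$ for $n<0$ coming from Proposition \ref{prop:suspiso}, which are $\M_G$-equivariant by the naturality of the action; hence tameness of $\pi_n^{H,\U}X$ follows from that of $\pi_0$ of the shifted spectrum.

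Next I would invoke the hypothesis: by assumption every $\pi_n^{H,\U}X$ is finitely generated as an abelian group. Combining this with tameness, Corollary \ref{cor:semifingen} applies and gives that the $\M_G$-action on $\pi_n^{H,\U}X$ is trivial. Since $H$ and $n$ were arbitrary, $X$ is $G^{\U}$-semistable by definition, which completes the argument.

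I do not expect any real obstacle here: the statement is a formal corollary, and all the substantive work — establishing tameness of the naive homotopy groups and proving the triviality criterion for finitely generated tame $\M_G$-modules — has already been done in Corollary \ref{cor:semifingen} and the surrounding discussion. The only mild point of care is ensuring the identifications $\pi_n^{H,\U}X\cong\pi_0^{H,\U}(\Omega^nX)$ (resp. $\pi_0^{H,\U}(S^{-n}\wedge X)$) are used as isomorphisms of $\M_G$-modules so that both tameness and finite generation transport across them, but this is guaranteed by the naturality of the $\M_G$-action noted in Remark \ref{rem:action}.
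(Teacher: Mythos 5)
Your proof is correct and follows precisely the route the paper intends: the paper labels the corollary an ``immediate consequence of Corollary \ref{cor:semifingen}'' and gives no further argument, and you have simply filled in the routine details, namely that tameness of $\pi_n^{H,\U}X$ was established in Section \ref{sec:action} and that the identifications $\pi_n^{H,\U}X\cong\pi_0^{H,\U}(\Omega^nX)$ (resp.\ $\pi_0^{H,\U}(S^{-n}\wedge X)$) are the very isomorphisms used to define the $\M_G$-action in higher and negative degrees, so tameness and finite generation transport across them.
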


\subsection{Mackey functor structure}
\label{sec:mackey}
It can be shown that for every $n\in \mathbb{Z}$ the collection $\upi_n^{\U} X=\{\pi_n^ {H,\U}X\}_{H\leq G}$ of $n$-th naive homotopy groups of a $G$-symmetric spectrum $X$ forms a (restricted) Mackey functor, i.e., a coefficient system together with transfer maps for a certain class of subgroup inclusions $H\leq K$ depending on the universe $\U$ or rather its linearization. Concretely this means that there are
\begin{itemize}
\item contravariantly functorial restriction homomorphisms $\res_H^K:\pi_n^{K,\U}X\to \pi_n^{H,\U}X$ for every subgroup inclusion $H\leq K$,
\item covariantly functorial transfer homomorphisms $\tr_H^K:\pi_n^{H,\U}X\to \pi_n^{K,\U}X$ for every subgroup inclusion $H\leq K$ such that $K/H$ allows a $K$-embedding into $\R[\U]$, and
\item transitive conjugation homomorphisms $c_g:\pi_n^{H,\U}X\to \pi_n^{gHg^{-1},\U}X$ for every element $g\in G$ and subgroup $H$ of $G$
\end{itemize}
such that inner conjugations act trivially, restrictions and transfers commute with conjugations and the double coset formula holds. Furthermore, it can be shown that these structure maps commute with the $\M_G$-actions. Because we do not make use of the Mackey-functor structure in this paper and it is similar to the one for $G$-orthogonal spectra (cf. \cite[Sec. 3 and 4]{Sch11}), we do not describe its construction here.
\subsection{Shift and relation to $\M_G$-action}
\label{sec:shifthom}
We now discuss the effect of the shift $sh^M$ along a finite $G$-set $M$ (contained in $\U$) on homotopy groups, or more precisely the effect of the composite $\Omega^M sh^M$ together with the natural transformation $\widetilde{\alpha}^M:id\to \Omega^M sh^M$ of Definition \ref{def:shift}. By adjunction, the $n$-th homotopy group $\pi_n^{H,\U}(\Omega^M sh^M X)$ is naturally isomorphic to the colimit over the terms $[S^{n\sqcup M\sqcup N},X(M\sqcup N)]^H$ for all finite $G$-subsets $N$ of $\U$. Finite $G$-subsets of the form $M\sqcup N$ are cofinal in the $G$-set universe $M\sqcup \U$, so we see that this colimit and hence $\pi_n^{H,\U}(\Omega^M sh^M X)$ is naturally isomorphic to $\pi_n^{H,M\sqcup \U}X$. Since $M\sqcup \U$ is isomorphic to $\U$, it follows that the homotopy groups $\pi_n^{H,\U}X$ and $\pi_n^{H,\U}(\Omega^M sh^M X)$ are abstractly isomorphic. This already shows:
\begin{Cor} \label{cor:shpiiso} The shift $sh^M$ along any finite $G$-subset $M\subset \U$ preserves and reflects $\upi_*^{\U}$-isomorphisms.
\end{Cor}
\begin{proof} By the (non-canonical) isomorphism of homotopy groups above, a map $f:X\to Y$ is a $\upi_*^{\U}$-isomorphism if and only if $\Omega^M sh^M f$ is. The latter is equivalent to $sh^M f$ being one by Proposition \ref{prop:suspiso2}.
\end{proof}

However, the fact that the isomorphism $\pi_n^{H,\U}(\Omega^M sh^M X)\cong \pi_n^{H,\U} X$ is not canonical (it depends on a choice of isomorphism $M\sqcup \U\cong \U$) should make one skeptical about whether such an isomorphism is induced by $\widetilde{\alpha}_X^M$. In fact, while there is no canonical isomorphism $M\sqcup \U\cong \U$, there is of course a canonical $G$-equivariant embedding $i:\U\hookrightarrow M\sqcup \U$ and it turns out that this describes the effect of $\widetilde{\alpha}_X^M$ on homotopy groups:
\begin{Prop} \label{prop:pishift} The composite
$\pi_n^{H,\U}X\xr{(\widetilde{\alpha}_X^M)_*} \pi_n^{H,\U}(\Omega^M sh^M X)\cong \pi_n^{H,M\sqcup \U}X$ equals the action of the inclusion $i:\U\hookrightarrow M\sqcup \U$, in the sense of Remark \ref{rem:action}.
\end{Prop}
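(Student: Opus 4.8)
The plan is to unwind both sides of the claimed equality into explicit colimit descriptions and check that the maps agree on representatives. First I would recall the description of $\pi_n^{H,\U}(\Omega^M sh^M X)$: by the smash--cotensor adjunction (Proposition \ref{prop:suspiso2}) this group is the colimit over finite $G$-subsets $N\subseteq \U$ of $[S^{n\sqcup N},\Omega^M(sh^MX)(N)]^H$, and since $(sh^MX)(N)=X(M\sqcup N)$ and $\Omega^M$ adjoins a negative copy of $S^M$, the term is $[S^{n\sqcup M\sqcup N},X(M\sqcup N)]^H$. Finite $G$-subsets of the form $M\sqcup N$ are cofinal in the poset of finite $G$-subsets of the universe $M\sqcup \U$, so this colimit is naturally $\pi_n^{H,M\sqcup\U}X$, as already observed in the text. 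The identification $\pi_n^{H,\U}(\Omega^M sh^M X)\cong \pi_n^{H,M\sqcup\U}X$ in the statement is precisely this cofinality isomorphism, and on a representative $g\colon S^{n\sqcup M\sqcup N}\to X(M\sqcup N)$ it is the ``identity''.

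Next I would trace through $(\widetilde{\alpha}_X^M)_*$ on a representative. Starting from a class in $\pi_n^{H,\U}X$ represented by $f\colon S^{n\sqcup N_0}\to X(N_0)$ for some finite $G$-subset $N_0\subseteq \U$, applying $\widetilde{\alpha}_X^M$ levelwise and then passing to $\pi_n^{H,\U}(\Omega^M sh^MX)$, I would use the explicit formula for $\alpha_X^M$ given in Section \ref{sec:funcgsym}: it is the composite $S^M\wedge X_n\cong X_n\wedge S^M\xrightarrow{\sigma_n^M}X(\underline n\sqcup M)\xrightarrow{X(\tau)}X(M\sqcup\underline n)$. Evaluated on $N_0$ this says the structure map of $sh^MX$ applied to $f$ is, up to the symmetry isomorphism $\tau_{N_0,M}$, just the generalized structure map $\sigma_{N_0}^M(f\wedge -)$ followed by $X(\tau_{N_0,M})$. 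Chasing the adjoint of $\widetilde{\alpha}_X^M$, the image class in $\pi_n^{H,M\sqcup\U}X$ is represented (at the stage $M\sqcup N_0$) by $X(\tau_{N_0,M})\circ \sigma_{N_0}^M\circ(f\wedge S^M)\colon S^{n\sqcup N_0\sqcup M}\to X(M\sqcup N_0)$. On the other hand, the inclusion $i\colon \U\hookrightarrow M\sqcup\U$ restricted to $N_0$ is just the inclusion $N_0\hookrightarrow M\sqcup N_0$ into the second summand, and by Definition \ref{def:action} (in the form of Remark \ref{rem:action}) the class $i\cdot[f]$ in $\pi_n^{H,M\sqcup\U}X$ is represented by the same map, once one moves from the stage $N_0$ up to the stage $M\sqcup N_0$ in the colimit system by applying the connecting map, which is exactly suspension by $S^M$ followed by $\sigma_{N_0}^M$ and the identification $X(N_0\sqcup M)\cong X(M\sqcup N_0)$ induced by the canonical bijection (i.e. $X(\tau_{N_0,M})$). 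So the two representatives literally coincide.

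The remaining care is bookkeeping about the symmetry isomorphisms $\tau$ and about which copy of $S^M$ gets placed where, together with checking compatibility with the connecting maps of the colimit systems so that the comparison is well-defined and independent of $N_0$; this is routine using the $\Sigma_{n}\times\Sigma_{m}$-equivariance of the iterated structure maps (Lemma \ref{lem:genstructmap}), exactly as in the well-definedness check following Definition \ref{def:action}. The main obstacle, such as it is, will be making sure that the identification $\pi_n^{H,\U}(\Omega^M sh^M X)\cong \pi_n^{H,M\sqcup\U}X$ is normalized in the same way on both sides -- in particular that the cofinality isomorphism is set up so that the ``obvious'' identity-on-representatives description holds, rather than differing by a permutation of the universe; once the conventions are pinned down, the proof reduces to the single diagram chase above. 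One could also phrase the whole argument more invariantly by noting that $\widetilde{\alpha}_X^M$ is, up to the symmetry twist, the unit of the $(S^M\wedge-,\Omega^M)$-adjunction composed with the generalized structure map, and that the latter is precisely what implements the colimit connecting map corresponding to enlarging the universe by $M$; I would mention this as the conceptual reason but carry out the representative-level check for rigor.
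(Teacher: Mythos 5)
Your proof follows essentially the same route as the paper: take a representative $f:S^{n\sqcup N}\to X(N)$, unwind $\widetilde{\alpha}_X^M$ into the composite $X(\tau_{N,M})\circ\sigma_N^M\circ(f\wedge S^M)$, and observe that this is precisely the connecting map in the colimit system for $\pi_n^{H,M\sqcup\U}X$ applied to $i_*f=f$. The extra discussion of normalizing the cofinality isomorphism and the closing conceptual remark are sound but not needed beyond what the paper records.
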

\begin{proof} Let $x\in \pi_n^{H,\U}X$ be an arbitrary element, represented by an $H$-map $f:S^{n\sqcup N}\to X(N)$ for some finite $H$-set $N$ contained in $\U$. Then $i_*f$ is again $f$, with the only difference of thinking of $N$ as now sitting inside $M\sqcup \U$. In order to compare it to an element of $\pi_n^{H,\U}(\Omega^M sh^M X)$ under the isomorphism above, we have to suspend this element by $M$, i.e., form the composition
\[ S^{n\sqcup N}\wedge S^M\xr{f\wedge S^M} X(N)\wedge S^M\xr{\sigma_N^M}X(N\sqcup M)\xr{X(\chi_{N,M})} X(M\sqcup N). \]
But, by the definition of $\widetilde{\alpha}_X^M$, after adjoining $S^M$ to the right this is precisely the composition \[ S^{n\sqcup N}\xr{f} X(N)\xr{(\widetilde{\alpha}_X^M)(N)} \Omega^M X(M\sqcup N)=(\Omega^M sh^M X)(N) \]
and thus $i\cdot x=[i_*f]=[\widetilde{\alpha}_X^M(N)\circ f]=\widetilde{\alpha}_X^M(x)$.
\end{proof}
This proposition can be translated into a statement about the internal action of $\M_G$ on $\pi_n^{H,\U}X$, where it corresponds to the algebraic shift discussed in Definition \ref{def:algshift}. For this we choose a ``shift by $M$'', i.e., an embedding $d^M:\U\hookrightarrow \U$ with image $\U-M$. In particular we obtain an isomorphism $id_M\sqcup d^M:M\sqcup \U\xr{\cong} \U$ of $G$-set universes and hence a natural isomorphism $\pi_n^{H,\U}(\Omega^M sh^M X)\cong \pi_n^{H,M\sqcup \U}X\cong \pi_n^{H,\U}X$. This isomorphism is not $\M_G$-equivariant in general, but it becomes so when conjugating the action on $\pi_n^{H,\U}X$ along $d^M$ in the sense of Definition \ref{def:conjugation}:
\begin{Prop} \label{prop:pishift2} The above defines an $\M_G$-isomorphism
  \[ \pi_n^{H,\U}(\Omega^M sh^M X)\cong c_{d^M}^*(\pi_n^{H,\U}X). \]
Moreover, the composition $\pi_n^{H,\U}X\xr{\widetilde{\alpha}_X^M}\pi_n^{H,\U}(\Omega^M sh^M X)\xr{\cong} c_{d^M}^*(\pi_n^{H,\U}X)$ equals multiplication with $d^M$.
\end{Prop}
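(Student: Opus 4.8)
The plan is to realize the stated isomorphism as the composite of the isomorphism $\pi_n^{H,\U}(\Omega^M sh^M X)\cong\pi_n^{H,M\sqcup\U}X$ already constructed in the discussion preceding Proposition~\ref{prop:pishift} with the reindexing isomorphism $\xi_*\colon\pi_n^{H,M\sqcup\U}X\xrightarrow{\cong}\pi_n^{H,\U}X$ attached to the chosen shift $\xi\defeq id_M\sqcup d^M\colon M\sqcup\U\xrightarrow{\cong}\U$, and then to identify how this composite transports the $\M_G$-action.

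First I would make explicit how $\M_G$ acts after the first identification. By adjunction there is a natural isomorphism $(\Omega^M sh^M X)(L)\cong\Omega^M X(M\sqcup L)$ of $G$-spaces for every finite $G$-set $L$, and because the generalized structure maps of $sh^M$ are those of $X$ precomposed with $M\sqcup(-)$ (Definition~\ref{def:shift}), the action of an injective $G$-map $\psi\colon L\to L'$ on evaluations of $\Omega^M sh^M X$ becomes $\Omega^M X(id_M\sqcup\psi)$ under these isomorphisms. Passing to the colimit over the cofinal system of finite $G$-subsets of the form $M\sqcup L\subseteq M\sqcup\U$, it follows that under the identification $\pi_n^{H,\U}(\Omega^M sh^M X)\cong\pi_n^{H,M\sqcup\U}X$ the action of $\psi\in\Inj_G(\U,\U)$ on the left (i.e.\ the action of Definition~\ref{def:action} on $\pi_*^{\U}$ of the $G$-symmetric spectrum $\Omega^M sh^M X$) corresponds to the action of $id_M\sqcup\psi\in\Inj_G(M\sqcup\U,M\sqcup\U)$ on the right, in the sense of Remark~\ref{rem:action}. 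This is the one step that requires genuine care; everything else is formal. It amounts to unwinding Definition~\ref{def:action} together with the definition of the generalized structure maps of $sh^M X$, but once the cofinal subsystem $\{M\sqcup L\}$ is fixed it is a routine diagram chase, the relevant squares being instances of the naturality square already drawn after Definition~\ref{def:action}.

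Next I would compose with $\xi_*$. The construction of Definition~\ref{def:action}, extended as in Remark~\ref{rem:action} to injective $G$-maps between (possibly different) universes, is immediately seen to be functorial in composition and to send a $G$-bijection $\xi$ to the evident reindexing isomorphism; hence $\xi_*$ carries the $(id_M\sqcup\psi)$-action on $\pi_n^{H,M\sqcup\U}X$ to the $(\xi\circ(id_M\sqcup\psi)\circ\xi^{-1})$-action on $\pi_n^{H,\U}X$. A direct computation from Definition~\ref{def:conjugation} identifies this self-injection: both $\xi\circ(id_M\sqcup\psi)\circ\xi^{-1}$ and $c_{d^M}(\psi)$ fix $M\subseteq\U$ pointwise, and on $\U-M=\im(d^M)$ both send $x$ to $d^M(\psi((d^M)^{-1}(x)))$, so $\xi\circ(id_M\sqcup\psi)\circ\xi^{-1}=c_{d^M}(\psi)$. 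Therefore the composite isomorphism $\Phi\colon\pi_n^{H,\U}(\Omega^M sh^M X)\to\pi_n^{H,\U}X$ satisfies $\Phi(\psi\cdot y)=c_{d^M}(\psi)\cdot\Phi(y)$, i.e.\ it is an isomorphism of $\M_G$-modules $\pi_n^{H,\U}(\Omega^M sh^M X)\cong c_{d^M}^*(\pi_n^{H,\U}X)$, which is the first assertion.

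For the ``moreover'' I would feed Proposition~\ref{prop:pishift} into this: it says precisely that the composite of $(\widetilde{\alpha}_X^M)_*$ with the cofinality isomorphism $\pi_n^{H,\U}(\Omega^M sh^M X)\cong\pi_n^{H,M\sqcup\U}X$ is the action of the canonical inclusion $i\colon\U\hookrightarrow M\sqcup\U$. Postcomposing with $\xi_*$ and using functoriality of the injection action once more, the total composite $\pi_n^{H,\U}X\to c_{d^M}^*(\pi_n^{H,\U}X)$ is the action of $\xi\circ i\colon\U\to\U$; but $i$ lands in the $\U$-summand of $M\sqcup\U$, on which $\xi$ restricts to $d^M$, so $\xi\circ i=d^M$ and the composite is multiplication by $d^M$. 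The main obstacle is thus confined to the equivariance assertion of the first step, the remainder being functoriality of the injection action together with the elementary identity $\xi\circ(id_M\sqcup\psi)\circ\xi^{-1}=c_{d^M}(\psi)$.
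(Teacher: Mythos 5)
Your argument is correct and follows essentially the same route as the paper's proof: transport the $\M_G$-action across the cofinality isomorphism (where it becomes pullback along $id_M\sqcup-$, since $M$ is left fixed), conjugate by $\xi=id_M\sqcup d^M$, and check the identity $\xi\circ(id_M\sqcup\psi)\circ\xi^{-1}=c_{d^M}(\psi)$, with the "moreover" part following from Proposition~\ref{prop:pishift} via $\xi\circ i=d^M$. You are somewhat more explicit than the paper at the step where the action is identified on $\pi_n^{H,M\sqcup\U}X$, which the paper compresses into the phrase "since $M$ is left untouched."
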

\begin{proof} Under the isomorphism $\pi_n^{H,\U}(\Omega^M sh^M X)\cong \pi_n^{H,M\sqcup \U}X$ the $\M_G$-action on the former corresponds to the $\mathbf{M}^{M\sqcup \U}_G$-action on the latter pulled back along the homomorphism $id_M\sqcup -:\M_G\to \mathbf{M}^{M\sqcup \U}_G$, since $M$ is left untouched. The equality $(id_M\sqcup d^M) \circ i_{\U}=d^M$ implies that after conjugation with $id_M\sqcup d^M$ this homomorphism becomes $c_{d^M}$, and so the action on $\pi_n^{H,\U}X$ is pulled back along $c_{d^M}$ as claimed. The second statement is then an immediate consequence of Proposition \ref{prop:pishift}.
\end{proof}
Applying Lemma \ref{lem:injective} and Proposition \ref{prop:criteria} on algebraic properties of tame $\M_G$-modules, we obtain the following corollaries:
\begin{Cor} The map $\widetilde{\alpha}_X^M:X\to (\Omega^M sh^MX)$ is a $\upi_*^{\U}$-isomorphism if and only if $d^M$ (and hence every shift by $M$) acts surjectively on $\pi_n^{H,\U}X$ for all subgroups $H$ of $G$ and all $n\in \mathbb{Z}$. 
\end{Cor}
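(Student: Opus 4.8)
The plan is to read the statement off directly from Proposition~\ref{prop:pishift2}, using Lemma~\ref{lem:injective}(ii) as the only genuine input.

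First I would fix a subgroup $H\leq G$ and an integer $n$ and examine the map $(\widetilde{\alpha}_X^M)_*:\pi_n^{H,\U}X\to \pi_n^{H,\U}(\Omega^M sh^M X)$. By Proposition~\ref{prop:pishift2} the target is $\M_G$-isomorphic to $c_{d^M}^*(\pi_n^{H,\U}X)$, and under this isomorphism $(\widetilde{\alpha}_X^M)_*$ is identified with multiplication by $d^M$. The conjugate module $c_{d^M}^*(\pi_n^{H,\U}X)$ has the \emph{same} underlying abelian group as $\pi_n^{H,\U}X$ (only the $\M_G$-action is pulled back along $c_{d^M}$), so $(\widetilde{\alpha}_X^M)_*$ is an isomorphism of abelian groups if and only if the self-map $d^M\cdot(-):\pi_n^{H,\U}X\to \pi_n^{H,\U}X$ is one.

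Next I would use that $\pi_n^{H,\U}X$ is a tame $\M_G$-module (established in Section~\ref{sec:action}) and that $d^M$ lies in $\Inj_G(\U,\U)\subseteq \M_G$. By Lemma~\ref{lem:injective}(ii) the operator $d^M\cdot(-)$ is injective on every tame $\M_G$-module, hence it is an isomorphism precisely when it is surjective. Combining this with the previous paragraph, $(\widetilde{\alpha}_X^M)_*$ is an isomorphism if and only if $d^M$ acts surjectively on $\pi_n^{H,\U}X$. Quantifying over all $H\leq G$ and all $n\in\mathbb{Z}$ — which is exactly the definition of a $\upi_*^{\U}$-isomorphism — yields the asserted equivalence. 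For the parenthetical clause I would recall the observation following Definition~\ref{def:algshift} that any two shifts by $M$ differ by precomposition with a $G$-automorphism of $\U$; such an automorphism is invertible in $\Inj_G(\U,\U)$, hence acts bijectively on every tame module, so $d^M$ acts surjectively if and only if every shift by $M$ does.

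There is no real obstacle: the argument is a translation of Proposition~\ref{prop:pishift2} into the language of surjectivity, with Lemma~\ref{lem:injective}(ii) bridging ``isomorphism'' and ``surjection''. The only points needing a moment's care are the bookkeeping that $c_{d^M}^*$ leaves the underlying abelian group unchanged and that a $\upi_*^{\U}$-isomorphism is by definition a map inducing isomorphisms on all $\pi_n^{H,\U}$.
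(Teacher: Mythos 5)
Your proof is correct and is essentially the argument the paper intends: the text right before the corollary says it follows by ``applying Lemma~\ref{lem:injective} and Proposition~\ref{prop:criteria}'', and you carry out precisely the combination of Proposition~\ref{prop:pishift2} (to identify $(\widetilde{\alpha}_X^M)_*$ with $d^M\cdot$) and Lemma~\ref{lem:injective}(ii) (to trade ``isomorphism'' for ``surjection''), together with the observation after Definition~\ref{def:algshift} for the parenthetical clause. No gap; this matches the paper's intended reasoning.
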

\begin{Cor} \label{cor:semi} A $G$-symmetric spectrum $X$ is $G^{\U}$-semistable if and only if the map $\widetilde{\alpha}_X^M:X\to \Omega^Msh^MX$ is a $\upi_*^{\U}$-isomorphism for every finite $G$-subset $M$ of $\U$ and if and only if $\alpha_X^M:S^M\wedge X\to sh^M X$ is a $\upi_*^{\U}$-isomorphism for every finite $G$-subset $M$ of $\U$.
\end{Cor}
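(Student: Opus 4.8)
The plan is to deduce this corollary purely formally from the corollary immediately preceding it, from Proposition~\ref{prop:criteria}, and from Proposition~\ref{prop:suspiso2}; no new geometric input should be required, so I would organise the argument as pure bookkeeping over the quantifiers involved.

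First I would dispatch the second ``if and only if''. The map $\alpha_X^M\colon S^M\wedge X\to sh^M X$ is by construction adjoint to $\widetilde{\alpha}_X^M\colon X\to \Omega^M sh^M X$, so applying the last assertion of Proposition~\ref{prop:suspiso2} with $V=\R[M]$ and $Y=sh^M X$ shows at once that $\alpha_X^M$ is a $\upi_*^{\U}$-isomorphism precisely when $\widetilde{\alpha}_X^M$ is. Thus the two displayed conditions agree term by term in $M$, and it only remains to relate the $\widetilde{\alpha}_X^M$-condition to $G^{\U}$-semistability.

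For that I would argue as follows. By definition $X$ is $G^{\U}$-semistable exactly when the $\M_G$-action on each tame module $\pi_n^{H,\U}X$ is trivial. By the equivalence of conditions $(i)$ and $(iii)$ in Proposition~\ref{prop:criteria}, triviality of the action on a fixed $\pi_n^{H,\U}X$ holds if and only if every shift along a finite $G$-subset of $\U$ acts surjectively on $\pi_n^{H,\U}X$; here I would note that the choice of shift $d^M$ is immaterial, since any two shifts by a given $M$ differ by precomposition with a $G$-automorphism of $\U$, which is a unit in $\Inj_G(\U,\U)$ and hence acts bijectively. On the other hand, the corollary immediately preceding this one states that $\widetilde{\alpha}_X^M$ is a $\upi_*^{\U}$-isomorphism if and only if $d^M$ acts surjectively on $\pi_n^{H,\U}X$ for all subgroups $H$ and all $n\in\mathbb Z$. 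Quantifying both statements over all finite $G$-subsets $M$, $G^{\U}$-semistability of $X$ becomes equivalent to $\widetilde{\alpha}_X^M$ being a $\upi_*^{\U}$-isomorphism for every finite $G$-subset $M$ of $\U$, as claimed. The only point needing a little care — rather than a genuine obstacle — is keeping track of the three quantifiers ($M$, $H$, $n$) and the remark that shifts by a fixed $M$ form a single orbit under invertible $G$-automorphisms; everything else is a direct citation of the results above.
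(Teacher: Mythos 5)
Your proposal is correct and follows essentially the same route as the paper: the first equivalence is the immediately preceding corollary combined with the equivalence $(i)\Leftrightarrow(iii)$ of Proposition~\ref{prop:criteria}, and the second equivalence is a direct application of the last assertion of Proposition~\ref{prop:suspiso2}. Your explicit accounting of the quantifiers over $M$, $H$, and $n$ is just a more spelled-out version of what the paper leaves implicit.
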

The last ``if and only if'' follows from Proposition \ref{prop:suspiso2}.
\begin{Cor} \label{cor:mapsemi} Let $\F$ be a family of subgroups of $G$, $A$ a cofibrant $G$-space with non-basepoint isotropy in $\F$ and $X$ a $G$-symmetric spectrum such that $\M_{G}$ acts trivially on $\pi_*^{H,\U}X$ for all $H$ in $\F$. Then $A\wedge X$ is $G^{\U}$-semistable. If $A$ is finite and $X$ is $G^{\U}$-projective level fibrant, then $\map(A,X)$ is also $G^{\U}$-semistable.
\end{Cor}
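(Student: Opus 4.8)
The strategy is to combine the characterization of $G^{\U}$-semistability from Corollary \ref{cor:semi} with Corollary \ref{cor:smashpi}. The first point is that Corollary \ref{cor:semi} is genuinely subgroupwise: it is deduced from Propositions \ref{prop:pishift}, \ref{prop:pishift2}, \ref{prop:suspiso2}, \ref{prop:criteria} and Lemma \ref{lem:injective}, all of which only involve $\pi_n^{H,\U}$ for one fixed subgroup $H$ at a time. Hence the hypothesis that $\M_G$ acts trivially on $\pi_n^{H,\U}X$ for all $n$ and all $H\in\F$ yields exactly the following: for every finite $G$-subset $M\subseteq\U$, both $\alpha_X^M:S^M\wedge X\to sh^M X$ and its adjoint $\widetilde{\alpha}_X^M:X\to\Omega^M sh^M X$ induce isomorphisms on $\pi_n^{H,\U}$ for all $n$ and all $H\in\F$. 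Indeed, by Proposition \ref{prop:pishift2} the map $\pi_n^{H,\U}(\widetilde{\alpha}_X^M)$ is, after the canonical identification, multiplication by a shift $d^M$, which is injective on the tame module $\pi_n^{H,\U}X$ by Lemma \ref{lem:injective} and surjective once the $\M_G$-action is trivial by Proposition \ref{prop:criteria}; the statement for $\alpha_X^M$ follows via (the subgroupwise version of) Proposition \ref{prop:suspiso2}.

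The second step is to record how $sh^M$, $\Omega^M$ and the maps $\alpha^M$, $\widetilde{\alpha}^M$ interact with $A\wedge(-)$ and $\map(A,-)$. Since $A$ and $S^M$ carry trivial $\Sigma_n$-actions on every level, for each finite $G$-set $N$ there are natural $G$-isomorphisms $(A\wedge X)(N)\cong A\wedge X(N)$ and $\map(A,X)(N)\cong\map(A,X(N))$, the latter because $\Bij(\underline{n},N)_+$ is a free and transitive right $\Sigma_n$-set, so both sides are non-canonically $\map(A,X_n)$ and the evident comparison map is a $G$-homeomorphism. Consequently $sh^M(A\wedge X)\cong A\wedge sh^M X$ and $sh^M\map(A,X)\cong\map(A,sh^M X)$, and therefore $\Omega^M sh^M\map(A,X)\cong\map(A,\Omega^M sh^M X)$ using $\map(S^M,\map(A,-))\cong\map(A,\map(S^M,-))$. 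Unwinding the definitions, under these identifications $\alpha_{A\wedge X}^M$ corresponds to $\mathrm{id}_A\wedge\alpha_X^M$ (up to the symmetry isomorphism of $S^M\wedge A$) and $\widetilde{\alpha}_{\map(A,X)}^M$ corresponds to $\map(A,\widetilde{\alpha}_X^M)$, both because these natural transformations are assembled from (adjoint) generalized structure maps by constructions that commute with the enriched functor $\map(A,-)$.

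Given these two steps the conclusion is immediate. For $A\wedge X$, Corollary \ref{cor:semi} reduces the claim to showing $\alpha_{A\wedge X}^M$ is a $\upi_*^{\U}$-isomorphism for every finite $G$-subset $M\subseteq\U$; under step two this is $\mathrm{id}_A\wedge\alpha_X^M$, which is a $\upi_*^{\U}$-isomorphism by Corollary \ref{cor:smashpi}(i), since $\alpha_X^M$ is an isomorphism on $\pi_*^{H,\U}$ for $H\in\F$ by step one and $A$ is a cofibrant $G$-space with non-basepoint isotropy in $\F$. For $\map(A,X)$ with $A$ finite and $X$ $G^{\U}$-projective level fibrant, Corollary \ref{cor:semi} reduces the claim to showing $\widetilde{\alpha}_{\map(A,X)}^M$ is a $\upi_*^{\U}$-isomorphism; under step two this is $\map(A,\widetilde{\alpha}_X^M)$, and $\widetilde{\alpha}_X^M:X\to\Omega^M sh^M X$ is a map of $G^{\U}$-projective level fibrant $G$-symmetric spectra ($sh^M$ and $\Omega^M$ preserving level fibrancy — the latter because $S^M$ is a $(G\times\Sigma_n)$-CW complex) that is an isomorphism on $\pi_*^{H,\U}$ for $H\in\F$ by step one, so Corollary \ref{cor:smashpi}(ii) applies. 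I expect the only non-formal work to be in step two: checking that $\alpha^M$ and $\widetilde{\alpha}^M$ really commute with $A\wedge(-)$ and $\map(A,-)$ under the evident identifications, and that $sh^M$ and $\Omega^M$ preserve $G^{\U}$-projective level fibrancy — this is a matter of unwinding point-set definitions and carries no genuine difficulty.
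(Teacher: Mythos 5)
Your proposal is correct and takes essentially the same route as the paper's proof, which in one sentence reduces the claim to Corollaries \ref{cor:semi} and \ref{cor:smashpi} via the compatibility of $A\wedge(-)$ and $\map(A,-)$ with $S^M\wedge(-)$, $sh^M$, and $\Omega^M sh^M$. Your unpacking of step one — observing that the ingredients behind Corollary \ref{cor:semi} (Propositions \ref{prop:pishift2}, \ref{prop:criteria}, Lemma \ref{lem:injective}) work one subgroup $H$ at a time, so the weaker hypothesis on $\pi_*^{H,\U}X$ for $H\in\F$ already gives that $\alpha_X^M$ is a $\pi_*^{H,\U}$-isomorphism for $H\in\F$ — is a genuinely useful clarification that the paper leaves implicit but that is needed for Corollary \ref{cor:smashpi} to apply with isotropy restricted to $\F$.
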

\begin{proof} This follows from Corollaries \ref{cor:semi} and \ref{cor:smashpi}, since $A\wedge (-)$ commutes both with $S^M\wedge (-)$ and $sh^M(-)$ and $\map(A,-)$ commutes with $\Omega^M sh^M(-)$.
\end{proof}
\begin{Cor} \label{cor:shiftsemi} If $X$ is $G^{\U}$-semistable, then so is $sh^N X$ for any finite $G$-subset $N$ of $\U$.
\end{Cor}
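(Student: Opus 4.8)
The plan is to reduce the statement to the two facts, already available, that the class of $G^{\U}$-semistable spectra is invariant under $\upi_*^{\U}$-isomorphisms and is preserved by smashing with a representation sphere. Fix a finite $G$-subset $N\subseteq\U$ and a $G^{\U}$-semistable $G$-symmetric spectrum $X$. First I would observe that $\Omega^N sh^N X$ is again $G^{\U}$-semistable: by Corollary \ref{cor:semi} the natural map $\widetilde{\alpha}_X^N:X\to\Omega^N sh^N X$ is a $\upi_*^{\U}$-isomorphism because $X$ is $G^{\U}$-semistable and $N\subseteq\U$; and since the $\M_G$-action on naive homotopy groups is natural (Section \ref{sec:action}), a $\upi_*^{\U}$-isomorphism induces isomorphisms of $\M_G$-modules $\pi_n^{H,\U}X\cong\pi_n^{H,\U}(\Omega^N sh^N X)$ for all $H$ and $n$, so one side carries the trivial action exactly when the other does. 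Alternatively this step is immediate from Proposition \ref{prop:pishift2}, which identifies $\pi_n^{H,\U}(\Omega^N sh^N X)$ with $c_{d^N}^*(\pi_n^{H,\U}X)$ as $\M_G$-modules, and pulling a trivial $\M_G$-module back along the conjugation ring homomorphism $c_{d^N}$ (Definition \ref{def:conjugation}) again yields a trivial one, as $c_{d^N}$ maps $\Inj_G(\U,\U)$ into itself.

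Next I would pass from $\Omega^N sh^N X$ back to $sh^N X$. Since $N$ is a finite $G$-subset of $\U$, the representation sphere $S^N=S^{\R[N]}$ is the sphere of a finite-dimensional $G$-subrepresentation of $\R[\U]$, so by Proposition \ref{prop:suspiso} the counit $\epsilon:S^N\wedge\bigl(\Omega^N sh^N X\bigr)\to sh^N X$ is a $\upi_*^{\U}$-isomorphism. As $S^N$ is a cofibrant $G$-space, Corollary \ref{cor:mapsemi}, applied with $\F$ the family of all subgroups of $G$, shows that $S^N\wedge(\Omega^N sh^N X)$ is $G^{\U}$-semistable. Invoking once more the invariance of $G^{\U}$-semistability under $\upi_*^{\U}$-isomorphisms, now for the map $\epsilon$, we conclude that $sh^N X$ is $G^{\U}$-semistable.

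I expect no real obstacle here: every step is a direct application of a result from Section \ref{sec:homgroups} together with the (essentially formal) remark that conjugation preserves triviality of an $\M_G$-module, and in the simplicial case one simply notes that geometric realization commutes with $sh^N$ so that everything reduces to the topological statement. A slightly more hands-on variant, avoiding loop spaces, would instead use the other criterion of Corollary \ref{cor:semi}: for a finite $G$-subset $M\subseteq\U$ one factors $\alpha_X^{M\sqcup N}:S^{M\sqcup N}\wedge X\to sh^{M\sqcup N}X$ as $S^M\wedge\alpha_X^N$ followed by $\alpha_{sh^N X}^M$ under the canonical identifications $S^{M\sqcup N}\cong S^M\wedge S^N$ and $sh^{M\sqcup N}X\cong sh^M sh^N X$; the outer map and $\alpha_X^N$ are $\upi_*^{\U}$-isomorphisms since $X$ is $G^{\U}$-semistable, hence so is $S^M\wedge\alpha_X^N$ by Corollary \ref{cor:smashpi}, and two-out-of-three finishes it. The only point needing care in that variant is checking the compatibility $\alpha_{sh^N X}^M\circ(S^M\wedge\alpha_X^N)=\alpha_X^{M\sqcup N}$, a short diagram chase with the generalized structure maps and the symmetry isomorphisms in Definition \ref{def:shift}.
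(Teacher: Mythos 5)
Your proof is correct but takes a different route than the paper's. The paper's own argument is one line: since $sh^N$ commutes (up to natural isomorphism) with $sh^M(-)$ and with $S^M\wedge(-)$, the map $\alpha^M_{sh^N X}$ is identified with $sh^N(\alpha^M_X)$, which is a $\upi_*^{\U}$-isomorphism by Corollary \ref{cor:semi} (applied to $X$) together with Corollary \ref{cor:shpiiso} (shift preserves $\upi_*^{\U}$-isomorphisms); then Corollary \ref{cor:semi} applied to $sh^N X$ finishes it. Your main route instead makes a roundtrip through $\Omega^N sh^N X$: it is a bit more elaborate than necessary, since once you know $X$ is $G^{\U}$-semistable you can go directly to $S^N\wedge X$ via Corollary \ref{cor:mapsemi}, and then Corollary \ref{cor:semi} says $\alpha^N_X\colon S^N\wedge X\to sh^N X$ is a $\upi_*^{\U}$-isomorphism, so invariance of semistability under $\upi_*^{\U}$-isomorphisms (which you correctly extract from naturality of the $\M_G$-action) gives the claim without ever looping. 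Your ``hands-on variant'' is closest in spirit to the paper's proof, but it replaces Corollary \ref{cor:shpiiso} with the cocycle identity $\alpha^{M\sqcup N}_X=\alpha^M_{sh^N X}\circ (S^M\wedge\alpha^N_X)$ and two-out-of-three; note that this only gives $\alpha^M_{sh^N X}$ a $\upi_*^{\U}$-isomorphism for $M$ disjoint from $N$ inside $\U$, so you should add the (easy) remark that $sh^M$ and $\alpha^M$ only depend on the isomorphism type of $M$ and every isomorphism type occurring in $\U$ is realized by a $G$-subset disjoint from $N$. Both of your arguments are sound; the first buys a slightly more conceptual picture (conjugating the $\M_G$-action along $d^N$ via Proposition \ref{prop:pishift2}), while the paper's buys brevity by exploiting the preservation of $\upi_*^{\U}$-isomorphisms by shifts, which it has already recorded.
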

\begin{proof} The shift $sh^N(-)$ commutes up to natural isomorphism with $sh^M(-)$ and $S^M\wedge (-)$, so this also follows from Corollaries \ref{cor:semi} and \ref{cor:shpiiso}.
\end{proof}

We now introduce the endofunctor $\Omega^{\U}sh^{\U}$ on $G$-symmetric spectra (together with a natural transformation $\widetilde{\alpha}^{\U}:id\to\Omega^{\U}sh^{\U}$), the equivariant analog of the construction called $R^{\infty}$ in \cite[Thm. 3.1.11]{HSS00}. It serves two purposes for us: First, we show that for all $G^{\U}$-semistable $G$-symmetric spectra $X$ the $G$-symmetric spectrum $\Omega^{\U}sh^{\U}X$ is a $G^{\U}\Omega$ spectrum and the map $\widetilde{\alpha}^{\U}_X$ is a $\upi_*^{\U}$-isomorphism, proving that $G^{\U}$-semistable $G$-symmetric spectra can be replaced by $G^{\U}\Omega$-spectra up to $\upi_*^{\U}$-isomorphism. Secondly, via an equivariant version of the proof of \cite[Thm. 3.1.11]{HSS00}, it is used to show that every $\upi_*^{\U}$-isomorphism is a $G^{\U}$-stable equivalence.

We choose an exhaustive filtration $\emptyset=M_0\subseteq M_1\subseteq \hdots \subseteq M_n \subseteq \hdots \subseteq \U$
of $\U$ by finite $G$-subsets. For a $G$-symmetric spectrum of spaces $X$ we then define $\Omega^{\U}sh^{\U}X$ as the mapping telescope of the sequence
\[ X=\Omega^{M_0}sh^{M_0}X\to \Omega^{M_1}sh^{M_1}X \to \Omega^{M_2}sh^{M_2}X\to \hdots \]
with induced natural map $\widetilde{\alpha}^{\U}_X:X\to \Omega^{\U}sh^{\U}X$.
The connecting maps in the system are given by
\[\Omega^{M_n}sh^{M_n}X\xrightarrow{\Omega^{M_n}\widetilde{\alpha}^{(M_{n+1}-M_n)}_{sh^{M_n}X}} \Omega^{M_n}\Omega^{M_{n+1}-M_n}sh^{M_{n+1}-M_n}sh^{M_n}X\cong \Omega^{M_{n+1}}sh^{M_{n+1}}X. \]
The effect on homotopy groups of this construction corresponds to the infinite shift below Definition \ref{def:algshift} (applied degreewise to $\M_G$-Mackey functors), whose terminology we now use. So for every $n\in \N$ we choose an $(M_n-M_{n-1})$-shift $d^M:\U\hookrightarrow \U$ leaving $M_{n-1}$ fixed. By composition we obtain $M_n$-shifts $d^{M_n}$, inductively defined via $d^M=d^{M_n-M_{n-1}}\circ d^{M_{n-1}}$. Making use of Proposition \ref{prop:pishift2} we see that there is an isomorphism of sequences
\[ \xymatrix{ \upi_n^{\U} X\ar[rr] \ar[d]_{id} && \upi_n^{\U}(\Omega^{M_1}sh^{M_1} X)\ar[rr] \ar[d]_{\cong} && \upi_n^{\U}(\Omega^{M_2}sh^{M_2} X) \ar[r] \ar[d]_{\cong} &  \hdots \\
  \upi_n^{\U} X \ar[rr]^-{d^{M_1}\cdot } && c_{d^{M_1}}^*(\upi_n^{\U} X)\ar[rr]^{d^{M_2-M_1}\cdot} && c_{d^{M_2}}^*(\upi_n^{\U} X) \ar[r] & \hdots }  \]
of $\M_G$-modules. In particular, it follows that the map $(\widetilde{\alpha}^{\U}_X)_*:\pi_n^{H,\U}X\to \pi_n^{H,\U}(\Omega^{\U}sh^{\U}X)$ can be identified with $d^{\U}:\pi_n^{H,\U}X\to c_{d^\U}^*(\pi_n^{H,\U} X)$ and via Proposition \ref{prop:criteria} we obtain:
\begin{Cor} \label{cor:semipiiso} The map $\widetilde{\alpha}^{\U}_X:X\to \Omega^{\U} sh^{\U}X$ is a $\upi_*^{\U}$-isomorphism if and only if $X$ is $G^{\U}$-semistable. 
\end{Cor}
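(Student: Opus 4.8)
The plan is to deduce the corollary purely formally from the identification, established in the paragraph preceding its statement, of $(\widetilde{\alpha}^{\U}_X)_*$ with the infinite algebraic shift, together with the triviality criteria for tame $\M_G$-modules of Proposition \ref{prop:criteria}.

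First I would fix a subgroup $H$ of $G$ and an integer $n$ and record that, under the isomorphism $\pi_n^{H,\U}(\Omega^{\U}sh^{\U}X)\cong c_{d^{\U}}^*(\pi_n^{H,\U}X)$ coming from the displayed isomorphism of colimit sequences, the map induced by $\widetilde{\alpha}^{\U}_X$ becomes exactly $d^{\U}:\pi_n^{H,\U}X\to c_{d^{\U}}^*(\pi_n^{H,\U}X)$. Consequently $\widetilde{\alpha}^{\U}_X$ is a $\upi_*^{\U}$-isomorphism if and only if $d^{\U}$ is an isomorphism of abelian groups on the tame $\M_G$-module $\pi_n^{H,\U}X$ for every $H$ and $n$.

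For the implication that $G^{\U}$-semistability is sufficient, I would assume the $\M_G$-action on each $\pi_n^{H,\U}X$ is trivial and observe that then every structure map in the colimit defining $c_{d^{\U}}^*(\pi_n^{H,\U}X)$ is multiplication by a shift, hence the identity; so the colimit is $\pi_n^{H,\U}X$ itself and $d^{\U}$ is the identity map, in particular an isomorphism. For the converse, if $\widetilde{\alpha}^{\U}_X$ is a $\upi_*^{\U}$-isomorphism then in particular $d^{\U}$ is surjective on every $\pi_n^{H,\U}X$, and the equivalence of conditions $(iv)$ and $(i)$ in Proposition \ref{prop:criteria} forces the $\M_G$-action on each of these modules to be trivial, i.e.\ $X$ is $G^{\U}$-semistable.

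There is essentially no obstacle left at this stage: the substantive work was already done in Proposition \ref{prop:pishift2} and in constructing the isomorphism of colimit sequences above. The only point requiring a sentence of care is the collapse of the colimit under a trivial action in the ``if'' direction, which is immediate once one unwinds that a trivial $\M_G$-action means every injection, and in particular every shift, acts as the identity.
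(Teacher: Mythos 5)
Your proposal is correct and follows the paper's own argument: the paper likewise identifies $(\widetilde{\alpha}^{\U}_X)_*$ with $d^{\U}:\pi_n^{H,\U}X\to c_{d^{\U}}^*(\pi_n^{H,\U}X)$ (via Proposition \ref{prop:pishift2} and the displayed isomorphism of colimit sequences) and then invokes the equivalence of $(i)$ and $(iv)$ in Proposition \ref{prop:criteria}. Your direct observation that a trivial $\M_G$-action collapses the defining colimit so that $d^{\U}$ is literally the identity is a clean way to see the ``if'' direction, equivalent to combining $(i)\Rightarrow(iv)$ with injectivity of shifts on tame modules.
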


Now we want to show that if $X$ is $G^{\U}$-semistable, then $\Omega^{\U}sh^{\U}X$ is a $G^{\U}\Omega$-spectrum. For this we need:
\begin{Lemma} \label{lem:omu}\begin{enumerate}[(i)]
	\item For every $G$-symmetric spectrum of spaces $X$, every subgroup $H$ of $G$ and every two finite $H$-sets $L$ and $M$ there is a natural bijection \[ [S^{L},(\Omega^{\U}sh^{\U}X)(M)]^H \cong \pi_0^{H,\U} (\Omega^{L} sh^M X)\]
	Moreover, given another finite $H$-subset $N$ of $\U$ disjoint to $M$, the square
	\[\xymatrix{ \pi_k ((\Omega^{\U}sh^{\U}X)(M)^H) \ar[rr]^{(\widetilde{\sigma}_M^N)_*} \ar[d]_{\cong} && \pi_k ((\Omega^N (\Omega^{\U}sh^{\U}X)(M))^H) \ar[d]^{\cong} \\
	\pi_k^{H,\U}(sh^M X)\ar[rr]^{(\widetilde{\alpha}_{sh^MX}^N)_*} && \pi_k^{H,\U} (\Omega^N sh^{M\sqcup N} X)}	 
	\]
	commutes, where the vertical isomorphisms are those obtained by setting $L=k$ respectively $L=N\sqcup k$.
	\item For every $G^{\U}\Omega$-spectrum of spaces $X$, the map $\widetilde{\alpha}^{\mathscr{U}}_X:X\to \Omega^{\U}sh^{\U}X$ is a $G^{\U}$-level equivalence.
\end{enumerate}
\end{Lemma}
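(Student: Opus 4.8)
The plan is to prove (i) by unwinding the mapping-telescope definition of $\Omega^{\U}sh^{\U}X$ and matching two colimit systems, and to prove (ii) by observing that the connecting maps in that telescope are themselves already $G^{\U}$-level equivalences.

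For (i), fix the exhaustive filtration $\emptyset=M_0\subseteq M_1\subseteq\cdots$ of $\U$ used to build $\Omega^{\U}sh^{\U}X$, so that $(\Omega^{\U}sh^{\U}X)(M)$ is the mapping telescope of the spaces $(\Omega^{M_n}sh^{M_n}X)(M)$. Evaluation at $M$ commutes with this sequential colimit, $[S^{L},-]^{H}$ commutes with it as well since $S^{L}$ is a finite $H$-CW complex and the telescope maps are cofibrations, and the levelwise functor $\Omega^{M_n}$ commutes with evaluation, giving $(\Omega^{M_n}sh^{M_n}X)(M)\cong\Omega^{M_n}X(M_n\sqcup M)$. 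Combining these with the $H$-equivariant smash--hom adjunction yields
\[
[S^{L},(\Omega^{\U}sh^{\U}X)(M)]^{H}\cong\colim_{n}[S^{L\sqcup M_n},X(M_n\sqcup M)]^{H}.
\]
On the other side, unwinding $\pi_0^{H,\U}$ and using $(\Omega^{L}sh^{M}X)(N)\cong\Omega^{L}X(M\sqcup N)$ gives $\pi_0^{H,\U}(\Omega^{L}sh^{M}X)\cong\colim_{N\in s_G(\U)}[S^{N\sqcup L},X(M\sqcup N)]^{H}$. As the $M_n$ are cofinal in $s_G(\U)$, both colimits run over essentially the same system, so it remains to check that the connecting maps match. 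This is a diagram chase: one unravels $\widetilde{\alpha}$ (adjoint to $\alpha$, which on each level is a generalized structure map followed by a symmetry reindexing) and verifies that the telescope's connecting map $\Omega^{M_n}\widetilde{\alpha}^{M_{n+1}-M_n}_{sh^{M_n}X}$ becomes "smash with $S^{M_{n+1}-M_n}$, then apply $\sigma^{M_{n+1}-M_n}_{M_n\sqcup M}$", up to the canonical symmetry isomorphisms of the $G$-sets involved. Naturality in $X$ is immediate from this description. The commutativity of the square is a further chase of the same type, identifying the adjoint generalized structure map $\widetilde{\sigma}_M^N$ of $\Omega^{\U}sh^{\U}X$, under the isomorphism just built, with the map induced by $\widetilde{\alpha}^{N}_{sh^{M}X}$.

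For (ii), let $X$ be a $G^{\U}\Omega$-spectrum. The connecting map $\Omega^{M_n}sh^{M_n}X\to\Omega^{M_{n+1}}sh^{M_{n+1}}X$ in the telescope is $\Omega^{M_n}$ applied to $\widetilde{\alpha}^{M_{n+1}-M_n}_{sh^{M_n}X}$ followed by a canonical isomorphism. By Example \ref{exa:gomsh}, $sh^{M_n}X$ is again a $G^{\U}\Omega$-spectrum, and $M_{n+1}-M_n$ is a finite $G$-subset of $\U$, so Corollary \ref{cor:omegash} shows $\widetilde{\alpha}^{M_{n+1}-M_n}_{sh^{M_n}X}$ is a $G^{\U}$-level equivalence. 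Since $\Omega^{M_n}=\map(S^{M_n},-)$ is applied levelwise and mapping out of the $G$-CW complex $S^{M_n}$ preserves genuine equivalences on all fixed points (every $G$-space is fibrant, or by the equivariant Whitehead theorem), $\Omega^{M_n}$ preserves $G^{\U}$-level equivalences; hence every connecting map in the telescope is one. Finally, in a mapping telescope of $G^{\U}$-level equivalences the inclusion of the initial term is a $G^{\U}$-level equivalence — checked levelwise and on fixed points, where the telescope commutes with fixed points and each connecting map is a weak equivalence — so $\widetilde{\alpha}^{\U}_X$ is a $G^{\U}$-level equivalence.

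The main obstacle is the bookkeeping in part (i): turning the abstract comparison of colimit systems into an honest natural isomorphism, with matching connecting maps and a commuting square, requires careful tracking of the canonical symmetry isomorphisms of finite $G$-sets and of the adjunction isomorphisms, since $\widetilde{\alpha}$ and the generalized structure maps are only specified up to such identifications. This is routine but needs to be written out with care; the preliminary commutation facts used above follow easily from $S^{L}$ and $S^{M_n}$ being finite $G$-CW complexes and from $\Omega$ being a levelwise enriched functor.
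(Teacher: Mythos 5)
Your proposal follows the paper's own argument: part (i) is the same chain of identifications (commuting $[S^{L},-]^{H}$ and evaluation through the telescope colimit, applying the smash--hom adjunction, and invoking cofinality of the $M_n$'s in $s_G(\U)$), and part (ii) is the same application of Corollary~\ref{cor:omegash} together with the facts that looping by a $G$-set preserves $G^{\U}$-level equivalences and that a mapping telescope of such equivalences is again one. You spell out a bit more detail on the compactness of $S^{L}$ and on the form of the connecting maps, where the paper simply observes that the stabilization maps on both sides touch neither $L$ nor $M$.
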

\begin{proof} The first part follows by a chain of adjunctions, the second is a consequence of Corollary \ref{cor:omegash} and the fact that looping with respect to a $G$-set preserves $G^{\U}$-level equivalences.
\end{proof}
\begin{Prop} \label{prop:semiomega} If $X$ is $G^{\U}$-semistable, then $\Omega^{\U}sh^{\U} X$ is a $G^{\U}\Omega$-spectrum.
\end{Prop}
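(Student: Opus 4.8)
The plan is to check the two defining clauses of a $G^{\U}\Omega$-spectrum for $Y:=\Omega^{\U}sh^{\U}X$ one at a time; only the homotopical clause uses semistability, and the level-fibrancy clause is the routine half. For level fibrancy one may assume $X$ is itself $G^{\U}$-projective level fibrant — this is harmless because replacing $X$ by a level-fibrant model does not affect the hypothesis, since $G^{\U}$-level equivalences are $\upi_*^{\U}$-isomorphisms and the $\M_G$-action is natural. Then each $sh^{M_k}X$ is again level fibrant, each $\Omega^{M_k}=\map(S^{M_k},-)$ preserves the relevant fibrations because $S^{M_k}$ is a cofibrant $G$-space and the level model structures are $G$-topological, and a mapping telescope of level-fibrant $G$-symmetric spectra is level fibrant (the stage inclusions are levelwise genuine cofibrations and the genuine model structures are finitely generated). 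So $Y$ is $G^{\U}$-projective level fibrant.

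For the $\Omega$-spectrum condition, fix a subgroup $H\leq G$ and finite $H$-subsets $M,N\subseteq\U$, which we may take disjoint. The map in question, $(\widetilde{\sigma}_M^N)^H\colon Y(M)^H\to\map_H(S^N,Y(M\sqcup N))$, has both source and target (after passing far enough along the telescope) infinite loop spaces, so it is a weak equivalence as soon as it induces an isomorphism on all $\pi_k$, $k\geq 0$. Applying Lemma \ref{lem:omu}(i) with $L=\underline{k}$ on the source and $L=N\sqcup\underline{k}$ on the target, together with the commuting square in that lemma, identifies $\pi_k\big((\widetilde{\sigma}_M^N)^H\big)$ with
\[ \big(\widetilde{\alpha}^N_{sh^M X}\big)_*\colon \pi_k^{H,\U}(sh^M X)\longrightarrow \pi_k^{H,\U}\big(\Omega^N sh^{M\sqcup N}X\big), \]
where $sh^M X$ and $\Omega^N sh^{M\sqcup N}X=\Omega^N sh^N(sh^M X)$ are taken as $H$-symmetric spectra. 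Thus everything reduces to showing that, for every subgroup $H$ and every finite $H$-subset $N\subseteq\U$, the natural map $\widetilde{\alpha}^N_{sh^M X}$ is a $\pi_k^{H,\U}$-isomorphism for all $k$.

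To get this it suffices that $sh^M(\res^G_H X)$ is $H^{\U}$-semistable, for then Corollary \ref{cor:semi} applied to the group $H$ makes $\widetilde{\alpha}^N_{sh^M X}$ a $\upi_*^{\U}$-isomorphism of $H$-symmetric spectra, hence in particular a $\pi_k^{H,\U}$-isomorphism. Now $sh^M$ along a finite $H$-subset of $\U$ preserves $H^{\U}$-semistability by Corollary \ref{cor:shiftsemi} (for the group $H$), so it remains to know that $\res^G_H X$ is $H^{\U}$-semistable whenever $X$ is $G^{\U}$-semistable. One can either invoke this from the closure properties of semistable spectra, or prove it on the spot: by Proposition \ref{prop:pishift2} it amounts to showing that every shift $d^N$ along a transitive $H$-subset $N\subseteq\U$ acts surjectively on each $\pi_k^{H',\U}X$ ($H'\leq H$), and this follows from the surjectivity of the action of the embedding $\U\hookrightarrow\overline{N}\sqcup\U$ (where $\overline{N}=G\cdot N$ is the ambient $G$-orbit, so that $G^{\U}$-semistability of $X$ applies) by factoring that embedding through $\U\hookrightarrow N\sqcup\U$ in two complementary ways and appealing to the algebraic criteria of Proposition \ref{prop:criteria} together with the injectivity in Lemma \ref{lem:injective}. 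The two steps deserving care — and the places where the argument could break — are the bookkeeping in the reduction, i.e.\ checking that the identifications of Lemma \ref{lem:omu}(i) genuinely intertwine $(\widetilde{\sigma}_M^N)_*$ with $(\widetilde{\alpha}^N_{sh^M X})_*$, and, if one does not wish to cite it, the closure of $H^{\U}$-semistability under restriction.
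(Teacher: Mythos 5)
Your proposal follows the paper's route: identify $\pi_k$ of $(\widetilde{\sigma}_M^N)^H$ at the basepoint with $(\widetilde{\alpha}^N_{sh^M X})_*$ via Lemma~\ref{lem:omu}(i), then deduce that this is an isomorphism because $sh^M X$, viewed as an $H$-symmetric spectrum, inherits $H^{\U}$-semistability from $X$. That reduction, and the use of Corollaries~\ref{cor:semi} and~\ref{cor:shiftsemi}, is exactly what the paper does. But there is a gap in the step that turns ``$\pi_k$-iso at the basepoint'' into a weak equivalence. The assertion that a map between group-like $H$-spaces (or ``infinite loop spaces'') which is a bijection on $\pi_0$ and an isomorphism on $\pi_k$ at the basepoint for $k\geq 1$ is automatically a weak equivalence is false: take the disjoint union $S^1\sqcup S^1$ mapping to itself by the identity on one component and a degree-$2$ map on the other. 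To conclude, one needs the map to be compatible with the group-like structure, and this is precisely what the paper verifies: after passing far enough along the telescope, the terms and maps in the system are all loops with respect to $\Omega^{M_n}$, and because the adjoint structure maps of the loops of a spectrum are given by the loops of the adjoint structure maps, $\widetilde{\sigma}_M^N$ is a \emph{homomorphism} of group-like $H$-spaces; only then does the basepoint isomorphism propagate to all components. Your proof needs this step made explicit.

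Two smaller remarks. First, your level-fibrancy argument is harmless but superfluous here: the statement is about $G$-symmetric spectra of spaces, and every topological based $G$-space is fibrant in the genuine model structure, so $G^{\U}$-projective level fibrancy is automatic. Second, your bookkeeping on the subgroup issue --- restricting to $H$ before shifting along the $H$-set $M$ and invoking closure of semistability under restriction (Theorem~\ref{theo:semistability}) --- is actually more careful than the paper's own wording, which quotes Corollary~\ref{cor:shiftsemi} directly even though $M$ is only an $H$-set; your version makes that implicit restriction explicit.
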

\begin{proof} Let $H$ be a subgroup of $G$ and $M,N$ two finite disjoint $H$-subsets of $\U$. Since $X$ is $G^{\U}$-semistable, so is $sh^M X$ by Corollary \ref{cor:shiftsemi} and hence the map $\widetilde{\alpha}^N_{sh^M X}$ is a $\upi_*^{\U}$-isomorphism. Now Lemma \ref{lem:omu} implies that in this case the adjoint structure map $\widetilde{\sigma}_M^N:(\Omega^{\U}sh^{\U}X)(M)\to \Omega^N (\Omega^{\U}sh^{\U}X)(M\sqcup N)$ induces a bijection on all homotopy groups at the basepoint of the $H$-fixed points.

We are not yet done because we have not shown anything about the homotopy groups at elements of $(\Omega^{\U}sh^{\U}X)(M)^H$ which lie in other components. For this we
take some $n\in \N$ such that $M_n$ is not the empty set. Then $(\Omega^{\U}sh^{\U}X)$ is $G$-homotopy equivalent to the mapping telescope only taken over the terms $\Omega^{M_i}sh^{M_i} X$ for $i\geq n$.
From $n$ on all the maps in the system are of the form $\Omega^{M_i}f_i$ for some map $f_i$, in particular all the maps are loop maps with respect to $M_n$. Since $\Omega^{M_i}$ commutes with mapping telescopes up to weak equivalence, we obtain that $\Omega^{\U}sh^{\U}X$ is $G^{\U}$-level equivalent to $\Omega$ of some spectrum. So since $\R[M_n]$ contains a trivial summand, the levels obtain a group-like $H$-space structure and the adjoint structure map is an $H$-space homomorphism, which finishes the proof.
\end{proof}
Together with Lemma \ref{lem:omegasemi} and Corollary \ref{cor:semipiiso} this gives:
\begin{Cor} \label{cor:semicrit} A $G$-symmetric spectrum allows a $\upi_*^{\U}$-isomorphism to a $G^{\U}\Omega$-spectrum if and only if it is $G^{\U}$-semistable. 
\end{Cor}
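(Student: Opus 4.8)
The plan is to assemble this corollary from the three ingredients the excerpt has just lined up: Lemma \ref{lem:omegasemi}, Proposition \ref{prop:semiomega} and Corollary \ref{cor:semipiiso}. The statement is a biconditional, so I would treat the two implications separately, and each should be essentially immediate once the right earlier result is invoked.

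For the ``only if'' direction, suppose $f\colon X\to Z$ is a $\upi_*^{\U}$-isomorphism onto a $G^{\U}\Omega$-spectrum $Z$. By Lemma \ref{lem:omegasemi}, $Z$ is $G^{\U}$-semistable, i.e. the $\M_G$-action on each $\pi_n^{H,\U}Z$ is trivial. The key point is that $f$ induces an \emph{$\M_G$-equivariant} isomorphism $\pi_n^{H,\U}(f)\colon \pi_n^{H,\U}X\xrightarrow{\cong}\pi_n^{H,\U}Z$ for every subgroup $H\leq G$ and every $n\in\mathbb{Z}$; this is because the $\M_G$-action on naive homotopy groups is natural in the $G$-symmetric spectrum, as recorded in Section \ref{sec:action}. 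Transporting the trivial action on $\pi_n^{H,\U}Z$ back along this isomorphism shows the $\M_G$-action on $\pi_n^{H,\U}X$ is trivial as well, so $X$ is $G^{\U}$-semistable.

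For the ``if'' direction, suppose $X$ is $G^{\U}$-semistable. Then Corollary \ref{cor:semipiiso} says precisely that the natural map $\widetilde{\alpha}^{\U}_X\colon X\to \Omega^{\U}sh^{\U}X$ is a $\upi_*^{\U}$-isomorphism, and Proposition \ref{prop:semiomega} says that under the same hypothesis $\Omega^{\U}sh^{\U}X$ is a $G^{\U}\Omega$-spectrum. Hence $\widetilde{\alpha}^{\U}_X$ is the desired $\upi_*^{\U}$-isomorphism from $X$ to a $G^{\U}\Omega$-spectrum, which completes the proof.

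Since both implications are short applications of already-established results, there is no real obstacle here; the only point requiring a moment's care is the naturality of the $\M_G$-action used in the ``only if'' direction, but this was already noted to be a straightforward check in Section \ref{sec:action}.
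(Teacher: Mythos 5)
Your proof is correct and takes essentially the same route as the paper: the corollary is assembled from Lemma \ref{lem:omegasemi} (plus naturality of the $\M_G$-action) for the ``only if'' direction, and Corollary \ref{cor:semipiiso} together with Proposition \ref{prop:semiomega} for the ``if'' direction, which is precisely what the paper does.
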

\subsection{Relation to $G^{\U}$-stable equivalences}
We are now ready for:
\begin{Theorem} \label{theo:piiso} Every $\upi_*^{\U}$-isomorphism of $G$-symmetric spectra is a $G^{\U}$-stable equivalence.
\end{Theorem}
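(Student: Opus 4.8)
The plan is to imitate the proof of \cite[Theorem 3.1.11]{HSS00}, using the endofunctor $\Omega^{\U}sh^{\U}$ as a hands-on substitute for a not-yet-available stable fibrant replacement.

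First I would reduce to a convenient situation. By Proposition \ref{prop:ggeo} we may work with $G$-symmetric spectra of spaces. Given a $\upi_*^{\U}$-isomorphism $f$, replacing source and target by $G$-flat replacements (which are $G^{\U}$-level equivalences, hence both $\upi_*^{\U}$-isomorphisms and $G^{\U}$-stable equivalences by Remark \ref{rem:levelomega}) and using that both classes have the 2-out-of-3 property, and then factoring $f$ through a $G$-flat cofibration in the level model structure, we may assume that $f$ is a $G$-flat cofibration of $G$-flat $G$-symmetric spectra. Let $W=Y/X$ be its cofibre, which is again $G$-flat and which satisfies $\upi_*^{\U}W=0$ by the long exact sequence of Proposition \ref{prop:exsym}. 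Mapping the Puppe sequence of $f$ into an arbitrary $G^{\U}$-flat level fibrant $G^{\U}\Omega$-spectrum $Z$ produces a long exact sequence of abelian groups; since each $S^k\wedge W$ is $G$-flat with vanishing naive homotopy groups and each $\Omega^jZ$ is again such a $Z$, the theorem reduces to the claim that \emph{every $G$-flat $W$ with $\upi_*^{\U}W=0$ is $G^{\U}$-stably contractible}, i.e.\ $[W,Z]^G=\ast$ for all $G^{\U}$-flat level fibrant $G^{\U}\Omega$-spectra $Z$.

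To prove the claim, observe that $\M_G$ acts trivially on the (zero) groups $\pi_n^{H,\U}W$, so $W$ is $G^{\U}$-semistable. By Corollary \ref{cor:semipiiso} the natural map $\widetilde{\alpha}^{\U}_W\colon W\to E\defeq \Omega^{\U}sh^{\U}W$ is a $\upi_*^{\U}$-isomorphism, so $\upi_*^{\U}E=0$, and by Proposition \ref{prop:semiomega} the spectrum $E$ is a $G^{\U}\Omega$-spectrum. A $G^{\U}\Omega$-spectrum with vanishing naive homotopy groups is $G^{\U}$-level equivalent to the trivial spectrum: for every subgroup $H$ and every finite $H$-subset $M\subseteq\U$ the shift $sh^ME$ is again a $G^{\U}\Omega$-spectrum (Example \ref{exa:gomsh}) with $\upi_*^{\U}(sh^ME)=0$ (a reindexing of $\upi_*^{\U}E$, using that $\widetilde{\alpha}^M_E$ is a $G^{\U}$-level equivalence by Corollary \ref{cor:omegash}), and its structure maps identify $E(M)^H=(sh^ME)(\emptyset)^H$ with a space all of whose homotopy groups are computed by $\pi_*^{H,\U}(sh^ME)=0$ via Example \ref{exa:piomega}; thus $E(M)^H$ is weakly contractible for all $H$ and all $M$, so $E\to\ast$ is a $G^{\U}$-level equivalence by Proposition \ref{prop:gleveleq}, in particular a $G^{\U}$-stable equivalence. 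It therefore remains to see that $\widetilde{\alpha}^{\U}_W\colon W\to E$ is itself a $G^{\U}$-stable equivalence; then $W\to\ast$ is one by 2-out-of-3, which proves the claim and hence the theorem.

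For the last point, $\widetilde{\alpha}^{\U}_W$ is the inclusion of the initial stage into the mapping telescope of $W\to \Omega^{M_1}sh^{M_1}W\to \Omega^{M_2}sh^{M_2}W\to\cdots$, whose $n$-th transition map is $\Omega^{M_n}$ applied to a map of the shape $\widetilde{\alpha}^N_{sh^{M_n}W}$ with $N\subseteq\U$. Each such transition map is a $G^{\U}$-stable equivalence: for $N\subseteq\U$ and any $G$-flat $X$ the map $\widetilde{\alpha}^N_X\colon X\to\Omega^Nsh^NX$ is, under the adjunction between smashing with the free spectrum $\mathscr{F}_NS^N$ and the shift $sh^N$, the adjunct of $\lambda^{(G)}_{\emptyset,N}\wedge\mathrm{id}_X\colon\mathscr{F}_NS^N\wedge X\to X$, which is a $G^{\U}$-stable equivalence by Example \ref{exa:lambda} (with $H=G$, $M=\emptyset$) together with Proposition \ref{prop:flatness}; since $sh^N$ and $\Omega^N$ preserve $G^{\U}$-stable equivalences (the latter because $\R[N]\subseteq\R[\U]$, so $S^N$ is invertible for $G^{\U}$-stable equivalences) and the unit of that adjunction is a $G^{\U}$-stable equivalence, passing to the adjunct keeps us inside the class. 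Finally, $G^{\U}$-stable equivalences are closed under the formation of mapping telescopes by the standard $\lim^1$-argument, so $\widetilde{\alpha}^{\U}_W$ is a $G^{\U}$-stable equivalence. The delicate step is precisely this last paragraph: checking that the $\lambda$-maps of Example \ref{exa:lambda} remain $G^{\U}$-stable equivalences after smashing with a $G$-flat spectrum and after ``desuspending into the shift'' along the relevant adjunction, and the bookkeeping that $\Omega^Nsh^N$ preserves $G^{\U}$-stable equivalences for $N\subseteq\U$; everything in the first two paragraphs is formal once the semistability machinery of Section \ref{sec:homgroups} and the level model structures of Section \ref{sec:glev} are available.
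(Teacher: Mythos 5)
Your reduction to the key claim ``every $G$-flat $W$ with $\upi_*^{\U}W=0$ is $G^{\U}$-stably contractible'' is fine, and your route to ``$E=\Omega^{\U}sh^{\U}W$ is $G^{\U}$-level contractible'' (via semistability of $W$ and Proposition \ref{prop:semiomega}) is a legitimate alternative to the paper's more direct computation via Lemma \ref{lem:omu}(i). The genuine gap is in the final paragraph, where you try to bridge from ``$E$ is stably contractible'' to ``$W$ is stably contractible'' by asserting that $\widetilde{\alpha}^{\U}_W\colon W\to E$ is itself a $G^{\U}$-stable equivalence.

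To establish this you claim that each transition map $\Omega^{M_n}\widetilde{\alpha}^N_{sh^{M_n}W}$ is a $G^{\U}$-stable equivalence, and to get that you appeal to two facts that you do not prove and that the paper nowhere establishes before Theorem \ref{theo:piiso}: that $sh^N$ preserves $G^{\U}$-stable equivalences, and that the unit of the adjunction $(\mathscr{F}_N S^0\wedge-,\,sh^N)$ is a $G^{\U}$-stable equivalence. Neither is formal. Knowing that $\lambda^{(G)}_{\emptyset,N}\wedge X$ is a $G^{\U}$-stable equivalence (Example \ref{exa:lambda} plus Proposition \ref{prop:flatness}) does not transport across the adjunction to its adjunct $\widetilde{\alpha}^N_X$ unless one already knows that $sh^N$ preserves the relevant class of weak equivalences and that the unit is one; but $sh^N$ is the \emph{right} adjoint in this Quillen pair (for the level model structures), so only preservation of level fibrations and level equivalences comes for free, not preservation of $G^{\U}$-stable equivalences. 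What the paper proves is that $sh^N$ preserves $\upi_*^{\U}$-isomorphisms (Corollary \ref{cor:shpiiso}), which would only yield the desired conclusion via Theorem \ref{theo:piiso} itself. The assertion that $\widetilde{\alpha}^{\U}_W$ is a $G^{\U}$-stable equivalence for general (non-semistable) $G$-flat $W$ is true, but essentially \emph{equivalent} to the theorem you are proving, so your argument as written is circular at exactly this point.

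The paper's proof carefully avoids this issue with a retract trick that you should compare against: one does not need $\widetilde{\alpha}^{\U}_W$ to be a stable equivalence, only that $\widetilde{\alpha}^{\U}_Z$ is a $G^{\U}$-\emph{level} equivalence whenever $Z$ is a $G^{\U}\Omega$-spectrum (Lemma \ref{lem:omu}(ii), a consequence of Corollary \ref{cor:omegash}). By naturality of $\widetilde{\alpha}^{\U}$, the map $\phi\mapsto(\widetilde{\alpha}^{\U}_Z)\circ\phi$ on $\Ho_{lev}^{\U}(W,Z)$ equals $\widetilde{\alpha}^{\U}_W{}^{*}\circ\Omega^{\U}sh^{\U}$, so it factors through $\Ho_{lev}^{\U}(\Omega^{\U}sh^{\U}W,Z)=\ast$; since postcomposition with $\widetilde{\alpha}^{\U}_Z$ is a bijection, $\Ho_{lev}^{\U}(W,Z)=\ast$ follows. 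If you want to keep your semistability-based level-contractibility computation, you should replace your last paragraph by this retract argument rather than trying to show $\widetilde{\alpha}^{\U}_W$ is a stable equivalence.
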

\begin{proof}
By Proposition \ref{prop:ggeo} it suffices to show the topological case. The proof is an equivariant analog of the argument in \cite[Thm. 3.1.11]{HSS00}. So let $f:X\to Y$ be a $\upi_*$-isomorphism. By Corollary \ref{cor:shpiiso} we know that all shifts $sh^Mf$ along finite $G$-subsets $M$ of $\U$ are also $\upi_*$-isomorphisms. So by Lemma \ref{lem:omu} (and the same argument as in the proof of Proposition \ref{prop:semiomega} to deal with the non-basepoint components) we see that $\Omega^{\U} sh^{\U}f$ is a $G^{\U}$-level equivalence and in particular a $G^{\U}$-stable equivalence.

We claim that this implies that $f$ is a $G^{\U}$-stable equivalence, too: The functors $\Omega^{M_i}sh^{M_i}$ preserve $G^{\U}$-level equivalences and hence so does their mapping telescope $\Omega^{\U}sh^{\U}$. Therefore, we obtain a functor\[ \Omega^{\U}sh^{\U}:\Ho^{\U} _{lev}(GSp^{\Sigma}_\T)\to \Ho^{\U}_{lev}(GSp^{\Sigma}_\T) \] and a natural transformation $\widetilde{\alpha}:id_{\Ho^{\U}_{lev}(GSp^{\Sigma}_\T)}\to \Omega^\U sh^\U$. Let $\gamma:GSp^\Sigma_\T\to \Ho^{\U}_{lev}(GSp^{\Sigma}_\T)$ be the projection. In the following we abbreviate $\Ho^{\U}_{lev}(GSp^{\Sigma}_\T)$ by $\Ho^{\U}_{lev}$. We have to show that $\Ho^{\U}_{lev}(f,Z)$ is a bijection for all $G^{\U}\Omega$-spectra $Z$. We consider the composite
\[ \Ho^{\U}_{lev}(X,Z)\xr{\Omega^\U sh^\U} \Ho^{\U}_{lev}(\Omega^\U sh^\U X,\Omega^\U sh^\U Z)\xr{\widetilde{\alpha}_X^*} \Ho^{\U}_{lev}(X,\Omega^\U sh^\U Z), \]
and similarly for $Y$. It sends a morphism $\phi:X\to Z$ to $\Omega^\U sh^\U (\phi)\circ \widetilde{\alpha}_X$, which by naturality equals $\widetilde{\alpha}_Z\circ \phi$. Since $Z$ is a $G^{\U}\Omega$-spectrum, it follows from Lemma \ref{lem:omu} that $\widetilde{\alpha}_Z$ is an isomorphism in the level homotopy category and hence postcomposition with it gives a natural isomorphism on morphism sets. It follows that $\Ho^{\U}_{lev}(f,Z)$ is a retract of $\Ho^{\U}_{lev}(\Omega^{\U}sh^{\U}f,Z)$ which we know to be a bijection since $\Omega^{\U}sh^{\U}f$ is a $G^{\U}$-stable equivalence. Hence, $\Ho^{\U}_{lev}(f,Z)$ is a bijection, too, and $f$ is a $G^{\U}$-stable equivalence.
\end{proof}
\begin{Cor} \label{cor:piiso} A morphism between $G^{\U}$-semistable $G$-symmetric spectra is a $\upi_*^{\U}$-isomorphism if and only if it is a $G^{\U}$-stable equivalence.
\end{Cor}
\begin{proof} It suffices to show the topological case. We have just seen that every $\upi_*^{\U}$-isomorphism is a $G^{\U}$-stable equivalence. For the other direction, let $f:X\to Y$ be a $G^{\U}$-stable equivalence between $G^{\U}$-semistable $X$ and $Y$. Then $\Omega^{\U}sh^{\U}(f)$ is a $G^{\U}$-stable equivalence between $G^{\U}\Omega$-spectra, hence a $G^{\U}$-level equivalence by the Yoneda Lemma and in particular a $\upi_*^{\U}$-isomorphism. By 2-out-of-3 it follows that $f$ is also a $\upi_*^{\U}$-isomorphism. 
\end{proof}

\section{Stable model structures}
\subsection{Properties of $G^{\U}$-stable equivalences} \label{sec:properties}
This section examines the behavior of $G^{\U}$-stable equivalences with respect to certain constructions. First we need a definition:

\begin{Def}[$h$-cofibrations] \label{def:hcof} A map $i:X\to Y$ of $G$-symmetric spectra of spaces is called an \emph{$h$-cofibration} if it satisfies the homotopy extension property.
\end{Def}
This property is equivalent to the inclusion $([0,1]_+\wedge X)\cup_{\{0\}_+\wedge X}Y\to [0,1]_+\wedge Y$ having a retraction, and so it follows that any functor preserving pushouts and the smash product with the interval also preserves $h$-cofibrations. In particular, this holds for smashing with any $G$-symmetric spectrum. Furthermore, the pushout of an $h$-cofibration along an arbitrary map is again an $h$-cofibration, and so is the (transfinite) composition of $h$-cofibrations. Since it is easily checked that the generating $G$-flat cofibrations (Equation \ref{eq:gencofflat}) are $h$-cofibrations, this implies that any $G$-flat cofibration (and hence also any $G^{\U}$-projective cofibration) is an $h$-cofibration.


\begin{Prop}\label{prop:gwedge} \begin{enumerate}
\item A wedge of $G^{\U}$-stable equivalences is again a $G^{\U}$-stable equivalence.
\item Smashing with a cofibrant $G$-space $A$ preserves $G^{\U}$-stable equivalences.
\item Let $f:X\to Y$ be a morphism of $G$-symmetric spectra of spaces and $V$ a $G$-subrepresentation of $\R[\U]$. Then the following are equivalent: \begin{enumerate}[(i)]
	\item $f$ is a $G^{\U}$-stable equivalence.
	\item $S^V\wedge f$ is a $G^{\U}$-stable equivalence.
	\item $\Omega^Vf$ is a $G^{\U}$-stable equivalence.
	\item $C(f)$ is $G^{\U}$-stably contractible, i.e., the unique map $C(f)\to *$ is a $G^{\U}$-stable equivalence.
	\item $H(f)$ is $G^{\U}$-stably contractible.
\end{enumerate}
\item Let\[ \xymatrix{A \ar[r]^f\ar[d]_i & B \ar[d]^j \\
						X \ar[r]_g & Y } \]
be a pushout diagram of $G$-symmetric spectra of spaces with $i$ an $h$-cofibration. Then: \begin{enumerate}[(i)]
	\item If $i$ is a $G^{\U}$-stable equivalence, then so is $j$.
	\item If $f$ is a $G^{\U}$-stable equivalence, then so is $g$.
\end{enumerate}
\item Let $X_0\xr{i_0} X_1 \xr{i_1} X_2 \xr{i_2}\hdots$
be a sequence of morphisms $i_j$ of $G$-symmetric spectra of spaces which are $h$-cofibrations and $G^{\U}$-stable equivalences. Then the induced morphism $X_0\to \colim _{j}X_j$ is also a $G^{\U}$-stable equivalence.
\end{enumerate}
\end{Prop}
There is also a dual version of item $4$, saying that the pullback of a $G^{\U}$-stable equivalence along a projective $G^{\U}$-level fibration is again a $G^{\U}$-stable equivalence (or similarly, that the pullback of a $G^{\U}$-stable equivalence that is also a projective $G^{\U}$-level fibration along an arbitrary map is again a $G^{\U}$-stable equivalence).
\begin{proof} The arguments are similar to the non-equivariant case, cf. \cite[Thm. 8.12]{MMSS01}. Using that $A\wedge -$, $\Omega^V$ and wedges take all $\upi_*$-isomorphisms (in particular, $G^{\U}$-level equivalences) to $\upi_*$-isomorphisms, one can reduce to the case where all the spectra involved in items $(1)-(3)$ are $G$-flat and $f$ is a $G$-flat cofibration. Items $(1)$ and $(2)$ then follow by adjunction. To show that $(3.ii)$ implies $(3.i)$, note that every $G^{\Omega}$-spectrum $X$ is $G^{\U}$-level equivalent to $\Omega^V \Omega^{M-V} sh^M X$, where $M$ is some finite $G$-subset of $\U$ whose linearization contains $V$. The equivalence to $(3.iii)$ follows from Proposition \ref{prop:suspiso2}, and that to $(3.iv)$ from the long exact sequence in homotopy classes of maps associated to a cofiber sequence. Finally, $C(f)$ and $S^1\wedge H(f)$ are $G^{\U}$-stably equivalent by Proposition \ref{prop:exseq}, so via the equivalence of $(3,i)$ and $(3,ii)$ it follows that $(3.iv)$ and $(3.v)$ are also equivalent.

The proof of item $(4)$ uses that (by $(3)$) $G^{\U}$-stable equivalences can be tested on cones. Regarding item $(5)$, after replacing each $i_j$ by a flat cofibration (up to $G^{\U}$-level equivalence) and applying $\map(-,Z)$ for a level-fibrant $G^{\U}\Omega$-spectrum $Z$ yields a tower of acylic fibrations of spaces with limit weakly equivalent to $\map(\colim _{j}X_j,Z)$, since the level model structure is topological. It follows that the map $\map(\colim _{j}X_j,Z)\to \map(X_0,Z)$ is a weak equivalence and taking $\pi_0$ yields the result.
\end{proof}

\subsection{$Q^{\U}$-replacement and stable model structures}
\label{sec:gmodstable}
Our next goal is the construction of an endofunctor $Q^{\U}$ of $G$-symmetric spectra with image in $G^{\U}\Omega$-spectra, together with a natural $G^{\U}$-stable equivalence $q^{\U}:id\to Q^{\U}$. We note that for $G^{\U}$-semistable $G$-symmetric spectra the construction $\widetilde{\alpha}_X^{\U}:X\to \Omega^{\U}sh^{\U}X$ of Section \ref{sec:shifthom} has these properties, but for general $X$ one needs to proceed in a different way.

We recall the $G^{\U}$-stable equivalences \[	G\ltimes_H\lambda^{(H)}_{M,N}: G\ltimes_H\mathscr{F}_{M\sqcup N}^{(H)}S^N \to G\ltimes_H\mathscr{F}_M^{(H)} S^0 \]
from Example \ref{exa:lambda}.
We use the levelwise mapping cylinder $\Cyl(-)$ to factor $G\ltimes_H \lambda^{(H)}_{M,N}$ as \[ G\ltimes_H\mathscr{F}^{(H)}_{M\sqcup N}S^N\xr{G\ltimes_H \overline{\lambda}^{(H)}_{M,N}} G\ltimes_H \Cyl(\lambda^{(H)}_{M,N})\xr{G\ltimes_H r^{(H)}_{M,N}} G\ltimes_H \mathscr{F}^{(H)}_M S^0. \]
Here, the map $G\ltimes_H r^{(H)}_{M,N}$ is a $G$-homotopy equivalence and $\overline{\lambda}^{(H)}_{M,N}$ is a $G^{\U}$-projective cofibration. The latter is a formal consequence of the fact that $G\ltimes_H\mathscr{F}_{M\sqcup N}^{(H)}S^N$ and $G\ltimes_H \mathscr{F}_M^{(H)} S^0$ are $G^{\U}$-projective as well as the $G^{\U}$-projective level model structures being topological (or simplicial). For example, this is explained in the proof of \cite[Lem. 3.4.10]{HSS00}.

We define \[  J_{\U,proj}^{st}\defeq \{i\square (G\ltimes_H\overline{\lambda}^{(H)}_{M,N})\ |\ i\in I_{\{e\}},\ H\leq G,\ M,N\subseteq_H\U\}\cup J_{\U,proj}^{lev}. \]
Again, $\square$ denotes the pushout product with respect to the smash product and $I_{\{e\}}$ is a set of generating cofibrations of the Quillen model structures on non-equivariant topological spaces or simplicial sets.
\begin{Lemma} For a $G^{\U}$-projectively level fibrant $G$-symmetric spectrum $X$, $H$ a subgroup of $G$ and $M$ and $N$ two finite $H$-subsets of $\U$ the following are equivalent:
\begin{enumerate}
\item The map $(\widetilde{\sigma}_M^N)^H:X(M)^H\to \map_H(S^N,X(M\sqcup N))$ is a weak homotopy equivalence.
\item $X$ has the right lifting property with respect to the set $\{i\square (G\ltimes_H \overline{\lambda}^{(H)}_{M,N})\}_{i\in I_{\{e\}}}$.
\end{enumerate}
\end{Lemma}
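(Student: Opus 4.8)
The plan is to rewrite condition (2) as a statement about a single comparison map of mapping spaces, and then to recognise that map --- up to homotopy equivalence --- as $(\widetilde{\sigma}_M^N)^H$. Abbreviate $g\defeq G\ltimes_H\overline{\lambda}^{(H)}_{M,N}$, a $G^{\U}$-projective cofibration with domain $P\defeq G\ltimes_H\mathscr{F}^{(H)}_{M\sqcup N}S^N$ and codomain $Q\defeq G\ltimes_H\Cyl(\lambda^{(H)}_{M,N})$. Since the category of $G$-symmetric spectra is tensored over based spaces with internal hom $\map_{GSp^{\Sigma}}(-,-)$, the pushout-product (Leibniz) adjunction shows that $X$ has the right lifting property against $i\square g$ for every $i\in I_{\{e\}}$ if and only if the restriction map
\[ g^{*}\colon \map_{GSp^{\Sigma}}(Q,X)\longrightarrow \map_{GSp^{\Sigma}}(P,X) \]
has the right lifting property against every $i\in I_{\{e\}}$, i.e.\ is a trivial Serre (resp.\ Kan) fibration of spaces (resp.\ simplicial sets).

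First I would observe that the fibration half of this is automatic: $g$ is a $G^{\U}$-projective cofibration, $X$ is $G^{\U}$-projectively level fibrant, and the $G^{\U}$-projective level model structure is ($G$-)topological (Proposition \ref{prop:levmonoidal}), so the usual cotensor form of the pushout-product axiom makes $g^{*}$ a fibration of spaces. Thus (2) is equivalent to $g^{*}$ being a weak equivalence. Next, recall the factorisation $G\ltimes_H\lambda^{(H)}_{M,N}=(G\ltimes_H r^{(H)}_{M,N})\circ g$ from the paragraph defining $J_{\U,proj}^{st}$, in which $G\ltimes_H r^{(H)}_{M,N}$ is a $G$-homotopy equivalence. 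As $\map_{GSp^{\Sigma}}(-,X)$ sends $G$-homotopies to homotopies, $(G\ltimes_H r^{(H)}_{M,N})^{*}$ is a homotopy equivalence of spaces, so by two-out-of-three $g^{*}$ is a weak equivalence if and only if $(G\ltimes_H\lambda^{(H)}_{M,N})^{*}$ is.

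Finally I would identify $(G\ltimes_H\lambda^{(H)}_{M,N})^{*}$ with $(\widetilde{\sigma}_M^N)^H$. Composing the induction/restriction adjunction $\map_{GSp^{\Sigma}}(G\ltimes_H(-),X)\cong\map_{HSp^{\Sigma}}(-,\res_H^G X)$ with the free-spectrum/evaluation adjunction $\map_{HSp^{\Sigma}}(\mathscr{F}^{(H)}_K A,Y)\cong\map_H(A,Y(K))$ identifies $\map_{GSp^{\Sigma}}(G\ltimes_H\mathscr{F}^{(H)}_M S^0,X)$ with $X(M)^H$ and $\map_{GSp^{\Sigma}}(P,X)$ with $\map_H(S^N,X(M\sqcup N))$; under these identifications $(G\ltimes_H\lambda^{(H)}_{M,N})^{*}$ becomes exactly $(\widetilde{\sigma}_M^N)^H$. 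This is the space-level refinement of the $\pi_0$-computation already in the proof of Example \ref{exa:lambda}: the adjoint of $\lambda^{(H)}_{M,N}$ is the inclusion of the wedge summand $S^N\hookrightarrow\mathbf{\Sigma}(M,M\sqcup N)=(\mathscr{F}^{(H)}_M S^0)(M\sqcup N)$ indexed by $M\hookrightarrow M\sqcup N$, and evaluating the enriched functor $X$ on that summand reproduces the generalized structure map $\sigma_M^N$ (cf.\ Section \ref{sec:free}). Chaining the equivalences then gives (2) $\Leftrightarrow$ $g^{*}$ a weak equivalence $\Leftrightarrow$ $(G\ltimes_H\lambda^{(H)}_{M,N})^{*}$ a weak equivalence $\Leftrightarrow$ $(\widetilde{\sigma}_M^N)^H$ a weak equivalence $\Leftrightarrow$ (1).

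The only step that is not pure formalism is the observation that $g^{*}$ is automatically a fibration; this is exactly where the level-fibrancy of $X$ (together with cofibrancy of $\overline{\lambda}^{(H)}_{M,N}$ and the model structure being ($G$-)topological) genuinely enters, and it is what lets us trade ``trivial fibration'' for ``weak equivalence''. Everything else is a matter of chasing the adjunctions already set up earlier in the paper.
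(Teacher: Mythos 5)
Your proof is correct and follows essentially the same route as the paper: reduce by the pushout-product adjunction to $g^*$ being a trivial fibration, observe it is automatically a fibration because $g$ is a cofibration, $X$ is level fibrant, and the level model structure is $G$-topological (or $G$-simplicial), replace $g$ by $G\ltimes_H\lambda^{(H)}_{M,N}$ using that $G\ltimes_H r^{(H)}_{M,N}$ is a $G$-homotopy equivalence, and then identify the resulting map with $(\widetilde{\sigma}_M^N)^H$ via the induction/restriction and free-spectrum/evaluation adjunctions. The only stylistic difference is that you flag explicitly where the level-fibrancy of $X$ enters, which the paper leaves implicit in the word ``always.''
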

\begin{proof} The argument is the same as in the non-equivariant case, cf. \cite[Lem. 3.4.12 and Cor. 3.4.13]{HSS00}.
\end{proof}
\begin{Cor} A $G$-symmetric spectrum of spaces or simplicial sets is a $G^{\U}\Omega$-spectrum if and only if it has the right lifting property with respect to the set $J_{\U,proj}^{st}$.
\end{Cor}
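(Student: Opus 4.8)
The plan is to read off both implications from two facts already in hand: the lemma immediately preceding this corollary, and the cofibrant generation of the $G^{\U}$-projective level model structure by the generating acyclic cofibrations $J_{\U,proj}^{lev}$ (Equation \ref{eq:genacyproj}). The first thing I would record is that a $G$-symmetric spectrum $X$ lifts against $J_{\U,proj}^{lev}$ precisely when the map $X\to *$ is a $G^{\U}$-projective level fibration, i.e.\ exactly when $X$ is $G^{\U}$-projectively level fibrant; this is immediate from cofibrant generation. So it only remains to match up the ``$\Omega$-spectrum condition'' with lifting against the maps $i\square(G\ltimes_H\overline{\lambda}^{(H)}_{M,N})$.

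For the forward implication I would start from a $G^{\U}\Omega$-spectrum $X$: by definition it is $G^{\U}$-projectively level fibrant, hence lifts against $J_{\U,proj}^{lev}$, and for every subgroup $H\leq G$ and all finite $H$-subsets $M,N\subseteq\U$ the map $(\widetilde{\sigma}_M^N)^H$ is a weak equivalence. Since $X$ is level fibrant the preceding lemma applies and gives, for each such triple $(H,M,N)$, the right lifting property against $\{i\square(G\ltimes_H\overline{\lambda}^{(H)}_{M,N})\}_{i\in I_{\{e\}}}$; ranging over all triples yields lifting against all of $J_{\U,proj}^{st}$.

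For the converse I would assume $X$ lifts against $J_{\U,proj}^{st}$; in particular it lifts against the subset $J_{\U,proj}^{lev}$, so $X$ is $G^{\U}$-projectively level fibrant, which is precisely the hypothesis the preceding lemma needs. Then for each $(H,M,N)$ the lemma turns the lifting property against $\{i\square(G\ltimes_H\overline{\lambda}^{(H)}_{M,N})\}_{i\in I_{\{e\}}}$ into the statement that $(\widetilde{\sigma}_M^N)^H$ is a weak equivalence, and since $J_{\U,proj}^{st}$ is indexed over all $H\leq G$ and all $M,N\subseteq_H\U$ this is exactly the condition required for $X$ to be a $G^{\U}\Omega$-spectrum. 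I expect no serious obstacle here; the only point worth double-checking is purely bookkeeping, namely that the indexing of $J_{\U,proj}^{st}$ quantifies over the same data ($H$, $M$, $N$) as the definition of a $G^{\U}\Omega$-spectrum, so that no case is missed, and that lifting against $J_{\U,proj}^{st}$ genuinely entails lifting against $J_{\U,proj}^{lev}$, which is what secures level fibrancy before the lemma is invoked.
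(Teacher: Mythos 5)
Your proof is correct and follows the same route as the paper: both implications come from the cofibrant generation of the $G^{\U}$-projective level model structure (lifting against $J_{\U,proj}^{lev}$ characterizes level fibrancy) combined with the preceding lemma, which, once level fibrancy is in hand, translates lifting against $\{i\square(G\ltimes_H\overline{\lambda}^{(H)}_{M,N})\}_{i\in I_{\{e\}}}$ into the weak-equivalence condition on $(\widetilde{\sigma}_M^N)^H$. The paper states this as a one-line observation; your write-up simply unfolds the same bookkeeping.
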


It follows from Proposition \ref{prop:levmonoidal}, i.e., the fact that the level model structures are $G$-topological (or $G$-simplicial), that every map in $J_{\U,proj}^{st}$ is a $G^{\U}$-projective cofibration. Also, all domains (and codomains) of maps in $J_{\U,proj}^{st}$ are small with respect to countably infinite sequences of flat cofibrations. Hence we can apply the small object argument (cf. \cite[10.5.16]{Hir03}) to obtain a functor $Q^{\U}:GSp^\Sigma\to GSp^\Sigma$ with image in $G^{\U}\Omega$-spectra and a natural transformation $q^{\U}:id\to Q^{\U}$.

We are left to show that for every $G$-symmetric spectrum $X$ the map $q^{\U}_X:X\to Q^{\U}X$ is a $G^{\U}$-stable equivalence.
We first prove the following:
\begin{Lemma} \label{lem:gpushoutproduct} Let $i:A\to B$ be a genuine $G$-cofibration of based $G$-spaces (or based $G$-simplicial sets) and $f:X\to Y$ a $G$-flat cofibration of $G$-symmetric spectra which is also a $G^{\U}$-stable equivalence. Then the pushout product map
\[ i\square f:(B\wedge X)\cup_{A\wedge X} (A\wedge Y)\to B\wedge Y \]
is again a $G$-flat cofibration and a $G^{\U}$-stable equivalence. 
\end{Lemma}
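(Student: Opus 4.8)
The approach is to compute the cofiber of $i\square f$ and recognize it as a smash product to which the results of Section \ref{sec:gprop} apply. That $i\square f$ is a $G$-flat cofibration is immediate from the fact that the $G^{\U}$-flat level model structure is based $G$-topological (Proposition \ref{prop:levmonoidal} and the discussion after it): the pushout product of the genuine $G$-cofibration $i$ with the $G$-flat cofibration $f$ is again a $G$-flat cofibration, and in particular an $h$-cofibration (as is $f$ itself), by the discussion following Definition \ref{def:hcof}. This will be used twice below.

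For the second assertion I would first identify the cofiber of $i\square f$. Since smashing with a fixed $G$-symmetric spectrum, resp. with a fixed based $G$-space, is a left adjoint, it commutes with the relevant pushouts, and a short computation gives a natural isomorphism $(B\wedge Y)/\bigl((B\wedge X)\cup_{A\wedge X}(A\wedge Y)\bigr)\cong (B/A)\wedge(Y/X)$. Because $i\square f$ and $f$ are $h$-cofibrations, their mapping cones are $G^{\U}$-level equivalent (hence, by Remark \ref{rem:levelomega}, $G^{\U}$-stably equivalent) to their cofibers, exactly as in the proof of Proposition \ref{prop:pushout}; so $C(i\square f)$ is $G^{\U}$-stably contractible if and only if $(B/A)\wedge(Y/X)$ is, and $Y/X$ is $G^{\U}$-level equivalent to $C(f)$. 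Since $f$ is a $G^{\U}$-stable equivalence, Proposition \ref{prop:cone} shows $C(f)$, and hence $Y/X$, is $G^{\U}$-stably contractible.

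The key remaining point is that, although $A$ and $B$ themselves need not be cofibrant, the quotient $B/A$ is: it is the cobase change of the genuine $G$-cofibration $i$ along $A\to\ast$, and $\ast$ is cofibrant in based $G$-spaces. Hence Proposition \ref{prop:gwedge}(ii) applies to the $G^{\U}$-stable equivalence $Y/X\to\ast$ and yields that $(B/A)\wedge(Y/X)\to(B/A)\wedge\ast=\ast$ is a $G^{\U}$-stable equivalence, i.e. $(B/A)\wedge(Y/X)$ is $G^{\U}$-stably contractible. Therefore $C(i\square f)$ is $G^{\U}$-stably contractible, and a second application of Proposition \ref{prop:cone}, now to $i\square f$, finishes the topological case. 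The simplicial case follows by passing to geometric realizations, using $|i\square f|\cong|i|\square|f|$ and Proposition \ref{prop:ggeo}.

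I do not expect a serious obstacle here; the one point that must not be overlooked is precisely the one just made — one cannot apply Proposition \ref{prop:gwedge}(ii) directly to $B\wedge f$, since $B$ need not be cofibrant, which is why the argument has to be routed through the cofiber $(B/A)\wedge(Y/X)$, whose first smash factor $B/A$ \emph{is} cofibrant.
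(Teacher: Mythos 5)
Your proof is correct and follows essentially the same route as the paper's: both reduce to showing the cofiber, identified as $(B/A)\wedge(Y/X)$, is $G^{\U}$-stably contractible, then use that $B/A$ is genuinely $G$-cofibrant together with Proposition~\ref{prop:gwedge}. You spell out the reason $B/A$ is cofibrant (cobase change of $i$ along $A\to\ast$) and the reduction to the cofiber a bit more explicitly, but the argument is the same.
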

\begin{proof} We already know that $i\square f$ is again a $G$-flat cofibration, since the level model structure is based $G$-topological (or based $G$-simplicial) by Proposition \ref{prop:levmonoidal}. Hence, using Proposition \ref{prop:gwedge} and the fact that for $G$-flat cofibrations the cone is $G^{\U}$-level equivalent to the cofiber, we can prove that $i\square f$ is a $G^{\U}$-stable equivalence by showing that its cofiber 
\[ B\wedge Y/((B\wedge X)\cup_{A\wedge X} (A\wedge Y))\cong (B/A)\wedge (Y/X)\]
is $G^{\U}$-stably contractible. Since $f$ is a $G$-flat cofibration and a $G^{\U}$-stable equivalence, the $G$-symmetric spectrum $Y/X$ is $G^{\U}$-stably contractible. Hence, so is its smash product with the cofibrant based $G$-space $B/A$ (by Proposition \ref{prop:gwedge}). So $i\square f$ is a $G^{\U}$-stable equivalence.
\end{proof}

Using this we obtain:
\begin{Prop} The map $q^{\U}_X:X\to Q^{\U}X$ is a $G^{\U}$-stable equivalence for every $G$-symmetric spectrum $X$.
\end{Prop}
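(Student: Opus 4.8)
The plan is to use the explicit construction of $Q^\U$ by the small object argument together with the closure properties of $G^\U$-stable equivalences from Section \ref{sec:gprop}. By construction, $q^\U_X$ is realized as a countable composition
\[ X = Z_0 \xr{k_0} Z_1 \xr{k_1} Z_2 \xr{k_2} \cdots, \qquad Q^\U X = \colim_j Z_j, \]
in which each $k_j$ is the cobase change, along a map to $Z_j$, of a coproduct of maps from $J^{st}_{\U,proj}$. Hence it suffices to prove that (a) every map in $J^{st}_{\U,proj}$ is at the same time a $G$-flat cofibration — so in particular an $h$-cofibration — and a $G^\U$-stable equivalence, and then to push this property through coproducts, cobase changes and a sequential colimit. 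The simplicial statement will follow from the topological one: $|q^\U_X|$ is again a countable composition of cobase changes of coproducts of the levelwise $h$-cofibrations $|g|$ for $g \in J^{st}_{\U,proj}$ (using that these $g$ are levelwise injective, so that the remark following Proposition \ref{prop:pushout} applies), each of which is a $G^\U$-stable equivalence by Proposition \ref{prop:ggeo}; thus the topological argument applies verbatim and Proposition \ref{prop:ggeo} transports the conclusion back.

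For part (a), the maps coming from $J^{lev}_{\U,proj}$ are generating acyclic cofibrations of the $G^\U$-projective level model structure, hence $G^\U$-projective cofibrations — thus $G$-flat cofibrations, thus $h$-cofibrations — and $G^\U$-level equivalences, hence $G^\U$-stable equivalences by Remark \ref{rem:levelomega}. For a generator of the form $i \square (G \ltimes_H \overline{\lambda}^{(H)}_{M,N})$ with $i \in I_{\{e\}}$: the map $G \ltimes_H \overline{\lambda}^{(H)}_{M,N}$ is a $G^\U$-projective cofibration, hence a $G$-flat cofibration, and it is a $G^\U$-stable equivalence by two-out-of-three applied to the factorization $G \ltimes_H \lambda^{(H)}_{M,N} = (G \ltimes_H r^{(H)}_{M,N}) \circ (G \ltimes_H \overline{\lambda}^{(H)}_{M,N})$, since $G \ltimes_H \lambda^{(H)}_{M,N}$ is a $G^\U$-stable equivalence by Example \ref{exa:lambda} and $G \ltimes_H r^{(H)}_{M,N}$ is a $G$-homotopy equivalence, hence a $G^\U$-level equivalence. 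As $i$, equipped with the trivial $G$-action, occurs among the generating genuine $G$-cofibrations of Proposition \ref{prop:genmod}, Lemma \ref{lem:gpushoutproduct} shows that $i \square (G \ltimes_H \overline{\lambda}^{(H)}_{M,N})$ is a $G$-flat cofibration and a $G^\U$-stable equivalence.

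Given (a), a coproduct of maps that are simultaneously $G$-flat cofibrations and $G^\U$-stable equivalences is again of this kind: the $G$-flat cofibrations form the cofibrations of a model structure, so are closed under coproducts (and are $h$-cofibrations), while Proposition \ref{prop:gwedge}(i) handles the wedge of $G^\U$-stable equivalences. Consequently $k_j$, being the cobase change of such a coproduct along an arbitrary map, is a $G$-flat cofibration — hence an $h$-cofibration — and, by Proposition \ref{prop:pushout}(i), a $G^\U$-stable equivalence. Finally, $q^\U_X \colon X = Z_0 \to \colim_j Z_j = Q^\U X$ is the sequential colimit of the maps $k_j$, all of which are $h$-cofibrations and $G^\U$-stable equivalences, so it is a $G^\U$-stable equivalence by the last proposition of Section \ref{sec:gprop}.

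There is no serious obstacle here; the point to keep in mind is that, unlike the generators from $J^{lev}_{\U,proj}$, the $\lambda$-type generators are \emph{stable} but not \emph{level} equivalences, so the argument genuinely relies on the three stability results for $h$-cofibrations — Lemma \ref{lem:gpushoutproduct} (pushout--product with a genuine $G$-cofibration), Proposition \ref{prop:pushout} (cobase change along an arbitrary map), and the sequential-colimit proposition — rather than on anything levelwise, and the $h$-cofibration bookkeeping is precisely what makes these three steps legitimate.
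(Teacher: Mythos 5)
Your proof is correct and follows essentially the same route as the paper's: show that every map in $J^{st}_{\U,proj}$ is an $h$-cofibration and a $G^{\U}$-stable equivalence (using Example~\ref{exa:lambda} and Lemma~\ref{lem:gpushoutproduct} for the $\lambda$-type generators), and then invoke the closure properties from Section~\ref{sec:gprop}. You are merely more explicit about unwinding the cell structure of $q^{\U}_X$ and about the simplicial case, which the paper leaves implicit.
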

\begin{proof} The map $q_X$ is a relative $J_{\U,proj}^{st}$-complex. Every map in $J_{\U,proj}^{st}$ is in particular a $G$-flat cofibration and $G^{\U}$-stable equivalence (for those of the form $G\ltimes_H \overline{\lambda}_{M,N}^{(H)}$ this is a consequence of Lemma \ref{lem:gpushoutproduct}), so the result follows from Proposition \ref{prop:gwedge}.
\end{proof}

We are now ready to establish the $G^{\U}$-stable model structures for $G$-symmetric spectra of spaces and of simplicial sets. We call a map a (positive) \emph{flat $G^{\U}$-stable fibration} if it has the right lifting property with respect to all maps that are (positive) $G$-flat cofibrations and $G^{\U}$-stable equivalences. (Positive) \emph{projective $G^{\U}$-stable fibrations} are analogously defined.

\begin{Theorem}[Flat stable model structures] \label{theo:flatmod} The classes of (positive) $G$-flat cofibrations, $G^{\U}$-stable equivalences and (positive) $G^{\U}$-stable fibrations define a cofibrantly generated, proper model structure on the categories of $G$-symmetric spectra of spaces and simplicial sets, called the (positive) $G^{\U}$-flat stable model structure.
\end{Theorem}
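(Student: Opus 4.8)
The plan is to obtain the $G^{\U}$-flat stable model structure as a left Bousfield localization of the $G^{\U}$-flat \emph{level} model structure (Corollary after Proposition \ref{prop:levmod}) at the class of $G^{\U}$-stable equivalences, using a variant of Bousfield--Friedlander localization / the standard recognition theorem for cofibrantly generated localizations. Concretely, I would verify the hypotheses needed so that:
\begin{itemize}
\item the cofibrations remain the (positive) $G$-flat cofibrations, with generating set $I_{\U,fl}^{lev}$ (resp. its positive variant),
\item the generating acyclic cofibrations are $J_{\U,fl}^{st} := J_{\U,fl}^{lev}\cup\{\,\mathscr G_n(i)\square(G\ltimes_H\overline\lambda^{(H)}_{M,N})\,\}$ — i.e.\ the $G$-flat analog of the set $J_{\U,proj}^{st}$ built above — obtained by applying the pushout-product with generating $G$-flat cofibrations to the maps $G\ltimes_H\overline\lambda^{(H)}_{M,N}$,
\item the fibrant objects are exactly the $G^{\U}$-flat level fibrant $G^{\U}\Omega$-spectra.
\end{itemize}

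The key steps, in order, are as follows. \emph{Step 1: identify the acyclic fibrations.} A map is a (positive) $G^{\U}$-stable fibration and a $G^{\U}$-stable equivalence if and only if it is a (positive) $G^{\U}$-flat \emph{level} acyclic fibration; this is formal from the fact that $G^{\U}$-level equivalences are $G^{\U}$-stable equivalences (Remark \ref{rem:levelomega}) together with a retract/2-out-of-3 argument, so acyclic fibrations are detected by right lifting against $I_{\U,fl}^{lev}$. \emph{Step 2: analyze the fibrant objects and stably fibrant maps.} Using the small-object argument on $J_{\U,fl}^{st}$ one builds a fibrant replacement; the argument of the Corollary above (``a $G$-symmetric spectrum has the RLP with respect to $J^{st}_{\cdot,proj}$ iff it is a $G^{\U}\Omega$-spectrum'') adapts verbatim to the flat setting, since the relevant lifting computations go through $\map_{GSp^\Sigma}$ and only use that the level model structure is $G$-topological (Proposition \ref{prop:levmonoidal}). \emph{Step 3: the crucial acyclicity input.} Show every map in $J_{\U,fl}^{st}$ is both an $h$-cofibration and a $G^{\U}$-stable equivalence: the maps in $J_{\U,fl}^{lev}$ are $G^{\U}$-level equivalences and $G$-flat cofibrations; the maps $\mathscr G_n(i)\square(G\ltimes_H\overline\lambda^{(H)}_{M,N})$ are $G^{\U}$-stable equivalences and $G$-flat cofibrations by the flat analog of Lemma \ref{lem:gpushoutproduct} (here I would use that $G\ltimes_H\overline\lambda^{(H)}_{M,N}$ is a $G^{\U}$-projective, hence $G$-flat, cofibration and a $G^{\U}$-stable equivalence by Example \ref{exa:lambda}, and that $\mathscr G_n(i)$ is a generating $G$-flat cofibration). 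That $G$-flat cofibrations are $h$-cofibrations was established just before Proposition \ref{prop:pushout}. \emph{Step 4: the lifting axioms.} The nontrivial half is that every (positive) $G$-flat cofibration which is a $G^{\U}$-stable equivalence has the left lifting property against every (positive) $G^{\U}$-stable fibration. Factor such a map $f$ as a relative $J_{\U,fl}^{st}$-cell complex $j$ followed by a stable fibration $p$; by Step 3 and the closure properties of $G^{\U}$-stable equivalences under pushouts along $h$-cofibrations and transfinite composition (Section \ref{sec:gprop}, Proposition \ref{prop:pushout} and the sequential colimit proposition), $j$ is a $G^{\U}$-stable equivalence, hence $p$ is a stable acyclic fibration, i.e.\ a level acyclic fibration by Step 1; a retract argument against the lift in the square with $f$ then exhibits $f$ as a retract of $j$, giving the lifting property. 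The remaining model-category axioms (2-out-of-3, retracts, the other factorization via $I_{\U,fl}^{lev}$) are routine. \emph{Step 5: properness.} Right properness follows because stable fibrations are in particular level fibrations and pullbacks are levelwise, combined with Proposition \ref{prop:pushout}'s pullback analog; left properness follows from left properness of the $G^{\U}$-flat level structure together with Proposition \ref{prop:pushout}(ii), since every cofibration is an $h$-cofibration.

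The main obstacle I expect is \textbf{Step 4}, and within it the verification that a transfinite composite of pushouts of maps in $J_{\U,fl}^{st}$ is a $G^{\U}$-stable equivalence: this is exactly where one needs the somewhat delicate results of Section \ref{sec:gprop} (that $G^{\U}$-stable equivalences are preserved by pushout along $h$-cofibrations and by sequential colimits of $h$-cofibrations), and one must be careful that the maps $\mathscr G_n(i)\square(G\ltimes_H\overline\lambda^{(H)}_{M,N})$ really are $h$-cofibrations of $G$-symmetric spectra — which follows since $\mathscr G_n(i)$ and $G\ltimes_H\overline\lambda^{(H)}_{M,N}$ are $G$-flat cofibrations, hence $h$-cofibrations, and pushout-products of $h$-cofibrations with $G$-flat cofibrations are $h$-cofibrations. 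A secondary subtlety, purely bookkeeping, is the positive case: one must check that the positive generating sets still detect the positive stable fibrations and that $\lambda$-type maps with $M=\emptyset$ are excluded consistently so that the positive fibrant objects are the \emph{positive} $G^{\U}\Omega$-spectra; this is parallel to the positive level model structure (Proposition \ref{prop:poslevmod}) and introduces no new ideas. Finally, the simplicial case follows from the topological one via Proposition \ref{prop:ggeo}, exactly as elsewhere in the paper.
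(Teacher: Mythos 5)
Your proposal takes a genuinely different route from the paper: you want to prove the theorem directly via the recognition theorem for cofibrantly generated model categories (explicit generating sets, small object argument, retract argument), whereas the paper applies Bousfield's $Q$-localization theorem \cite[Theorem 9.3]{Bou01} with the coaugmented functor $Q^{\U}$ built in the preceding subsection, and only has to verify the axioms (A1)--(A3). The two approaches are not equivalent in effort: Bousfield's theorem hands you, as part of its conclusion, the characterization of the stable fibrations as level fibrations whose $Q^{\U}$-square is homotopy cartesian, and in particular the identification of stable acyclic fibrations with level acyclic fibrations. In your approach that characterization is exactly the thing you still have to prove.

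This is where your Step~4 has a genuine gap. You factor a $G$-flat cofibration and $G^{\U}$-stable equivalence $f$ as a $J^{st}_{\U,fl}$-cell complex $j$ followed by a map $p$, and then call $p$ a ``stable fibration'' so that Step~1 applies and $p$ is a level acyclic fibration. But the small object argument only gives you that $p$ is $J^{st}_{\U,fl}$-injective; Step~1 is a statement about maps having the RLP against \emph{all} stable acyclic cofibrations, and ``$J^{st}_{\U,fl}$-injective $\Rightarrow$ stable fibration'' is precisely what you are in the middle of trying to establish. What is actually needed, and not addressed anywhere in your proposal, is a direct argument that a $J^{st}_{\U,fl}$-injective map which is a $G^{\U}$-stable equivalence is a level acyclic fibration. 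Closing this loop requires engaging seriously with what $J^{st}$-injectivity encodes on non-fibrant maps (the $Q^{\U}$-square being homotopy cartesian, or some substitute), which is exactly the work that Bousfield's theorem packages up for you. A secondary issue: your set $J^{st}_{\U,fl}$ takes pushout products $\mathscr G_n(i)\square(G\ltimes_H\overline\lambda^{(H)}_{M,N})$ with spectrum-level generating flat cofibrations, but the paper's $J^{st}_{\U,proj}$ (and, per the remark after Theorems \ref{theo:flatmod}--\ref{theo:projmod}, the flat version as well) uses pushout products with the \emph{space-level} set $I_{\{e\}}$; this choice is what makes the fibrant-object characterization reduce cleanly, by adjunction, to a statement about the $G$-equivariant mapping space $\map_{GSp^\Sigma}(G\ltimes_H\overline\lambda^{(H)}_{M,N},Z)$, so the adaptation is not ``verbatim'' as you claim. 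Finally, note that the obstacle you flag as the main one (transfinite composites of pushouts of $J^{st}$-maps being stable equivalences) is in fact fully handled by Propositions in Section~\ref{sec:gprop} and is the same input the paper uses to show $q^{\U}$ is a stable equivalence; the real difficulty lies in the fibration characterization.
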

\begin{Theorem}[Projective stable model structures] \label{theo:projmod} The classes of (positive) projective $G^{\U}$-cofibrations, $G^{\U}$-stable equivalences and (positive) projective $G^{\U}$-stable fibrations define a cofibrantly generated, proper model structure on the categories of $G$-symmetric spectra of spaces and simplicial sets, called the (positive) $G^{\U}$-projective stable model structure.
\end{Theorem}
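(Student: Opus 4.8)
The plan is to obtain the (positive) projective $G^{\U}$-stable model structure by applying Kan's recognition principle for cofibrantly generated model categories \cite[Thm.~11.3.1]{Hir03} to $GSp^{\Sigma}$, with weak equivalences the class $W$ of $G^{\U}$-stable equivalences, generating cofibrations $I_{\U,proj}^{lev}$, and generating acyclic cofibrations $J_{\U,proj}^{st}$ (and, in the positive case, their positive analogues, obtained as usual by deleting the maps built from $\mathscr{F}_{\emptyset}$ and from the $\lambda$'s with $M=\emptyset$). The category is complete and cocomplete (Section~\ref{sec:defsymm}); $W$ satisfies two-out-of-three and is closed under retracts directly from Definition~\ref{def:gstableeq}; and both sets permit the small object argument, the domains of $I_{\U,proj}^{lev}$ by cofibrant generation of the level model structure and those of $J_{\U,proj}^{st}$ by the smallness remark preceding the theorem. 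It then remains to verify: (a) every relative $J_{\U,proj}^{st}$--cell complex is a $G^{\U}$-projective cofibration and a $G^{\U}$-stable equivalence; (b) every $I_{\U,proj}^{lev}$--injective map is a $G^{\U}$-stable equivalence and is $J_{\U,proj}^{st}$--injective; and (c) every $J_{\U,proj}^{st}$--injective $G^{\U}$-stable equivalence is $I_{\U,proj}^{lev}$--injective, i.e.\ a $G^{\U}$-projective level acyclic fibration. For (a): every map in $J_{\U,proj}^{st}$ is a $G^{\U}$-projective cofibration (Proposition~\ref{prop:levmonoidal}), hence an $h$-cofibration (Section~\ref{sec:gmodstable}), and a $G^{\U}$-stable equivalence (exactly as in the proof that $q^{\U}_X$ is one); since $h$-cofibrations and projective cofibrations are closed under pushout and transfinite composition, and $G^{\U}$-stable equivalences that are $h$-cofibrations are closed under pushout along arbitrary maps (Proposition~\ref{prop:pushout}(i)) and under transfinite composition (the colimit proposition of Section~\ref{sec:gprop}), the claim follows, and retracts preserve both properties. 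For (b): $I_{\U,proj}^{lev}$--injective maps are the level acyclic fibrations of the projective level model structure, which are $G^{\U}$-level equivalences, hence $G^{\U}$-stable equivalences (Remark~\ref{rem:levelomega}), and which have the right lifting property against every $G^{\U}$-projective cofibration, in particular against $J_{\U,proj}^{st}$.

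The main difficulty — and the point where the argument genuinely departs from the orthogonal case — is (c). Let $p\colon X\to Y$ be $J_{\U,proj}^{st}$--injective and a $G^{\U}$-stable equivalence; since $J_{\U,proj}^{st}\supseteq J_{\U,proj}^{lev}$, the map $p$ is a $G^{\U}$-projective level fibration, and it suffices to show it is a $G^{\U}$-level equivalence. By the same adjunction computation as in the lemma characterising $G^{\U}\Omega$-spectra via right lifting against $J_{\U,proj}^{st}$ — using the (semi-)free spectrum adjunction together with the pushout-product adjunction — being $J_{\U,proj}^{st}$--injective is equivalent to $p$ being a $G^{\U}$-projective level fibration for which, for all $H\leq G$ and all finite $H$-subsets $M,N\subseteq\U$ (with $M\neq\emptyset$ in the positive case), the square
\[
\xymatrix{ X(M)^H \ar[rr]^-{(\widetilde{\sigma}_M^N)^H}\ar[d] && \map_H(S^N,X(M\sqcup N))\ar[d]\\
Y(M)^H \ar[rr]^-{(\widetilde{\sigma}_M^N)^H} && \map_H(S^N,Y(M\sqcup N)) }
\]
is a homotopy pullback of spaces. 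Passing to vertical (homotopy) fibres over the basepoint identifies the comparison map with the adjoint generalized structure map of the levelwise homotopy fibre $H(p)$, so this says precisely that $H(p)$ is a (positive) $G^{\U}\Omega$-spectrum.

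To finish (c) I would compare $p$ with its functorial fibrant replacement. In the commuting square with rows $q^{\U}_X\colon X\to Q^{\U}X$ and $q^{\U}_Y\colon Y\to Q^{\U}Y$ and right vertical $Q^{\U}p$, the horizontal maps are $G^{\U}$-stable equivalences into $G^{\U}\Omega$-spectra, so $Q^{\U}p$ is a $G^{\U}$-stable equivalence between $G^{\U}\Omega$-spectra and hence a $G^{\U}$-level equivalence by Remark~\ref{rem:levelomega}. This square is a homotopy pullback in the projective level model structure exactly when the induced map on vertical homotopy fibres $H(p)\to H(Q^{\U}p)$ is a $G^{\U}$-level equivalence, and it is: $H(p)$ is a $G^{\U}\Omega$-spectrum by the previous paragraph, $H(Q^{\U}p)$ is one as the homotopy fibre of a map of $G^{\U}\Omega$-spectra, and $H(p)\to H(Q^{\U}p)$ is a $G^{\U}$-stable equivalence because the induced map of mapping cones $C(p)\to C(Q^{\U}p)$ is one — both $C(p)$ and $C(Q^{\U}p)$ being $G^{\U}$-stably contractible since $p$ and $Q^{\U}p$ are $G^{\U}$-stable equivalences (Proposition~\ref{prop:cone}) — so Remark~\ref{rem:levelomega} applies (using the natural $\upi_*^{\U}$-isomorphism $S^1\wedge H(-)\to C(-)$ of Corollary~\ref{cor:homfiber} and Theorem~\ref{theo:piiso} to transfer the stable equivalence from cones to homotopy fibres). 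Since the projective level model structure is right proper and $Q^{\U}p$ is a $G^{\U}$-level equivalence, the homotopy pullback square forces $p$ to be a $G^{\U}$-level equivalence; being also a $G^{\U}$-projective level fibration, $p$ is $I_{\U,proj}^{lev}$--injective. In the simplicial case these cone and homotopy-fibre arguments require $X$ and $Y$ to be level fibrant, which one arranges by a harmless preliminary level-fibrant replacement (level equivalences being $G^{\U}$-stable equivalences), or one transports the topological statement along geometric realisation using Proposition~\ref{prop:ggeo}.

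With (a)--(c) established, \cite[Thm.~11.3.1]{Hir03} produces a cofibrantly generated model structure generated by $I_{\U,proj}^{lev}$ and $J_{\U,proj}^{st}$ whose cofibrations are the (positive) $G^{\U}$-projective cofibrations, whose weak equivalences are the $G^{\U}$-stable equivalences, and whose fibrations — the maps with the right lifting property against the acyclic cofibrations — are by definition the (positive) projective $G^{\U}$-stable fibrations. Since the acyclic cofibrations contain $J_{\U,proj}^{lev}$, every $G^{\U}$-stable fibration is in particular a $G^{\U}$-projective level fibration; left properness then follows from Proposition~\ref{prop:pushout}(ii) (projective cofibrations are $h$-cofibrations) and right properness from the pullback version of Proposition~\ref{prop:pushout} along projective level fibrations. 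The flat stable model structures of Theorem~\ref{theo:flatmod} are obtained in exactly the same fashion, starting from the flat level model structure with generating cofibrations $I_{\U,fl}^{lev}$ and the evident flat analogue of $J_{\U,proj}^{st}$ built from semi-free spectra.
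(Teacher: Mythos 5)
Your proof takes a genuinely different route from the paper. You apply Kan's recognition theorem \cite[Thm.~11.3.1]{Hir03} with generating sets $I_{\U,proj}^{lev}$ and $J_{\U,proj}^{st}$, whereas the paper invokes Bousfield's localization theorem \cite[Thm.~9.3]{Bou01} for the functor $Q^{\U}$ and verifies his axioms (A1)--(A3), of which (A3) follows from the dual of Proposition \ref{prop:pushout}. Steps (a) and (b) of your argument are fine, and the reduction of (c) to the statement ``a $J_{\U,proj}^{st}$-injective $G^{\U}$-stable equivalence is a $G^{\U}$-level equivalence'' is also correct. The argument you then give for (c) has a gap.

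You assert that the naturality square comparing $p$ with $Q^{\U}p$ is a homotopy pullback in the level model structure ``exactly when'' the induced map $H(p)\to H(Q^{\U}p)$ on levelwise homotopy fibres is a $G^{\U}$-level equivalence. The direction you need --- that a level equivalence on basepoint homotopy fibres forces the square to be a homotopy pullback --- is false. Since $Q^{\U}p$ is a $G^{\U}$-level equivalence, $H(Q^{\U}p)$ is level contractible, so your criterion reduces to ``$H(p)$ is level contractible''. But a level fibration whose basepoint fibres are all contractible need not be a level equivalence: surjectivity of $\pi_0(p(M)^H)$ can fail, just as the based Serre fibration $\{a\}\hookrightarrow\{a,b\}$ has contractible basepoint fibre without being a weak equivalence. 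In fact, precisely because $Q^{\U}p$ is a level equivalence, the naturality square being a homotopy pullback is \emph{equivalent} to $p$ being a $G^{\U}$-level equivalence, so the ``exactly when'' you state is a reformulation of the conclusion rather than a proof of it. Moreover, the detour through $Q^{\U}$ buys you nothing here: you already knew $H(p)$ is level contractible before introducing $Q^{\U}$, since $J_{\U,proj}^{st}$-injectivity makes $H(p)$ a $G^{\U}\Omega$-spectrum and Proposition \ref{prop:cone}(v) together with Remark \ref{rem:levelomega} makes it level contractible.

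The missing piece is precisely the level-wise $\pi_0$-surjectivity of $p$, which is the technical heart of stabilizing a level model structure on spectra and does not follow formally from contractibility of basepoint fibres; closing it requires a genuine extra input (for example exploiting group-like $H$-space structures after looping, or the retract manipulations built into the proof of \cite[Thm.~9.3]{Bou01}). The paper's choice of Bousfield's theorem is convenient exactly because it packages this: after the easy checks of (A1)--(A3), one obtains for free both the characterization of stable fibrations as level fibrations with homotopy-cartesian naturality square and the identification of acyclic stable fibrations with level acyclic fibrations. A minor further remark: for the flat case (Theorem \ref{theo:flatmod}) the additional generating acyclic cofibrations are the \emph{same} maps $i\square(G\ltimes_H\overline{\lambda}^{(H)}_{M,N})$, which happen to be flat cofibrations because every projective cofibration is one, not a separate ``flat analogue built from semi-free spectra''; the single functor $Q^{\U}$ is used for all four model structures.
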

\begin{proof} All the model structures are obtained in the same way via left Bousfield localization of the respective level model structure,. We apply Theorem \cite[Thm. 9.3]{Bou01}, with respect to the functor $Q^{\U}$ and the natural transformation $q^{\U}:id\to Q^{\U}$ just constructed. Any $G^{\U}$-stable equivalence between $G^{\U}\Omega$-spectra is a $G^{\U}$-level equivalence by the Yoneda Lemma and thus a map $f$ is a $G^{\U}$-stable equivalence if and only if $Q^{\U}f$ is a $G^{\U}$-level equivalence (and if and only if $Q^{\U}f$ is a positive $G^{\U}$-level equivalence). In other words the class of $G^{\U}$-stable equivalences agrees with that of $Q^{\U}$-equivalences in the sense of Bousfield's theorem.

We now check the axioms in \cite{Bou01}. (A1) requires that every $G^{\U}$-level equivalence is a $G^{\U}$-stable equivalence, which is clear. For every $G$-symmetric spectrum $X$ the maps $q^{\U}_{QX}$ and $Q^{\U}q^{\U}_X$ from $Q^{\U}X$ to $Q^{\U}Q^{\U}X$ are $G^{\U}$-stable equivalences between $G^{\U}\Omega$-spectra and hence $G^{\U}$-level equivalences, so (A2) is satisfied. 
In (A3) we have to show that the pullback of a $G^{\U}$-stable equivalence along a $G^{\U}$-level fibration between level-fibrant (positive) $G^{\U}\Omega$-spectra is again a $G^{\U}$-stable equivalence. This is a direct application of the dual version of item $4$ in Proposition \ref{prop:gwedge} (and does not need the $G^{\U}\Omega$ spectrum hypothesis).
\end{proof}
Using the characterization given in \cite[Thm. 9.3]{Bou01}, we see that
generating acyclic cofibrations can be obtained by adding the maps $i\square (G\ltimes_H \overline{\lambda}_{M,N})$ for all $H$-subsets $M$ and $N$ of $\U$ (with $M\neq \emptyset$ in the positive case) to the respective set of generating acyclic cofibrations for the level model structure (while nothing needs to be added to the generating cofibrations). Moreover, a $G$-symmetric spectrum is fibrant in either of the (positive) stable model structures if and only if it is a level-fibrant (positive) $G^{\U}\Omega$-spectrum.


\subsection{Some properties of the homotopy category of $G$-symmetric spectra}
We list certain properties of the homotopy category $\Ho^{\U}(GSp^{\Sigma})$ of the $G^{\U}$-stable model structures that we need later for the comparison with $G$-orthogonal spectra. 
\begin{Prop}
\begin{enumerate}
\item(Stability) Let $V$ be a finite dimensional $G$-representation which embeds into $\R[\U]$. Then the adjunction
\[ S^V\wedge (-):GSp_{\T}^{\Sigma} \rightleftarrows GSp^{\Sigma}_{\T}:\Omega^V \]
is a Quillen equivalence for either of the $\U$-stable model structures of the previous section. In particular, the $\U$-model structures are stable and so their homotopy categories inherit a triangulated structure.
\item(True homotopy groups) The $G$-symmetric spectra $G/H_+\wedge \mathbb{S}^n$ represent the \emph{true} homotopy groups, i.e., for $H$ a subgroup of $G$ and a $G^{\U}$-semistable $G$-symmetric spectrum $X$ there is a natural isomorphism \[	\Ho^{\U}(GSp^{\Sigma})(G/H_+\wedge \mathbb{S}^n,X)\cong \pi^{H,\U}_n(X). \]
\item(Generators) The suspension spectra of $G$-orbits $\{G/H_+\wedge \mathbb{S}\ |\ H\leq G\}$ form a set of compact generators of the triangulated homotopy category $\Ho^{\U}(GSp^{\Sigma})$.
\end{enumerate}
\end{Prop}
\begin{proof} $(i)$: This is a consequence of the fact that both functors preserve and reflect all $G^{\U}$-stable equivalences (Proposition \ref{prop:suspiso2}) and that adjunction unit and counit are $\upi_*^{\U}$-isomorphisms (Proposition \ref{prop:suspiso}) and thus $G^{\U}$-stable equivalences.

$(ii)$: We can restrict to the case $n=0$ and $X$ a $G^{\U}\Omega$-spectrum, since every $G^{\U}$-semistable $G$-symmetric spectrum can be replaced by one up to $\upi_*^{\U}$-isomorphism. The statement then follows by adjunction, since
\[ \Ho^{\U}(GSp^{\Sigma})(\Sigma^{\infty}G/H_+,X)\cong [G/H_+,X_0]^G=\pi_0 (X_0^H)\cong \pi_0^{H,\U}X, \]
where we have used the non-positive $G^{\U}$-projective stable model structure to compute the morphism set in the homotopy category. The last step uses Example \ref{exa:piomega}.

$(iii)$: Using item $(ii)$, this follows from item $3$ of Proposition \ref{prop:exseq} and the fact that equivariant homotopy groups detect $G^{\U}$-stable equivalences between $G^{\U}$-semistable $G$-symmetric spectra and hence in particular $G^{\U}\Omega$-spectra.
\end{proof}

\section{Derived functors}
\label{sec:derived}
In this section we explain how the model structures can be used to derive various functors between categories of equivariant symmetric spectra.

\subsection{Change of universe}
\label{sec:changeofuniverse}
The $\U$-stable homotopy categories $\Ho^{\U}(GSp^{\Sigma})$ for varying $G$-set universes $\U$ are related by change of universe functors.
For this we let $\U,\U'$ be $G$-set universes such that $\U$ allows an embedding into $\U'$.
It follows immediately from the definitions that every $G^{\U'}$-level equivalence is also a $G^{\U}$-level equivalence and the same is true for projective level fibrations. Moreover, every $G^{\U'}\Omega$-spectrum is in particular a $G^{\U}\Omega$-spectrum and so we see that the identity functor is a right Quillen functor from the $G^{\U'}$-projective stable model structure to the $G^{\U}$-projective stable model structure. Hence, we obtain a Quillen pair
\begin{equation} \label{eq:change1} id:GSp^\Sigma_{\U,proj} \rightleftarrows GSp^{\Sigma}_{\U',proj}:id
\end{equation}
and a derived adjunction between the homotopy categories. We note that the Quillen pair does not depend on a choice of embedding $\U\hookrightarrow \U'$.

If the $\R$-linearizations of $\U$ and $\U'$ are isomorphic, it follows by 2-out-of-3 for the Quillen equivalences from Theorem \ref{theo:gquillen} that the above adjunction is a Quillen equivalence. In fact, less is necessary: It is a consequence of \cite[Thm. 1.2]{Lew95}, that we obtain a Quillen equivalence already if every $G$-orbit $G/H$ which embeds into $\R[\U']$ also embeds into $\R[\U]$. 

\begin{Remark} The identity adjunction is usually not a Quillen pair for the flat stable model structures, since a $G^{\U}$-stable equivalence between $G$-flat $G$-symmetric spectra is not necessarily a $G^{\U'}$-stable equivalence.
\end{Remark}

\subsection{Change of groups} 
\label{sec:changeofgroups} Let $H\leq G$ be a subgroup inclusion. Then the restriction functor $\res_H^G:GSp^{\Sigma}\to HSp^{\Sigma}$ preserves all the structure one could ask for, namely it maps $G$-flat cofibrations ($G^{\U}$-flat stable fibrations) to $H$-flat cofibrations (resp. $H^{\U}$-flat stable fibrations) and all $G^{\U}$-stable equivalences to $H^{\U}$-stable equivalences, and similarly for the projective model structure and their positive versions. Hence, it is both a right and a left Quillen functor and one obtains Quillen pairs
\[ G\ltimes_H(-):HSp^{\Sigma}\rightleftarrows GSp^{\Sigma}:{\res}_H^G \]
and
\[ {\res}_H^G:GSp^{\Sigma} \rightleftarrows HSp^{\Sigma}:\map_H(G,-) \]
with respect to either the flat or the projective $\U$-stable model structures, positive and non-positive. The Wirthm{\"u}ller isomorphism (Proposition \ref{prop:wirth}) implies that if $G/H$ embeds into $\R[\U]$, the derived functors $\mathbb{L}(G\ltimes_H -)$ and $\mathbb{R}(\map_H(G,-))$ are naturally isomorphic.

\subsection{Categorical fixed points}
Let $N$ be a normal subgroup of $G$, $p:G\to G/N$ the projection, $\U_{G/N}$ a $G/N$-set universe and $p^*(\U_{G/N})$ the pulled-back $G$-set universe. Then the fixed point adjunction
\[ p^*:(G/N)Sp^{\Sigma}\rightleftarrows GSp^{\Sigma}:(-)^{N} \]
becomes a Quillen pair for the $(G/N)^{\U_{G/N}}$-projective ($(G/N)^{\U_{G/N}}$-flat) model structure on $(G/N)Sp^{\Sigma}$ and the $G^{p^*(\U_{G/N})}$-projective (resp. $G^{p^*(\U_{G/N})}$-flat) model structure on $GSp^{\Sigma}$.

\subsection{Orbits}
Let $G$, $N$, $p$ and $\U_{G/N}$ be as above. Then the $N$-orbits adjunction
\[ (-)/N:GSp^{\Sigma}\rightleftarrows (G/N)Sp^{\Sigma}:p^* \]
is a Quillen pair for the $G^{p^{*}(\U_{G/N})}$-projective model structure on the left and the $(G/N)^{\U_{G/N}}$-projective model structure on the right, and likewise for the flat model structures.

\section{Monoidal properties and the norm}
\label{sec:monoidal}

In this section we deal with the relationship between the smash product and the stable model structures, constructing model structures on categories of modules, algebras (both via \cite{SS00}) and commutative algebras (using \cite{Whi17}). We also explain homotopical properties of the multiplicative norm.

We start with:
\begin{Prop}\label{prop:stablemonoidal} The (positive and non-positive) flat and projective $G^{\U}$-stable model structures on $G$-symmetric spectra are monoidal with respect to the smash product. 
\end{Prop}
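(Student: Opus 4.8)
The plan is to verify, for each of the four structures (flat or projective, positive or non-positive), the two conditions of a monoidal model category: the pushout-product axiom and the unit axiom. Since the cofibrations of every $G^{\U}$-stable model structure coincide with those of the underlying $G^{\U}$-level model structure, the assertion that $i\square j$ is again a (positive) $G$-flat, respectively (positive) $G^{\U}$-projective, cofibration whenever $i$ and $j$ are is exactly the first bullet of Proposition \ref{prop:levmonoidal}. Hence the only part of the pushout-product axiom requiring work is: if $i$ is a (positive) cofibration and $j$ a (positive) cofibration which is in addition a $G^{\U}$-stable equivalence, then $i\square j$ is a $G^{\U}$-stable equivalence. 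By the standard reduction to generators (cf.\ \cite{Hov99}) it suffices to treat $i$ a generating cofibration and $j$ a generating acyclic cofibration, using the explicit sets recorded after Theorems \ref{theo:flatmod} and \ref{theo:projmod}: the generating acyclic cofibrations of a stable structure are the level generating acyclic cofibrations together with the maps $i'\square(G\ltimes_H\overline{\lambda}^{(H)}_{M,N})$ with $i'\in I_{\{e\}}$ and $M,N$ finite $H$-subsets of $\U$.

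If $j$ is a level generating acyclic cofibration, then $i\square j$ is a $G$-flat cofibration and, by the second (or third, for the positive variants) bullet of Proposition \ref{prop:levmonoidal}, a $G^{\U}$-level equivalence, hence a $G^{\U}$-stable equivalence. If $j=i'\square(G\ltimes_H\overline{\lambda}^{(H)}_{M,N})$, I would use associativity of $\square$ to write $i\square j=i'\square\bigl(i\square(G\ltimes_H\overline{\lambda}^{(H)}_{M,N})\bigr)$; since $i'$ is (with trivial action) a genuine $G$-cofibration of spaces, Lemma \ref{lem:gpushoutproduct} reduces us to showing that $i\square(G\ltimes_H\overline{\lambda}^{(H)}_{M,N})$ is a $G$-flat cofibration and $G^{\U}$-stable equivalence. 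Using the projection formula $W\wedge(G\ltimes_H Z)\cong G\ltimes_H(\res^G_H W\wedge Z)$, the fact that the restriction of a free (semi-free) spectrum is again free (semi-free), the smash-pushout-product identifications $\mathscr{F}_M(f)\square\mathscr{F}_N(g)\cong\mathscr{F}_{M\sqcup N}(f\square g)$ and $\mathscr{G}_n(f)\square\mathscr{G}_m(g)\cong\mathscr{G}_{m+n}(\Sigma_{n+m}\ltimes_{\Sigma_n\times\Sigma_m}(f\square g))$, and $\mathscr{G}_n(A)\wedge\Sigma^\infty K\cong\mathscr{G}_n(A\wedge K)$, one can rewrite this map (distributing the smash over the mapping cylinder defining $\overline{\lambda}$) so as to identify it, up to a $G^{\U}$-level equivalence, with the induction to $G$ of a map of the form $A\wedge\lambda^{(H')}_{M',N'}$ for a cofibrant $G$-space $A$ and finite $H'$-subsets $M',N'$ of $\U$. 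Example \ref{exa:lambda} gives that $G\ltimes_{H'}\lambda^{(H')}_{M',N'}$, hence the cofibration $\overline{\lambda}$ appearing in its mapping-cylinder factorization (by two-out-of-three), is a $G^{\U}$-stable equivalence, and Proposition \ref{prop:gwedge}(ii) shows that smashing with $A$ preserves this; together with the fact that $G\ltimes_{H'}(-)$ is left Quillen for the stable structures (restriction being right Quillen by the level case and preserving $G^{\U}\Omega$-spectra), this gives the claim. By symmetry of $\square$ the same computations handle $j\square i$, so the pushout-product axiom holds.

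For the unit axiom, in the two non-positive structures the unit $\mathbb{S}=\mathscr{F}_\emptyset S^0$ is $G^{\U}$-projective, hence $G$-flat and cofibrant by Remark \ref{rem:freeflat}, so the axiom is automatic. In the two positive structures one must produce a cofibrant approximation $q\colon Q\mathbb{S}\to\mathbb{S}$ with $q\wedge X$ a $G^{\U}$-stable equivalence for every positively cofibrant $X$, and I would deduce this from the pushout-product axiom just proven: factor $q$ in the positive stable model structure as a positive cofibration $c\colon Q\mathbb{S}\to\mathbb{S}'$ followed by a stable acyclic fibration $p\colon\mathbb{S}'\to\mathbb{S}$; then $c$ is a $G^{\U}$-stable equivalence by two-out-of-three, so $c\wedge X=c\square(\ast\to X)$ is a $G^{\U}$-stable acyclic cofibration by the pushout-product axiom, while $p$ is a $G^{\U}$-level equivalence between $G$-flat spectra, so $p\wedge X$ is a $G^{\U}$-level equivalence by monoidality of the non-positive level structure together with Ken Brown's lemma; composing yields $q\wedge X$ a $G^{\U}$-stable equivalence.

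The main obstacle I expect is the identification in the second paragraph of the pushout-product of a generating cofibration with $i'\square(G\ltimes_H\overline{\lambda}^{(H)}_{M,N})$ as level-equivalent to an induced smash $G\ltimes_{H'}(A\wedge\lambda^{(H')}_{M',N'})$. The twisting of symmetric-group actions appearing in smash products of semi-free spectra, and the need to keep all free-spectrum indices inside finite $G$-subsets of $\U$ so that Example \ref{exa:lambda} applies, make this bookkeeping genuinely more delicate than its non-equivariant counterpart; by contrast the cofibration part of the pushout-product axiom and the unit axiom are comparatively formal once this is in hand.
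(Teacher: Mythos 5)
Your route to the pushout-product axiom is genuinely different from the paper's. The paper does not reduce to generators at all: since stable and level cofibrations coincide and projective (acyclic) cofibrations are flat ones, it suffices to show that for a $G^{\U}$-stably acyclic flat cofibration $f$ and a flat cofibration $g$ the cofiber $(\mathrm{cof}\,f)\wedge(\mathrm{cof}\,g)$ is $G^{\U}$-stably contractible, and this is done directly by the adjunction $[\mathrm{cof}(f)\wedge\mathrm{cof}(g),Z]^G\cong[\mathrm{cof}(f),\Hom(\mathrm{cof}(g),Z)]^G$ together with Corollary \ref{cor:homomega} (which says $\Hom$ from a $G$-flat spectrum into a $G^{\U}$-flat level fibrant $G^{\U}\Omega$-spectrum is again one). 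Your generator-by-generator identification is conceptually more explicit, and for the projective structure (where the generators are induced free spectra $G\ltimes_K\mathscr{F}^{(K)}_P(i)$ with $P$ inside $\U$) it can be pushed through via the projection and double-coset formulas. But for the flat structure the generating cofibrations are semi-free, $\mathscr{G}_n(k)$ for $k\in I_{G\times\Sigma_n}$, and here the reduction to maps of the form $G\ltimes_{H'}(A\wedge\lambda^{(H')}_{M',N'})$ does not go through cleanly: after applying $\mathscr{G}_n(f)\square\mathscr{G}_m(g)\cong\mathscr{G}_{m+n}(\Sigma_{n+m}\ltimes_{\Sigma_n\times\Sigma_m}(f\square g))$, the resulting level $\underline{n}$ carries an induced $\Sigma_n$-action rather than a fixed $G$-set structure inside $\U$, so Example \ref{exa:lambda} is not directly applicable. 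This is precisely the twisting you flag as the ``main obstacle,'' and I think it is more than bookkeeping: the mechanism that makes it work is exactly the $\Hom$-adjunction argument, so you are essentially forced back to Corollary \ref{cor:homomega} at the crucial step. The paper's route buys you uniformity across flat and projective at no extra cost.

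The unit axiom argument for the positive structures has a genuine gap. You factor $q\colon Q\mathbb{S}\to\mathbb{S}$ as a positive cofibration $c$ followed by a stable acyclic fibration $p\colon\mathbb{S}'\to\mathbb{S}$ and claim $p$ is a $G^{\U}$-level equivalence. It is not: $Q\mathbb{S}$ is positive cofibrant and $c$ is a positive cofibration, so $(\mathbb{S}')_0\cong(Q\mathbb{S})_0=*$, while $\mathbb{S}_0=S^0$, hence $p_0\colon *\to S^0$ is not a weak equivalence. So $p$ is only a \emph{positive} level equivalence (a positive level acyclic fibration, since stable and level acyclic fibrations agree), and the Ken Brown step fails both ways: in the non-positive level structure $p$ is not a weak equivalence, and in the positive level structure $\mathbb{S}$ is not cofibrant. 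The fix is to argue directly that $p\wedge X$ is a $G^{\U}$-stable equivalence by yet again taking a $G$-flat cofibrant approximation $p^{\flat}$ of $p$, observing that $\mathrm{cof}(p^{\flat})$ is $G^{\U}$-stably contractible (since $p$ is a positive level equivalence, hence a $\upi_*^{\U}$-isomorphism and a $G^{\U}$-stable equivalence by Theorem \ref{theo:piiso}), and then using the same $\Hom$-adjunction with Corollary \ref{cor:homomega} to conclude that $\mathrm{cof}(p^{\flat})\wedge X$ is $G^{\U}$-stably contractible; alternatively, take a concrete replacement $\mathbb{S}^+=\mathscr{F}_M S^M$ with $\emptyset\neq M\subseteq\U$ and run the mapping-cylinder factorization of $\lambda_{\emptyset,M}\wedge X$ through the already-established pushout-product axiom.
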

\begin{proof} Since the stable model structure shares the cofibrations with the respective level model structure and every $G^{\U}$-stably acyclic projective cofibration is also a $G^{\U}$-stably acyclic flat cofibration, we only have to show that the pushout product $f\square g$ of a $G^{\U}$-stably acyclic flat cofibration $f:A\to B$ with a flat cofibration $C\to D$ is again a $G^{\U}$-stable equivalence. The proof is the same as for Lemma \ref{lem:gpushoutproduct}, this time making use of the fact that the $\Hom$-spectrum from a flat $G$-symmetric spectrum into a $G^{\U}$-flat level fibrant $G^{\U}\Omega$-spectrum is again a flat level fibrant $G^{\U}\Omega$-spectrum (which is Corollary \ref{cor:homomega}).

To establish monoidality for the positive model structures we furthermore need to show that for a positive $G^{\U}$-projective cofibrant replacement $\mathbb{S}^+\to \mathbb{S}$ and every positive flat cofibrant $G$-symmetric spectrum $X$ the map $(\mathbb{S}^+\wedge X)\to X$ is a $G^{\U}$-stable equivalence, but this is clear because every positive $G^{\U}$-level equivalence is a $G^{\U}$-stable equivalence and $\mathbb{S}$ and $\mathbb{S}^+$ are in particular flat.
\end{proof}

\subsection{The monoid axiom and model structures on module and algebra categories}
In order to obtain model structures on modules and algebras over an arbitrary $G$-symmetric ring spectrum we need to show one more property called the monoid axiom (cf. \cite[Def 3.3]{SS00}). The main ingredient is the following:
\begin{Prop}[Flatness] \label{prop:flatness}
\begin{enumerate}[(i)]
 \item Smashing with a $G$-flat $G$-symmetric spectrum preserves $\upi_*^{\U}$-isomorphisms and $G^{\U}$-stable equivalences.
 \item Smashing with any $G$-symmetric spectrum preserves $\upi_*^{\U}$-isomorphisms and $G^{\U}$-stable equivalences between $G$-flat $G$-symmetric spectra.
\end{enumerate}
\end{Prop}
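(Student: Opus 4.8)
The plan is to reduce the flatness statement to the one-variable analysis of smashing with free and semi-free $G$-symmetric spectra, exactly as in the non-equivariant case treated in \cite{HSS00} and its $G$-orthogonal analog in \cite{Sto11}. First I would observe that parts $(i)$ and $(ii)$ for $\upi_*^{\U}$-isomorphisms are in fact the same statement: if $X\to Y$ is a $\upi_*^{\U}$-isomorphism and $Z$ is $G$-flat, then we want $Z\wedge X\to Z\wedge Y$ to be a $\upi_*^{\U}$-isomorphism, and conversely any $\upi_*^{\U}$-isomorphism between $G$-flat spectra smashed with an arbitrary $Z$ can be handled by first taking a $G$-flat replacement $Z^{\flat}\to Z$ (using that $\upi_*^{\U}$-isomorphisms are detected by the $G$-flat stable equivalences via Theorem \ref{theo:piiso}, and that smashing with the contractible quotient $Z/Z^{\flat}$ is harmless once $X$, $Y$ are $G$-flat — here the cone long exact sequence of Proposition \ref{prop:exsym} together with Corollary \ref{cor:homfiber} lets us transfer the question to cones). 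Then since $G^{\U}$-stable equivalences between $G$-flat spectra can be checked on cones being $G^{\U}$-stably contractible (Proposition \ref{prop:cone}), the statement about $G^{\U}$-stable equivalences in $(ii)$ follows from the $\upi_*^{\U}$ statement once we know smashing with a $G$-flat spectrum preserves $G^{\U}$-stably contractible spectra — and that in turn is the $G^{\U}$-stable equivalence half of $(i)$.

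So the crux is $(i)$: smashing with a fixed $G$-flat $Z$ preserves $\upi_*^{\U}$-isomorphisms. The approach is to use the skeleton/cell filtration of $Z$ (Section \ref{sec:latching}): a $G$-flat $Z$ is a retract of a spectrum built from the basepoint by pushouts along the generating $G$-flat cofibrations $\mathscr{G}_n(i)$ for $i\in I_{G\times\Sigma_n}$ and transfinite composition. Smashing a fixed map $f\colon X\to Y$ with each of these operations should preserve $\upi_*^{\U}$-isomorphisms: transfinite composition along $h$-cofibrations is handled because $-\wedge f$ takes the generating flat cofibrations to $h$-cofibrations and $\upi_*^{\U}$ commutes with sequential colimits along such (the colimit arguments of Section \ref{sec:gprop}); pushouts are handled by the gluing lemma for $\upi_*^{\U}$-isomorphisms via the five lemma and the long exact sequences of Proposition \ref{prop:exsym}, provided we control smashing with the generating cells themselves. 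Thus everything reduces to showing that $\mathscr{G}_n(A)\wedge f$ is a $\upi_*^{\U}$-isomorphism whenever $A$ is a cofibrant $(G\times\Sigma_n)$-space and $f$ is a $\upi_*^{\U}$-isomorphism. Using the explicit formula from Section \ref{sec:free},
\[ (\mathscr{G}_n(A)\wedge X)_{n+k}\cong \Sigma_{n+k}\ltimes_{\Sigma_n\times\Sigma_k}(A\wedge X_k), \]
one sees that $\mathscr{G}_n(A)\wedge X$ is (up to shift) built out of $X$ by inducing up along $\Sigma_n\times\Sigma_k\hookrightarrow\Sigma_{n+k}$ and smashing with $A$; combining Corollary \ref{cor:smashpi} (smashing with a cofibrant equivariant space preserves $\upi_*^{\U}$-isomorphisms), Lemma \ref{lem:wirth2} (induction preserves $\upi_*^{\U}$-isomorphisms) and Corollary \ref{cor:shpiiso} (shifts preserve $\upi_*^{\U}$-isomorphisms) gives the claim for the generators, at least after identifying the resulting spectrum correctly as a colimit/shift of induced smash products.

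The main obstacle I anticipate is the bookkeeping in this last reduction: $\mathscr{G}_n(A)\wedge X$ is not literally an induced spectrum but is assembled level by level from such pieces, so one has to argue that the functor $X\mapsto \mathscr{G}_n(A)\wedge X$ — equivalently $\bigvee_k \mathscr{G}_{n+k}\big(\Sigma_{n+k}\ltimes_{\Sigma_n\times\Sigma_k}(A\wedge X_k)\big)$ after resolving $X$ by its own cells, or more directly via the description of $\mathscr{G}_n(A)\wedge-$ as an iterated shift of $G\ltimes$-type constructions — takes $\upi_*^{\U}$-isomorphisms to $\upi_*^{\U}$-isomorphisms. The cleanest route is probably to first establish $(i)$ and $(ii)$ for $Z$ a free spectrum $\mathscr{F}_M^{(G)}(A)$ or semi-free spectrum $\mathscr{G}_M(A)$ directly from the smash product formulas $\mathscr{F}_M(f)\square\mathscr{F}_N(g)\cong\mathscr{F}_{M\sqcup N}(f\square g)$ and its semi-free analog together with Corollaries \ref{cor:smashpi}, \ref{cor:shpiiso} and Lemma \ref{lem:wirth2}, and only then run the cell-induction over the skeleton filtration of a general $G$-flat $Z$. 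A secondary subtlety is the simplicial case, but this follows from the topological one by geometric realization since $|{-}\wedge{-}|\cong|{-}|\wedge|{-}|$, $|f|$ is a $\upi_*^{\U}$-isomorphism iff $f$ is, and geometric realization preserves $G$-flatness and $G^{\U}$-stable equivalences (Proposition \ref{prop:ggeo}).
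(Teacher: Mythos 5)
Your overall strategy — reduce to the case of smashing with a semi-free generator $\mathscr{G}_n(A)$ via the skeleton filtration, handle pushouts and transfinite composition using $h$-cofibrations and the long exact sequences, then pass to stable equivalences via the cone criterion and to part $(ii)$ via a flat replacement and two-out-of-three — is the same as the paper's. However, there is a genuine gap in the crucial generator step, and you have flagged it yourself as the ``main obstacle'' without resolving it.

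The gap is this: you propose to conclude that $\mathscr{G}_n(A)\wedge f$ is a $\upi_*^{\U}$-isomorphism by identifying $\mathscr{G}_n(A)\wedge X$ ``as a colimit/shift of induced smash products'' or ``as an iterated shift of $G\ltimes$-type constructions''. No such identification exists at the level of spectra. The formula $(\mathscr{G}_n(A)\wedge X)_{n+k}\cong \Sigma_{n+k}\ltimes_{\Sigma_n\times\Sigma_k}(A\wedge X_k)$ is a \emph{levelwise} isomorphism, and the resulting levels carry the full $\Sigma_{n+k}$-action through the induction, whereas a shift $sh^M Z$ carries a $\Sigma_k$-action only through $\underline{k}$. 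So $\mathscr{G}_n(A)\wedge X$ is not a shift of anything, and Corollaries \ref{cor:smashpi}, \ref{cor:shpiiso} and Lemma \ref{lem:wirth2} cannot be applied directly at the level of spectra as you suggest; the pushout-product formula $\mathscr{F}_M(f)\square\mathscr{F}_N(g)\cong\mathscr{F}_{M\sqcup N}(f\square g)$ likewise says nothing about $\mathscr{G}_n(A)\wedge Y$ for an arbitrary $Y$. The paper fills exactly this hole with Lemma \ref{lem:flat}, which works with elements of the homotopy groups rather than with spectra: it evaluates $(\mathscr{G}_n(A)\wedge Y)(M)$, applies the double coset formula to split it as a wedge $\bigvee_{[N\subseteq M]} G\ltimes_{\mathrm{Stab}(N)}(A\wedge Y(M-N))$ (a decomposition that is \emph{not} compatible with structure maps as $M$ varies), observes that the projection of any representing map $S^{k\sqcup M}\to (\mathscr{G}_n(A)\wedge Y)(M)$ to each summand lives in $\pi_k^{G,\U}(\Omega^M(G\ltimes_{\mathrm{Stab}(N)}(A\wedge Y)))$ — where $G\ltimes_{\mathrm{Stab}(N)}(A\wedge Y)$ \emph{is} an honest $G$-symmetric spectrum to which Lemma \ref{lem:wirth2} and Corollary \ref{cor:smashpi} apply — and concludes by suspending high enough to kill all finitely many projections at once and invoking Corollary \ref{cor:piwedge}. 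This element-by-element argument is not bookkeeping that can be deferred; it is the content of the proposition, and your proposal needs to carry it out.

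Two smaller points. First, it is slightly cleaner (and what the paper does) to prove only the vanishing statement for the generators, i.e.\ $\upi_*^{\U}Y=0\Rightarrow\upi_*^{\U}(\mathscr{G}_n(A)\wedge Y)=0$, and then upgrade to ``preserves $\upi_*^{\U}$-isomorphisms'' once and for all via the cone long exact sequence; trying to prove the preservation statement directly for generators is harder for no gain. Second, your appeal to Theorem \ref{theo:piiso} when deducing $(ii)$ from $(i)$ is unnecessary: given a $\upi_*^{\U}$-isomorphism $f\colon X\to Y$ between $G$-flat spectra and arbitrary $Z$, take a flat replacement $Z^{\flat}\to Z$; by $(i)$ applied with $X$, $Y$, and $Z^{\flat}$ in turn, three sides of the evident square are $\upi_*^{\U}$-isomorphisms, so $Z\wedge f$ is one by two-out-of-three, with no need to pass through stable equivalences.
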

Before we come to showing this we need one lemma. We recall that $\mathscr{G}_n(-)$ denotes the semi-free $G$-symmetric spectrum functor in level $n$ (cf. Section \ref{sec:free}).
\begin{Lemma} \label{lem:flat} Let $n\in \N$, $A$ a cofibrant $(G\times \Sigma_n)$-space and $Y$ a $G$-symmetric spectrum with $\upi_*^{\U}Y=0$. Then $\upi_*^{\U}(\mathscr{G}_n(A)\wedge Y)=0$.
\end{Lemma}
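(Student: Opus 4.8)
The plan is to reduce the statement to an explicit description of the levels of $\mathscr{G}_n(A)\wedge Y$ and then to apply the behavior of naive homotopy groups under the constructions already developed in Section \ref{sec:homgroups}. Recall from Section \ref{sec:free} that for a based $(G\times\Sigma_n)$-space $A$ and a $G$-symmetric spectrum $Y$ there is a natural $G$-isomorphism
\[
  (\mathscr{G}_n(A)\wedge Y)_k \cong \begin{cases} * & k<n \\ \Sigma_{n+m}\ltimes_{\Sigma_n\times\Sigma_m}(A\wedge Y_m) & k=n+m. \end{cases}
\]
So as a $G$-symmetric spectrum, $\mathscr{G}_n(A)\wedge Y$ is (up to the shift by $n$ built into the semi-free construction) assembled out of the spaces $A\wedge Y_m$ with an extra induced $\Sigma$-action. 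The first step I would carry out is to make this precise: observe that $\mathscr{G}_n(A)\wedge Y$ is, in fact, $G$-isomorphic to $\mathscr{G}_n(A\wedge Y)$ if one reads $A\wedge Y$ appropriately as a ``$(G\times\Sigma_n)$-symmetric-spectrum''; more usefully, I would instead work with the cofiber filtration of $A$ itself.

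The core of the argument is a cell induction on $A$. Since $A$ is a cofibrant (hence retract of a cell) $(G\times\Sigma_n)$-space, and all the functors in sight ($\mathscr{G}_n(-)$, $-\wedge Y$, and $\upi_*^{\U}(-)$ on a fixed spectrum with the long exact sequences of Proposition \ref{prop:exsym}) are compatible with cofiber sequences, wedges, and sequential colimits along $h$-cofibrations, it suffices to treat $A$ of the form $(G\times\Sigma_n)/L_+\wedge S^k$ for $L$ a subgroup of $G\times\Sigma_n$ and $k\geq 0$. Smashing with $S^k$ only shifts homotopy groups (Proposition \ref{prop:suspiso}), so I reduce to $A=(G\times\Sigma_n)/L_+$. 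Then $\mathscr{G}_n((G\times\Sigma_n)/L_+)$ is a free (semi-free on an induced cell) $G$-symmetric spectrum, and by the adjunctions of Section \ref{sec:free} together with the concrete level description above, $\mathscr{G}_n((G\times\Sigma_n)/L_+)\wedge Y$ is $G$-isomorphic to an induction of the form $G\ltimes_H(\text{shift of a free spectrum smashed with }\res\, Y)$ for the subgroup $H\leq G$ determined by $L\in\F^{G,\Sigma_n}$ via the twisting/untwisting correspondence (Lemmas \ref{lem:twist}, \ref{lem:untwist}); if $L$ is not in $\F^{G,\Sigma_n}$ at all the relevant $\Sigma_n$-isotropy forces the smash product to be built from proper $\Sigma$-inductions and one handles it by a further reduction. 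Concretely I expect $\mathscr{G}_n((G\times\Sigma_n)/L_+)\wedge Y \cong sh^{?}\,(G\ltimes_H \res_H^G Y)$ (twisted appropriately), or a wedge of such.

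At that point the result follows by assembling three facts already in hand: $sh^M$ preserves and reflects $\upi_*^{\U}$-isomorphisms (Corollary \ref{cor:shpiiso}), in particular preserves spectra with vanishing homotopy groups; induction $G\ltimes_H(-)$ takes spectra with vanishing naive homotopy groups to spectra with vanishing naive homotopy groups (Lemma \ref{lem:wirth2}); and restriction $\res_H^G$ of a spectrum with $\upi_*^{\U}Y=0$ again has vanishing homotopy groups (immediate from the definition, since finite $H$-subsets of $\U$ are indexed over and a trivial colimit system stays trivial). Feeding $Y$ with $\upi_*^{\U}Y=0$ through this chain gives $\upi_*^{\U}(\mathscr{G}_n((G\times\Sigma_n)/L_+)\wedge Y)=0$, and then the cell induction (using exactness, wedge additivity of homotopy groups from Corollary \ref{cor:piwedge}, and preservation of $\upi_*^{\U}$-isomorphisms under colimits along $h$-cofibrations) upgrades this to arbitrary cofibrant $A$, and finally to retracts by functoriality.

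The main obstacle I anticipate is the bookkeeping in identifying $\mathscr{G}_n((G\times\Sigma_n)/L_+)\wedge Y$ with an explicit induction-and-shift of $Y$: one must track the interaction between the $\Sigma$-action used to form the semi-free spectrum, the block-permutation action appearing in $\Sigma_{n+m}\ltimes_{\Sigma_n\times\Sigma_m}(-)$, and the twisting of the $G$-action by the homomorphism $H\to\Sigma_n$ encoded in $L$. This is exactly the kind of coordinate-change computation already done in Lemmas \ref{lem:twist} and \ref{lem:untwist} and in the proof that the strong consistency condition holds, so while it is somewhat intricate, it is routine in character; no genuinely new idea is needed beyond those lemmas and Lemma \ref{lem:wirth2}.
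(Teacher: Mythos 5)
The cell induction on $A$ that you set up in the first half is sound: $\mathscr{G}_n(-)\wedge Y$ is compatible with pushouts along $h$-cofibrations, wedges, and sequential colimits, and $S^k$-suspension shifts $\upi_*^{\U}$, so one does reduce to $A=(G\times\Sigma_n)/L_+$. The gap is in the identification you then assert. For $L=\Gamma_{\alpha}$ the graph of $\alpha\colon H\to\Sigma_n$, a Yoneda/untwisting computation (using Lemma~\ref{lem:untwist}) gives $\mathscr{G}_n((G\times\Sigma_n)/L_+)\cong G\ltimes_H\mathscr{F}^{(H)}_{\underline{n}_{\alpha}}S^0$, and hence by the projection formula
\[
\mathscr{G}_n((G\times\Sigma_n)/L_+)\wedge Y\ \cong\ G\ltimes_H\bigl(\mathscr{F}^{(H)}_{\underline{n}_{\alpha}}S^0\wedge {\res}_H^G Y\bigr).
\]
The free-spectrum smash factor $\mathscr{F}^{(H)}_{M}S^0\wedge(-)$ cannot be traded for a shift. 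Your ``concrete'' guess $sh^{?}(G\ltimes_H\res_H^G Y)$ is wrong: for $Y=\mathbb{S}$, $H=G$, $\alpha$ trivial, $n=1$ the left side is $\mathscr{F}_1S^0$, which is not $G^{\U}$-semistable (Example~\ref{exa:semifree}, smashed down by $S^1$ via Corollary~\ref{cor:mapsemi}), whereas any shift $sh^M\mathbb{S}$ is semistable (Corollary~\ref{cor:shiftsemi}). So the two are not even $\upi_*^{\U}$-isomorphic.

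Once the identification is corrected, the lemmas you invoke (Corollary~\ref{cor:shpiiso}, Lemma~\ref{lem:wirth2}, the observation about $\res_H^G$) account for the induction and restriction factors, but not for $\mathscr{F}^{(H)}_M S^0\wedge(-)$. Since $\mathscr{F}_M S^0=\mathscr{G}_M((\Sigma_M)_+)$, showing that this functor preserves vanishing of naive homotopy groups is itself an instance of the very lemma under proof (with $A=(\Sigma_M)_+$), so the argument is circular as written. Corollary~\ref{cor:smashpi} is about smashing with a cofibrant $G$-\emph{space}, not with a free $G$-symmetric \emph{spectrum}, and cannot be used here. The case $L\notin\F^{G,\Sigma_n}$ is also waved off without an argument. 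The paper avoids all of this by not doing a cell induction at all: it keeps $A$ general, evaluates $\mathscr{G}_n(A)\wedge Y$ at a finite $G$-set $M$ of cardinality $n+m$, applies the double coset formula to split $(\mathscr{G}_n(A)\wedge Y)(M)$ as $\bigvee_{[N\subseteq M,\,|N|=n]}G\ltimes_{\mathrm{Stab}(N)}(A\wedge Y(M-N))$, and kills each summand using Lemma~\ref{lem:wirth2} and Corollary~\ref{cor:smashpi}, assembling via Corollary~\ref{cor:piwedge}. That double coset decomposition is precisely the computation your approach defers and never supplies.
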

\begin{proof} It suffices to prove the lemma for $G$-symmetric spectra of spaces. The $(n+m)$-th level of the $G$-symmetric spectrum $\mathscr{G}_n(A)\wedge Y$ is given by $\Sigma_{n+m}\ltimes_{\Sigma_n\times \Sigma_m} (A\wedge Y_m)$. We let $\varphi:G\to \Sigma_{n+m}$ be a homomorphism such that the associated $G$-set $\underline{n+m}_{\varphi}$ (which we denote by $M$ from now on) embeds into $\U$, and $f:S^{k\sqcup M}\to (\mathscr{G}_n(A)\wedge Y)(M)$ a $G$-map. We have to show that $f$ is stably null-homotopic. Applying the double coset formula (cf. Section \ref{sec:unstable}), we see that $(\mathscr{G}_n(A)\wedge Y)(M)$ splits off as
\[ \bigvee_{[N\subseteq M=\underline{n+m},|N|=n]} G\ltimes_{Stab(N)} (A\wedge Y(M-N)), \]
where the wedge is taken over a system of $G$-orbit representatives $N$ of subsets of $M$ of cardinality $n$ and $Stab(N)\subseteq G$ is the stabilizer of such a subset. The stabilizer fixes both $N$ and the complement $M-N$, so it acts on $Y(M-N)$. The action on $A$ comes from pulling back the $(G\times \Sigma_n)$-action along the graph of the homomorphism $Stab(f)\to \Sigma_N\cong \Sigma_n$ (where the latter is induced by the canonical order-preserving bijection $\underline{n}\cong N\subseteq \underline{n+m}$). We now fix such an $n$-element subset $N$ and consider the $G$-map $f':S^{k\sqcup M}\to (\mathscr{G}_n(A)\wedge Y)(M)\to G\ltimes_{Stab(N)} (A\wedge Y(M-N))$, i.e., $f$ followed by the projection to this summand. It represents an element in $\pi_k^{G,\U}(\Omega^M (G\ltimes_{Stab(N)}(A\wedge Y)))$, which is trivial by Lemma \ref{lem:wirth2} and Corollary \ref{cor:smashpi}. Hence, there exists a finite $G$-set $M'$ such that $\sigma_{M-N}^{M'}\circ (f'\wedge S^{M'})$ is $G$-nullhomotopic. Taking $M'
$ large enough so that this holds for all subsets $N$ at once, we obtain that $\sigma_M^{M'}\circ (f\wedge S^{M'})$ has the property that all postcompositions with projections to the summands are $G$-nullhomotopic. Using item 3 of Proposition \ref{prop:exseq} we see that this implies that a suspension of $f$ itself is $G$-nullhomotopic and so we are done.
\end{proof}

\begin{proof}[Proof of Proposition \ref{prop:flatness}] To $(i)$: We first consider the statement about homotopy groups. By the long exact sequence of the mapping cone (Proposition \ref{prop:exseq}) it suffices to show that $X\wedge Y$ has trivial homotopy groups whenever $X$ is $G$-flat and $Y$ has trivial homotopy groups. This follows from Lemma \ref{lem:flat} via an induction over the skeleton filtration of $X$, using that the smash product with $Y$ takes $G$-flat cofibrations to $h$-cofibrations (cf. \cite[Prop. 12.3]{MMSS01} for the non-equivariant analog).

To obtain the statement on $G^{\U}$-stable equivalences we use that we can replace any $G$-symmetric spectrum $Y$ by a $G$-flat one $Y^{\flat}$ up to $G^{\U}$-level equivalence. As we just saw, smashing with $X$ takes $G^{\U}$-level equivalences to $\upi_*^{\U}$-isomorphisms and hence $G^{\U}$-stable equivalences. So the claim follows from monoidality of the $G^{\U}$-flat stable model structure.

Statement $(ii)$ follows by 2-out-of-3 applied to a flat replacement.
\end{proof}

Now we come to the monoid axiom. For a $G$-symmetric spectrum $Y$ we denote by $\{ J_{st}^{\U}\wedge Y\}^{cell}$ the class of maps obtained via (transfinite) compositions and pushouts from maps of the form $j\wedge Y$, where $j$ is a $G$-flat cofibration and $G^{\U}$-stable equivalence.

\begin{Prop}[Monoid axiom] \label{prop:monoidaxiom} Every map in $\{ J_{st}^{\U}\wedge Y\}^{cell}$ is a $G^{\U}$-stable equivalence.
\end{Prop}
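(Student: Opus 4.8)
\textbf{Proof proposal for the monoid axiom (Proposition \ref{prop:monoidaxiom}).}

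The plan is to reduce the statement to the flatness result (Proposition \ref{prop:flatness}) together with the good behavior of $G^{\U}$-stable equivalences under pushouts and transfinite composition established in Section \ref{sec:gprop}. First I would observe that a generating $G$-flat cofibration which is a $G^{\U}$-stable equivalence is in particular an $h$-cofibration: indeed, by the discussion preceding Proposition \ref{prop:pushout}, every $G$-flat cofibration is an $h$-cofibration, and $h$-cofibrations are preserved by any functor that preserves pushouts and smashing with the interval — in particular by $(-)\wedge Y$ for every $G$-symmetric spectrum $Y$. So each map of the form $j\wedge Y$ with $j$ a $G$-flat cofibration and $G^{\U}$-stable equivalence is again an $h$-cofibration.

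Next, I would show that each such $j\wedge Y$ is itself a $G^{\U}$-stable equivalence. This is exactly where Proposition \ref{prop:flatness} enters: smashing a $G^{\U}$-stable equivalence $j$ between $G$-flat $G$-symmetric spectra (the domain and codomain of a generating acyclic $G$-flat cofibration are $G$-flat, by Remark \ref{rem:freeflat}) with an \emph{arbitrary} $G$-symmetric spectrum $Y$ preserves the $G^{\U}$-stable equivalence. More precisely, factoring $j$ into the inclusion of its domain into its cone-relevant data and invoking Proposition \ref{prop:cone}, the cofiber of $j$ is $G^{\U}$-stably contractible and $G$-flat, so by Proposition \ref{prop:flatness}(i) its smash product with $Y$ is again $G^{\U}$-stably contractible; since $j\wedge Y$ is an $h$-cofibration, its cone is $G^{\U}$-level equivalent to this cofiber, and another application of Proposition \ref{prop:cone} shows $j\wedge Y$ is a $G^{\U}$-stable equivalence.

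Finally I would close the class under pushouts and transfinite composition. A pushout of $j\wedge Y$ along an arbitrary map is a pushout of an $h$-cofibration which is a $G^{\U}$-stable equivalence, hence again a $G^{\U}$-stable equivalence and an $h$-cofibration by Proposition \ref{prop:pushout}(i) (and the fact that pushouts of $h$-cofibrations are $h$-cofibrations). Similarly, a transfinite composition of such maps is a $G^{\U}$-stable equivalence by the sequential colimit result in Section \ref{sec:gprop} (the proposition on colimits along $h$-cofibrations that are $G^{\U}$-stable equivalences), using again that the composite of $h$-cofibrations is an $h$-cofibration. Running the standard small-object-style transfinite induction over the cellular construction of maps in $\{J_{st}^{\U}\wedge Y\}^{cell}$ then shows every such map is a $G^{\U}$-stable equivalence. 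The main obstacle is the second step — getting from "$j$ is a $G^{\U}$-stable equivalence between $G$-flat spectra" to "$j\wedge Y$ is a $G^{\U}$-stable equivalence" for \emph{all} $Y$ — but this is precisely the content of Proposition \ref{prop:flatness}, so the argument is essentially a bookkeeping assembly of already-established facts.
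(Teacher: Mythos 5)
Your proposal is correct and follows the paper's proof essentially verbatim: identify $j\wedge Y$ as both an $h$-cofibration and a $G^{\U}$-stable equivalence via Proposition \ref{prop:flatness}, then close under pushouts and transfinite composition using Proposition \ref{prop:pushout} and the sequential colimit result. One small citation slip in the cofiber step: since $Y$ is an \emph{arbitrary} $G$-symmetric spectrum, you need Proposition \ref{prop:flatness}(ii) applied to the stable equivalence $B/A\to *$ between $G$-flat spectra (not part (i), which would require $Y$ itself to be $G$-flat); also note that $j$ in the definition of $\{J_{st}^{\U}\wedge Y\}^{cell}$ ranges over all $G$-flat cofibrations that are $G^{\U}$-stable equivalences, not just generating ones, so the cofiber argument you give is in fact the right level of generality, whereas the aside invoking Remark \ref{rem:freeflat} about flat domains and codomains of generators is not needed.
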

\begin{proof} By Proposition \ref{prop:flatness} we know that each map $j\wedge Y$ is a $G^{\U}$-stable equivalence and an $h$-cofibration. Since the class of $G^{\U}$-stable equivalences which are also $h$-cofibrations is closed under (transfinite) composition and pushouts, this gives the monoid axiom.
\end{proof}
This implies the monoid axiom \cite[Def. 3.3]{SS00} in all the positive and non-positive, projective and flat model structures, since every cofibration there is in particular a $G$-flat cofibration. Hence, by \cite[Thm. 4.1]{SS00} we obtain various model structures on the categories of modules and algebras, where the weak equivalences (and fibrations) are those maps which forget to a weak equivalence (resp. fibration) in the respective model structure on $G$-symmetric spectra.
\begin{Cor} \label{cor:modmodules} \begin{enumerate}[(i)]
             \item For every $G$-symmetric ring spectrum $R$ the positive and non-positive, $G^{\U}$-flat and $G^{\U}$-projective stable model structures lift to the category of $R$-modules. If $R$ is commutative, they are again monoidal.
	     \item For every commutative $G$-symmetric ring spectrum $R$ the positive and non-positive, $G^{\U}$-flat and $G^{\U}$-projective stable model structures lift to the category of $R$-algebras. Furthermore, every cofibration of $R$-algebras with cofibrant source is also a cofibration of $R$-modules.
\end{enumerate}
\end{Cor}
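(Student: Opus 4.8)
The plan is to derive Corollary \ref{cor:modmodules} directly from the machinery of Schwede--Shipley \cite{SS00}, for which the essential hypotheses have already been assembled in this section. First I would recall the statement of \cite[Thm.~4.1]{SS00}: if a cofibrantly generated, monoidal model category satisfies the monoid axiom and has the property that the domains of its generating cofibrations are small relative to the whole category (which holds here, as all our semi-free generators have small domains with respect to sequences of flat cofibrations), then for any monoid $R$ the category of $R$-modules inherits a cofibrantly generated model structure with weak equivalences and fibrations created in the underlying category, and for any commutative monoid $R$ the category of $R$-algebras does likewise. Since Proposition \ref{prop:stablemonoidal} gives monoidality of all four stable model structures (positive and non-positive, $G^{\U}$-flat and $G^{\U}$-projective) and Proposition \ref{prop:monoidaxiom} together with the remark following it gives the monoid axiom in all of them, \cite[Thm.~4.1]{SS00} applies verbatim to each, yielding parts (i) and (ii).

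Next I would address the two supplementary clauses. For the monoidality of the lifted model structures on $R$-modules when $R$ is commutative: the smash product $\wedge_R$ over a commutative monoid makes $R\text{-mod}$ closed symmetric monoidal, and one checks the pushout-product axiom on generating (acyclic) cofibrations. These are of the form $R\wedge i$ for $i$ a generating (acyclic) cofibration of $G$-symmetric spectra, and $(R\wedge i)\square_R (R\wedge j)\cong R\wedge(i\square j)$, so the pushout-product axiom for $\wedge_R$ reduces to the one for $\wedge$ established in Proposition \ref{prop:stablemonoidal} (using also Proposition \ref{prop:flatness} to handle the acyclic case, exactly as in the proof of monoidality on the underlying category); the unit axiom is automatic since $R$ itself is the unit and is cofibrant when $R$ is $G$-flat, or handled as in the positive case by the same argument as for $\mathbb{S}$. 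For the last sentence of (ii), that every cofibration of $R$-algebras with cofibrant source forgets to a cofibration of $R$-modules: this is \cite[Lemma 4.1(3) / Cor.~4.4]{SS00} or, more precisely, follows from the fact that the free $R$-algebra functor on an $R$-module $M$ is $\bigvee_{n\geq 0} M^{\wedge_R n}/\Sigma_n$ and a cofibration of $R$-algebras with cofibrant source is built by pushouts along maps of this free form applied to generating cofibrations of $R$-modules; since the relevant $\Sigma_n$-quotients of smash powers of (flat) cofibrations are again cofibrations of $R$-modules (which is where flatness of the model structures, i.e. Proposition \ref{prop:flatness}, enters to control the $\Sigma_n$-action), the claim follows by the standard filtration argument for pushouts of free algebra maps.

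The final sentence of the corollary block---``In all the above model structures, a map is a weak equivalence (or fibration) if and only if the underlying map of $G$-symmetric spectra is one''---is part of the conclusion of \cite[Thm.~4.1]{SS00} and requires nothing further, since the model structures are obtained by transfer along the free-forgetful adjunction, under which the forgetful functor is by construction a right adjoint detecting weak equivalences and fibrations.

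The main obstacle I expect is not any single hard argument but rather verifying that the hypotheses of \cite{SS00} genuinely hold in \emph{all} eight cases at once---in particular that the positive model structures, whose unit $\mathbb{S}$ fails to be cofibrant, still satisfy everything needed (the monoid axiom does not mention the unit, so this is fine, but one must be slightly careful with the unit axiom for monoidality of $R$-modules in the positive commutative case). The substantive mathematical content has, however, already been isolated: Proposition \ref{prop:flatness} (flatness), which feeds Proposition \ref{prop:monoidaxiom} (the monoid axiom) and also the $\Sigma_n$-equivariant control needed for the cofibrancy comparison in (ii). So the proof is essentially a citation of \cite[Thm.~4.1]{SS00} applied four times (plus the positive variants), with the monoidality and cofibrancy-comparison clauses handled by the standard pushout-product and free-algebra-filtration arguments, all of whose inputs are Propositions \ref{prop:stablemonoidal} and \ref{prop:flatness}.
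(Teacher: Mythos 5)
Your proposal is correct and takes the same route as the paper: the entire corollary is a direct application of \cite[Thm.~4.1]{SS00}, whose hypotheses — cofibrant generation, monoidality (Proposition \ref{prop:stablemonoidal}), and the monoid axiom (Proposition \ref{prop:monoidaxiom}) — have been verified, with the smallness caveat in the topological case handled exactly as you note. One small point: the supplementary clauses (monoidality of $R$-modules for $R$ commutative, and ``cofibrations of $R$-algebras with cofibrant source forget to cofibrations of $R$-modules'') are already conclusions of parts (2) and (3) of \cite[Thm.~4.1]{SS00}, so your re-derivations are redundant; moreover, in the re-derivation of the last clause you appeal to Proposition \ref{prop:flatness} ``to control the $\Sigma_n$-action,'' but that proposition is about smashing with $G$-flat objects preserving stable equivalences and has nothing to do with $\Sigma_n$-quotients of pushout-powers — the relevant fact there is the pushout-product axiom, which is what Schwede--Shipley actually use. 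Since those clauses are covered by the cited theorem anyway, this misattribution does not affect correctness.
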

\begin{Remark} Strictly speaking, Theorem 4.1 of \cite{SS00} does not apply verbatim to the topological case, because not every $G$-symmetric spectrum of spaces is small. However, the domains and targets of the generating (acyclic) cofibrations are small with respect to (transfinite) compositions of $G$-flat cofibrations, and so the small object argument can nevertheless be applied (cf. \cite[Rem. 2.4]{SS00}).
\end{Remark}

\subsection{Homotopical properties of the norm} \label{sec:norm}
In this section we deal with the homotopical properties of the norm functor, a multiplicative version of induction whose analog on $G$-orthogonal spectra plays a major role in the solution of the Kervaire invariant one problem in \cite{HHR16}, where its properties are studied in detail (Sections A.4 and B.5). An analysis of the norm is also needed for the construction of model structures on strictly commutative $G$-symmetric ring spectra, as we will see in the next section.

The norm can be constructed as follows: We fix a subgroup inclusion $H\leq G$. For every $H$-symmetric spectrum $X$ and every $n\in \N$, the $n$-fold smash power $X^{\wedge n}$ has a natural action of the wreath product $\Sigma_n\wr H$, i.e., the semidirect product of $\Sigma_n$ and $H^n$ associated to the $\Sigma_n$-action permuting the factors. Taking $n$ to be the index of $H$ in $G$, any choice of a system of coset representatives $g_1,\hdots, g_n\in G$ of $G/H$ gives rise to an embedding $\varphi:G\to \Sigma_n\wr H;g\mapsto (\sigma(g),h_1(g),\hdots,h_n(g))$ characterized by the formula $g\cdot g_i=g_{\sigma(g)(i)}h_i(g)$. This embedding is independent of the chosen $g_i$ up to conjugation. The norm is then defined as the $G$-symmetric spectrum obtained by restricting $X^{\wedge n}$ along this embedding.

The norm functor is symmetric monoidal, hence it maps commutative $H$-symmetric ring spectra to commutative $G$-symmetric ring spectra, yielding a left adjoint of the restriction functor.

We now summarize the homotopical properties of these constructions. They are very clean over the projective model structures. Given an $H$-set universe $\U$, the $n$-fold disjoint union $n\times \U$ becomes a $(\Sigma_n\wr H)$-set universe and we have:
\begin{Theorem} \label{theo:norm1} For every $H$-set universe $\U$ and every natural number $n$ the functor $(-)^{\wedge n}:HSp^{\Sigma}\to (\Sigma_n\wr H)Sp^{\Sigma}$ maps $H^{\U}$-stable equivalences between $H^{\U}$-projective $H$-symmetric spectra to $(\Sigma_n\wr H)^{n\times \U}$-stable equivalences of $(\Sigma_n\wr H)^{n\times \U}$-projective spectra. In particular, the norm $N_H^G:HSp^{\Sigma}\to GSp^{\Sigma}$ maps $G^{\U}$-stable equivalences between $H^{\U}$-projective $H$-symmetric spectra to $G^{G\ltimes_H \U}$-stable equivalences.
\end{Theorem}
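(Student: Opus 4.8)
<br>

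The plan is to reduce the statement about the $n$-fold smash power to a statement about $G$-flat (and $G^{\U}$-projective) cell attachment, exploiting that both $H^{\U}$-projective cofibrations and the smash power behave well under the skeleton filtration. First I would note that it suffices to show two things: (a) $(-)^{\wedge n}$ sends $H^{\U}$-projective cofibrations to $(\Sigma_n \wr H)^{n\times\U}$-projective cofibrations, and (b) it sends $H^{\U}$-stably acyclic $H^{\U}$-projective cofibrations to $(\Sigma_n\wr H)^{n\times\U}$-stable equivalences. Given (a) and (b), a general $H^{\U}$-stable equivalence $f:X\to Y$ between $H^{\U}$-projective spectra can be factored (in the $H^{\U}$-projective stable model structure) as an acyclic cofibration followed by a map admitting a section up to homotopy, or — more simply — one uses Ken Brown's lemma: a functor that sends acyclic cofibrations between cofibrant objects to weak equivalences automatically sends all weak equivalences between cofibrant objects to weak equivalences. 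So the crux is (b), together with checking that $(-)^{\wedge n}$ preserves cofibrant objects and $G$-homotopies (the latter is clear, since $(-)^{\wedge n}$ is enriched over based spaces/simplicial sets and hence preserves the interval and pushouts).

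For (a), the key is the behavior of the smash power on a pushout of the form attaching a semi-free cell $\mathscr{G}_m(i)$ for $i\in I_{H\times\Sigma_m}$. Using the multiplicativity of the skeleton filtration — concretely the isomorphisms $\mathscr{F}_M(f)\square \mathscr{F}_N(g)\cong \mathscr{F}_{M\sqcup N}(f\square g)$ and the semi-free analog from Section~\ref{sec:smash} — the $n$-fold smash power of a pushout along $\mathscr{G}_M(\nu)$ decomposes into a finite filtration whose associated graded pieces are, up to a $(\Sigma_n\wr H)$-equivariant reindexing, smash products of copies of the latching data; the isotropy that appears is governed by $n\times M \subseteq n\times\U$, which is precisely why the target universe is $n\times\U$. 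I would carry this out by first treating $X$ $H^{\U}$-projective cofibrant with $*\to X$, then observing that each cell of $X$ induces, after $(-)^{\wedge n}$, a composite of cofibrations built from products of lower cells, all with isotropy embedding into $n\times\U$. The $(\Sigma_n\wr H)$-equivariant indexing here is the combinatorial heart of the argument and is essentially identical to \cite[Appendix A]{HHR14}.

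For (b) I would use the distributivity of smash powers over wedge-type cofibrations together with an induction over the skeleton filtration of the acyclic cofibration $f$, reducing to the case of attaching a single $H^{\U}$-stably acyclic generating cell of the form $i\square(H\ltimes_K \overline{\lambda}^{(K)}_{M,N})$. For such a cell, the smash power $(-)^{\wedge n}$ applied to $\mathbb{S}\to (H\ltimes_K\mathscr{F}^{(K)}_M S^0)$-type pieces produces, by the multiplicativity above, spectra built out of copies of $\mathscr{F}^{(?)}_{?}S^?$ and one uses the $G^{\U}$-stable equivalences $\lambda^{(?)}$ of Example~\ref{exa:lambda} together with the flatness statement Proposition~\ref{prop:flatness}(i) (smashing with a $G$-flat spectrum preserves $G^{\U}$-stable equivalences) to conclude. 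The point is that the cofiber of such a cell, after $(-)^{\wedge n}$, is a wedge of smash products in which each summand contains at least one $\overline{\lambda}$-factor, hence is $(\Sigma_n\wr H)^{n\times\U}$-stably contractible by the flatness proposition, and then Proposition~\ref{prop:cone} finishes it. I expect the main obstacle to be bookkeeping: making the $(\Sigma_n\wr H)$-equivariant identification of the associated graded of $(\text{cell attachment})^{\wedge n}$ precise, in particular tracking which subgroups of $\Sigma_n\wr H$ appear as isotropy and verifying they all lie in $\mathcal{F}^{\Sigma_n\wr H, \Sigma_\bullet}_{n\times\U}$; once this is in place the homotopical input is entirely supplied by Proposition~\ref{prop:flatness} and Example~\ref{exa:lambda}. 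Finally, the ``in particular'' for the norm follows by restricting along the embedding $\varphi:G\to\Sigma_n\wr H$ and using that $\varphi^*$ of the $(\Sigma_n\wr H)^{n\times\U}$-model structure maps into the $G^{G\ltimes_H\U}$-model structure, since $\varphi^*(n\times\U)\cong G\ltimes_H\U$ as $G$-sets and restriction along $\varphi$ preserves the relevant families (compare the Twisting/Untwisting Lemmas~\ref{lem:twist} and \ref{lem:untwist}).
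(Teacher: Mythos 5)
Your high-level plan matches the paper's: reduce via Ken Brown's lemma to showing $(-)^{\wedge n}$ preserves (stably acyclic) $H^{\U}$-projective cofibrations between $H^{\U}$-projective spectra, then induct over the skeleton filtration using the distributive law, the multiplicativity $\mathscr{F}_M(f)\square\mathscr{F}_N(g)\cong\mathscr{F}_{M\sqcup N}(f\square g)$, and the $\lambda$-maps of Example~\ref{exa:lambda}. That is Proposition~\ref{prop:norm1} in the paper.

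However, there is a genuine gap in your step~(b). After filtering $(Y)^{\wedge n}$ relative to $(X)^{\wedge n}$, the associated graded pieces are of the form $(\Sigma_n\wr H)\ltimes_{\Sigma_k\wr H\times\Sigma_{n-k}\wr H}\bigl((B/A)^{\wedge k}\wedge X^{\wedge(n-k)}\bigr)$, i.e.\ they are \emph{external} smash products of a $(\Sigma_k\wr H)$-symmetric spectrum with a $(\Sigma_{n-k}\wr H)$-symmetric spectrum, viewed over the product group. To conclude that such a piece is $(\Sigma_n\wr H)^{n\times\U}$-stably contractible from knowing that one factor is $(\Sigma_k\wr H)^{k\times\U}$-stably contractible, you invoke Proposition~\ref{prop:flatness}(i). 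But that proposition is a statement \emph{within a single group}: smashing with a $G$-flat spectrum preserves $G^{\U}$-stable equivalences. It says nothing about an external smash across two different groups equipped with two different universes, and the needed implication ``$C$ is $G^{\U_G}$-stably contractible and $G$-flat $\Rightarrow$ $C\wedge D$ is $(G\times K)^{\U_G\sqcup\U_K}$-stably contractible for every $K$-flat $D$'' is in fact \emph{false} over the flat model structures. The paper covers this step with a separate lemma, Proposition~\ref{prop:times}, which requires one factor to be $K^{\U_K}$-\emph{projective} (not merely flat) and is proved via the adjunction $[B/A\wedge Y/X, Z]^{G\times K}\cong [B/A, \Hom(Y/X,Z)^K]^G$ together with the observation that $K$-fixed points of a $(G\times K)^{\U_G\sqcup\U_K}\Omega$-spectrum form a $G^{\U_G}\Omega$-spectrum; the remark immediately after it states explicitly that it fails over the flat model structures. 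So the homotopical input for your step~(b) is not Proposition~\ref{prop:flatness}(i) but an external-product lemma like Proposition~\ref{prop:times}, which you would need to state and prove. Once that lemma is in place, the rest of your argument (distributive law for the wedge case, pushout filtration, Ken Brown, and the restriction along $\varphi:G\hookrightarrow\Sigma_n\wr H$ for the norm) goes through as in the paper.
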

In particular, both the $n$-fold smash power and the norm can always be left derived, as any $H^{\U}$-projective replacement functor is a left deformation for them.

The behavior with respect to the flat model structure is more delicate. It is \emph{not} true that the functor $(-)^{\wedge n}:HSp^{\Sigma}\to (\Sigma_n\wr H)Sp^{\Sigma}$ takes $H^{\U}$-stable equivalences between $H$-flat $H$-symmetric spectra to $(\Sigma_n\wr H)^{n\times \U}$-stable equivalences, in fact not even to $(\Sigma_n\times H)^{n\times \U}$-stable equivalences. Nevertheless, under some conditions on the $H$-set universe $\U$, the norm functor \emph{is} homotopical on $H$-flat $H$-symmetric spectra. The precise conditions on $\U$ needed are quite complicated and for simplicity we restrict to the case where $\U$ is the full $N$-fixed $H$-set universe associated to a subgroup $N$ of $H$ which is normal in $G$. We denote this $H$-set universe, an infinite disjoint union of all orbits $H/K$ with $N\leq K$, by $\U_H(N)$. The case where $N$ is trivial gives the complete $H$-set universe. We then have:
\begin{Theorem} \label{theo:norm2} Let $N\leq H\leq G$ be subgroups with $N$ normal in $G$. Then the norm functor $N_H^G:HSp^{\Sigma}\to GSp^{\Sigma}$ takes $H^{\U_H(N)}$-stable equivalences between $H$-flat $H$-symmetric spectra to $G^{\U_G(N)}$-stable equivalences of $G$-flat $G$-symmetric spectra. 
\end{Theorem}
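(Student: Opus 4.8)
The plan is to reduce Theorem \ref{theo:norm2} to the projective case (Theorem \ref{theo:norm1}) by comparing the $H$-flat and $H^{\U_H(N)}$-projective model structures via functorial cofibrant replacement, using flatness (Proposition \ref{prop:flatness}) to transport stable equivalences across these replacements. First I would observe that $N_H^G$ preserves $G$-flatness: if $X$ is $H$-flat then $X^{\wedge n}$ is $(\Sigma_n\wr H)$-flat (the skeleton filtration of $X$ gives, after smashing, a $\Sigma_n\wr H$-cellular filtration built out of semi-free cells, using the explicit description of smash products of semi-free spectra from Section \ref{sec:free}), and restriction along $\varphi:G\hookrightarrow \Sigma_n\wr H$ preserves flatness since $G$-flat cofibrations of $G$-symmetric spectra of simplicial sets are exactly the non-equivariant flat cofibrations (Remark \ref{rem:flatdetect}), and in the topological case flatness is detected after restriction as well. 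So the target statement about $G$-flatness is immediate, and the content is the homotopical statement.

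For the homotopical part, let $f:X\to Y$ be an $H^{\U_H(N)}$-stable equivalence between $H$-flat $H$-symmetric spectra. I would choose a functorial $H^{\U_H(N)}$-projective cofibrant replacement $p_X:X^c\to X$, $p_Y:Y^c\to Y$ (so $X^c,Y^c$ are $H^{\U_H(N)}$-projective, hence also $H$-flat by Remark \ref{rem:freeflat}, and $p_X,p_Y$ are $H^{\U_H(N)}$-stable equivalences, in fact can be taken to be $H^{\U_H(N)}$-level equivalences or even acyclic fibrations); let $f^c:X^c\to Y^c$ be the induced map, which is an $H^{\U_H(N)}$-stable equivalence between $H^{\U_H(N)}$-projective spectra. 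By Theorem \ref{theo:norm1}, $N_H^G f^c$ is a $G^{G\ltimes_H \U_H(N)}$-stable equivalence; but $G\ltimes_H \U_H(N)$ has the same $\R$-linearization data as (indeed is, up to isomorphism, a sub-universe cofinally equivalent to) $\U_G(N)$ — every orbit $G/K$ with $N\le K$ embeds, and no others — so by the change-of-universe comparison of Section \ref{sec:changeofuniverse} (a $G^{G\ltimes_H\U_H(N)}$-stable equivalence between the relevant spectra is a $G^{\U_G(N)}$-stable equivalence, since here the universes have isomorphic linearizations), $N_H^G f^c$ is a $G^{\U_G(N)}$-stable equivalence. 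It then remains to show that $N_H^G(p_X):N_H^G(X^c)\to N_H^G(X)$ is a $G^{\U_G(N)}$-stable equivalence (and likewise for $Y$); then the two-out-of-three property applied to the square relating $N_H^G f$ and $N_H^G f^c$ finishes the argument.

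The main obstacle is precisely this last point: showing that $N_H^G$ sends the replacement map $p_X:X^c\to X$, which is an $H^{\U_H(N)}$-stable equivalence between $H$-flat spectra (but \emph{not} between projective ones on the target side), to a $G^{\U_G(N)}$-stable equivalence. This is where the hypothesis $N\trianglelefteq G$ and the choice of the full $N$-fixed universe are essential — the non-equivariant and naive failures flagged before the theorem statement show one cannot argue levelwise. I expect the proof to proceed by an induction over the skeleton filtration (Section \ref{sec:latching}) of the map $p_X$, reducing to the behaviour of $N_H^G$ on pushouts along maps of the form $\mathscr{G}_n(g)$ for $g$ an acyclic cofibration of equivariant spaces, and then analysing the "norm diagram" / distributivity description of $N_H^G$ applied to such a pushout (as in \cite[Section A.4, B.5]{HHR14}). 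The key geometric input will be that the relevant quotient spaces $\Sigma_n\wr H$-equivariant cells, after restriction along $\varphi$ and passing to $\U_G(N)$-stable equivalences, are controlled by the families $\F^{G,\Sigma_n}_{\U_G(N)}$ and that the analog of Proposition \ref{prop:commfree} (the universal space for the appropriate family of "graph" subgroups of $G\times\Sigma_n$ with $H\cap N$ in the kernel) governs exactly the $E\Sigma_n$-type spaces that appear; this is the step that the introduction identifies as the subtle point overlooked in \cite{MM02}. Combined with Proposition \ref{prop:flatness} (flatness preserves $G^{\U_G(N)}$-stable equivalences, used to commute the norm-cell attachments past stable equivalences) and Proposition \ref{prop:pushout} (pushouts along $h$-cofibrations preserve stable equivalences), the induction should close, with the transfinite composition handled by the colimit statement for $h$-cofibrations in Section \ref{sec:gprop}.
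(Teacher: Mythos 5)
Your first observation (that $N_H^G$ preserves flatness) and your invocation of Theorem \ref{theo:norm1} together with the change-of-universe step for projectively cofibrant inputs are correct, and in fact the paper uses both as ingredients. However, the reduction you propose has a genuine circularity. Having replaced $X$, $Y$ by $H^{\U_H(N)}$-projective spectra $X^c$, $Y^c$ via level acyclic fibrations $p_X,p_Y$, you are left needing to show that $N_H^G(p_X)$ is a $G^{\U_G(N)}$-stable equivalence. But $p_X$ is itself a $H^{\U_H(N)}$-stable equivalence between $H$-flat $H$-symmetric spectra, which is precisely the class of maps the theorem is about — the source happens to be projective, but the target $X$ is only flat, and that is the nontrivial side. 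Your proposed workaround — inducting over the skeleton filtration of the map $p_X$ — is not well-posed: the latching maps $\nu_n[p_X]$ of an arbitrary map such as a level acyclic fibration carry no cofibration or equivalence structure, so the distributive-law analysis you sketch does not get off the ground. (The skeleton filtration is only tractable when the ambient map is a flat cofibration, so that the $\nu_n$ are genuine $(G\times\Sigma_n)$-cofibrations.)

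The paper sidesteps this by never forming $p_X$: it proves a stronger statement by Ken Brown's Lemma, namely that $N_H^G$ sends $H$-flat (stable acyclic) cofibrations between $H$-flat spectra to $G$-flat (stable acyclic) cofibrations, and deduces the theorem from that. The cell induction is then carried out directly on flat cofibrations, following the three-step scheme of Proposition \ref{prop:norm1} (generators, wedges plus pushouts via the distributive law, sequential colimits), but with one extra structural feature your sketch misses entirely: an \emph{induction on the order of $G$}. This is what controls the terms $N^K_{K\cap g_iHg_i^{-1}}(\cdots)$ for proper $K<G$ that appear in the distributive law, via the hypothesis that $N$ is normal in $G$ and that $\U_H(N)$ is the full $N$-fixed universe (so that restriction to $K$ again gives a full-fixed universe $\U_K(K\cap N)$ with $K\cap N$ normal in $K$). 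Finally, the specific technical input is not the analogue of Proposition \ref{prop:commfree} you point to — that lemma concerns symmetric powers and the universal spaces $E\F^{G,\Sigma_n}_{\U_G(N)}$ and is used only for commutative algebras. What the proof of Theorem \ref{theo:norm2} actually needs is Lemma \ref{lem:tech2}, a statement specifically about the norm applied to generating level-acyclic flat cofibrations $\mathscr{G}_m(j)$, showing that $(\Sigma_{G/H\times m})_+\wedge_{\Sigma_m^{G/H}} N_H^G j$ is an $\F^{G,\Sigma_{G/H\times m}}_{\U_G(N)}$-equivalence. You should reorganise the argument around Ken Brown's Lemma and the induction on the order of $G$ rather than around projective replacement.
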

In particular, for universes of this form the effect of the derived norm can be computed on $H$-flat $H$-symmetric spectra. The point in this additional work will become clear in the next section, where it is used to show that the derived norm of a commutative $H$-symmetric ring spectrum is equivalent as a $G$-symmetric spectrum to the derived norm of the underlying $H$-symmetric spectrum (cf. \cite[Sec. B.8]{HHR16}).

\begin{Remark} \label{rem:trick} We note that if $f:X\to Y$ is an $H$-flat cofibration of $H$-symmetric spectra of simplicial sets, it follows immediately from monoidality of the non-equivariant flat model structure that both $f^{\wedge n}$ and $f^{\square n}$ are $(\Sigma_n\wr H)$-flat cofibrations and also $G$-flat cofibrations, since equivariant flat cofibrations are exactly those morphisms whose underlying non-equivariant morphism is a flat cofibration. In particular, this gives the part about cofibrations of Theorem \ref{theo:norm2} in the simplicial case and also for all morphisms of $H$-symmetric spectra of spaces which are the geometric realization of a morphism of $H$-symmetric spectra of simplicial sets, in particular for (wedges of) generating cofibrations.
\end{Remark}

In the proofs of Theorems \ref{theo:norm1} and \ref{theo:norm2} we will make use of the distributive law of \cite[Sec. A.3.3.]{HHR16}, which says that both the $n$-fold smash power $(\bigvee_I X_i)^{\wedge n}$ and the norm $N_H^G (\bigvee_I X_i)$ of a wedge $\bigvee_{I} X_i$ can be rewritten as a wedge of induced smash powers respectively norms. 
We quickly recall it in the versions we need and start with the case of smash powers. For this we choose a linear ordering on the index set $I$. Given a monotone function $f:\underline{n}\to I$, we let $a_1,\hdots,a_k$ be the different values of $f$ and $n_i\in \N_{>0}$ the order of their respective preimages. Then the distributive law asserts that there is a natural $(\Sigma_n\wr H)$-equivariant decomposition
\[  (\bigvee_{i\in I} X_i)^{\wedge n}\cong \bigvee_{f:\underline{n}\to I, \text{monotone}} \Sigma_n\wr H\ltimes_{\Sigma_{n_1}\wr H\times \hdots \times \Sigma_{n_k}\wr H} (X_{a_1}^{\wedge n_1}\wedge \hdots \wedge X_{a_k}^{\wedge n_k}). \]
In the case of the norm, we need to replace monotone functions $\underline{n}\to I$ by $G$-orbits of all functions $f:G/H\to I$. Given such an $f$, we let $K$ be the stabilizer and $g_1,\hdots,g_l$ a system of representatives for double cosets $K\backslash G/H$. Then there is a $G$-isomorphism
\[ N_H^G (\bigvee_{i\in I} X_i) \cong \bigvee_{[f:G/H\to I]} G\ltimes_K(\bigwedge_{i=1}^k N_{K\cap g_iHg_i^{-1}}c_{g_i}^{*} {\res}^H_{g_i^{-1}Kg_i\cap H} X_{f(g_i)}).
\]
\begin{Remark} The above formulas hold for $H$-objects in any symmetric monoidal category for which the monoidal product distributes over the coproduct. In particular, they hold for the pushout product $\square$ of morphisms of $H$-symmetric spectra.
\end{Remark}
We now move towards the proofs of Theorems \ref{theo:norm1} and \ref{theo:norm2} and start with the following:
\begin{Prop} \label{prop:times} Let $G$ and $K$ be finite groups, $\U_G$ a $G$-set universe, $\U_K$ a $K$-set universe, $i:A\to B$ a $G^{\U_G}$-projective cofibration and $j:X\to Y$ a $K^{\U_K}$-projective cofibration. Then the pushout product $i\square j$ is a $(G\times K)^{\U_G\sqcup \U_K}$-projective cofibration. If either $i$ or $j$ is positive, so is $i\square j$. If $i$ is a $G^{\U_G}$-projective cofibration and $G^{\U_G}$-stable equivalence and $j$ is a $K^{\U_K}$-projective cofibration, then $i\square j$ is a $(G\times K)^{\U_G\sqcup \U_K}$-stable equivalence.
\end{Prop}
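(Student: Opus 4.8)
The plan is to reduce everything to generating (acyclic) cofibrations, using that the external smash product $GSp^{\Sigma}\times KSp^{\Sigma}\to(G\times K)Sp^{\Sigma}$ (with $G$ acting through the first and $K$ through the second variable) preserves colimits in each variable, so that $(-)\square(-)$ is a colimit‑preserving bifunctor into the arrow category of $(G\times K)$‑symmetric spectra. The two facts doing the work are the compatibility isomorphisms $(G\ltimes_H P)\wedge(K\ltimes_{H'}Q)\cong(G\times K)\ltimes_{H\times H'}(P\wedge Q)$ and $\mathscr{F}^{(H)}_M A\wedge\mathscr{F}^{(H')}_{M'}A'\cong\mathscr{F}^{(H\times H')}_{M\sqcup M'}(A\wedge A')$, both compatible with pushout products, which are the symmetric‑spectra analogues of the identities in \cite[Appendix A]{HHR14}; note that if $M$ is an $H$‑subset of $\U_G$ and $M'$ an $H'$‑subset of $\U_K$ then $M\sqcup M'$ is an $(H\times H')$‑subset of $\U_G\sqcup\U_K$. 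For the cofibration assertion one writes $i$ (resp. $j$) as a retract of a relative $I^{lev}_{\U_G,proj}$‑cell complex (resp. $I^{lev}_{\U_K,proj}$‑cell complex) and computes $(G\ltimes_H\mathscr{F}^{(H)}_M a)\square(K\ltimes_{H'}\mathscr{F}^{(H')}_{M'}a')\cong(G\times K)\ltimes_{H\times H'}\mathscr{F}^{(H\times H')}_{M\sqcup M'}(a\square a')$ for $a\in I_H$, $a'\in I_{H'}$; since the genuine model structure on $(H\times H')$‑spaces is monoidal, $a\square a'$ is a genuine $(H\times H')$‑cofibration, so the right‑hand side is a generating cofibration of the $(G\times K)^{\U_G\sqcup\U_K}$‑projective level model structure. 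Positivity passes through because $|M\sqcup M'|\ge\max(|M|,|M'|)$.

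For the statement about stable equivalences I would regard $i$ as a stably acyclic $G^{\U_G}$‑projective cofibration, hence a retract of a relative $J^{st}_{\U_G,proj}$‑cell complex, and $j$ as a retract of a relative $I^{lev}_{\U_K,proj}$‑cell complex. All intermediate maps are $G$‑ resp. $K$‑projective cofibrations, so by the first part each $(-)\square(-)$ of an intermediate pair is a $(G\times K)^{\U_G\sqcup\U_K}$‑projective, hence $h$‑, cofibration; as the class of maps that are simultaneously $(G\times K)^{\U_G\sqcup\U_K}$‑stable equivalences and $h$‑cofibrations is closed under pushouts (Proposition \ref{prop:pushout}), transfinite compositions and coproducts (Section \ref{sec:gprop}, Proposition \ref{prop:gwedge}) and retracts, it suffices to check that $a\square b$ is a $(G\times K)^{\U_G\sqcup\U_K}$‑stable equivalence for $a$ a generating acyclic cofibration of the $G^{\U_G}$‑projective stable model structure and $b=K\ltimes_{H'}\mathscr{F}^{(H')}_{M'}(i_0)$ with $i_0\in I_{H'}$. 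If $a=G\ltimes_H\mathscr{F}^{(H)}_M(j_0)\in J^{lev}_{\U_G,proj}$, the compatibility isomorphism identifies $a\square b$ with $(G\times K)\ltimes_{H\times H'}\mathscr{F}^{(H\times H')}_{M\sqcup M'}(j_0\square i_0)$, and $j_0\square i_0$ is a genuine acyclic $(H\times H')$‑cofibration, so this is even a $(G\times K)^{\U_G\sqcup\U_K}$‑level equivalence.

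The remaining and most delicate case is $a=i'\square(G\ltimes_H\overline\lambda^{(H)}_{M,N})$. Using commutativity and associativity of $\square$ I would write $a\square b\cong i'\square c$ with $c:=(G\ltimes_H\overline\lambda^{(H)}_{M,N})\square b$, which by the first part is a $(G\times K)^{\U_G\sqcup\U_K}$‑projective, hence flat, cofibration; by Proposition \ref{prop:cone} it is a stable equivalence iff its cofibre is stably contractible. Since $\overline\lambda^{(H)}_{M,N}$ is the inclusion of the source into the mapping cylinder of $\lambda^{(H)}_{M,N}$, its cofibre is the mapping cone $C(\lambda^{(H)}_{M,N})$, so the cofibre of $c$ is $(G\times K)\ltimes_{H\times H'}\bigl(C(\lambda^{(H)}_{M,N})\wedge\mathscr{F}^{(H')}_{M'}A\bigr)$, where $A$ is the cofibre of $i_0$, a cofibrant based $H'$‑space. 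Invoking the free‑spectrum identity $\lambda^{(H)}_{M,N}\wedge\mathscr{F}^{(H')}_{M'}A\cong A\wedge\lambda^{(H\times H')}_{M\sqcup M',N}$ together with the fact that the mapping cone commutes both with smashing by a space and with the external smash by a fixed spectrum, this cofibre becomes $(G\times K)\ltimes_{H\times H'}\bigl(A\wedge C(\lambda^{(H\times H')}_{M\sqcup M',N})\bigr)$. By Example \ref{exa:lambda} for $G\times K$ (using that $M\sqcup M'$ and $N$ embed into $\U_G\sqcup\U_K$ as $(H\times H')$‑sets) and Proposition \ref{prop:cone}, $C(\lambda^{(H\times H')}_{M\sqcup M',N})$ is stably contractible; smashing with the cofibrant space $A$ preserves this (Proposition \ref{prop:gwedge}), as does inducing up from $H\times H'$ to $G\times K$ (induction is left Quillen, so sends stably contractible cofibrant objects to stably contractible ones). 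Hence $c$ is a stable equivalence, and then $a\square b\cong i'\square c$ is a $(G\times K)^{\U_G\sqcup\U_K}$‑stable equivalence by Lemma \ref{lem:gpushoutproduct}, applied with $i'$ as the genuine $(G\times K)$‑cofibration of spaces (with trivial action) and $c$ as the flat cofibration which is also a stable equivalence.

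The main obstacle is the third step: setting up the external‑smash compatibility isomorphisms in the precise forms needed — in particular the identity $\lambda^{(H)}_{M,N}\wedge\mathscr{F}^{(H')}_{M'}(-)\cong(-)\wedge\lambda^{(H\times H')}_{M\sqcup M',N}$, whose proof is a careful diagram chase through the generalized structure maps and the universal properties of the (semi‑)free spectra, modelled on \cite[Appendix A]{HHR14} — and checking that mapping cylinders, mapping cones, induction and free spectra all interact with the external smash product as expected. It is worth stressing that the $\overline\lambda$‑type generating acyclic cofibrations genuinely force one through the mapping cone and Lemma \ref{lem:gpushoutproduct}: a naive attempt to deduce this external monoidality from the single‑group monoidality of Proposition \ref{prop:stablemonoidal} runs in a circle, because the cofibres that arise are again of the shape (stably contractible $G$‑spectrum) $\wedge$ (projective $K$‑spectrum), and the point of the argument above is precisely that for cofibres coming from the $\lambda$‑maps these can be computed explicitly.
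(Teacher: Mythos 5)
Your cofibration argument and the reduction to generating maps coincide with the paper's. For the stable-equivalence part you and the paper take genuinely different routes. The paper avoids the analysis of generating acyclic cofibrations entirely: having established that $i\square j$ is a $(G\times K)^{\U_G\sqcup\U_K}$-projective cofibration, it reduces via Proposition \ref{prop:cone} to showing the quotient $B/A\wedge Y/X$ is $(G\times K)^{\U_G\sqcup\U_K}$-stably contractible, and does so by the adjunction isomorphism $[B/A\wedge Y/X,Z]^{G\times K}\cong[B/A,\Hom(Y/X,Z)^K]^G$ together with two observations: (a) since $Y/X$ is $K^{\U_K}$-projective, hence $(G\times K)^{\U_G\sqcup\U_K}$-projective with trivial $G$-action, Corollary \ref{cor:homomega} makes $\Hom(Y/X,Z)$ a $(G\times K)^{\U_G\sqcup\U_K}$-flat level fibrant $\Omega$-spectrum; (b) the $K$-fixed points of any $(G\times K)^{\U_G\sqcup\U_K}\Omega$-spectrum form a $G^{\U_G}\Omega$-spectrum, immediately from Definition \ref{def:gomega} because a finite $H$-subset of $\U_G$ is an $(H\times K)$-subset of $\U_G\sqcup\U_K$ with trivial $K$-action. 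As $B/A$ is $G^{\U_G}$-projective and $G^{\U_G}$-stably contractible, the adjoint hom-set vanishes and the quotient is stably contractible.

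Your approach instead decomposes $i$ into a relative $J^{st}_{\U_G,proj}$-cell complex, reduces via saturatedness of the class ($h$-cofibration and stable equivalence) to the cases $a\in J^{lev}_{\U_G,proj}$ and $a=i'\square(G\ltimes_H\overline\lambda^{(H)}_{M,N})$, and handles the latter by computing the cofibre through the external identity $\lambda^{(H)}_{M,N}\wedge\mathscr{F}^{(H')}_{M'}A\cong A\wedge\lambda^{(H\times H')}_{M\sqcup M',N}$ and Lemma \ref{lem:gpushoutproduct}. This is correct — the compatibility isomorphisms you invoke do hold, and your bookkeeping of where $M\sqcup M'$ and $N$ embed into $\U_G\sqcup\U_K$ is exactly the universe verification Example \ref{exa:lambda} requires — but it is substantially longer and requires establishing the external $\lambda$-identity, which the paper sidesteps. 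Your closing remark that one is "forced through the mapping cone and Lemma \ref{lem:gpushoutproduct}" overstates the situation: the paper's adjunction argument handles the general shape (stably contractible $G$-spectrum) $\wedge$ (projective $K$-spectrum) directly, using that $\Hom$ out of a projective spectrum preserves $\Omega$-spectra and that $K$-fixed points of $(G\times K)$-$\Omega$-spectra are $G$-$\Omega$-spectra, with no explicit $\lambda$-cofibre computation at all.
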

\begin{proof} By Section \ref{sec:changeofuniverse}, $i$ is a (positive, acyclic) $(G\times K)^{\U_G\sqcup \U_K}$-projective cofibration when equipped with trivial $K$-action, and similarly for $j$. So the result follows from monoidality of the $(G\times K)^{\U_G\sqcup \U_K}$-stable model structure, cf. Proposition \ref{prop:stablemonoidal} (and Proposition \ref{prop:levmonoidal} for the smash product of a positive and non-positive projective cofibration).
\end{proof}
\begin{Remark} This proposition does not hold over the flat model structures. 
\end{Remark}

Theorem \ref{theo:norm1} is then a consequence of the following proposition, via Ken Brown's Lemma:
\begin{Prop} \label{prop:norm1} Let $f:X\to Y$ be an $H^{\U}$-projective cofibration of $H^{\U}$-projective $H$-symmetric spectra and $n\in \N$. Then $f^{\wedge n}:X^{\wedge n}\to Y^ {\wedge n}$ is a projective $(\Sigma_n \wr H)^{n\times \U}$-cofibration. If $f$ is in addition an $H^{\U}$-stable equivalence, then $f^{\wedge n}$ is a $(\Sigma_n \wr H)^{n\times \U}$-stable equivalence.
\end{Prop}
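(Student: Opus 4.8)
The plan is to run the standard skeletal filtration of the $n$-fold smash power and thereby reduce everything to statements about the iterated pushout-product powers of $f$. Recall (see, e.g., \cite[Appendix B]{HHR14}) that for any map $f\colon X\to Y$ of $H$-symmetric spectra the map $f^{\wedge n}$ factors as a finite composite of $(\Sigma_n\wr H)$-equivariant maps
\[
	X^{\wedge n}=Q_0\longrightarrow Q_1\longrightarrow\cdots\longrightarrow Q_n=Y^{\wedge n},
\]
in which $Q_{i-1}\to Q_i$ is the pushout of
\[
	\Sigma_n\wr H\ltimes_{(\Sigma_i\wr H)\times(\Sigma_{n-i}\wr H)}\bigl(f^{\square i}\wedge\mathrm{id}_{X^{\wedge(n-i)}}\bigr)
\]
along a natural gluing map into $Q_{i-1}$. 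Here $f^{\square i}$ denotes the $i$-fold pushout-product power of $f$; its $i$ tensor slots carry a $\Sigma_i$-permutation action, so $f^{\square i}$ is canonically a map of $(\Sigma_i\wr H)$-symmetric spectra, and the subgroup $(\Sigma_i\wr H)\times(\Sigma_{n-i}\wr H)\le\Sigma_n\wr H$ is the stabiliser of the partition $\{1,\dots,i\}\sqcup\{i+1,\dots,n\}$, on which the $(\Sigma_n\wr H)$-set universe $n\times\U$ restricts to $(i\times\U)\sqcup\bigl((n-i)\times\U\bigr)$.

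The heart of the argument is the claim that, for $f$ an $H^{\U}$-projective cofibration of $H^{\U}$-projective $H$-symmetric spectra, the map $f^{\square i}$ is a $(\Sigma_i\wr H)^{i\times\U}$-projective cofibration between $(\Sigma_i\wr H)^{i\times\U}$-projective spectra, and moreover an \emph{acyclic} such cofibration as soon as $f$ is in addition an $H^{\U}$-stable equivalence. I would prove this by induction on $i$, writing $f^{\square i}\cong f^{\square(i-1)}\square f$: Proposition \ref{prop:times} at once gives that $f^{\square(i-1)}\square f$ is a (respectively, acyclic) projective cofibration for the \emph{product} group $(\Sigma_{i-1}\wr H)\times H$ over the universe $\bigl((i-1)\times\U\bigr)\sqcup\U\cong i\times\U$. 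The one genuinely delicate point is upgrading this to equivariance for the full wreath product $\Sigma_i\wr H$, and this is where the distributive law of \cite[Section A.3.3.]{HHR14} enters: reducing $f$ to a relative cell complex and applying the distributive law exhibits $f^{\square i}$, up to retract, as assembled from terms $\Sigma_i\wr H\ltimes_{\prod_j(\Sigma_{m_j}\wr K_j)}\bigl(\iota_1^{\square m_1}\wedge\cdots\wedge\iota_k^{\square m_k}\bigr)$ built out of the generating projective cofibrations $\iota_j=H\ltimes_{K_j}\mathscr{F}^{(K_j)}_{M_j}(-)$, and the natural isomorphism $\mathscr{F}_{M_1}(a)\wedge\cdots\wedge\mathscr{F}_{M_m}(a)\cong\mathscr{F}_{M_1\sqcup\cdots\sqcup M_m}(a^{\wedge m})$ together with the $\Sigma_m$-equivariant compatibility of $\mathscr{F}_M(-)$ with smash powers lets one read off directly that these building blocks are $(\Sigma_i\wr H)^{i\times\U}$-projective cofibrations. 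I expect this combinatorial bookkeeping of the wreath-product equivariance to be the main obstacle; the remaining steps are formal.

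Granting the claim, the first assertion follows quickly. Smashing $f^{\square i}$ with the projective-cofibrant spectrum $X^{\wedge(n-i)}$, i.e., forming the pushout product $f^{\square i}\square(\ast\to X^{\wedge(n-i)})$, is again a projective cofibration for $(\Sigma_i\wr H)\times(\Sigma_{n-i}\wr H)$ over $(i\times\U)\sqcup\bigl((n-i)\times\U\bigr)$ by Proposition \ref{prop:times}; inducing up along the subgroup inclusion into $\Sigma_n\wr H$ is left Quillen for the projective model structures with the restricted universe on the subgroup (Section \ref{sec:changeofgroups}), hence preserves projective cofibrations. Thus each $Q_{i-1}\to Q_i$ is a pushout of a $(\Sigma_n\wr H)^{n\times\U}$-projective cofibration, and since projective cofibrations are closed under pushout and finite composition, $f^{\wedge n}$ is a $(\Sigma_n\wr H)^{n\times\U}$-projective cofibration.

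For the second assertion, assume in addition that $f$ is an $H^{\U}$-stable equivalence. By the claim, $f^{\square i}$ is then an \emph{acyclic} $(\Sigma_i\wr H)^{i\times\U}$-projective cofibration, so $f^{\square i}\wedge\mathrm{id}_{X^{\wedge(n-i)}}=f^{\square i}\square(\ast\to X^{\wedge(n-i)})$ is an acyclic projective cofibration by the last clause of Proposition \ref{prop:times}, and inducing it up preserves acyclic cofibrations because induction is left Quillen. Hence each $Q_{i-1}\to Q_i$ is a pushout of an acyclic $(\Sigma_n\wr H)^{n\times\U}$-projective cofibration — in particular of an $h$-cofibration which is a $(\Sigma_n\wr H)^{n\times\U}$-stable equivalence, so one may also invoke Proposition \ref{prop:pushout}(i) — and is therefore itself a $(\Sigma_n\wr H)^{n\times\U}$-stable equivalence. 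Composing over the finitely many stages of the filtration shows that $f^{\wedge n}$ is a $(\Sigma_n\wr H)^{n\times\U}$-stable equivalence, which would complete the proof.
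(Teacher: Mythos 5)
Your overall architecture is the right one and essentially agrees with the paper's: the skeletal filtration
\[
X^{\wedge n}=Q_0\to Q_1\to\cdots\to Q_n=Y^{\wedge n},\qquad
Q_{i-1}\to Q_i\ \text{a pushout of }(\Sigma_n\wr H)\ltimes_{(\Sigma_i\wr H)\times(\Sigma_{n-i}\wr H)}\bigl(f^{\square i}\wedge \mathrm{id}_{X^{\wedge(n-i)}}\bigr),
\]
together with Proposition~\ref{prop:times} and the fact that induction along the subgroup $(\Sigma_i\wr H)\times(\Sigma_{n-i}\wr H)\le\Sigma_n\wr H$ is left Quillen (Section~\ref{sec:changeofgroups}) is exactly the reduction the paper makes. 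The gap is in your ``heart of the argument'': the claim that $f^{\square i}$ is a $(\Sigma_i\wr H)^{i\times\U}$-projective (acyclic) cofibration. Your induction via $f^{\square i}\cong f^{\square(i-1)}\square f$ and Proposition~\ref{prop:times} only ever produces $(\Sigma_{i-1}\wr H)\times H$-equivariance, and the step from there to the full wreath product is not a matter of bookkeeping: a map that is an acyclic cofibration for a proper subgroup $(\Sigma_{i-1}\wr H)\times H$ of $\Sigma_i\wr H$ carries no information about fixed points for the graph subgroups of $\Sigma_i\wr H$ that project onto all of $\Sigma_i$, and these are precisely what the family $\mathcal{F}^{\Sigma_i\wr H,\Sigma_m}_{i\times\U}$ sees. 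The distributive law of~\cite[A.3.3]{HHR14} also does not rescue this: it decomposes smash powers of \emph{wedges}, not pushout-product powers, and $(-)^{\square n}$ does not commute with cell attachments (a pushout of $f$ does not yield a pushout on passing to $f^{\square n}$), so ``reducing $f$ to a relative cell complex'' does not reduce $f^{\square i}$ to generators in the way a wedge decomposition would.

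This is exactly the point where the paper deliberately retreats: in the acyclic case it does \emph{not} prove that $i^{\square k}$ is a $(\Sigma_k\wr H)^{k\times\U}$-projective cofibration (indeed it explicitly remarks that this is not shown), but only that it is a $(\Sigma_k\wr H)$-flat cofibration, obtained by transporting through simplicial sets and using that non-equivariant flatness detects equivariant flatness (Remark~\ref{rem:flatdetect}). Being an $h$-cofibration then suffices, and the genuine stable-contractibility of the quotients $(B/A)^{\wedge k}$ is established by a secondary induction on $k$, with the $k=n$ case extracted by factoring $i^{\wedge n}$ through the filtration with $X=A$, $Y=B$ and using that the total composite is already known to be a stable equivalence. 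Your key claim, in its acyclic form, is (a posteriori) equivalent in content to the proposition you are proving; asserting it as a lemma and proving it by appeal to Proposition~\ref{prop:times} plus a hand-wave over the wreath-product upgrade begs the question. To repair the argument you would need to reproduce the two inner inductions the paper runs: one over the skeleta to handle the non-generating case and one over $k$ (inside item~(ii)) to handle the quotients $(B/A)^{\wedge k}$, along with the flatness detour that replaces projective cofibrancy of $i^{\square k}$ by flat cofibrancy.
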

\begin{proof} It suffices to prove the topological case. We start by showing the claim for the generating (stable acyclic) cofibrations. The generating cofibrations are of the form $H\ltimes_K \mathscr{F}_M(i:A\to B)$ for $i$ a genuine $K$-cofibration of based $K$-CW complexes. Then $(H\ltimes_K \mathscr{F}_M(i))^{\wedge n}\cong (\Sigma_n\wr H)\ltimes_{\Sigma_n\wr K}(\mathscr{F}_{M^n}(i^{\wedge n}))$, which is an $H^{n\times \U}$-projective cofibration, since $i^{\wedge n}$ is a genuine $(\Sigma_n\wr K)$-cofibration (and, for later reference, so is the $n$-fold pushout product $i^{\square n}$). Furthermore, if $i$ is also a genuine $K$-equivalence, then $i^{\wedge n}$ and $i^{\square n}$ are genuine $(\Sigma_n\wr K)$-equivalences. Hence, it remains to consider the generating stable acyclic cofibrations added from the level to the stable model structure. They are of the form $H\ltimes_K (\lambda^{(K)}_{M,N}: F_{M\sqcup N} S^N\to F_{M} S^0)$ followed by an $H$-homotopy equivalence.
Hence, it suffices to show that $(H\ltimes_K \lambda^{(K)}_{M,N})^n$ is a $(\Sigma_n\wr H)^{n\times \U}$-stable equivalence. But this follows from the identification $(H\ltimes_K \lambda^{(K)}_{M,N})^{\wedge n}\cong (\Sigma_n\wr H)\ltimes_{\Sigma_n\wr K}(\lambda^{(\Sigma_n\wr K)}_{n\times M,n\times N})$.

In order to finish the proof we have to show the following:
\begin{enumerate}[(i)]
\item For every family $\{j_i:X_i\to Y_i\}_{i\in I}$ of generating $H^{\U}$-projective (stable acyclic) cofibrations, the map $(\bigvee_{I} j_i)^{\wedge n}$ is a $(\Sigma_n\wr H)^{n\times \U}$-projective (stable acyclic) cofibration.
\item Given a pushout square \[ \xymatrix{ A \ar[r] \ar[d]_-i & X \ar[d]^-f\\
																					B \ar[r] & Y,} \]
where $i$ is a wedge of generating $H^{\U}$-projective (stable acyclic) cofibrations, the map $f^{\wedge n}:X^{\wedge n}\to Y^{\wedge n}$ is a $(\Sigma_n\wr H)^{n\times \U}$-projective (stable acyclic) cofibration.
\item Given a sequence $X_0\to X_1\to \hdots $ of $H^{\U}$-projective (stable acyclic) cofibrations, for which we already know that each individual map $(X_i)^{\wedge n}\to (X_{i+1})^{\wedge n}$ is a $(\Sigma_n\wr H)^{n\times \U}$-projective (stable acyclic) cofibration, the map $(X_0)^{\wedge n}\to (\colim X_i)^{\wedge n}$ is a $(\Sigma_n\wr H)^{n\times \U}$-projective (stable acyclic) cofibration.
\end{enumerate}
This proves the proposition, because every $H^{\U}$-projective (stable acyclic) cofibration is a retract of one obtained from the operations $(i)-(iii)$.
 
Number $(iii)$ is clear, because $(-)^{\wedge n}$ commutes with sequential colimits. Regarding $(i)$, we use the distributive law recalled above to see that $(\bigvee_{I} j_i)^{\wedge n}$ decomposes as a wedge of maps of the form $(\Sigma_n\wr H)\ltimes_{\Sigma_{n_1}\wr H\times \hdots \times \Sigma_{n_k}\wr H} (j_{a_1}^{\wedge n_1}\wedge \hdots \wedge j_{a_k}^{\wedge n_k})$. We know that each factor is a (stable acyclic) $(n_i\times \U)$-projective cofibration of $(n_i\times \U)$-projective $(\Sigma_{n_i}\wr H)$-symmetric spectra, so by Proposition \ref{prop:times} and Lemma \ref{lem:wirth2} the induction of the smash product over them is a (stable acyclic) $(\Sigma_n\wr H)^{n\times \U}$-projective cofibration. Hence, so is the wedge.

Finally, we prove $(ii)$ by induction on $n$, using that $(i)$ and $(iii)$ are already dealt with. The case $n=1$ is clear. So we fix an $n$ larger than $1$ and assume the statement being shown for all $k<n$. The $(\Sigma_n\wr H)$-symmetric spectrum $Y^{\wedge n}$ is obtained from $X^{\wedge n}$ by inductively forming pushouts along the maps $(\Sigma_n\wr H)\ltimes _{\Sigma_k\wr H\times \Sigma_{n-k}\wr H} ((i^{\square k})\wedge X^{\wedge (n-k)})$ for $k=1,\hdots, n$ (as explained, for example, in \cite[Lem. A.8.]{SS12}). Since the $i^{\square k}$ are $(\Sigma_k \wr H)^{k\times \U}$-projective cofibrations if $i$ is a wedge of generating cofibrations, as remarked above, and $X^{\wedge(n-k)}$ is $(\Sigma_{n-k}\wr H)^{(n-k)\times \U}$-projective, the part about cofibrations again follows from Proposition \ref{prop:times}.
If $i$ is a wedge of generating $H^{\U}$-projective stable acyclic cofibrations, it follows from Remark \ref{rem:trick} that the $n$-fold pushout product is a $(\Sigma_k \wr H)^{k\times \U}$-flat cofibration. Therefore, it suffices to show that the quotients $(\Sigma_n\wr H)\ltimes _{\Sigma_k\wr H\times \Sigma_{n-k}\wr H} ((B/A)^{\wedge k}\wedge X^{\wedge (n-k)})$ are all $(\Sigma_n\wr H)^{n\times \U}$-stably contractible. Since we already know that each $X^{\wedge (n-k)}$ is $(\Sigma_{n-k}\wr H)^{(n-k)\times \U}$-projective, in view of Proposition \ref{prop:times} it hence suffices to show that each $(B/A)^{\wedge k}$ is $(\Sigma_k\wr H)^{k\times \U}$-stably contractible. For all $k<n$ this follows from the induction hypothesis. For $k=n$ we consider a different pushout where $X$ is equal to $A$, $Y$ is equal to $B$ and the horizontal maps are identities. Again, we see that the map $i^{\wedge n}:A^{\wedge n}\to B^{\wedge n}$ is the composite of $n$ maps, of which the first $n-1$ are $(\Sigma_n\wr H)^{n\times \U}$-stable equivalences by the induction hypothesis. Since we have shown above that $i^{
\wedge n}$ is also a $(\Sigma_n\wr H)^{n\times \U}$-stable equivalence, this means that the last map in the composite must also be one. This last map 
is $i^{\square n}$ with quotient $(B/A)^{\wedge n}$, which finishes the proof.
\end{proof}

Now we come to the flat case. Again, by Ken Brown's Lemma the following proposition implies Theorem \ref{theo:norm2}. 
\begin{Prop} Let $N\leq H\leq G$ be subgroup inclusions with $N$ normal in $G$ and $f:X\to Y$ an $H$-flat cofibration of $H$-flat $H$-symmetric spectra. Then $N_H^G f:N_H^G X\to N_H^G Y$ is a $G$-flat cofibration. If $f$ is in addition an $H^{\U_H(N)}$-stable equivalence, then $N_H^G f$ is a $G^{\U_G(N)}$-stable equivalence.
\end{Prop}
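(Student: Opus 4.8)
The plan is to follow the structure of the proof of Proposition~\ref{prop:norm1}, carrying it out in the flat rather than the projective model structures and supplying the two additional ingredients that the flat setting requires.

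\textbf{Cofibrations.} By Remark~\ref{rem:flatdetect}(i) a map of $H$-symmetric spectra of simplicial sets is an $H$-flat cofibration exactly when its underlying non-equivariant map is a flat cofibration, and similarly over $G$. Writing $n=[G:H]$, the $G$-symmetric spectrum $N_H^G f$ is the restriction of $f^{\wedge n}$ along an embedding $\varphi\colon G\to\Sigma_n\wr H$, and restriction preserves flat cofibrations, so it is enough to see that $f^{\wedge n}$ underlies a non-equivariant flat cofibration. This is immediate from the monoidality of the non-equivariant flat model structure (Proposition~\ref{prop:stablemonoidal}): $f^{\wedge n}$ is assembled from the iterated pushout-products $f^{\square k}$ by pushouts and composition, and each $f^{\square k}$ is a non-equivariant flat cofibration. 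The topological statement follows by geometric realization.

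\textbf{Stable equivalences.} Now suppose $f$ is in addition an $H^{\U_H(N)}$-stable equivalence. As a flat cofibration $f$ is an $h$-cofibration, so by Proposition~\ref{prop:cone} it suffices to show that the cofiber of $N_H^G f$ is $G^{\U_G(N)}$-stably contractible. Since the $H^{\U_H(N)}$-flat stable model structure is cofibrantly generated, $f$ is a retract of a relative cell complex built from the generating stable-acyclic $H$-flat cofibrations: the maps $\mathscr{G}_m(j)$ for $j$ a generating acyclic cofibration of the mixed $\F^{H,\Sigma_m}_{\U_H(N)}$-model structure on $(H\times\Sigma_m)$-spaces, together with the stable maps $H\ltimes_K\overline{\lambda}^{(K)}_{P,Q}$. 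One then runs the same three-part induction as in the proof of Proposition~\ref{prop:norm1} (over wedges of such maps, over pushouts along such wedges, and over sequential colimits), using the distributive law recalled above to split $N_H^G$ applied to a wedge into a wedge of induced norms $G\ltimes_K\bigl(\bigwedge_i N_{\,\cdot\,}c_{g_i}^{*}\res^{H}_{\,\cdot\,}(-)\bigr)$, together with the pushout-product filtration of $N_H^G Y$ over $N_H^G X$.

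\textbf{Where the flat case diverges, and the main obstacle.} Proposition~\ref{prop:times} is false over the flat model structures, so the treatment of the non-diagonal summands cannot proceed as in the projective case; instead one uses Proposition~\ref{prop:flatness} (smashing with a $G$-flat spectrum preserves $G^{\U}$-stable equivalences) together with Lemma~\ref{lem:wirth2} (induction from a subgroup preserves stable equivalences), which disposes of every summand of $N_H^G(Y)/N_H^G(X)$ that is induced from a proper subgroup or that involves a factor of $X$. What remains — and what I expect to be the main obstacle — is the ``diagonal'' contribution $N_H^G(Y/X)$: one must show that $N_H^G$ carries $H$-flat, $H^{\U_H(N)}$-stably contractible spectra to $G^{\U_G(N)}$-stably contractible ones. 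On generating cofibrations the identity $\mathscr{G}_p(A)\wedge\mathscr{G}_q(B)\cong\mathscr{G}_{p+q}\bigl(\Sigma_{p+q}\ltimes_{\Sigma_p\times\Sigma_q}(A\wedge B)\bigr)$ identifies $N_H^G(\mathscr{G}_m(A))$ with a semi-free $G$-symmetric spectrum $\mathscr{G}^{(G)}_{mn}(B)$ whose defining $(G\ltimes\Sigma_{mn})$-space $B$ is induced from $A^{\wedge n}$, so one is reduced to the space-level assertion that if $A$ has isotropy in $\F^{H,\Sigma_m}_{\U_H(N)}$ — equivalently, by the Twisting/Untwisting analysis and the normality of $N$, all its isotropy graphs $\{(k,\psi(k))\}$ satisfy $K\cap N\subseteq\ker\psi$ — then $B$ has isotropy in $\F^{G,\Sigma_{mn}}_{\U_G(N)}$. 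The combinatorial heart is that the condition ``$(-)\cap N$ acts trivially on a finite set'' characterizes embeddability into the $N$-fixed universes and is preserved by forming smash powers and inducing, precisely because $N$ is normal in $G$; this, combined with the description of the relevant $G$-equivariant universal $\Sigma_{mn}$-space over $\U_G(N)$ from Proposition~\ref{prop:commfree} and with the observation that $\R[n\times\U_H(N)]$ restricts along $\varphi$ to the $N$-fixed $G$-representation universe (so that, although $\res_\varphi(n\times\U_H(N))=G\ltimes_H\U_H(N)$ is a proper sub-$G$-set of $\U_G(N)$, every $G^{G\ltimes_H\U_H(N)}$-stable equivalence is a $G^{\U_G(N)}$-stable equivalence), shows that $N_H^G$ sends the generating stable-acyclic $H$-flat cofibrations over $\U_H(N)$ to $G^{\U_G(N)}$-stable equivalences, after which the three-part induction goes through.
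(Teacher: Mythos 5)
Your cofibration argument is correct and genuinely different from the paper's: you reduce via Remark~\ref{rem:flatdetect}(i) to the non-equivariant flat monoidality, whereas the paper reads off flatness directly from the identification $N_H^G(\mathscr{G}_m(i))\cong \mathscr{G}_{G/H\times m}\bigl((\Sigma_{G/H\times m})_+\wedge_{\Sigma_m^{G/H}} N_H^G i\bigr)$ and the fact that the underlying $(G\ltimes\Sigma_{G/H\times m})$-space map is a genuine cofibration. Both work; yours is cleaner for the cofibration part.

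For the stable-equivalence part, however, there are genuine gaps.

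First, the proof of items $(i)$ and $(ii)$ in the three-step induction cannot "go through" in the flat case without an additional \emph{induction on the order of $G$}, which you do not invoke. In the distributive-law decomposition of $N_H^G(\bigvee_I j_i)$, the summands induced from a proper subgroup $K<G$ have the form $G\ltimes_K\bigl(\bigwedge_i N^K_{K\cap g_iHg_i^{-1}}(\cdots)\bigr)$. To show these are $G^{\U_G(N)}$-stable equivalences, you need to know that the norms $N^K_{K\cap g_iHg_i^{-1}}$ applied to $H$-flat stable acyclic cofibrations yield $K^{\U_K(K\cap N)}$-stable equivalences between $K$-flat spectra — which is precisely the proposition itself, for the smaller group $K$ (with normal subgroup $K\cap N$). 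Proposition~\ref{prop:flatness} and Lemma~\ref{lem:wirth2} alone only tell you how to smash and induce \emph{once you already know} each factor is a $K$-stable equivalence of $K$-flat spectra; they do not supply that input. The paper closes this circle by a global induction on $|G|$, which is the essential new structural ingredient relative to Proposition~\ref{prop:norm1}.

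Second, your change-of-universe claim, "every $G^{G\ltimes_H\U_H(N)}$-stable equivalence is a $G^{\U_G(N)}$-stable equivalence," is false as stated. The Remark following the discussion of Adjunction~\ref{eq:change1} in Section~\ref{sec:changeofuniverse} says explicitly that a $G^{\U}$-stable equivalence between $G$-flat spectra need not be a $G^{\U'}$-stable equivalence for $\U\subseteq\U'$ even when $\R[\U]\cong\R[\U']$; only $G^{\U}$-stable equivalences between $G^{\U}$-\emph{projective} spectra are preserved (via Ken Brown's lemma for the projective change-of-universe Quillen pair). The paper is careful to apply exactly this restricted statement: the $\overline\lambda$-generators are $H^{\U_H(N)}$-projective cofibrations of $H^{\U_H(N)}$-projective spectra, and Proposition~\ref{prop:norm1} (projective version) produces $G^{G\ltimes_H\U_H(N)}$-stable equivalences of $G^{G\ltimes_H\U_H(N)}$-\emph{projective} spectra, after which the projective change of universe applies. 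Your unconditional version would make the "Remark" wrong and also trivialize the flat-versus-projective distinction the paper is at pains to maintain.

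Third, for the "diagonal" term $N_H^G(B/A)$: your space-level reduction via $\mathscr{G}_p(A)\wedge\mathscr{G}_q(B)\cong\mathscr{G}_{p+q}(\cdots)$ handles the norm of a \emph{single} semi-free generator, which is essentially what Lemma~\ref{lem:tech2} does, and that is a correct ingredient. But $B/A$ is a quotient of a \emph{wedge} of generators, and the distributive law applied to $N_H^G$ of a wedge reintroduces non-diagonal summands from proper subgroups (requiring the $|G|$-induction as above) plus the one genuinely diagonal term. The paper disposes of this last term not by any direct isotropy computation but by a bootstrapping argument: run the pushout filtration for the trivial pushout with $X=A$, $Y=B$, observe the composite $N_H^G A\to N_H^G B$ is already known to be a $G^{\U_G(N)}$-stable equivalence (from the generator computation), deduce each filtration step is one since all earlier steps are handled by induction, and read off that the final step's quotient, $N_H^G(B/A)$, is $G^{\U_G(N)}$-stably contractible. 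You should supply this bootstrapping step; without it the argument for the constant-function summand is circular.
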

\begin{proof} The method of proof is similar to that of Proposition \ref{prop:norm1} above. We explain the modifications needed and at which places we are making use of this specific type of $H$-set universe. We start with the generating (stable acyclic) flat cofibrations. The generating flat cofibrations are of the form $\mathscr{G}_m(i:A\to B)$ for $i$ a genuine cofibration of $(G\times \Sigma_n)$-CW complexes. Then $N_H^G(\mathscr{G}_m(i))\cong \mathscr{G}_{G/H\times m}((\Sigma_{G/H\times m})_+\wedge_{\Sigma_m^{G/H}} N_H^Gi)$, which is a flat cofibration of $G$-symmetric spectra (as - by the same proof - is the map $(\mathscr{G}_m(i))^{\square N_H^G}$). Furthermore, as a consequence of Lemma \ref{lem:tech2}, this map is also a $G^{\U_G(N)}$-level equivalence if $i:A\to B$ is in addition an $\F_{\U_H(N)}^{H,\Sigma_m}$-equivalence, which proves the statement on the generating acyclic cofibrations of the level model structure. The additional generating acyclic cofibrations of the $H^{\U}$-flat stable model 
structure are $H^{\U_H(N)}$-projective cofibrations of $H^{\U_H(N)}$-projective $H$-symmetric spectra, so Proposition \ref{prop:norm1} above implies that they are sent to $G^{G\ltimes_H \U_H(N)}$-stable equivalences of $G^{G\ltimes_H \U_H(N)}$-projective $G$-symmetric spectra under the norm. Since the $G$-set universe $G\ltimes_H \U_H(N)$ is a subuniverse of $\U_G(N)$, the change of universe for the projective model structures (cf. Section \ref{sec:changeofuniverse}) implies that every $G^{G\ltimes_H \U_H(N)}$-stable equivalence of $G^{G\ltimes_H \U_H(N)}$-projective $G$-symmetric spectra is also a $G^{\U_G(N)}$-stable equivalence.

We are then left to show the analogs of items $(i)$, $(ii)$ and $(iii)$ in the proof of Proposition \ref{prop:norm1}. Again, $(iii)$ is easy. This time we show both items $(i)$ and $(ii)$ by induction over the order of $G$, the whole statement being trivial for $G=\{e\}$. So we assume the proposition already shown for all proper subgroups of $G$.

We start by proving $(i)$: Given a wedge $\bigvee_I j_i:\bigvee_I X_i\to \bigvee_I Y_i$ of generating $H^{\U}$-flat (stable acyclic) cofibrations, the distributive law shows that $N_H^G (\bigvee_I j_i)$ is a wedge of maps of the form $G\ltimes_K(\bigwedge_{i=1}^k N^K_{K\cap g_iHg_i^{-1}}c_{g_i}^{*} {\res}^H_{g_i^{-1}Kg_i\cap H} j_{f(g_i)})$, for a function $f:G/H\to I$. This immediately shows that $N_H^G (\bigvee_I j_i)$ is again a $G$-flat cofibration. If each $j_i$ is a generating acyclic cofibration, we claim that every smash factor $N^K_{K\cap g_iHg_i^{-1}}c_{g_i}^{*} {\res}^H_{g^{-1}Kg\cap H} j_{f(g_i)}$ is a $K^{\U_K(K\cap N)}$-stable equivalence. If $K$ is equal to $G$, one can choose $g_i=1$ and the term becomes simply $N_H^G (j_{f(1)})$, which we have shown to be a $G^{\U_G(N)}$-stable equivalence. If $K$ is a proper subgroup, $c_{g_i}^{*} {\res}^H_{g_i^{-1}Kg_i\cap H} j_{f(g_i)}$ is a $K\cap gHg^{-1}$-flat cofibration and in addition a $(K\cap g_iHg_i^{-1})^{c_{g_i}^* {\res}^H_{g_i^{-1}Kg_i\cap H}\
U_H(N)}$-stable equivalence. But the $K\cap g_iHg_i^{-1}$-set universe $c_{g_i}^* {\res}^H_{g_i^{-1}Kg_i\cap H}\U_H(N)$ is isomorphic to $\U_{K\cap g_iHg_i^{-1}}(K\cap N)$, and $K\cap N$ is a subgroup of $K\cap g_iHg_i^{-1}$ which is normal in $K$. So by the induction hypothesis, applying the norm $N_{K\cap gHg^{-1}}^K$ gives a $K^{\U_K(K\cap N)}$-stable equivalence, as claimed. Hence, by the monoidality of the $K^{\U_K(K\cap N)}$-flat stable model structure, the smash product over all these is also a $K^{\U_K(K\cap N)}$-stable equivalence. Since $\U_K(K\cap N)$ is isomorphic to the restriction of $\U_G(N)$, we see that the induction to $G$ is a $G^{\U_G(N)}$-stable equivalence, so $(i)$ is shown.

For item $(ii)$ we take a pushout square as in the proof of Proposition \ref{prop:norm1} and again use that $Y^{\wedge n}$ can be obtained from $X^{\wedge n}$ by iteratively forming pushouts along the map $\Sigma_n\wr H\ltimes_{\Sigma_k\wr H\times \Sigma_{n-k}\wr H}(i^{\square k}\wedge X^{\wedge (n-k)})$ for $k=1,\hdots,n$. All these maps are $G$-flat cofibrations by Remark \ref{rem:trick}. For the case where $i$ is a wedge of generating $H^{\U_H(N)}$-stably acyclic cofibrations, we consider the quotient $(\Sigma_n\wr H)\ltimes_{\Sigma_k\wr H\times \Sigma_{n-k}\wr H} (B/A)^{\wedge k}\wedge X^{n-k}$. It remains to show that when restricted along the chosen embedding $G\to \Sigma_n\wr H$, these quotients are all $G^{\U_G(N)}$-stably contractible.
This restriction is isomorphic to the wedge over the terms in the distributive law for $N_H^G (B/A\vee X)$ associated to those functions $f:G/H\to I$ which map exactly $k$ elements to $1$. For all but the constant function with value $1$, the induction hypothesis and the same proof as for item $(i)$ shows that the associated term is $G^{\U_G(N)}$-stably contractible. Hence, it remains to show that $N_H^G (B/A)$ is $G^{\U_G(N)}$-stably contractible. For this we again consider a different pushout with $X=A$, $Y=B$ and both horizontal maps the identities. We already know that the composite $N_H^G A\to N_H^G B$ is a $G^{\U_G(N)}$-stable equivalence. Since we also showed that all maps but the last in the factorization are $G^{\U_G(N)}$-stable equivalences, this means that the last one must also be one. This last map has quotient $N_H^G (B/A)$, which is hence $G^{\U_G(N)}$-stably contractible, so we are done.
\end{proof}

\subsection{Model structures on commutative algebras}
In this section we construct model structures on the category of commutative algebras over a fixed commutative $G$-symmetric ring spectrum $R$. The following theorem summarizes the results. We recall once more that for a normal subgroup $N$ of $G$ we denote by $\U_G(N)$ the full $N$-fixed $G$-set universe.

\begin{Theorem}[Model structures on commutative algebras] \label{theo:modcomm} Let $R$ be a commutative $G$-symmetric ring spectrum and $N$ be a normal subgroup of $G$. Then the positive $G^{\U_G(N)}$-projective and the positive $G^{\U_G(N)}$-flat model structures lift to the category of commutative $R$-algebras. In either of these categories, a map is a weak equivalence or fibration if and only if the underlying map is in the respective positive model structure on $G$-symmetric spectra.
\end{Theorem}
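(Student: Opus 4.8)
The standard strategy for lifting a model structure to commutative algebras, following \cite{Whi14}, is to verify the \emph{commutative monoid axiom} together with the fact that $R$-modules form a monoidal model category with cofibrant unit well enough behaved, so that the free commutative algebra functor interacts correctly with (acyclic) cofibrations. Concretely, the plan is to apply the transfer principle to the forgetful functor from commutative $R$-algebras to $R$-modules, for which the crucial point is that pushouts of free commutative algebra maps along arbitrary maps preserve the underlying (acyclic) cofibrations. This reduces, via the usual filtration of such a pushout (as in \cite[Section 4]{Whi14} or \cite[Lemma VII.6.2]{EKMM}), to understanding the maps
\[ (E\Sigma_n)_+\wedge_{\Sigma_n} Q^n_{n-1}(f) \longrightarrow (E\Sigma_n)_+\wedge_{\Sigma_n} X^{\wedge n} \]
and more importantly the maps $(E\Sigma_n)_+\wedge_{\Sigma_n} X^{\wedge n}\to X^{\wedge n}/\Sigma_n$, where $X$ is a positive $G$-flat (or positive $G^{\U_G(N)}$-projective) cofibrant $R$-module and $E\Sigma_n$ is the \emph{appropriate} equivariant model as described in the introduction (Proposition \ref{prop:commfree}), namely a universal space for the family $\F^{G,\Sigma_n}_{\U_G(N)}$ rather than one with trivial $G$-action.

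\textbf{Key steps.} First I would reduce to the case $R=\mathbb{S}$: since the weak equivalences and fibrations of $R$-algebras are created in $R$-modules and the model structures on $R$-modules are themselves lifted from $G$-symmetric spectra (Corollary \ref{cor:modmodules}), the needed smallness, factorization and lifting statements for commutative $R$-algebras follow from the corresponding statements for commutative $G$-symmetric ring spectra together with the monoidal structure of the $R$-module model structures, after base change along $\mathbb{S}\to R$. Second, I would establish Proposition \ref{prop:commfree}: for $X$ a positive $G$-flat $G$-symmetric spectrum, the map $(E\F^{G,\Sigma_n}_{\U_G(N)})_+\wedge_{\Sigma_n} X^{\wedge n}\to X^{\wedge n}/\Sigma_n$ is a $G^{\U_G(N)}$-stable equivalence. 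The positivity hypothesis guarantees that $X$ is built from semi-free cells $\mathscr{G}_m(-)$ with $m\geq 1$, so that the $\Sigma_n$-action on the relevant smash powers has isotropy exactly in the family $\F^{G,\Sigma_n}_{\U_G(N)}$ — this is the point where the non-trivial equivariant $E\Sigma_n$ enters, and it is precisely the input that was overlooked in \cite{MM02} and pointed out in \cite{HHR14}. Using the explicit description of $\mathscr{G}_m(A)\wedge X$ from Section \ref{sec:free}, the smash power of a semi-free cell is again (induced from) a semi-free cell with an $E\F$-type factor appearing, and the collapse map becomes, levelwise, a genuine equivalence because collapsing $E\F$ is one on the relevant fixed points. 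The homotopical properties of the norm (Theorem \ref{thmx:homnorm}, i.e.\ Theorem \ref{theo:norm2}) feed in here exactly because the norm $N^G_{\Sigma_n\wr e}$ (or rather the relative smash power construction restricted along $G\to \Sigma_n\wr G$) controls how these $E\F$-cells behave under the cellular filtration of the free commutative algebra functor, ensuring the filtration quotients are $G^{\U_G(N)}$-stably acyclic when one starts with an acyclic cofibration. Third, with Proposition \ref{prop:commfree} in hand, I would verify the commutative monoid axiom of \cite{Whi14}: every map obtained from $\{J^{st}_{\U_G(N)}\}$ by applying the free commutative algebra functor, then taking pushouts and transfinite compositions, is a $G^{\U_G(N)}$-stable equivalence; this uses the monoid axiom (Proposition \ref{prop:monoidaxiom}), flatness (Proposition \ref{prop:flatness}), and the fact that the relevant $h$-cofibration properties propagate through the symmetric power filtration. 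Finally I would invoke \cite[Theorem 3.2]{Whi14} (or the analogous transfer theorem) to conclude that the positive $G^{\U_G(N)}$-flat and positive $G^{\U_G(N)}$-projective model structures lift, with weak equivalences and fibrations created in the underlying category, as claimed; the properness and cofibrant-generation are inherited formally.

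\textbf{Main obstacle.} The heart of the matter — and the step I expect to be most delicate — is Proposition \ref{prop:commfree}, i.e.\ identifying the correct equivariant universal space and proving that the collapse map $(E\F^{G,\Sigma_n}_{\U_G(N)})_+\wedge_{\Sigma_n}X^{\wedge n}\to X^{\wedge n}/\Sigma_n$ is a $G^{\U_G(N)}$-stable equivalence for positive $G$-flat $X$. The subtlety is twofold: one must check that the isotropy of the $\Sigma_n$-action on the cells of $X^{\wedge n}$ genuinely lands in the family $\F^{G,\Sigma_n}_{\U_G(N)}$ (this is where the positive condition, forbidding the level-$0$ cell, is essential), and one must control the interaction with the $\U_G(N)$-stable equivalences, which are not detected by naive homotopy groups — hence one cannot argue purely $\upi^{\U}_*$-homologically but must go through the defining lifting/mapping-out characterization, or compare to $G$-orthogonal spectra via Theorem \ref{theo:gquillen2}. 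The connection to the norm (Theorem \ref{theo:norm2}) is what makes the acyclicity propagate correctly through the filtration: without knowing that the norm preserves $G$-flat stable equivalences, one could not conclude that the symmetric power of an acyclic cofibration has acyclic filtration quotients over the non-trivial universe. Once these geometric inputs are secured, the rest is the by-now-standard machinery of \cite{Whi14} and presents no essential difficulty.
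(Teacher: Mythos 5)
Your proposal follows the paper's route: reduce via \cite{Whi14} to showing that $i^{\square n}/\Sigma_n$ preserves (positive flat/projective, $\U_G(N)$-stably acyclic) cofibrations, which in the paper is Proposition~\ref{prop:comm2}, proved using Proposition~\ref{prop:commfree} (the equivariant $E\F^{G,\Sigma_n}_{\U_G(N)}$ collapse map being an equivalence) together with Lemma~\ref{lem:comm2} and the norm Theorem~\ref{theo:norm2}. You correctly identify all three of these ingredients, including the role of the norm theorem in propagating acyclicity to the symmetric powers, so the substance matches.

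One small misplacement in your ``main obstacle'' paragraph is worth flagging. You worry that Proposition~\ref{prop:commfree} itself ``cannot be argued purely $\upi^{\U}_*$-homologically,'' suggesting a detour through the lifting characterization or Theorem~\ref{theo:gquillen2}. In fact the paper proves Proposition~\ref{prop:commfree} as a $\upi_*^{\U_G(N)}$-\emph{isomorphism} by a direct induction over the skeleton filtration of $X$, with Lemma~\ref{lem:commfree} handling the semi-free cells levelwise and the long exact sequences of Proposition~\ref{prop:exsym} gluing the pieces. The place where naive homotopy groups genuinely fail and one is forced into the stable-equivalence framework is later, in Proposition~\ref{prop:comm2}(ii): the cofiber $B/A$ of a stably acyclic positive flat cofibration is $G^{\U_G(N)}$-stably contractible but need not have vanishing $\upi_*^{\U_G(N)}$, so one cannot appeal to Proposition~\ref{prop:flatness} for its smash powers. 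That is precisely the step resolved by Lemma~\ref{lem:comm2} (reduction to graph subgroups $(H,\alpha)$) together with Theorem~\ref{theo:norm2} applied to the norms $N_{K_i}^H(B/A)$ — the step you do describe correctly in your last sentence. So your intuition about the norm's role is right, but the ``$\upi_*$ fails'' obstruction lives one proposition downstream of where you placed it.
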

The cofibrations in the flat model structures do not depend on the universe and have the following convenient property (cf. \cite{Shi04} for the non-equivariant version):
\begin{Prop}[Convenience] \label{prop:conv} The underlying $R$-module map of a positive $G$-flat cofibration of commutative $R$-algebras $X\to Y$ is a positive $G$-flat cofibration of $R$-modules if $X$ is (not necessarily positive) $G$-flat as an $R$-module. In particular, the $G$-symmetric spectrum underlying a positive $G$-flat commutative $G$-symmetric ring spectrum is $G$-flat.
\end{Prop}
In the last section we discussed that given a subgroup inclusion $H\leq G$, the forgetful functor from the category of commutative $G$-symmetric ring spectra to the category of commutative $H$-symmetric ring spectra has a left adjoint, the norm $N_H^G$. Since fibrations and weak equivalences are created in underlying non-equivariant spectra, it follows directly from the change of group results of Section \ref{sec:changeofgroups} that this becomes a Quillen adjunction for both the positive projective and the positive flat model structures. Moreover, the convenience property and Theorem \ref{theo:norm2} have the following consequence:
\begin{Cor} \label{cor:normcomp} For every normal subgroup $N$ of $H$ the diagram
\[ \xymatrix{ \Ho^{\U_H(N)}(\text{comm. H-rings}) \ar[r]^-{\mathbb{L}N_H^G}  \ar[d] &\Ho^{\U_G(N)}(\text{comm. G-rings}) \ar[d] \\
 \Ho^{\U_H(N)}(HSp^{\Sigma}) \ar[r]^-{\mathbb{L}N_H^G} & \Ho^{\U_G(N)}(GSp^{\Sigma}) }                                            
                                            \]
commutes up to natural isomorphism, where the vertical maps are forgetful functors. 
\end{Cor}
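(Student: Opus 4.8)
The plan is to deduce Corollary \ref{cor:normcomp} from the convenience property (Proposition \ref{prop:conv}) together with the homotopical control over the norm provided by Theorem \ref{theo:norm2}. The statement to be proved is that for a commutative $H$-symmetric ring spectrum $A$, the $G$-symmetric spectrum underlying the derived norm $\mathbb{L}N_H^G A$ (computed in commutative ring spectra) agrees with the derived norm $\mathbb{L}N_H^G$ of the underlying $H$-symmetric spectrum of $A$. Since both squares of functors are honest commutative squares on the point-set level (the norm is strong symmetric monoidal and commutes with the forgetful functors by construction), the only issue is the comparison of the two ways of deriving: on the left one must first cofibrantly replace $A$ in the positive $G^{\U_H(N)}$-flat model structure on commutative $H$-algebras, whereas on the right one must cofibrantly replace the underlying $H$-symmetric spectrum of $A$ in the $H^{\U_H(N)}$-flat stable model structure.

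The key steps, in order, would be as follows. First, choose a cofibrant replacement $A^c \to A$ in the positive $G$-flat model structure on commutative $H$-algebras (Theorem \ref{theo:modcomm}); since $\mathbb{S}$ is $G$-flat as a module over itself, Proposition \ref{prop:conv} implies that the underlying $H$-symmetric spectrum of $A^c$ is positive $H$-flat, hence in particular $H$-flat. Second, observe that $A^c \to A$ is by definition a weak equivalence of commutative $H$-algebras, i.e., an $H^{\U_H(N)}$-stable equivalence of underlying $H$-symmetric spectra between $H$-flat objects (after also replacing $A$ itself by something flat, or arguing that flatness is all that Theorem \ref{theo:norm2} needs on the source). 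Third, apply Theorem \ref{theo:norm2}: the norm $N_H^G$ takes $H^{\U_H(N)}$-stable equivalences between $H$-flat $H$-symmetric spectra to $G^{\U_G(N)}$-stable equivalences. Therefore $N_H^G A^c$ computes the derived norm both of $A$ in commutative $H$-algebras (because $A^c$ is cofibrant there) and of the underlying $H$-symmetric spectrum of $A$ (because $A^c$ is $H$-flat, hence an admissible input for the left-derived norm of underlying spectra as recorded after Theorem \ref{theo:norm2}). Chasing this through the commuting square of point-set functors yields the claimed natural isomorphism.

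The main obstacle I expect is the bookkeeping around which cofibrant replacement is ``good enough'' for each of the four corners of the diagram simultaneously. Concretely, one needs: (a) $A^c$ cofibrant as a commutative $H$-algebra so that $N_H^G A^c$ is the derived norm in the top row; (b) $A^c$ flat as an $H$-symmetric spectrum so that $N_H^G A^c$ computes the derived norm of the underlying spectrum via Theorem \ref{theo:norm2}; and (c) a compatible flat replacement of $A$ itself (or an argument that $A$ need not be replaced because $N_H^G$ is already homotopical on the $H$-flat objects in question). Point (b) is exactly what Proposition \ref{prop:conv} is designed to supply, so the real content is invoking convenience at the right moment; once that is in place, the argument is a formal diagram chase using Ken Brown's lemma and the fact that all weak equivalences and fibrations in the algebra categories are created in underlying $G$-symmetric spectra. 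One should also check that the forgetful functor on the right (underlying $H$-symmetric spectrum of a commutative $H$-algebra) sends the chosen replacement to something for which the derived norm is computed by the point-set norm, but this is precisely the combination of convenience and Theorem \ref{theo:norm2}.
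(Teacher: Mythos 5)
Your argument is correct and is essentially the paper's own proof: cofibrantly replace in the positive flat model structure on commutative $H$-algebras, use Proposition \ref{prop:conv} to see that the underlying $H$-symmetric spectrum of this replacement is positive $H$-flat, and then invoke Theorem \ref{theo:norm2} to conclude that applying $N_H^G$ to it computes the derived norm both of the commutative ring spectrum and of its underlying spectrum. (One small slip: you write ``positive $G$-flat model structure on commutative $H$-algebras'' where you mean the positive $H$-flat one, but the intent is clear; also, the ``bookkeeping'' concerns you raise in the last paragraph are already resolved by the observation that the single replacement $A^c$ is simultaneously cofibrant in commutative $H$-algebras and $H$-flat as a spectrum, so no second compatible replacement is needed.)
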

\begin{proof} We can compute the derived norm on the level of commutative ring spectra by replacing by a cofibrant commutative $H$-symmetric ring spectrum in the positive $H^{\U_H(N)}$-flat model structure. By Proposition \ref{prop:conv}, the $H$-symmetric spectrum underlying a cofibrant commutative $H$-symmetric ring spectrum is (positive) $H$-flat and we know that the derived norm $N_H^G$ on $H$-symmetric spectra can be computed by an $H$-flat replacement (Theorem \ref{theo:norm2}), which finishes the proof.
\end{proof}

In \cite{HHR16}, where they work with a projective model structure, this issue needs to be addressed differently (cf. \cite[Sec. B.8.]{HHR16}).

We now come to the proofs. In the non-equivariant version, a major input is the fact that given a positive flat symmetric spectrum $X$, the $n$-fold smash product $X^{\wedge n}$ is $\Sigma_n$-free, i.e., the map $(E\Sigma_n)_+\wedge _{\Sigma_n} X^{\wedge n}\to X^{\wedge n}/\Sigma_n$ is a stable equivalence. Equivariantly, the level of freeness depends on the chosen $G$-set universe. We recall that $E\F_{\U_G(N)}^{G,\Sigma_n}$ denotes the universal space for the family of graph subgroups for homomorphisms $H\to \Sigma_n$ ($H\leq G$) with kernel containing $H\cap N$ and that $\mathscr{G}_m(-)$ stands for the semi-free $G$-symmetric spectrum functor in level $m$ (cf. Section \ref{sec:free}).
\begin{Lemma} \label{lem:commfree} For every two natural numbers $m,n$ with $m>0$ and every cofibrant $(G\times \Sigma_m)$-space $A$ the map
\[     (E\F_{\U_G(N)}^{G,\Sigma_n})_+\wedge_{\Sigma_n} (\mathscr{G}_m (A))^{\wedge n}\to (\mathscr{G}_m(A))^{\wedge n}/\Sigma_n \]
is a $G^{\U_G(N)}$-level equivalence.
\end{Lemma}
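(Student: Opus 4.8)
The plan is to reduce, via the characterization of $G^{\U_G(N)}$-level equivalences in Proposition \ref{prop:gleveleq}, to checking an equivalence on $H$-fixed points after evaluation at finite $H$-subsets of $\U_G(N)$, and then to run a universal-space argument after making the $(G\times\Sigma_n)$-equivariant structure of $(\mathscr{G}_m A)^{\wedge n}$ completely explicit. First I would pass to the topological case (the simplicial one follows by geometric realization, which commutes with smash products, semifree spectra and evaluations up to the usual care and preserves $G^{\U_G(N)}$-level equivalences). Since restriction to a subgroup is strong symmetric monoidal and cocontinuous, it commutes with $(E\F)_+\wedge_{\Sigma_n}(-)$, with $(-)/\Sigma_n$, with $n$-fold smash powers and with the formation of semifree spectra, and $\res_{H\times\Sigma_n}E\F_{\U_G(N)}^{G,\Sigma_n}$ is a universal space for the restricted family; hence by Proposition \ref{prop:gleveleq} it suffices to show: for every $H\leq G$ and every finite $H$-subset $P$ of $\res_H\U_G(N)$, the map of the lemma, restricted to $H$ and evaluated at $P$, is an $H$-equivalence. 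For notational ease I will write this as the case $H=G$, $P$ a finite $G$-subset of $\U_G(N)$, and use the key consequence that every $G$-orbit of $P$ has stabilizer containing $N$ (because every orbit of $\U_G(N)$ does), i.e. \emph{$N$ acts trivially on $P$}.

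Next I would identify the relevant spaces explicitly. Using the description of smash products with semifree spectra recalled in Section \ref{sec:free}, an induction on $n$ gives a natural isomorphism $(\mathscr{G}_m A)^{\wedge n}\cong \mathscr{G}_{nm}\bigl(\Sigma_{nm}\ltimes_{\Sigma_m^{\times n}}A^{\wedge n}\bigr)$ of $G$-symmetric spectra, equivariant for the $\Sigma_n$-action permuting the smash factors on the left and, on the right, the $\Sigma_n$-action that simultaneously permutes the $n$ smash factors of $A^{\wedge n}$ and the $n$ consecutive blocks of size $m$ inside $\underline{nm}$; here $\Sigma_m^{\times n}\subseteq\Sigma_{nm}$ is the block-internal subgroup and $G$ acts diagonally on $A^{\wedge n}$. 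Evaluating at $P$ and using $(\mathscr{G}_{nm}B)(P)\cong (B\wedge\mathbf{\Sigma}(\underline{nm},P))/\Sigma_{nm}$ together with freeness of the $\Sigma_{nm}$-coordinate yields a natural $(G\times\Sigma_n)$-isomorphism
\[ (\mathscr{G}_m A)^{\wedge n}(P)\;\cong\; A^{\wedge n}\wedge_{\Sigma_m^{\times n}}\mathbf{\Sigma}(\underline{nm},P), \]
with $\Sigma_n$ acting by factor-permutation on $A^{\wedge n}$ and by the block embedding $\Sigma_n\hookrightarrow\Sigma_{nm}$ (precomposition) on $\mathbf{\Sigma}(\underline{nm},P)$. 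Since $A$ is a cofibrant $(G\times\Sigma_m)$-space, $\mathbf{\Sigma}(\underline{nm},P)$ is a $(G\times\Sigma_{nm})$-CW complex on which $\Sigma_m^{\times n}$ acts freely on the set of wedge summands (injections $\underline{nm}\hookrightarrow P$ are free under precomposition), so this space is a $(G\times\Sigma_n)$-CW complex.

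The crux is the isotropy computation. A non-basepoint of $(\mathscr{G}_m A)^{\wedge n}(P)$ lies over a wedge summand $S^{P-\gamma(\underline{nm})}$ for an injection $\gamma\colon\underline{nm}\hookrightarrow P$. If $(g,\rho)\in G\times\Sigma_n$ fixes it, then $g$ must preserve the subset $\gamma(\underline{nm})\subseteq P$, and — because $\gamma$ is injective — the induced permutation $\gamma^{-1}g\gamma$ of $\underline{nm}$ is forced to respect the block partition and to induce exactly $\rho$ on the set of blocks. In particular $\rho$ is determined by $g$, so the isotropy group is a graph subgroup $\Gamma_\varphi$ of some $\varphi\colon H''\to\Sigma_n$ (this is where $m>0$ is used: for $m=0$ the blocks are empty and $\rho$ is unconstrained). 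Moreover, if $g\in H''\cap N$, then $g$ acts trivially on $P$ by the reduction above, hence fixes $\gamma(\underline{nm})$ pointwise and permutes the $n$ blocks trivially, so $\varphi(g)=e$. Thus $H''\cap N\subseteq\ker\varphi$, i.e. $\Gamma_\varphi\in\F_{\U_G(N)}^{G,\Sigma_n}=:\F$. Hence $(\mathscr{G}_m A)^{\wedge n}(P)$ is a $(G\times\Sigma_n)$-CW complex all of whose non-basepoint isotropy lies in $\F$.

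Finally I would conclude with the standard argument. The collapse map $(E\F)_+\wedge (\mathscr{G}_m A)^{\wedge n}(P)\to (\mathscr{G}_m A)^{\wedge n}(P)$ is a genuine $(G\times\Sigma_n)$-equivalence: on $L$-fixed points $E\F^L$ is contractible when $L\in\F$, while when $L\notin\F$ both sides are trivial since the target has no $L$-fixed non-basepoints. Both source and target are $(G\times\Sigma_n)$-CW complexes on which $\Sigma_n$ acts freely off the basepoint (as $\F\subseteq\F^{G,\Sigma_n}$), so by the equivariant Whitehead theorem this is a $(G\times\Sigma_n)$-homotopy equivalence, and passing to $\Sigma_n$-orbits gives a $G$-homotopy equivalence $(E\F)_+\wedge_{\Sigma_n}(\mathscr{G}_m A)^{\wedge n}(P)\to (\mathscr{G}_m A)^{\wedge n}(P)/\Sigma_n$. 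Since evaluation at $P$ commutes with $(E\F)_+\wedge_{\Sigma_n}(-)$ and with $(-)/\Sigma_n$, this is exactly the map of the lemma evaluated at $P$, and the lemma follows from Proposition \ref{prop:gleveleq}. I expect the main obstacle to be the isotropy computation of the third paragraph: correctly tracking the $\Sigma_n$-component of stabilizers through the explicit model, and in particular recognizing that triviality of the $N$-action on $P\subseteq\U_G(N)$ is precisely what confines the isotropy to $\F_{\U_G(N)}^{G,\Sigma_n}$; everything else is bookkeeping.
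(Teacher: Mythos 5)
Your proposal is correct, and it takes a genuinely different route from the paper's proof. Both start from the same identification $(\mathscr{G}_m A)^{\wedge n}\cong\mathscr{G}_{nm}\bigl(\Sigma_{nm}\ltimes_{\Sigma_m^{n}}A^{\wedge n}\bigr)$, and both ultimately rest on an isotropy computation that needs $m>0$ (so that the block permutations $\Sigma_n\hookrightarrow\Sigma_{nm}$ intersect the block-internal $\Sigma_m^n$ trivially) and on a special feature of $\U_G(N)$. After that they diverge. The paper observes that $\mathscr{G}_{nm}$, being a left adjoint, turns the map of the lemma into $\mathscr{G}_{nm}$ applied to a single map of cofibrant $(G\times\Sigma_{nm})$-spaces, and then invokes the appendix Lemma \ref{lem:tech3} (whose proof runs a fixed-point analysis via the double coset Lemma \ref{lem:tech1}, using that the isotropy of $\U_G(N)$ is closed under supergroups); one also implicitly needs that $\mathscr{G}_{nm}$ sends $\F_{\U}^{G,\Sigma_{nm}}$-equivalences of cofibrant spaces to $G^{\U}$-level equivalences, which follows from Lemma \ref{lem:comp}. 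You instead go straight to Proposition \ref{prop:gleveleq}, evaluate at a finite $H$-subset $P\subseteq\res_H\U_G(N)$, make the $(G\times\Sigma_n)$-CW structure of $(\mathscr{G}_m A)^{\wedge n}(P)\cong A^{\wedge n}\wedge_{\Sigma_m^n}\mathbf{\Sigma}(\underline{nm},P)$ explicit, and do the stabilizer computation there; the input you use from $\U_G(N)$ is the (slightly stronger, but equivalent for full $N$-fixed universes) fact that $N$ acts trivially on $P$, which pins down $\rho=e$ for $g\in H''\cap N$ and confines the isotropy to $\F_{\U_G(N)}^{G,\Sigma_n}$. Your route is arguably more elementary — it avoids both Lemma \ref{lem:tech3} and the transport of $\F$-equivalences through $\mathscr{G}_{nm}$ — at the price of being hard-wired to the full $N$-fixed universe (Lemma \ref{lem:tech3} holds for any universe with isotropy closed under supergroups), while the paper's route localizes the combinatorics in a reusable appendix lemma. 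Both correctly prove the stated lemma.
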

\begin{Remark} As remarked in \cite[Rem. B.119]{HHR16}, there is an error in the analogous version of this result for $G$-orthogonal spectra in \cite[Lem. III.8.4]{MM02}, where they always use an $E\Sigma_n$ with trivial $G$-action, even when working over non-trivial $G$-representation universes. In the case of $G$-orthogonal spectra and the complete universe it is corrected in \cite[Prop. B.116]{HHR16}.
\end{Remark}
\begin{proof} There is a natural $(G\times \Sigma_n)$-isomorphism $(\mathscr{G}_m(A))^{\wedge n}\cong \mathscr{G}_{n\times m}((\Sigma_{n\times m})_+\wedge_{\Sigma_m^n} A^{\wedge n})$ with diagonal $G$-action on $A$ and $\Sigma_n$-action both on $A^{\wedge n}$ and by precomposition on $\Sigma_{n\times m}$ (and no $\Sigma_n$-action on the level $n\times m$, with respect to which the semi-free spectrum is formed). So the map above is naturally isomorphic to $\mathscr{G}_{n\times m}$ applied to the map
\[ (E\F_{\U_G(N)}^{G,\Sigma_n})_+\wedge_{\Sigma_n} (\Sigma_{n\times m})_+\wedge _{\Sigma_m^n} A^{\wedge n}\to (\Sigma_{n\times m})_+\wedge_{\Sigma_n\wr \Sigma_m} A^{\wedge n} \]
of cofibrant $(G\times \Sigma_{n\times m})$-spaces. The isotropy of $\U_G(N)$ is closed under supergroups and so Lemma \ref{lem:tech3} says that this map is an $\F_{\U_G(N)}^{G,\Sigma_{n\times m}}$-equivalence, which finishes the proof.
\end{proof}

\begin{Prop} \label{prop:commfree} For every positive $G$-flat $G$-symmetric spectrum $X$ the map\[ (E\F_{\U_G(N)}^{G,\Sigma_n})_+\wedge_{\Sigma_n} X^{\wedge n}\to X^{\wedge n}/\Sigma_n\]
is a $\upi_*^{\U_G(N)}$-isomorphism and in particular a $G^{\U_G(N)}$-stable equivalence.
\end{Prop}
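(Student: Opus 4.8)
The strategy is to reduce the statement to the semi-free case already handled in Lemma \ref{lem:commfree} by means of the skeleton filtration of $X$ (Section \ref{sec:latching}), exactly as in the non-equivariant argument of \cite{Shi04} and its equivariant orthogonal analog in \cite[Section B.7]{HHR14}. First I would observe that since $X$ is positive $G$-flat, every latching map $\nu_m(X)\colon L_mX\to X_m$ for $m\geq 1$ is a genuine $(G\times\Sigma_m)$-cofibration, so each skeleton inclusion $F^{m-1}X\to F^mX$ is obtained as a pushout of $\mathscr{G}_m(\nu_m(X))$ along the counit, and $F^0X=*$ because of positivity. Thus $X=\colim_m F^mX$ with all transition maps $h$-cofibrations, and both sides of the map in question are built from these skeleta. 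Since $(E\F_{\U_G(N)}^{G,\Sigma_n})_+\wedge_{\Sigma_n}(-)^{\wedge n}$ and $(-)^{\wedge n}/\Sigma_n$ commute with sequential colimits along $h$-cofibrations, and such colimits preserve $\upi_*^{\U}$-isomorphisms (Section \ref{sec:gprop}), it suffices to prove that the map is a $\upi_*^{\U_G(N)}$-isomorphism on each $F^mX$, by induction on $m$.

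\textbf{The inductive step.} For the induction, I would filter $Y^{\wedge n}$ over $Z^{\wedge n}$ for a pushout square $\mathscr{G}_m(L_mX)\to Z$, $\mathscr{G}_m(X_m)\to Y$ (with $Z=F^{m-1}X$, $Y=F^mX$), using the standard filtration of an $n$-fold smash power of a pushout: $Y^{\wedge n}$ is obtained from $Z^{\wedge n}$ by successively attaching, for $k=1,\dots,n$, along the maps
\[ \Sigma_n \ltimes_{\Sigma_k\times\Sigma_{n-k}} \big( (\mathscr{G}_m(\nu_m(X)))^{\square k}\wedge Z^{\wedge(n-k)}\big), \]
and an entirely parallel filtration exists after applying $(E\F_{\U_G(N)}^{G,\Sigma_n})_+\wedge_{\Sigma_n}(-)$. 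Comparing the two filtrations termwise and using the long exact sequences of Proposition \ref{prop:exsym} together with the five lemma, the problem reduces to showing that the comparison map is a $\upi_*^{\U_G(N)}$-isomorphism on the subquotients, which (because $(\mathscr{G}_m(\nu_m(X)))^{\square k}$ is a pushout of maps of the form $\mathscr{G}_{km}$ of a cofibrant $(G\times\Sigma_k\times\Sigma_{km})$-space, cf.\ the smash-power formula in Section \ref{sec:free}) are themselves of the form treated in Lemma \ref{lem:commfree}, smashed with the $G$-flat spectrum $Z^{\wedge(n-k)}$. The induction hypothesis handles the $Z^{\wedge n}$-part ($k=0$), and Lemma \ref{lem:commfree} together with Proposition \ref{prop:flatness} (smashing with the $G$-flat spectrum $Z^{\wedge(n-k)}$ preserves $\upi_*^{\U}$-isomorphisms) handles the rest.

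\textbf{Main obstacle.} The genuinely delicate point is the equivariance of the freeness comparison, i.e.\ making sure throughout that the correct universal space $E\F_{\U_G(N)}^{G,\Sigma_n}$ — with its nontrivial $G$-action, governed by the family of graph subgroups of homomorphisms $H\to\Sigma_n$ with $H\cap N$ in the kernel — is the one that appears at every stage, and that the various inductions $\Sigma_n\ltimes_{\Sigma_k\times\Sigma_{n-k}}(-)$ and restrictions to the subgroups $\Sigma_k\wr(\text{stab})$ interact correctly with these families. This is exactly the subtlety flagged in \cite{HHR14} as the error in \cite[Lemma III.8.4]{MM02} and \cite[Prop. 8.2]{Man04}; here it is isolated into the auxiliary point-set lemma used in the proof of Lemma \ref{lem:commfree} (the statement that the relevant map of cofibrant $(G\times\Sigma_{n\times m})$-spaces is an $\F_{\U_G(N)}^{G,\Sigma_{n\times m}}$-equivalence, using that the isotropy of $\U_G(N)$ is closed under passage to supergroups). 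Granting that lemma, the spectrum-level bookkeeping above is routine, and the final sentence of the proposition — that the map is in particular a $G^{\U_G(N)}$-stable equivalence — is then immediate from Theorem \ref{theo:piiso}.
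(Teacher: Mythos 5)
You take the same overall route as the paper: reduce to the semi-free building blocks via the skeleton filtration and the standard filtration of $n$-fold smash powers of a pushout, then invoke Lemma \ref{lem:commfree} on the subquotients. But the inductive step has a genuine gap. Writing $Z=F^{m-1}X$ and $W=(\mathscr{G}_m(X_m/L_mX))^{\wedge j}\wedge Z^{\wedge(n-j)}$, the subquotient at filtration stage $j$ (with $1\leq j<n$) is $\Sigma_n\ltimes_{\Sigma_j\times\Sigma_{n-j}}W$, and the comparison map to be shown a $\upi_*^{\U_G(N)}$-isomorphism is
\[ (E\F_{\U_G(N)}^{G,\Sigma_n})_+\wedge_{\Sigma_j\times\Sigma_{n-j}} W \longrightarrow W/(\Sigma_j\times\Sigma_{n-j}). \]
The factor $Z^{\wedge(n-j)}$ still carries a nontrivial $\Sigma_{n-j}$-action that is being quotiented out. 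That quotient does not disappear, and Proposition \ref{prop:flatness} --- which only says that smashing \emph{with} the $G$-flat spectrum $Z^{\wedge(n-j)}$ preserves $\upi_*^{\U}$-isomorphisms --- does not touch it. The missing observation is that $E\F_{\U_G(N)}^{G,\Sigma_n}$ is $(\Sigma_j\times\Sigma_{n-j})$-equivariantly homotopy equivalent to $E\F_{\U_G(N)}^{G,\Sigma_j}\times E\F_{\U_G(N)}^{G,\Sigma_{n-j}}$, so the comparison map factors as a smash of two comparison maps: the first, for $(\mathscr{G}_m(X_m/L_mX))^{\wedge j}$ with its $\Sigma_j$-action, is covered by Lemma \ref{lem:commfree}; the second, for $Z^{\wedge(n-j)}$ with its $\Sigma_{n-j}$-action, is precisely the \emph{induction hypothesis} applied to skeleton $m-1$ and exponent $n-j$. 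One then smashes the two, and since all the spectra involved are $G$-flat this produces a $G^{\U_G(N)}$-stable equivalence by monoidality of the flat stable model structure (Proposition \ref{prop:stablemonoidal}).

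This also explains why the induction must carry the statement quantified over all $n$ simultaneously: at a fixed $n$, the step from $F^{m-1}X$ to $F^mX$ calls on the claim for $F^{m-1}X$ with the strictly smaller exponents $n-j$. Your accounting --- where the induction hypothesis is used only for the starting term $Z^{\wedge n}$ and the remaining filtration stages are attributed to Lemma \ref{lem:commfree} plus flatness --- leaves the $\Sigma_{n-j}$-quotient on the second smash factor unjustified. The rest of your plan ($F^0X=*$ from positivity, the commutation of both constructions with filtered colimits along $h$-cofibrations, the comparison of the two filtrations via the long exact sequences, and the remark about the nontrivial $G$-action on $E\Sigma_n$) is in line with the paper's argument.
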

\begin{proof} This follows from Lemma \ref{lem:commfree} via an induction over the skeleton filtration of $X$, using again that if $f:A\to B$ is the pushout of a map $i$, then $f^{\wedge n}$ is the composite of pushouts of maps of the form $\Sigma_n\ltimes_{\Sigma_{k}\times \Sigma_{n-k}}(i^{\square k}\wedge A^{\wedge n-k})$ (cf. \cite[Lem. A.8]{SS12}).
\end{proof}

For the homotopical properties of $(-)^{\wedge n}/\Sigma_n$ we hence need to examine those of the functor $(E\F_{\U}^{G,\Sigma_n})_+\wedge_{\Sigma_n} X^{\wedge n}$.
\begin{Lemma} \label{lem:comm2} Let $\U$ be a $G$-set universe, $Y$ a $(G\times \Sigma_n)$-symmetric spectrum which is $K^{\U}$-contractible for all graph subgroups $K$ in $\F_{\U}^{G,\Sigma_n}$ (where $K$ acts on $\U$ through its projection to $G$), and $A$ a cofibrant $(G\times \Sigma_n)$-space set with isotropy in $\F_{\U}^{G,\Sigma_n}$. Then $A\wedge _{\Sigma_n} Y$ is $G^{\U}$-stably contractible.
\end{Lemma}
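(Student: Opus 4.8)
The plan is to reduce the statement, via the skeleton filtration of $Y$ together with the distributive law, to the case where $Y$ is a semi-free $(G\times\Sigma_n)$-symmetric spectrum $\mathscr{G}_m(B)$ for some cofibrant $(G\times\Sigma_n\times\Sigma_m)$-space $B$ which is $K^{\U}$-contractible for all $K\in\F_{\U}^{G,\Sigma_n}$ (acting through $G$), and then unwind the explicit description of $A\wedge_{\Sigma_n}\mathscr{G}_m(B)$ to a statement about spaces. Concretely, using the isomorphism $A\wedge_{\Sigma_n}\mathscr{G}_m(B)\cong \mathscr{G}_m\big(A\wedge_{\Sigma_n}B\big)$ (where on the right $\Sigma_n$ acts on $B$ and diagonally on $A$, with no $\Sigma_n$-action on the level $m$), and the fact that semi-free spectra on $G^{\U}$-level contractible spaces are $G^{\U}$-level contractible, it suffices to show that $A\wedge_{\Sigma_n}B$ is a $G^{\U}$-level contractible space, i.e.\ that $(A\wedge_{\Sigma_n}B)^H$ is contractible for every $H\le G$ admitting a finite $H$-subset of $\U$ of the appropriate size — equivalently, using Twisting/Untwisting (Lemmas \ref{lem:twist}, \ref{lem:untwist}), for every $H\le G$ acting on $\U$.

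First I would set up the skeleton filtration $F^{-1}Y=\ast\to F^0Y\to F^1Y\to\cdots\to Y$ of Section \ref{sec:latching}. Since $A$ is cofibrant with isotropy in $\F_{\U}^{G,\Sigma_n}$ and $A\wedge_{\Sigma_n}(-)$ commutes with colimits, pushouts and the interval, it preserves $h$-cofibrations, so the maps $A\wedge_{\Sigma_n}F^kY\to A\wedge_{\Sigma_n}F^{k+1}Y$ are $h$-cofibrations and I can pass to the colimit using the results of Section \ref{sec:gprop}. Thus it is enough to treat each $F^kY$, and by the pushout squares of Section \ref{sec:latching} together with Proposition \ref{prop:pushout}, it is enough to treat spectra of the form $\mathscr{G}_k(C)$ where $C=Y_k$ or $C=L_kY$ (or their relative analogs). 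The hypothesis that $Y$ is $K^{\U}$-contractible for all graph subgroups $K$ transfers to these latching and level spaces because fixed points commute with the relevant limits and the $\F_{\U}^{G,\Sigma_n}$-equivalences are detected on such fixed points; here I would have to check carefully that "$K^{\U}$-contractible $(G\times\Sigma_n)$-symmetric spectrum" forces the needed contractibility of the building blocks $C$ after restricting along graph subgroups — this uses that $C\simeq Y_k$ and the Twisting/Untwisting identification of $Y_k^{K}$ with an evaluation $Y(\text{some }H\text{-set})^H$.

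Second, for the semi-free case I would invoke the concrete description of the smash product with a semi-free spectrum from Section \ref{sec:free} and the natural isomorphism $A\wedge_{\Sigma_n}\mathscr{G}_m(B)\cong\mathscr{G}_m(A\wedge_{\Sigma_n}B)$, then reduce — again by inducting over the $(G\times\Sigma_n)$-CW structure of $A$ and using the long exact sequences of Proposition \ref{prop:exsym} — to the case $A=(G\times\Sigma_n)/K_+$ for a graph subgroup $K\in\F_{\U}^{G,\Sigma_n}$. In that case $A\wedge_{\Sigma_n}B\cong (G/p(K))_+\wedge_{?}\cdots$ unwinds (with $p:K\to G$ the projection, which is injective since $K$ is a graph subgroup) to an induced space of the form $G\ltimes_{p(K)}\big(c^*\res^{G\times\Sigma_n}_{K}B\big)$, whose fixed points I can compute by the double coset formula of Section \ref{sec:unstable}; the key point is that the relevant evaluations of $Y$ at $K$, i.e.\ the $K$-fixed points of $B$, are contractible by hypothesis, so all the nonempty summands of the fixed-point space are contractible and the induction goes through. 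Finally I would note that $G$-symmetric spectra of simplicial sets reduce to the topological case by geometric realization, since $|\,\cdot\,|$ commutes with $(-)\wedge_{\Sigma_n}(-)$ and preserves and reflects $G^{\U}$-stable equivalences (Proposition \ref{prop:ggeo}).

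The main obstacle I expect is the bookkeeping in the reduction step: correctly propagating the hypothesis "$K^{\U}$-contractible for all graph subgroups $K$" through the skeleton filtration and the semi-free description, and in particular making precise the translation between $K$-fixed points of the $(G\times\Sigma_n)$-spectrum $Y$ (or its latching/level spaces) and fixed points of genuine $G$-spectra under the relevant subgroups of $G$. This is where the Twisting and Untwisting lemmas and the double coset formula do the real work, and getting the families and isotropy conditions to line up — so that a graph subgroup $K\le G\times\Sigma_n$ with $p(K)=H$ contributes exactly the contractibility of the $H$-relevant evaluation of $Y$ — is the delicate part; the rest is the standard cellular induction familiar from Section \ref{sec:gprop}.
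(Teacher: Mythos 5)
The key tool you reach for in your second step — the cell induction over $A$ reducing to orbits $A=(G\times\Sigma_n)/K_+$ and identifying $(G\times\Sigma_n)/K_+\wedge_{\Sigma_n}Y\cong G\ltimes_{p(K)}\res^K_{p(K)}Y$ — is exactly the paper's proof, and it already finishes the argument for an arbitrary $Y$: the hypothesis says precisely that $\res^K_{p(K)}Y$ is $p(K)^{\U}$-stably contractible (via the graph isomorphism $K\cong p(K)$, this is what ``$K^{\U}$-contractible'' means), induction to $G$ preserves stable contractibility (Lemma~\ref{lem:wirth2} and Section~\ref{sec:changeofgroups}), and pushouts along $h$-cofibrations and sequential colimits of $h$-cofibrations preserve stable contractibility (Proposition~\ref{prop:pushout} and the following proposition). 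Nothing about $Y$ needs to be decomposed.

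By contrast, the first reduction you propose — filtering $Y$ by its skeleta and reducing to the semi-free case $Y=\mathscr{G}_m(B)$ — is not merely delicate bookkeeping, it fails. The hypothesis that $Y$ is $K^{\U}$-contractible for all graph subgroups $K$ is a hypothesis on $Y$ as a whole and does \emph{not} pass to the skeleta $F^kY$ or to the semi-free building blocks $\mathscr{G}_k(Y_k)$ and $\mathscr{G}_k(L_kY)$. Already $F^0Y=\Sigma^{\infty}Y_0$ need not be stably contractible even when $Y$ is: stable contractibility of a spectrum imposes no contractibility on its individual level spaces. So the sentence ``it is enough to treat each $F^kY$'' is not correct, and the asserted property of $B$ in the reduced semi-free statement is unavailable. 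The obstacle you flag at the end (``correctly propagating the hypothesis through the skeleton filtration'') is therefore not a technical nuisance but a genuine dead end. The right move is to skip the filtration of $Y$ entirely and run the cell induction over $A$ directly, after which the rest of your argument — the orbit identification, the double coset/Wirthm\"uller input, and the reduction of the simplicial case to the topological one via geometric realization — is in order.
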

\begin{proof} We let $A$ be an $I_G$-cell complex, the general statement follows since $G^{\U}$-stable equivalences are closed under retracts. By an induction over the cells it suffices to show that $(G\times \Sigma_n)/K_+\wedge _{\Sigma_n} Y$ is $G^{\U}$-stably contractible for all $K$ in $\F_{\U}^{G,\Sigma_n}$. Let $H$ be the projection of $K$ to $G$. Then $(G\times \Sigma_n)/K_+\wedge _{\Sigma_n} Y$ is $G$-isomorphic to $G\ltimes_H \res_H^K Y$, which is $G^{\U}$-stably contractible by the assumption and the fact that induction maps $H^{\U}$-stable equivalences to $G^{\U}$-stable equivalences.
\end{proof}
To establish the model structures we use results of \cite{Whi17}. The following is the main input for applying them:
\begin{Prop} \label{prop:comm2} Let $i:A\to B$ be a morphism of $G$-symmetric spectra. Then:
\begin{enumerate}[(i)]
  \item If $i$ is a (positive) $G$-flat cofibration, then $i^{\square n}/\Sigma_n$ is again a (positive) $G$-flat cofibration.
  \item If $i$ is a positive $G$-flat cofibration and $G^{\U_G(N)}$-stable equivalence, then $i^{\square n}/\Sigma_n$ is a $G^{\U_G(N)}$-stable equivalence.
\end{enumerate}
\end{Prop}
\begin{proof} By \cite[Lem. 5.1]{Whi17}, it suffices to show $(i)$ on the class of generating $G$-flat cofibrations, which are all of the form $\mathscr{G}_m(j):\mathscr{G}_m(A)\to \mathscr{G}_m(B)$ for a genuine $(G\times \Sigma_m)$-cofibration $j$ and $m>0$. But then $(\mathscr{G}_m(j))^{\square n}$ is equal to $\mathscr{G}_{n\times m}(\Sigma_{n\times m}\ltimes_{\Sigma_m^n} j^{\square n})$, which is even a $\Sigma_n\wr G$--flat cofibration because $\Sigma_{n\times m}\ltimes_{\Sigma_m^n} j^{\square n}$ is a genuine $\Sigma_n\wr G$-cofibration. This proves that $(-)^{\square n}/\Sigma_n$ preserves (positive) $G$-flat cofibrations.

For $(ii)$ we use that we already know that $i^{\square n}/\Sigma_n$ is a $G$-flat cofibration, and hence it suffices to show that the quotient $(B/A)^{\wedge n}/\Sigma_n$ is $G^{\U}$-stably contractible. Since $B/A$ is positive $G$-flat, we know by Proposition \ref{prop:commfree} that the quotient is $G^{\U_G(N)}$-stably equivalent to $(E\F_{\U_G(N)}^{G,\Sigma_n})_+\wedge_{\Sigma_n} (B/A)^{\wedge n}$. We now want to apply Lemma \ref{lem:comm2} and have to check that for every subgroup $H$ of $G$ and every homomorphism $\alpha:H\to \Sigma_n$ with kernel containing $H\cap N$ the spectrum $(B/A)^{\wedge n}$ is $H^{\U_H(H\cap N)}$-stably contractible, where the $H$-action is via the graph embedding $H\to G\times \Sigma_n$. But $(B/A)^{\wedge n}$ is $H$-isomorphic to a smash product of $N_{K_i}^H (B/A)$ for subgroups $K_i$ of $H$ which contain $H\cap N$. The $G$-symmetric spectrum $B/A$ is $G^{\U_G(N)}$-stably contractible, hence it is also $K_i^{\U_{K_i}(K_i\cap N)}$-stably contractible.
Since $K_i$ contains $H\cap N$, we see that $K_i\cap N$ equals $H\cap N$, which is normal in $H$. So we can use Theorem \ref{theo:norm2} to deduce that each $N_{K_i}^H (B/A)$ is $H^{\U_H(H\cap N)}$-stably contractible. All of them are also $H$-flat (by Proposition \ref{prop:norm1}), hence their smash product is $H^{\U_H(H\cap N)}$-stably contractible and we are done.
\end{proof}
In the language of \cite{Whi17}, this implies that the positive $G^{\U_G(N)}$-flat stable model structure satisfies the strong commutative monoid axiom (\cite[Def. 3.4.]{Whi17}), while the positive $G^{\U_G(N)}$-projective stable model structure satisfies the weak commutative monoid axiom (\cite[Rem. 3.3]{Whi17}). 

\begin{proof}[Proof of Theorem \ref{theo:modcomm}] This follows from Proposition \ref{prop:comm2} via \cite[Thm 3.2.]{Whi17} in the flat case and \cite[Rem. 3.3.]{Whi17} in the projective case.
\end{proof}

\begin{proof}[Proof of Proposition \ref{prop:conv}] If one assumes that the source is a positive $G$-flat $R$-module, this is \cite[Prop. 3.5.]{Whi17}. However, the only place where positive $G$-flatness of the source is used in the proof is to ensure that smashing with it preserves positive $G$-flat cofibrations of $R$-modules. For this it is sufficient that it is $G$-flat as an $R$-module. 
\end{proof}

\section{Comparison to other models}
\subsection{Quillen equivalence to $G$-orthogonal spectra}
\label{sec:gquillen}
In this section we explain how our model structures relate to the respective ones on $G$-orthogonal spectra, introduced in \cite{MM02}, \cite{Sto11} and \cite{HHR16}.

Orthogonal spectra are defined similarly to symmetric spectra of spaces with orthogonal groups $O(n)$ in place of the symmetric groups $\Sigma_n$, i.e., an orthogonal spectrum is a sequence of based $O(n)$-spaces $X_n$ with structure maps $X_n\wedge S^1\to X_{n+1}$ whose iterations $X_n\wedge S^m\to X_{n+m}$ are $O(n)\times O(m)$-equivariant.

An orthogonal spectrum $X$ has an underlying symmetric spectrum of spaces $UX$ by restricting the $O(n)$-actions to $\Sigma_n$-actions via the embedding as permutation matrices. The resulting functor $U:Sp^{O}\to Sp^{\Sigma}_{\T}$ has a left adjoint $L$, which can be obtained via a left Kan extension (see \cite[I.3 and III.23]{MMSS01} for a description).

\begin{Def} A $G$-orthogonal spectrum is a $G$-object in orthogonal spectra.
\end{Def}
\begin{Remark} This is not exactly the definition of \cite{MM02}, but there is an equivalence of categories (cf. \cite[V, Thm. 1.5]{MM02}, \cite[Prop. A.18]{HHR16} and Section \ref{sec:free} for the corresponding story for $G$-symmetric spectra). 
\end{Remark}
Every $G$-orthogonal spectrum $X$ can be evaluated on all finite dimensional $G$-representations~$V$ via the formula $X(V)=X_{\dim V}\wedge_{O(\dim V)}\Lin(\R^{\dim V},V)_+$, where $\Lin$ denotes the space of linear isometries, and there are $G$-equivariant generalized structure maps $\sigma_V^W:X(V)\wedge S^W\to X(V\oplus W)$. 

These evaluations for $G$-symmetric and $G$-orthogonal spectra are related in the following way:
\begin{Lemma} \label{lem:eva} For every finite $G$-set $M$ and every $G$-orthogonal spectrum $X$ there is a natural $G$-homeomorphism $UX(M)\cong X(\R[M])$.
\end{Lemma}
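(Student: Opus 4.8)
The plan is to unwind both sides of the claimed homeomorphism $UX(M)\cong X(\R[M])$ directly from the defining formulas and check that the coordinate-change bookkeeping matches. Let $m$ be the order of $M$. By definition $UX(M) = (UX)_m \wedge_{\Sigma_m} \Bij(\underline m, M)_+$, and since $(UX)_m = X_m$ as a $\Sigma_m$-space (the $O(m)$-action restricted along the permutation embedding), this is $X_m \wedge_{\Sigma_m}\Bij(\underline m,M)_+$ with the diagonal $G$-action $g[x\wedge f] = [gx\wedge gf]$. On the other side, $X(\R[M]) = X_m \wedge_{O(m)} \Lin(\R^m,\R[M])_+$ with diagonal $G$-action. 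So I need a $G$-equivariant homeomorphism between these two, natural in $X$.

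First I would construct the comparison map. There is a canonical map $\Bij(\underline m,M)\to \Lin(\R^m,\R[M])$ sending a bijection $f$ to the linear isometry it induces on $\R$-linearizations (here I use that the canonical basis of $\R[M]$ is declared orthonormal, so a bijection of bases gives an isometry). This is $\Sigma_m$-equivariant on the right (precomposition) and $G$-equivariant, hence induces a natural map $\Phi_M : UX(M) = X_m\wedge_{\Sigma_m}\Bij(\underline m,M)_+ \to X_m\wedge_{O(m)}\Lin(\R^m,\R[M])_+ = X(\R[M])$, clearly $G$-equivariant for the diagonal actions. The heart of the argument is that $\Phi_M$ is a homeomorphism. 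For this I would exhibit the inverse conceptually: $\Lin(\R^m,\R[M])$ is a free right $O(m)$-space, and choosing any bijection $\psi:\underline m \to M$ (equivalently, a basepoint in $\Lin(\R^m,\R[M])$ coming from $\Bij$) identifies $X_m\wedge_{O(m)}\Lin(\R^m,\R[M])_+\cong X_m$ non-equivariantly; the same $\psi$ identifies $X_m\wedge_{\Sigma_m}\Bij(\underline m,M)_+\cong X_m$; and under these identifications $\Phi_M$ becomes the identity, because $\psi$ maps to the isometry induced by $\psi$. Since both constructions $X_m\wedge_{\Sigma_m}(-)_+$ and $X_m\wedge_{O(m)}(-)_+$ commute with the $O(m)$-action used to recoordinatize (the orbit $\Bij(\underline m,M)$ sits inside $\Lin(\R^m,\R[M])$ as a $\Sigma_m$-orbit spanning all of it under $O(m)$), the map is bijective; continuity of the inverse follows because the recoordinatization is by a homeomorphism and the relevant quotient maps are open.

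The cleanest way to package the previous paragraph is to note $\Lin(\R^m,\R[M]) \cong \Bij(\underline m,M)\times_{\Sigma_m} O(m)$ as a $(\Sigma_M\ltimes G)$-$O(m)$-bispace: every linear isometry $\R^m\to\R[M]$ factors uniquely as (isometry induced by a bijection $\underline m\to M$) composed with an element of $O(m)$, once we fix that the bijections span exactly the "signed permutation"-free part — more precisely, for each $f\in\Bij(\underline m,M)$ the coset $f_*\cdot O(m)$ covers a full $O(m)$-orbit, and these orbits for varying $f$ partition $\Lin(\R^m,\R[M])$ into $m!$ pieces indexed by $\Bij(\underline m,M)$, i.e. the map $\Bij(\underline m,M)_+\wedge_{\Sigma_m} O(m)_+\to \Lin(\R^m,\R[M])_+$ is a $G$-equivariant homeomorphism. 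Smashing $X_m\wedge_{O(m)}(-)$ onto this identity and using $X_m\wedge_{O(m)}(O(m)_+\wedge_{\Sigma_m} Z)\cong X_m\wedge_{\Sigma_m}Z$ yields $X(\R[M]) \cong X_m\wedge_{\Sigma_m}\Bij(\underline m,M)_+ = UX(M)$, $G$-equivariantly and naturally in $X$.

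The main obstacle is getting the equivariance bookkeeping exactly right: the $G$-action on $X(\R[M])$ is twisted through $G\to\Sigma_M$ acting on $\R[M]$, and I must check that the decomposition $\Lin(\R^m,\R[M])\cong\Bij(\underline m,M)\times_{\Sigma_m}O(m)$ is compatible with this twisted $G$-action and not just with the $O(m)$-action — i.e. that the $G$-action on a linear isometry $\ell$, namely $g\cdot\ell = \rho(g)\circ\ell$ where $\rho:G\to O(\R[M])$ permutes basis vectors, respects the factorization into a bijection-part and an $O(m)$-part. This is where one has to be careful that $g$ acting on $\R[M]$ by a permutation sends the isometry induced by $f$ to the isometry induced by $gf$ (true because permutations of basis vectors restrict to bijections of the basis), so the $G$-action only touches the $\Bij$-coordinate, matching $g[x\wedge f]=[gx\wedge gf]$ on both sides. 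Once this is verified, naturality in $X$ and in isomorphisms of $M$ is formal, since every map in sight is induced by the natural transformation $\Bij(\underline m,-)\to\Lin(\R^m,\R[-])$ and the structural isomorphisms of the bar-type constructions.
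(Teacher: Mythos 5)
Your proof is correct and takes essentially the same route as the paper, which simply writes down the map $[x,\varphi]\mapsto[x,\R[\varphi]]$ and declares it an equivariant homeomorphism; your packaging via the bispace identification $\Lin(\R^m,\R[M])\cong\Bij(\underline m,M)\times_{\Sigma_m}O(m)$ is a clean way to organize the verification that the paper leaves to the reader. One phrasing slip worth fixing: the right $O(m)$-action on $\Lin(\R^m,\R[M])$ is transitive, so for any $f$ the ``coset'' $\R[f]\cdot O(m)$ is all of $\Lin(\R^m,\R[M])$ and there is no partition into $m!$ pieces --- what you actually use (correctly, in the next step) is that $(f,A)\mapsto\R[f]\circ A$ is an $m!$-to-one surjection $\Bij(\underline m,M)\times O(m)\to\Lin(\R^m,\R[M])$ whose fibers are precisely the $\Sigma_m$-orbits $\{(f\sigma,\sigma^{-1}A)\}$, so that passing to the balanced quotient gives the homeomorphism.
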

\begin{proof} Let $m$ be the order of $M$. Then the map sending a pair $[x\in X_m,\varphi:\underline{m}\cong M]$ to $[x,\R[\varphi]:\R^m\cong \R[M]]$ is an equivariant homeomorphism.
\end{proof}

We start with comparing the projective model structures, and there we are cheating a bit, because we really need to compare to a stronger variant of the one of \cite{MM02} and require a level equivalence to induce a genuine $H$-equivalence on all $H$-subrepresentations of $\R[\U]$ not just on restrictions of $G$-subrepresentations, and likewise use a stronger notion of $G^{\R[\U]}\Omega$-spectra. This does not affect the stable equivalences, which are the $\upi_*^{\R[\U]}$-isomorphisms. In the case where $\R[\U]$ is a complete $G$-representation universe, the positive version of this stronger model structure is used in \cite[B.4.]{HHR16}.
\begin{Theorem}[Quillen equivalence to $G$-orthogonal spectra, I] \label{theo:gquillen} The adjunction \[ L:GSp^{\Sigma}_{\T}\rightleftarrows GSp^O:U \] is a Quillen equivalence for the $G^{\U}$-projective stable model structure on $GSp^{\Sigma}_{\T}$ and the (strong) $G^{\R[\U]}$-projective stable model structure on $GSp^{O}$.
\end{Theorem}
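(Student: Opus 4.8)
### Proof Proposal

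The plan is to verify that $(L, U)$ is a Quillen pair and then promote it to a Quillen equivalence using the characterization of stable equivalences via naive homotopy groups established earlier in the paper. First I would check that $U$ is right Quillen. Since the weak equivalences and fibrations of both stable model structures are built from the corresponding level model structures by the same Bousfield localization recipe (localizing at the class of $\Omega$-spectra via the functor $Q^{\U}$, resp. its orthogonal analogue), it suffices to show that $U$ preserves level fibrations and level equivalences and carries the fibrant objects (level-fibrant $G^{\R[\U]}\Omega$-spectra) to level-fibrant $G^{\U}\Omega$-spectra. For the level statements, recall from Lemma \ref{lem:eva} that $UX(M) \cong X(\R[M])$ $G$-equivariantly and naturally; since subrepresentations of the form $\R[M]$ for finite $G$-subsets $M \subseteq \U$ are cofinal among finite-dimensional $G$-subrepresentations of $\R[\U]$, a map $f$ of $G$-orthogonal spectra is a (strong) $G^{\R[\U]}$-level equivalence/fibration if and only if $Uf$ is a $G^{\U}$-level equivalence/fibration in the sense of Proposition \ref{prop:gleveleq}. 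The same identification shows that $U$ takes $G^{\R[\U]}\Omega$-spectra to $G^{\U}\Omega$-spectra, since the adjoint generalized structure maps of $UX$ evaluated on $M, N \subseteq \U$ are exactly those of $X$ evaluated on $\R[M], \R[N]$. Hence $U$ is right Quillen and $(L,U)$ is a Quillen pair (using that $L$ is strong symmetric monoidal and the positive/projective business has already been arranged to be compatible).

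Next I would establish the Quillen equivalence. By the standard criterion it is enough to show: (a) $U$ reflects weak equivalences between fibrant objects, and (b) for every cofibrant $G$-symmetric spectrum $X$, the composite $X \to U(LX) \to U((LX)^{\mathrm{fib}})$ is a $G^{\U}$-stable equivalence. For (a): if $f: X \to Y$ is a map of level-fibrant $G^{\R[\U]}\Omega$-spectra with $Uf$ a $G^{\U}$-stable equivalence, then since $Uf$ is a map of $G^{\U}\Omega$-spectra it is a $G^{\U}$-level equivalence by Remark \ref{rem:levelomega}, hence a $G^{\R[\U]}$-level equivalence by the cofinality identification above, hence a $G^{\R[\U]}$-stable equivalence. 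For (b): here I would use the naive-homotopy-group description of stable equivalences. By Remark \ref{rem:orthhomgroups}, $\pi_n^{H,\R[\U]}Z \cong \pi_n^{H,\U}(UZ)$ naturally for every $G$-orthogonal spectrum $Z$. The unit map $X \to U(LX)$ is a $G^{\U}$-stable equivalence: one checks this first on free generators $\mathscr{F}_M A$ (whose image under $L$ is the corresponding free $G$-orthogonal spectrum, and here $U L \mathscr{F}_M A$ can be computed directly — it differs from $\mathscr{F}_M A$ only by replacing the discrete bijection/injection spaces with spaces of linear isometric embeddings, and the inclusion $\Bij \hookrightarrow \Lin$-type map, suitably bundled, is a $\upi_*$-iso after stabilizing, hence a stable equivalence by Theorem \ref{theo:piiso}), then extends to all cofibrant $X$ by the cell-induction/colimit arguments of Section \ref{sec:gprop} (the unit commutes with wedges, pushouts along $h$-cofibrations, sequential colimits, smashing with spaces). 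Finally, the fibrant-replacement map $LX \to (LX)^{\mathrm{fib}}$ is a $G^{\R[\U]}$-stable equivalence by definition, and applying $U$ preserves it since $U$ preserves all stable equivalences — this follows again from the $\pi_*$-identification of Remark \ref{rem:orthhomgroups} together with the fact that $G$-orthogonal spectra are always $\upi_*$-semistable, so every $G^{\R[\U]}$-stable equivalence is a $\upi_*^{\R[\U]}$-isomorphism, which $U$ turns into a $\upi_*^{\U}$-isomorphism, hence a $G^{\U}$-stable equivalence by Theorem \ref{theo:piiso}.

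The main obstacle I anticipate is the base case of step (b): proving that the unit $\mathscr{F}_M^{(G)} A \to U L \mathscr{F}_M^{(G)} A$ (equivalently, the map comparing the symmetric-spectrum free object with the underlying symmetric spectrum of the orthogonal free object) is a $G^{\U}$-stable equivalence. This requires understanding $L$ on free objects well enough to identify $UL\mathscr{F}_M A$ concretely, and then comparing the $\mathbf{\Sigma}$-indexed structure (injections of finite sets) with the orthogonal $\mathbf{O}$-indexed structure (linear isometric embeddings of representations). One natural route is to reduce, via the Wirthmüller isomorphism (Proposition \ref{prop:wirth}) and the double coset formula, to computing the naive homotopy groups of both sides and checking the comparison map is an isomorphism — essentially an equivariant refinement of the corresponding non-equivariant computation in \cite{HSS00} and \cite{MMSS01}, using the Segal–tom Dieck splitting as in Example \ref{exa:semifree} to handle the $\Inj$ side and the analogous description for the $\Lin$ side. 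Once the free case is in hand, everything else is a formal manipulation with the closure properties of $G^{\U}$-stable equivalences and the cellular structure of cofibrant objects.
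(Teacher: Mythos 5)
Your first two paragraphs (the Quillen-pair check via Lemma \ref{lem:eva} and the reflection of equivalences via Remark \ref{rem:levelomega} and Corollary \ref{cor:piiso}) match the paper closely. The difference, and the gap, is in how you propose to handle the derived unit.

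You reduce to the free generators $\mathscr{F}_M A$ and then want to show that the unit $\eta_{\mathscr{F}_M A}\colon\mathscr{F}_M A\to UL\mathscr{F}_M A$ is a $\upi_*^{\U}$-isomorphism and invoke Theorem \ref{theo:piiso}. This cannot work when $M\neq\emptyset$. By Example \ref{exa:semifree}, $\mathscr{F}_M S^M$ is \emph{not} $G^{\U}$-semistable: its naive homotopy groups carry a nontrivial $\M_G$-action through $\Inj(M,\U)$. On the other hand $UL\mathscr{F}_M A$ is the underlying $G$-symmetric spectrum of a $G$-orthogonal spectrum, hence $G^{\U}$-semistable by Theorem \ref{theo:semistability}. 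Since semistability is detected by the naive homotopy groups (it is the statement that the $\M_G$-action is trivial), a $\upi_*^{\U}$-isomorphism would carry semistability across, so the unit cannot be a $\upi_*^{\U}$-isomorphism on $\mathscr{F}_M A$ for nonempty $M$. The ``comparison map on naive homotopy groups'' you propose to compute via Wirthm\"uller and Segal--tom~Dieck is precisely a map from a nontrivial $\M_G$-module to a trivial one, and it is not injective. So the base case of your cell induction, as sketched, fails; you would need a different argument for it, and the natural one (reducing to the suspension spectrum case via the $\lambda$-maps of Example \ref{exa:lambda}) essentially collapses onto the paper's approach.

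The paper avoids the issue entirely by not descending to cell generators at the level of cofibrant objects. Instead it uses that both stable homotopy categories are triangulated, that $\mathbb{L}L$ and $\mathbb{R}U$ are exact and preserve arbitrary direct sums (the latter using Remark \ref{rem:wedgehom}), and that $\Ho^{\U}(GSp^{\Sigma})$ is compactly generated by $\{G/H_+\wedge\mathbb{S}\}$ (Proposition \ref{prop:ggenerators}). It therefore suffices to check the derived unit on suspension spectra. On these, since $L\circ\Sigma^\infty_{\Sigma}\cong\Sigma^\infty_{O}$ and $U\circ\Sigma^\infty_{O}\cong\Sigma^\infty_{\Sigma}$ (as all three are left adjoint to evaluation at level zero or are so by definition), the unit $\eta_{\Sigma^\infty A}$ is an \emph{isomorphism of $G$-symmetric spectra}, not merely a stable equivalence, so no naive homotopy group computation is needed. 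This is strictly cleaner than the free-spectrum route and, given the semistability obstruction above, is the step you would have to borrow anyway.
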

For the flat model structures, we let $\mathcal{V}$ be a $G$-representation universe with the property that every finite dimensional $G$-subrepresentation sits inside a larger one which is isomorphic to the linearization of a $G$-set. This gives rise to a $G$-set universe $\U(\mathcal{V})$ by taking an infinite disjoint union of those $G$-orbits $G/H$ for which $\R[G/H]$ embeds into $\mathcal{V}$. In particular, we have $\R[\U(\mathcal{V})]\cong \mathcal{V}$. 
\begin{Theorem}[Quillen equivalence to $G$-orthogonal spectra, II] \label{theo:gquillen2} The adjunction \[ L:GSp^{\Sigma}_{\T}\rightleftarrows GSp^O:U \] is a Quillen equivalence for the $G^{\U(\mathcal{V})}$-flat stable model structure on $GSp^{\Sigma}_{\T}$ and the $G^{\mathcal{V}}$-flat stable model structure of \cite[Sec. 2.3.3]{Sto11} on $GSp^{O}$.
\end{Theorem}
We note that in general a $G$-set universe $\U$ is not isomorphic to $\U(\R[\U])$ and if it is not, the $G^{\U}$-flat model structure does not allow a direct comparison to one of the model structures on $G$-orthogonal spectra. Nevertheless, it is still Quillen equivalent to one via a zig-zag through the projective model structures.
For example, taking $\U$ to be a free $G$-set universe, its linearization is a complete $G$-representation universe and so $\U(\R[\U])$ is a complete $G$-set universe.
\begin{proof}[Proof of Theorem \ref{theo:gquillen}] In the (strong version of the) $G^{\R[\U]}$-projective level model structure on $GSp^O$, a map is a weak equivalence (fibration) if and only if it is a genuine $H$-equivalence ($H$-fibration) when evaluated on all finite dimensional $H$-subrepresentations $V$ of $\R[\U]$ for all subgroups $H$ of $G$. Hence, it follows directly from the natural $H$-homeomorphism $X(\mathbb{R}M)\cong (UX)(M)$ for every subgroup $H$ of $G$ and every finite $H$-set $M$ (Lemma \ref{lem:eva}) that the forgetful functor $U$ is a right Quillen functor for the level model structures. In particular, the left adjoint $L$ preserves cofibrations. The stable model structure on $G$-orthogonal spectra is obtained by a left Bousfield localization at the $\upi_*^{\R[\U]}$-isomorphisms, with fibrant objects being exactly the (strong) $G^{\R[\U]}\Omega$-spectra, so the isomorphism of Lemma \ref{lem:eva} also implies that $U$ preserves fibrant objects. By adjunction, this implies that $L$ maps 
$G^{\U}$-projective $G$-symmetric spectra which are $G^{\U}$-stably contractible to $G^{\R[\U]}$-projective $G^{\R[\U]}$-stably contractible $G$-orthogonal spectra. Since both for $G$-symmetric spectra and for $G$-orthogonal spectra a projective cofibration is stably acyclic if and only if its cofiber is stably contractible, this implies that $L$ also preserves stable acyclic cofibrations and hence is a left Quillen functor.

It remains to show that the derived functors are equivalences between the homotopy categories. Since a $\upi_*^{\U}$-isomorphism is a $G^{\U}$-stable equivalence (Theorem \ref{theo:piiso}), we see that $U$ preserves all stable equivalences. In addition, as the underlying $G$-symmetric spectra of $G$-orthogonal spectra are all $G^{\U}$-semistable (they allow $\upi_*^{\U}$-isomorphisms to $G^{\U}\Omega$-spectra), Corollary \ref{cor:piiso} implies that $U$ also reflects all stable equivalences. This means that we can use criterion \cite[Lem. A.2.(iii)]{MMSS01} and so it suffices to show that the derived unit of the adjunction is a natural isomorphism in the stable $\U$-homotopy category of $G$-symmetric spectra.
Since both model structures are stable and $\mathbb{L}L$ and $\mathbb{R}U$ preserve arbitrary direct sums, it suffices to check that the derived unit is an isomorphism on the set of compact generators $\{G/H_+\wedge \mathbb{S}\ |\ H\leq G\}$ of the triangulated homotopy category.
Because each $G/H_+\wedge \mathbb{S}$ is $G$-flat and $U$ preserves all $G^{\U}$-stable equivalences, this in turn is equivalent to checking that for each subgroup $H$ of $G$ the non-derived unit $\eta_{G/H_+\wedge \mathbb{S}}:G/H_+\wedge \mathbb{S}\to UL(G/H_+\wedge \mathbb{S})$ is a $G^{\U}$-stable equivalence. But this morphism is even an isomorphism, since both $L$ and $U$ take the sphere spectrum to itself and commute with the smash product with based $G$-spaces.
\end{proof}
\begin{proof}[Proof of Theorem \ref{theo:gquillen2}]
The flat $\mathcal{V}$-level model structure on $G$-orthogonal spectra of Stolz is built in the same way as the one on $G$-symmetric spectra, replacing the families $\mathcal{F}_{\U(\mathcal{V})}^{G,\Sigma_n}$ by their orthogonal analogs $\mathcal{F}_{\mathcal{V}}^{G,O(n)}$. The graph of a homomorphism $H\to \Sigma_n$ lies in $\mathcal{F}_{\U(\mathcal{V})}^{G,\Sigma_n}$ if and only if the graph of the composition $H\to \Sigma_n\to O(n)$ lies in $\mathcal{F}_{\mathcal{V}}^{G,O(n)}$, so we see that pulling back $E\mathcal{F}_{\mathcal{V}}^{G,O(n)}$ along $G\times \Sigma_n\to G\times O(n)$ gives a model for $E\mathcal{F}_{\U(\mathcal{V})}^{G,\Sigma_n}$ and it follows that $U$ is a right Quillen functor for the level model structures. From here on we can proceed as in the proof of Theorem \ref{theo:gquillen} above.
\end{proof}

\subsection{Quillen equivalence to Mandell's equivariant symmetric spectra}
Finally we compare to Mandell's definition of an equivariant symmetric spectrum. As mentioned in the introduction, the underlying category depends on a choice of normal subgroup $N$ and models the equivariant stable homotopy category with respect to the $N$-fixed universe.

\begin{Def}[Mandell, introduction of \cite{Man04}] A $G\Sigma_{G/N}$-spectrum consists of a sequence of based $(G\times \Sigma_n)$-simplicial sets $X(n\times (G/N))$ and structure maps $\sigma_{n\times (G/N)}^{G/N}:X(n\times (G/N))\wedge S^{G/N}\to X((n+1)\times G/N)$ such that for all $n, m\in \N$ the iterate $X(n\times (G/N))\wedge S^{m\times G/N}\to X((n+m)\times (G/N))$ is $(G\times \Sigma_n\times \Sigma_m)$-equivariant.
\end{Def}
The notation we used in the definition makes it clear how it relates to our version of $G$-symmetric spectra: Every $G$-symmetric spectrum of simplicial sets $X$ gives rise to a $G\Sigma_{G/N}$-spectrum (which we denote by $U_{G/N} X$) by only remembering the evaluations $X(n\times (G/N))$ (together with the respective generalized structure maps) and restricting the $(G\ltimes \Sigma_{n\times (G/N)})$-action to the diagonal $(G\times \Sigma_n)$-action. In fact, both categories are enriched functor categories (cf. Section \ref{sec:free}) and the functor $U_{G/N}$ comes from restriction along an embedding of the indexing categories. Hence, there exists a left adjoint $L_{G/N}:G\Sigma_{G/N}\to GSp^{\Sigma}_{\mathcal{S}}$ given by enriched left Kan extension.

For a normal subgroup $N$ of $G$ we denote by $\U_{G/N}$ the $G$-set universe consisting of an infinite disjoint union of copies of $G/N$. It models the $N$-fixed $G$-equivariant stable homotopy category.
\begin{Theorem}[Quillen equivalence to $G\Sigma_{G/N}$-spectra] \label{theo:gquillenmandell} For every finite group $G$ and every normal subgroup $N$ the adjunction \[ L_{G/N}:G\Sigma_{G/N}\rightleftarrows GSp^{\Sigma}_{\mathcal{S}}:{U_{G/N}} \]
is a Quillen equivalence for the stable model structure of \cite[Thm. 4.1]{Man04} on $G\Sigma_{G/N}$ and the $\U_{G/N}$-projective stable model structure on $GSp^{\Sigma}_{\mathcal{S}}$. \end{Theorem}
\begin{proof} We first show that the adjunction is a Quillen pair. In the level model structure of \cite[Def. 3.1]{Man04} the weak equivalences and fibrations are defined levelwise, so it follows that the forgetful functor $U_{G/N}$ becomes a right Quillen functor for the $\U_{G/N}$-projective level model structure on $GSp^{\Sigma}$. The stable model structure (\cite[Thm. 4.1]{Man04}) is obtained via a left Bousfield localization from the level model structure, so it has the same cofibrations and we can deduce that $L_{G/N}$  preserves cofibrations. The fibrant objects of the stable model structure are precisely those level fibrant $X$ for which $X(n\times G/H)\to \Omega^{G/N} X((n+1)\times G/H)$ is a genuine $G$-equivalence, so we see that $U_{G/N}$ also preserves fibrant objects. Via the same argument as in Theorem \ref{theo:gquillen} we deduce that the adjunction is a Quillen pair. In order to see that it is a Quillen equivalence we 
consider the chain of Quillen adjunctions

\[ G\Sigma_{G/N}\stackrel[U_{G/N}]{L_{G/N}}{\rightleftarrows}GSp^{\Sigma}_{\mathcal{S},\U_{G/N},proj}\stackrel[\mathcal{S}]{|.|}{\rightleftarrows} GSp^{\Sigma}_{\T,\U_{G/N},proj} \stackrel[U]{L}{\rightleftarrows} GSp_{\R[\U],proj}^O. \]

By \cite[Thm. 10.2]{Man04}, the full composite is a Quillen equivalence. We already know that the second and third adjunction are Quillen equivalences and hence so is the first.
\end{proof}

\appendix

\section{Technical lemmas}
In this appendix we list several technical lemmas about the interplay of the families $\F_{\U}^{G,\Sigma_n}$ for varying $n$ and $G$, which we left out of the main text to shorten the exposition. The proofs are all similar in style and rely on the following:
\begin{Lemma} \label{lem:tech1} Let $G$ be a discrete group with a normal subgroup $K$, $X$ a cofibrant $G$-space on which the normal subgroup $K$ acts freely away from the basepoint and $H$ a subgroup of the quotient $G/K$. Then there is a natural homeomorphism
\begin{equation} \label{eq:tech}   (X/K)^H\cong \bigvee_{[\alpha:H\to G]} X^{\im(\alpha)}/(C(\im(\alpha))\cap K), \end{equation}
where the wedge is taken over a system of representatives of $K$-conjugacy classes of group homomorphisms $\alpha:H\to G$ lifting the inclusion $H\to G/K$, and $C(\im(\alpha))$ denotes the subgroup of elements commuting with the image of $\alpha$.
\end{Lemma}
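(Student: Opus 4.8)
The plan is to analyze the $H$-fixed points of the quotient $X/K$ directly from the orbit decomposition, using the freeness of the $K$-action to control which points survive. First I would note that a point of $(X/K)^H$ is represented by a point $x \in X$ such that for every $h \in H$ there is a (unique, by freeness) element $k_h \in K$ with $h \cdot x = k_h \cdot x$; choosing lifts $\tilde h \in G$ of $h \in H \leq G/K$, this amounts to saying $(k_h^{-1}\tilde h)\cdot x = x$, so $x$ is fixed by a lift of $h$ in $G$. The assignment $h \mapsto k_h^{-1}\tilde h$ needs to be checked to be a group homomorphism $\alpha \colon H \to G$ lifting the inclusion $H \hookrightarrow G/K$ — this uses that $k_h$ is \emph{unique}, which holds precisely because $K$ acts freely away from the basepoint (and $x$ is not the basepoint, or else it contributes trivially). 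So every nonbasepoint $K$-orbit fixed by $H$ determines a lift $\alpha$ together with a point of $X^{\im(\alpha)}$.

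Next I would organize this into the claimed wedge. Two lifts $\alpha, \alpha'$ give rise to the same component exactly when they are $K$-conjugate: if $\alpha' = c_k \circ \alpha$ for some $k \in K$, then $k$ carries $X^{\im(\alpha)}$ homeomorphically onto $X^{\im(\alpha')}$ and identifies the corresponding $K$-orbits. Conversely, if $x$ and $x'$ lie in the same $K$-orbit, say $x' = k\cdot x$, then comparing the defining homomorphisms shows $\alpha' = c_k \circ \alpha$. This gives a well-defined map from the disjoint union $\bigsqcup_{[\alpha]} X^{\im(\alpha)}$ (over $K$-conjugacy classes of lifts) down to $(X/K)^H$, which collapses basepoints; passing to the quotient by the basepoint identifications produces the wedge on the right-hand side. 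The redundancy \emph{within} a single $X^{\im(\alpha)}$ is exactly the residual $K$-action: two points $x, x'' \in X^{\im(\alpha)}$ have the same image iff $x'' = k \cdot x$ for some $k \in K$ with $c_k \circ \alpha = \alpha$, i.e.\ $k \in C(\im(\alpha)) \cap K$; hence the image of $X^{\im(\alpha)}$ is $X^{\im(\alpha)}/(C(\im(\alpha)) \cap K)$, matching the wedge summand. Note $C(\im(\alpha)) \cap K$ does act on $X^{\im(\alpha)}$ since these elements commute with $\im(\alpha)$ and so preserve its fixed points.

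To conclude I would verify this continuous bijection is a homeomorphism. Since $X$ is a cofibrant $G$-CW complex, $X/K$ is a CW complex, its $H$-fixed points form a subcomplex, and each $X^{\im(\alpha)}/(C(\im(\alpha))\cap K)$ is likewise a CW complex; the map is cellular and bijective, and one checks it is a homeomorphism either by exhibiting the inverse directly (send a $K$-orbit to its canonical lift in the appropriate component, which is continuous because locally one can choose the element $k_h$ continuously using freeness and the slice/orbit structure of a $G$-CW complex) or by a compactness argument on skeleta. Naturality in $X$ is immediate from the construction, as every step is defined orbit-by-orbit without choices beyond the fixed system of representatives $[\alpha]$.

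\textbf{Main obstacle.} The delicate point is the passage from a point of $(X/K)^H$ to a genuine group homomorphism $\alpha\colon H \to G$, and the continuity of the resulting inverse map: both rely essentially on the \emph{unique} solvability of $h\cdot x = k\cdot x$ for $k \in K$, which is where the hypothesis that $K$ acts freely off the basepoint enters. One must be careful that the basepoint (on which $K$ need not act freely) is excluded at exactly the right moments, and that the cocycle identity $k_{hh'} = k_h\, (h\cdot k_{h'} \cdot h^{-1})$ forcing $\alpha$ to be a homomorphism is read off correctly from associativity of the action. Everything else is bookkeeping with the $G$-CW structure.
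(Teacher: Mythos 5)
Your argument is essentially the same as the paper's: you produce the lift $\alpha$ from an $H$-fixed orbit by the unique solvability of $\tilde h \cdot x = k \cdot x$ in $K$ (freeness), observe that $K$-conjugate lifts give the same wedge summand, identify the fibers over a single summand with orbits of $C(\im\alpha)\cap K$, and appeal to cofibrancy for the topology. The small parenthetical about a cocycle identity for $k_{hh'}$ is an unnecessary detour --- as you also note, uniqueness of the element of $\tilde h\tilde h'K$ fixing $x$ gives multiplicativity of $\alpha$ directly --- but this does not affect correctness.
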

\begin{proof}[Sketch of proof] Let $\alpha:H\to G$ be such a group homomorphism and $[h]\in H$ an element represented by $h\in G$. Then $h\cdot \alpha([h])^{-1}$ maps to $e$ under the projection, hence it lies in $K$. So if $x$ is an $\im(\alpha)$-fixed point of $X$, then we have $[h][x]=[hx]=[h\alpha(h)^{-1}x]=[x]$, so $[x]$ is an $H$-fixed point in the quotient. This shows that for each such $\alpha$ the composition $X^{\im(\alpha)}\to X\to X/K$ lands in the $H$-fixed points and we obtain a map from the wedge of the $X^{\im(\alpha)}$ over all such $\alpha$ to $(X/K)^{H}$. Furthermore, two such summands only intersect in the basepoint, because the $K$-action is free. The normal subgroup $K$ acts on this wedge, an element $k$ sends the subspace $X^{\im(\alpha)}$ homeomorphically onto $X^{\im(k\alpha k^{-1})}$ and so we obtain an injective map from the right side of the expression \ref{eq:tech} to the left. On the other hand, given an element $[x]$ fixed by $H$ and represented by $x\in X$, there is an 
associated homomorphism $H\to G$ sending $[h]$ to $k(h)^{-1}\cdot h$, where $k(h)$ is the unique element in $K$ which satisfies $hx=k(h)x$. It is an easy check that this is indeed independent of the chosen representative $h$, that it defines a group homomorphism lifting the inclusion $H\to G/K$ and that $x$ is an $\im(\alpha)$-fixed point. Hence, the map is also surjective and that it is a homeomorphism follows from $X$ being cofibrant.
\end{proof}

\begin{Lemma} \label{lem:comp} Let $\U$ be a $G$-set universe. Then for every $\mathcal{F}_{\U}^{G,\Sigma_m}$-equivalence between cofibrant $(G\times\Sigma_m)$-spaces $f:X\to Y$ and every cofibrant $(G\times \Sigma_k)$-space $A$, the map $\Sigma_{m+k}\ltimes_{\Sigma_m\times \Sigma_k} (f\wedge A)$ is an $\F_{\U}^{G,\Sigma_{m+k}}$-equivalence.
\end{Lemma}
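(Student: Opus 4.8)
The plan is to compute the $L$-fixed points of $\Sigma_{m+k}\ltimes_{\Sigma_m\times\Sigma_k}(X\wedge A)$ by realising it as a free orbit and feeding that into the homeomorphism of Lemma~\ref{lem:tech1}. Write $n\defeq m+k$. Since $\F_\U^{G,\Sigma_n}$-equivalences are detected on $L$-fixed points for the graph subgroups $L\le G\times\Sigma_n$ lying in $\F_\U^{G,\Sigma_n}$, it suffices to fix such an $L=\{(p,\gamma(p))\mid p\in P\}$ --- so $P\le G$, $\gamma\colon P\to\Sigma_n$, and the $P$-set $\underline{n}_\gamma$ embeds $P$-equivariantly into $\U$ --- and show that $\bigl(\Sigma_n\ltimes_{\Sigma_m\times\Sigma_k}(f\wedge A)\bigr)^L$ is a weak equivalence. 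First I would introduce the group $\mathbf{G}\defeq(G\times\Sigma_n)\times(\Sigma_m\times\Sigma_k)$ acting on $\Sigma_{n+}\wedge X\wedge A$ by letting $G$ act diagonally on $X$ and $A$, letting $\Sigma_n$ act by left translation on $\Sigma_{n+}$, and letting $\Sigma_m\times\Sigma_k$ act by its given actions on $X$ and $A$ together with right translation on $\Sigma_{n+}$ through the block inclusion $\Sigma_m\times\Sigma_k\hookrightarrow\Sigma_n$. Then $K\defeq\{e\}\times(\Sigma_m\times\Sigma_k)$ is normal in $\mathbf{G}$, acts freely on $\Sigma_{n+}\wedge X\wedge A$ away from the basepoint (it already acts freely on the factor $\Sigma_{n+}$), the quotient is $\Sigma_n\ltimes_{\Sigma_m\times\Sigma_k}(X\wedge A)$ as a $(G\times\Sigma_n)=\mathbf{G}/K$-space naturally in $X$, and $\Sigma_{n+}\wedge X\wedge A$ is a cofibrant $\mathbf{G}$-space since $X$, $A$ (inflated along the evident surjections) and $\Sigma_{n+}$ are cofibrant and smash products of cofibrant objects are cofibrant.

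Applying Lemma~\ref{lem:tech1} with $(\mathbf{G},K,L)$ in place of $(G,K,H)$ then expresses $\bigl(\Sigma_n\ltimes_{\Sigma_m\times\Sigma_k}(X\wedge A)\bigr)^L$, naturally in $X$, as a wedge indexed by $K$-conjugacy classes of homomorphisms $\alpha\colon L\to\mathbf{G}$ lifting $L\hookrightarrow\mathbf{G}/K$. Such a lift has the form $\alpha(\ell)=(\ell,\beta(\ell))$ for a homomorphism $\beta\colon P\to\Sigma_m\times\Sigma_k$ (identifying $L\cong P$), $\beta=(\beta_m,\beta_k)$, and the $K$-conjugacy classes correspond to pairs consisting of a $\Sigma_m$-conjugacy class of $\beta_m\colon P\to\Sigma_m$ and a $\Sigma_k$-conjugacy class of $\beta_k\colon P\to\Sigma_k$. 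The key observation is that, unwinding the $\mathbf{G}$-action, a non-basepoint $(\rho,x,a)$ is fixed by $\im(\alpha)$ exactly when $\rho^{-1}\gamma(p)\rho=\beta_m(p)\times\beta_k(p)$ for all $p\in P$ --- i.e.\ $\rho$, viewed as a bijection $\underline{n}\to\underline{n}$, is a $P$-isomorphism $\underline{m}_{\beta_m}\sqcup\underline{k}_{\beta_k}\cong\underline{n}_\gamma$ --- and $x\in X^{\Gamma_{\beta_m}}$, $a\in A^{\Gamma_{\beta_k}}$, where $\Gamma_{\beta_m}\le G\times\Sigma_m$ and $\Gamma_{\beta_k}\le G\times\Sigma_k$ are the graphs of $\beta_m$ and $\beta_k$. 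Hence the $\alpha$-summand is a point unless $\underline{n}_\gamma\cong\underline{m}_{\beta_m}\sqcup\underline{k}_{\beta_k}$ as $P$-sets; and when it is not, the set of admissible $\rho$ is a free right $\bigl(C(\im(\alpha))\cap K\bigr)\cong\bigl(C_{\Sigma_m}(\beta_m(P))\times C_{\Sigma_k}(\beta_k(P))\bigr)$-set, so the absorption isomorphism identifies the summand with a finite wedge of copies of $X^{\Gamma_{\beta_m}}\wedge A^{\Gamma_{\beta_k}}$, on which $f\wedge A$ induces copies of $f^{\Gamma_{\beta_m}}$ smashed with $\mathrm{id}_{A^{\Gamma_{\beta_k}}}$.

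To finish, I would note that for every non-trivial summand $\underline{m}_{\beta_m}$ is a sub-$P$-set of $\underline{m}_{\beta_m}\sqcup\underline{k}_{\beta_k}\cong\underline{n}_\gamma$, which admits a $P$-embedding into $\U$ by the hypothesis on $L$; therefore so does $\underline{m}_{\beta_m}$, i.e.\ $\Gamma_{\beta_m}\in\F_\U^{G,\Sigma_m}$, and $f^{\Gamma_{\beta_m}}\colon X^{\Gamma_{\beta_m}}\to Y^{\Gamma_{\beta_m}}$ is a weak equivalence. Consequently every summand map is a weak equivalence (a finite wedge of weak equivalences smashed with the CW complex $A^{\Gamma_{\beta_k}}$), hence so is the whole wedge, hence $\bigl(\Sigma_n\ltimes_{\Sigma_m\times\Sigma_k}(f\wedge A)\bigr)^L$; as $L\in\F_\U^{G,\Sigma_n}$ was arbitrary this proves the lemma. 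The main obstacle is the bookkeeping in the middle step: matching $K$-conjugacy classes of lifts with pairs $(\beta_m,\beta_k)$, and correctly computing the $\im(\alpha)$-fixed points together with the residual centralizer quotient, so as to see that the surviving summands are governed by decompositions of the $P$-set $\underline{n}_\gamma$ --- it is precisely this $P$-set decomposition that forces $\Gamma_{\beta_m}$ into $\F_\U^{G,\Sigma_m}$ and thereby lets the hypothesis on $f$ apply.
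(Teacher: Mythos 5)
Your proposal follows the same route as the paper's own proof: apply Lemma \ref{lem:tech1} to the normal subgroup $\Sigma_m\times\Sigma_k$ of $G\times\Sigma_{m+k}\times\Sigma_m\times\Sigma_k$, decompose the $L$-fixed points as a wedge over lifts $\alpha$, identify the $\im(\alpha)$-fixed points of $\Sigma_{m+k}$ with the set of $P$-isomorphisms $\underline{m}_{\beta_m}\sqcup\underline{k}_{\beta_k}\cong\underline{n}_\gamma$, and observe that non-triviality of a summand forces $\Gamma_{\beta_m}\in\F_\U^{G,\Sigma_m}$ so that $f^{\Gamma_{\beta_m}}$ is a weak equivalence. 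The only cosmetic difference is that the paper shows $((\Sigma_{m+k})_+\wedge f\wedge A)^{\Gamma_\alpha}$ is a weak equivalence and then invokes freeness of the residual $C(\im\alpha)\cap K$-action to pass to the quotient, whereas you unwind the quotient explicitly into a finite wedge of copies of $f^{\Gamma_{\beta_m}}\wedge\mathrm{id}$; these are the same argument.
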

\begin{proof} We let $L$ be a subgroup contained in $\F_{\U}^{G,\Sigma_{m+k}}$. By Lemma \ref{lem:tech1} above (applied to the normal subgroup $\Sigma_m\times \Sigma_k$ of $\Sigma_{m+k}\times \Sigma_m\times \Sigma_k$), the $L$-fixed points of $\Sigma_{m+k}\ltimes_{\Sigma_m\times \Sigma_k} (X\wedge A)$ are naturally isomorphic to a wedge of terms of the form $((\Sigma_{m+k})_+\wedge X\wedge A)^{\Gamma_\alpha}/C(\im(\alpha))$, where $\alpha$ is a homomorphism $L\to \Sigma_m\times \Sigma_k$, and it suffices to show that $((\Sigma_{m+k})_+\wedge f\wedge A)^{\Gamma_\alpha}/C(\im(\alpha))$ is a weak equivalence for every such $\alpha$. We show that $((\Sigma_{m+k})_+\wedge f\wedge A)^{\Gamma_{\alpha}}$ is a weak equivalence. The action of $C(\im(\alpha))$ is free, so this gives the statement. Since $L$ is contained in $\F_{\U}^{G,\Sigma_{m+k}}$, it is the graph of a homomorphism $\beta:H\to \Sigma_{m+k}$ for a subgroup $H$ of $G$ and for which the associated $H$-set $\underline{m+k}_{\beta}$ embeds into $\U$. Composing 
with $\alpha$ gives two further homomorphisms $\alpha_1:H\to \Sigma_m$ and $\alpha_2:H\to \Sigma_k$ with associated $H$-sets $\underline{m}_{\alpha_1}$ and $\underline{k}_{\alpha_2}$. Hence, the $\Gamma_{\alpha}$-fixed points of $\Sigma_{m+k}$ are given by the set of $H$-isomorphisms from $\underline{m}_{\alpha_1} \sqcup \underline{k}_{\alpha_2}$ to $\underline{m+k}_{\beta}$. If $\underline{m}_{\alpha_1}$ does not embed into $\U$, then there cannot exist such an isomorphism, in which case both sides are trivial and hence the map is necessarily a weak equivalence. If $\underline{m}_{\alpha_1}$ does embed into $\U$, then $f$ induces an equivalence between the $\Gamma_{\alpha_1}$-fixed points of $X$ and $Y$, since these are given by the fixed points of the graph of $\alpha_1$ and $f$ is an $\mathcal{F}_{\U}^{G,\Sigma_m}$-equivalence. Hence, the whole term is a weak equivalence and we are done.
\end{proof}

The next lemma deals with the multiplicative norm $N_H^G X$ for a based $H$-space $X$ and a subgroup inclusion $H\leq G$. We will make use of the double coset formula  \[ {\res}_K^G N_H^G X\cong \bigwedge_{[g]\in K\backslash G/H} N^K_{gHg^{-1}\cap K} c_g^* {\res}^H_{H\cap g^{-1}Kg} X. \]
Provided that $X$ also has a $\Sigma_m$-action commuting with the $H$-action, the norm $N_H^G X$ has a $G\ltimes (\Sigma_m)^{G/H}$-action, with $G$ acting on $(\Sigma_m)^{G/H}$ via its action on $G/H$.

The $H$-universe we use is the full $N$-fixed one $\U_H(N)$ for a subgroup $N$ of $H$ which is normal in $G$.
\begin{Lemma} \label{lem:tech2} Let $n,m\in \N$ and $j:A\to B$ an $\F_{\U_H(N)}^{H,\Sigma_m}$-equivalence of cofibrant $(H\times \Sigma_m)$-spaces. Then $(\Sigma_{G/H\times m})_+\wedge_{\Sigma_m^{G/H}} N_H^G j$ is an $\F_{\U_G(N)}^{G,\Sigma_{G/H\times m}}$-equivalence. Here, the $G$-action is both through $N_H^G(-)$ and by precomposition with the inverse on $\Sigma_{G/H\times m}$. \end{Lemma}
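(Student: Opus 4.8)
The plan is to follow the pattern of Lemma \ref{lem:comp}, replacing the smash product $f\wedge A$ there by the norm. Write $W=\Sigma_{G/H\times m}$ and, for a cofibrant $(H\times\Sigma_m)$-space $X$, put $Z(X)=W_+\wedge_{\Sigma_m^{G/H}}N_H^G X$, so the map in question is $Z(j)\colon Z(A)\to Z(B)$. It suffices to check that $Z(j)^L$ is a weak equivalence for every subgroup $L\leq G\times W$ lying in $\F_{\U_G(N)}^{G,\Sigma_{G/H\times m}}$; such an $L$ is the graph $\Gamma_\beta$ of a homomorphism $\beta\colon K\to W$ for a subgroup $K\leq G$ whose associated $K$-set structure on $G/H\times m$ has $K\cap N$ acting trivially.

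First I would compute $Z(X)^{\Gamma_\beta}$ by means of Lemma \ref{lem:tech1}. The space $W_+\wedge N_H^G X$ is a cofibrant $G'$-space for $G'=W\times(\Sigma_m^{G/H}\rtimes G)$ --- with $W$ acting by left translation on $W_+$, the factor $\Sigma_m^{G/H}$ acting by right translation on $W_+$ and coordinatewise on $N_H^G X$, and $G$ acting on $N_H^G X$ through the norm and on $W_+$ through its permutation action on the $G/H$-indexed blocks (this is the ``precomposition with the inverse'' of the statement) --- the normal subgroup $K'=\Sigma_m^{G/H}$ acts freely away from the basepoint, and its quotient is $Z(X)$ with the $(G\times W)$-action of the statement, under the identification $G\times W\cong G'/K'$. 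Lemma \ref{lem:tech1} then writes $Z(X)^{\Gamma_\beta}$ as a wedge, indexed by $\Sigma_m^{G/H}$-conjugacy classes of homomorphisms $K\to\Sigma_m^{G/H}\rtimes K$ splitting the projection, of spaces of the form $(\text{finite set})_+\wedge(N_H^G X)^{\Delta}$ for an associated subgroup $\Delta$ of $\Sigma_m^{G/H}\rtimes G$, the finite set being a torsor under a residual free centralizer action that is divided out. Next I would feed each $(N_H^G X)^{\Delta}$ through the double coset formula for $\res\circ N_H^G$ recalled before the lemma, which rewrites it as a smash product of fixed-point spaces $X^{\Gamma}$, where each $\Gamma$ is the graph of a homomorphism into $\Sigma_m$ that equips $\underline m$ with an $H$-set structure up to conjugacy.

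The decisive step is then the analogue of the final paragraph of the proof of Lemma \ref{lem:comp}: a given wedge summand is either a point, in which case $Z(j)$ induces a weak equivalence on it for trivial reasons, or else each $H$-set on $\underline m$ appearing in the smash factorisation has $H\cap N$ acting trivially and therefore embeds into the full $N$-fixed $H$-universe $\U_H(N)$. This is where normality of $N$ in $G$ enters: since $gNg^{-1}=N$ for every $g\in G$, the conjugations and restrictions produced by the two double coset formulas transport the hypothesis ``$K\cap N$ acts trivially on $G/H\times m$'' to ``$H\cap N$ acts trivially on each block of $\underline m$'', with the \emph{same} universe $\U_H(N)$ controlling every block; without normality the blocks would be governed by mutually non-isomorphic universes and the uniform statement would break down. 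Granting this, the hypothesis that $j$ is an $\F_{\U_H(N)}^{H,\Sigma_m}$-equivalence makes each $j^{\Gamma}$ a weak equivalence, and since every space in sight is cofibrant, the map induced on each summand --- a smash product of such $j^{\Gamma}$'s with cofibrant spaces --- is a weak equivalence; a wedge of weak equivalences of well-pointed spaces is one, so $Z(j)^{\Gamma_\beta}$ is too.

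The hard part will be the combinatorial bookkeeping in the middle paragraph: organising the wreath-product indexing coming out of Lemma \ref{lem:tech1} together with the two double coset decompositions, and verifying that the $\U_H(N)$-embeddability of the blocks is precisely what the graph-subgroup condition on $\Gamma_\beta$ yields once normality of $N$ is invoked. Everything after the shape of $Z(X)^{\Gamma_\beta}$ has been pinned down is formal, using cofibrancy and the compatibility of weak equivalences with smash products and wedges.
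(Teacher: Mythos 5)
Your proposal matches the paper's proof in every essential respect: fix a graph subgroup $\Gamma_\beta\leq G\times\Sigma_{G/H\times m}$, apply Lemma \ref{lem:tech1} with normal subgroup $\Sigma_m^{G/H}$ inside the same ambient semidirect-product group, split the wedge summands $(N_H^GX)^{\Delta}$ via the double coset formula for $\res\circ N_H^G$ into smash factors of the form $X^{\Gamma}$, use normality of $N$ to see that the graph-subgroup condition on $\beta$ descends blockwise to ``$J_i\cap N$ in the kernel'' so that each $\Gamma$ lies in $\F_{\U_H(N)}^{H,\Sigma_m}$, and conclude by monoidality of weak equivalences between cofibrant spaces. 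The ``combinatorial bookkeeping'' you defer is exactly the content of the paper's middle two displays (the $J$-decomposition of $(G/H\times m)_\alpha$ into induced orbits and the identification of $J$-fixed points of $N_{J_i}^J$ with $J_i$-fixed points), but the route is the same.
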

\begin{proof}Again we have to show that for every homomorphism $\beta:J\to \Sigma_{G/H\times m}$ from a subgroup $J$ of $G$ the $\Gamma_{\beta}$-fixed points of $(\Sigma_{G/H\times m})_+\wedge_{\Sigma_m^{G/H}} N_H^G j$ are a weak equivalence, provided that the associated $J$-action on $G/H\times \underline{m}$ embeds into $\U_J(J\cap N)$, i.e., has $J\cap N$ in the kernel. Using Lemma \ref{lem:tech1} above for the normal subgroup $\Sigma_m^{G/H}$ of $\Sigma_{G/H\times m}\times (G\ltimes \Sigma_m^{G/H})$, these fixed points naturally decompose into a wedge with summands of the form $((\Sigma_{G/H\times m})_+\wedge N_H^G j)^{\Gamma_{\alpha}}/(C(\im(\alpha))\cap \Sigma_m^{G/H})$, where $\alpha$ is a group homomorphism $J\to G\ltimes (\Sigma_m^{G/H})$ lifting the inclusion $J\to G$. We again consider the map on fixed points before quotiening out by $C(\im(\alpha))\cap \Sigma_m^{G/H}$ and show that it is a weak equivalence. Since $G\ltimes\Sigma_m^{G/H}$ comes with an action on $G/H\times m$, the map $\alpha$ 
also induces another action of $J$, which we denote by $(G/H\times m)_{\alpha}$. Let $g_1,\hdots,g_k$ be a system of double coset representatives for $J\backslash G/H$ and $J_i=J\cap g_iHg_i^{-1}$ the $J$-stabilizer of $g_iH$ in $G/H$, in particular $g_iH\times m$ becomes a $J_i$-set with the restricted action through $\alpha$. This gives us two things: A $J$-decomposition
\[ (G/H\times m)_{\alpha} \cong \bigsqcup_{i=1,\hdots,k} J\ltimes_{J_i} (g_iH\times m)_{\alpha_{|J_i}} \]
and an isomorphism
\[ ((\Sigma_{G/H\cdot m})_+\wedge_{\Sigma_m^{G/H}} N_H^G A)^{\im(\alpha))}\cong {\Iso}_J((G/H\times m)_{\beta},(G/H\times m)_{\alpha})_+\wedge \bigwedge_{i=1,\hdots,k} (N_{J_i}^J {\res}^{H\times \Sigma_m}_{J_i}A)^J \]
where the last restriction is formed along the composition \[ J_i\xr{(incl,\alpha)} g_iHg_i^{-1}\times \Sigma_m\xr{c_g^{*}\times \Sigma_m} H\times \Sigma_m.\]
The $J$-fixed points of a norm $N_{J_i}^J$ are isomorphic to the $J_i$-fixed points of the argument, so the last smash product factor becomes $({\res}_{J_i}^{H\times \Sigma_m}A)^{J_i}$. Now, if $L\cap N$ acts non-trivially on $(G/H\times m)_{\alpha}$, the latter cannot be isomorphic to $(G/H\times m)_{\beta}$ and hence in that case the fixed points are trivial on both sides and the statement is shown. On the other hand, if $J\cap N$ does act trivially on $(G/H\times m)_{\alpha}$, then each $J_i\cap N$ acts trivially on $(g_iH\times m)_{\alpha_{|J_i}}$. Hence the kernel of $\alpha_{|J_i}$ contains $J_i\cap N$ and so $j$ induces a weak equivalence on $({\res}_{J_i}^{H\times \Sigma_m}-)^{J_i}$. Thus, in that case $((\Sigma_{G/H\cdot m})_+\wedge N_H^G j)^{\Gamma_{\alpha}}$ is a weak equivalence as the smash product of weak equivalences between cofibrant spaces, which finishes the proof.
\end{proof}

In the final lemma we need the technical hypothesis that the isotropy of the $G$-set universe $\U$ is closed under supergroups, i.e., that whenever an orbit $G/K$ embeds into $\U$, so does $G/L$ for all $K\leq L\leq G$. We note that if $\U$ has this property, then so do all restrictions of $\U$ to a subgroup of $G$. 
\begin{Lemma} \label{lem:tech3} Let $A$ be a cofibrant $(G\times \Sigma_m)$-space for $m>0$, $n$ a natural number and $\U$ a $G$-set universe whose isotropy is closed under supergroups. Then the map
\[   (E\mathcal{F}_{\U}^{G,\Sigma_n})_+\wedge _{\Sigma_n} (\Sigma_{n\times m})_+\wedge_{\Sigma_m^n} A^{\wedge n} \to (\Sigma_{n\times m})_+\wedge _{\Sigma_n\wr \Sigma_m} A^{\wedge n} \]
is an $\F_{\U}^{G,\Sigma_{n\times m}}$-equivalence. Here, the $\Sigma_n$-action on $(\Sigma_{n\times m})_+\wedge _{\Sigma_m^n} A^{\wedge n}$ is both through $A^n$ and by precomposition on $\Sigma_{n\times m}$, permuting the blocks of size $m$. $G$ only acts through $A^{\wedge n}$.
\end{Lemma}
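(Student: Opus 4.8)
The plan is to follow the template of the preceding appendix lemmas: reduce to fixed points, rewrite both sides as quotients by a free action, and decompose via Lemma~\ref{lem:tech1}. First I would pass to the topological case and note that, both sides being $(G\times\Sigma_{n\times m})$-CW complexes, the map is an $\mathcal{F}_{\U}^{G,\Sigma_{n\times m}}$-equivalence iff it induces a weak equivalence on $\Gamma_{\beta}$-fixed points for every graph subgroup $\Gamma_{\beta}=\{(j,\beta(j))\mid j\in J\}$ in $\mathcal{F}_{\U}^{G,\Sigma_{n\times m}}$, that is, for every subgroup $J\leq G$ and homomorphism $\beta\colon J\to\Sigma_{n\times m}$ for which the $J$-set $P\defeq\underline{n\times m}_{\beta}$ embeds into $\U|_J$. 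Then, writing $\Sigma_n\wr\Sigma_m=\Sigma_n\ltimes\Sigma_m^{n}\leq\Sigma_{n\times m}$ and letting it act on $\Sigma_{n\times m}$ by right translation, on $A^{\wedge n}$ as usual, and on $E\mathcal{F}_{\U}^{G,\Sigma_n}$ through its quotient $\Sigma_n$, I would identify the left-hand side with $\bigl((E\mathcal{F}_{\U}^{G,\Sigma_n})_+\wedge(\Sigma_{n\times m})_+\wedge A^{\wedge n}\bigr)/(\Sigma_n\wr\Sigma_m)$ and the right-hand side with $\bigl((\Sigma_{n\times m})_+\wedge A^{\wedge n}\bigr)/(\Sigma_n\wr\Sigma_m)$, the map being induced by collapsing $E\mathcal{F}_{\U}^{G,\Sigma_n}$ to a point; in both cases $\Sigma_n\wr\Sigma_m$ acts freely away from the basepoint because it does so on the $\Sigma_{n\times m}$-coordinate.

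With this in hand I would apply Lemma~\ref{lem:tech1} with the free normal subgroup $\Sigma_n\wr\Sigma_m$ of the ambient group $\Sigma_{n\times m}\times(\Sigma_n\wr\Sigma_m)\times G$ to both sides. In each case the $\Gamma_{\beta}$-fixed points decompose as a wedge over the homomorphisms $\kappa\colon J\to\Sigma_n\wr\Sigma_m$ that are $\Sigma_{n\times m}$-conjugate to $\beta$, taken up to $(\Sigma_n\wr\Sigma_m)$-conjugacy --- such a $\kappa$ amounts to a $J$-equivariant decomposition of $P$ into $n$ blocks of size $m$, and the condition is exactly the non-emptiness of the conjugator space $C_{\kappa}\defeq\{\rho\in\Sigma_{n\times m}\mid\rho^{-1}\beta\rho=\kappa\}$, which carries a free action of $\Gamma'\defeq C(\im\kappa)\cap(\Sigma_n\wr\Sigma_m)$. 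Letting $\psi_{\kappa}\colon J\to\Sigma_n$ be the induced action on the set of blocks, the summand on the right is $\bigl((C_{\kappa})_+\wedge D_{\kappa}\bigr)/\Gamma'$ and the one on the left is $\bigl((E\mathcal{F}_{\U}^{G,\Sigma_n})^{\Gamma_{\psi_{\kappa}}}_+\wedge(C_{\kappa})_+\wedge D_{\kappa}\bigr)/\Gamma'$, where $D_{\kappa}$ is the $\kappa$-twisted $J$-fixed-point space of $A^{\wedge n}$ and the map collapses $(E\mathcal{F}_{\U}^{G,\Sigma_n})^{\Gamma_{\psi_{\kappa}}}$.

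The crux, and the only point at which the hypothesis on $\U$ enters, is then to show that $(E\mathcal{F}_{\U}^{G,\Sigma_n})^{\Gamma_{\psi_{\kappa}}}$ is contractible --- not merely nonempty --- for every such $\kappa$. Here I would use that, since $\beta$ and $\kappa$ are $\Sigma_{n\times m}$-conjugate, $P\cong\underline{n\times m}_{\kappa}$ and the chosen block structure exhibits $P$ $J$-equivariantly as $\bigsqcup_i J\times_{J_i}B_i$, with $J_i\leq J$ the stabilizer of the $i$-th block under $\psi_{\kappa}$ and $B_i$ the nonempty (as $m>0$) $J_i$-set of the $m$ elements in that block; a $J$-embedding $P\hookrightarrow\U|_J$ restricts on each orbit $J/J_{i,\ell}$ (for $J_{i,\ell}\leq J_i$ an isotropy group of some $B_i$) to an embedding into $\U|_J$, and since the isotropy of $\U$, hence of $\U|_J$, is closed under supergroups, $J/J_i$ embeds into $\U|_J$ for every $i$; thus $\underline{n}_{\psi_{\kappa}}=\bigsqcup_i J/J_i$ embeds into $\U|_J$, i.e.\ $\Gamma_{\psi_{\kappa}}\in\mathcal{F}_{\U}^{G,\Sigma_n}$, so $(E\mathcal{F}_{\U}^{G,\Sigma_n})^{\Gamma_{\psi_{\kappa}}}$ is contractible. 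To finish I would use freeness of $\Gamma'$ on $C_{\kappa}$ to untwist $(-)_+\wedge(C_{\kappa})_+\wedge(-)$ into a wedge over $C_{\kappa}/\Gamma'$, reducing the summand map to a wedge of maps $(\text{contractible})_+\wedge D_{\kappa}\to D_{\kappa}$, each a weak equivalence since all spaces in sight are CW; summing over $\kappa$ gives the lemma, and the simplicial case (if needed) follows by geometric realization. I expect the genuine work to be the bookkeeping in the two Lemma~\ref{lem:tech1} decompositions --- verifying that they are indexed by the same set of $\kappa$'s and keeping precise track of $\Gamma'$, $C_{\kappa}$ and the twisted fixed-point spaces $D_{\kappa}$ --- rather than any conceptual difficulty.
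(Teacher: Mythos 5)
Your proof takes essentially the same route as the paper's: both identify the two sides with quotients of $(\Sigma_{n\times m})_+\wedge A^{\wedge n}$ (and of the analogous space with the $E\mathcal{F}_{\U}^{G,\Sigma_n}$-factor adjoined) by the free action of the normal subgroup $\Sigma_n\wr\Sigma_m$, decompose fixed points via Lemma~\ref{lem:tech1}, and then compare summands indexed by lifts $\kappa\colon J\to\Sigma_n\wr\Sigma_m$. The key step is identical in content --- the hypothesis on $\U$ together with $m>0$ lets one pass between the isotropy of a point of $\underline{n\times m}$ and the larger isotropy of its block in $\underline{n}$ --- and the only difference is one of direction: you argue the contrapositive, showing that $(E\mathcal{F}_{\U}^{G,\Sigma_n})^{\Gamma_{\psi_\kappa}}$ is contractible whenever the $\Sigma_{n\times m}$-fixed points are nonempty, whereas the paper shows the latter are empty whenever the former is.

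One point, which you inherit verbatim from the paper (it asserts the same thing in the remark immediately preceding the lemma): the implication ``isotropy of $\U$ closed under supergroups $\Rightarrow$ isotropy of $\U|_J$ closed under supergroups'' is not a formal consequence. The restricted isotropy $\{\,J\cap gKg^{-1}\ :\ K\text{ isotropy of }\U,\ g\in G\,\}$ can fail to be closed under supergroups in $J$: for example, take $G=\Sigma_4$, $\U$ with isotropy the family of subgroups containing a $4$-cycle (which is closed under supergroups in $G$), and $J$ the Klein subgroup $\langle(12),(34)\rangle$; the restricted isotropy is then $\{\{e\},\langle(12)(34)\rangle,J\}$, which does not contain $\langle(12)\rangle\supseteq\{e\}$. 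For the universes $\U_G(N)$ to which Lemma~\ref{lem:tech3} is actually applied in the paper, normality of $N$ rescues the step: for $J\cap N\leq L\leq J$ one has $L=J\cap LN$ with $LN$ a group in the isotropy family. So your argument, like the paper's, is sound in the cases where the lemma is invoked, but the hypothesis on $\U$ as literally stated is a little weaker than what both proofs actually use.
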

\begin{proof} We have to show that it induces an equivalence on all fixed points for graph subgroups $L\in \F_{U}^{G,\Sigma_{n\times m}}$. Since $m>0$, the map $\Sigma_n\wr \Sigma_m\to \Sigma_{n\times m}$ is injective and so the wreath product acts freely on both sides and we can apply Lemma \ref{lem:tech1} above for the direct product $(G\times \Sigma_{m\times n})\times (\Sigma_n\wr \Sigma_m)$ with normal subgroup $\Sigma_n\wr \Sigma_m$. So we have to show that the map $(E\mathcal{F}_{\U}^{G,\Sigma_n})_+\wedge (\Sigma_{n\times m})_+\wedge A^{\wedge n} \to (\Sigma_{n\times m})_+\wedge A^{\wedge n}$ induces an equivalence on graph subgroups in $H\times \Sigma_{n\times m}\times \Sigma_n\wr \Sigma_m$ (where $H$ is the projection of $L$ to $G$) for homomorphisms $(\alpha,\beta)$ of which $\underline{n+m}_{\alpha}$ allows an $H$-embedding into $\U$. The wreath product $\Sigma_n\wr \Sigma_m$ acts on $E\F_{\U}^{G,\Sigma_n}$ through the projection to $\Sigma_n$, and hence the fixed points of such a graph subgroup on 
$E\F_{\U}^{G,\Sigma_n}$ are contractible if $\beta$ followed by this projection to $\Sigma_n$ defines an $H$-set structure on $\underline{n}$ which embeds into $\U$, and empty otherwise. Hence, we have to show that if this $H$-action on $\underline{n}$ does not embed into $\U$, then the fixed points on the right hand side are also empty. We claim that already the fixed points on $\Sigma_{n\times m}$ are empty. As in the proofs before, a bijection in $\Sigma_{n\times m}$ is fixed under this graph subgroup if and only if it is an isomorphism from the $H$-set structure on $\underline{n\times m}$ via $\beta$ to the one via $\alpha$. Since we assumed that $\alpha$ allows an embedding into $\U$, we thus have to show that if $\underline{n}_{pr\circ \beta}$ is not $H$-embeddable, then neither is $\underline{n\times m}_{\beta}$. We decompose the $H$-action on $\Sigma_n$ into orbits $N_1,\hdots,N_k$, choose elements $n_i\in N_i$ and let $H_i\leq H$ be the stabilizer of $n_i$. Then there is an $H$-isomorphism
\[ (\underline{n\times m})_{\beta}\cong \bigsqcup_{i=1,\hdots,k} H\ltimes_{H_i} ({n_i}\times m). \]
By assumption, for one of the $H_i$'s the $H$-set $H/H_i$ does not embed into $\U$. So, since the isotropy of $\U$ is closed under supergroups, $H\ltimes_{H_i} ({n_i}\times m)$ also does not $H$-embed into $\U$ (the isotropy of an element can only be smaller) and hence neither does $(\underline{n\times m})_{\beta}$, and so we are done.
\end{proof}

\textbf{Acknowledgments}. This paper grew out of my master's thesis at the University of Bonn. I would like to thank my adviser Stefan Schwede for suggesting the topic and for his ongoing support during the thesis writing and after. I also thank Tom Bachmann, Jeremiah Heller, Haynes Miller, Irakli Patchkoria, Lu\'is Alexandre Pereira, Steffen Sagave, Wolfgang Steimle and Christian Wimmer for various discussions on the subject and helpful suggestions.

This research was supported by the Deutsche Forschungsgemeinschaft Graduiertenkolleg 1150 ``Homotopy and Cohomology'' and the Danish National Research Foundation through the Centre for Symmetry and Deformation (DNRF92).


\bigskip
\footnotesize

  \textsc{Department of Mathematical Sciences, University of Copenhagen, Denmark}\par\nopagebreak
  \textit{E-mail address}: \texttt{hausmann@math.ku.dk}
  
\end{document}